\documentclass[a4paper, leqno, twoside,12pt]{amsart}
\usepackage[utf8]{inputenc}
\usepackage{esint}
\usepackage{amssymb,amsmath,mathtools,latexsym}
\usepackage{amsthm}
\usepackage[usenames,dvipsnames]{color}
\usepackage{marginnote}
\usepackage[alphabetic]{amsrefs}
\usepackage{setspace}
\usepackage{graphicx}
\usepackage{mathrsfs}
\usepackage{bbm}
\usepackage{mdwlist}
\usepackage{url}
\usepackage[hidelinks]{hyperref}
\usepackage{breakurl}
\usepackage[final]{showlabels}
\usepackage[all,cmtip]{xy}
\topmargin -0.5in
\textheight 9in
\oddsidemargin -0.25in
\evensidemargin -0.25in
\textwidth 6.4in

\usepackage{ifthen}
\renewcommand{\mod}{\,\mathrm{mod}\,}
\newcommand{\one}{\mathbbm{1}}
\newcommand{\ii}{\mathbf{i}}

\newcommand{\weakstar}{weak$^{\ast}$~}
\newcommand{\set}[2]{\left\{ {#1} \,;\, {#2} \right\}}
\newcommand{\der}[1][]{
   \ifthenelse{ \equal{#1}{} }%
      {\ensuremath{\mathrm{d}}}%
      {\ensuremath{\mathrm{d}_{#1}\!}}%
}
\newcommand{\quotient}[2]{\mathchoice
            {
              \raisebox{.6ex}{\small \newline ${#1}$}\!\big/\!\raisebox{-.6ex}{\small ${#2}$}
            }
            {
              {#1}/{#2}
            }
            {
              {#1}/{#2}
            }
            {
              {#1}/{#2}
            } }
\newcommand{\rquotient}[2]{\mathchoice
            {
              \raisebox{-.6ex}{\small \newline ${#1}$}\!\big\backslash\!\raisebox{.6ex}{\small ${#2}$}
            }
            {
              {#1}\backslash{#2}
            }
            {
              \raisebox{-.4ex}{\tiny \newline ${#1}$}\!\backslash\!\raisebox{.4ex}{\tiny ${#2}$}
            }
            {
              \raisebox{-.3ex}{\tiny \newline ${#1}$}\!\backslash\!\raisebox{.3ex}{\tiny ${#2}$}
            } }
\newcommand{\biquotient}[3]{\mathchoice
            {
              \raisebox{-.6ex}{\small \newline ${#1}$}\!\big\backslash\!\raisebox{.6ex}{\small ${#2}$}\!\big/\!\raisebox{-.6ex}{\small ${#3}$}
            }
            {
              {#1}\backslash{#2}/{#3}
            }
            {
              \raisebox{-.4ex}{\tiny \newline ${#1}$}\!\backslash\!\raisebox{.4ex}{\tiny ${#2}$}\!/\!\raisebox{-.4ex}{\tiny ${#3}$}
            }
            {
              \raisebox{-.3ex}{\tiny \newline ${#1}$}\!\backslash\!\raisebox{.3ex}{\tiny ${#2}$}\!/\!\raisebox{-.3ex}{\tiny ${#3}$}
            } }

\newcommand{\cl}[1]{\overline{ {#1} }}
\newcommand{\vol}[1]{\mathrm{Vol}( {#1} )}

\newcommand{\Ltwo}{L^{2}}

\newcommand{\norm}[1]{\lVert {#1} \rVert}
\newcommand{\normp}[1]{\lVert {#1} \rVert_{p}}
\newcommand{\normS}[1]{\lVert {#1} \rVert_{S}}
\DeclareFontFamily{U}{matha}{\hyphenchar\font45}
\DeclareFontShape{U}{matha}{m}{n}{
      <5> <6> <7> <8> <9> <10> gen * matha
      <10.95> matha10 <12> <14.4> <17.28> <20.74> <24.88> matha12
      }{}
\DeclareSymbolFont{matha}{U}{matha}{m}{n}
\DeclareFontSubstitution{U}{matha}{m}{n}
\DeclareFontFamily{U}{mathx}{\hyphenchar\font45}
\DeclareFontShape{U}{mathx}{m}{n}{
      <5> <6> <7> <8> <9> <10>
      <10.95> <12> <14.4> <17.28> <20.74> <24.88>
      mathx10
      }{}
\DeclareSymbolFont{mathx}{U}{mathx}{m}{n}
\DeclareFontSubstitution{U}{mathx}{m}{n}
\DeclareMathDelimiter{\vvvert}{0}{matha}{"7E}{mathx}{"17}

\newcommand{\exponentdecayharishchandraunipotent}{\varepsilon_{0}}
\newcommand{\exponentunipotentgrowth}{c_{0}}
\newcommand{\maxexponentunipotentgrowth}{c_{1}}

\newcommand{\exponentdecayhorocycles}{\kappa_{0}}
\newcommand{\exponentdecaytimes}{\varrho_{0}}
\newcommand{\exponentdecaytimesprime}{\varrho_{1}}
\newcommand{\exponentdecaytimesjoint}{\varrho_{\jmath}}
\newcommand{\exponentpolynomialsobolevunipotent}{\eta}
\newcommand{\exponentdecayrationalpointscongruence}{\kappa_{1}}
\newcommand{\exponentdecayrationalpointscongruencejoining}{\kappa_{2}}
\newcommand{\exponentdecayrationalpointsS}{\kappa_{2}}
\newcommand{\exponentdecayprimitiverationalpointsSfactor}{\kappa_{5}}

\newcommand{\rationalsn}{\Pcal(n)}
\newcommand{\rationalsnr}{\Pcal_{\infty}(n)}
\newcommand{\rationalsnrpiece}{\Pcal_{\infty}(n;\alpha,\beta)}
\newcommand{\rationalsnrXpiece}{\Pcal_{\infty,X}(n;\alpha,\beta)}
\newcommand{\primitiverationalsnproduct}{\Pcal(n)^{\times}}
\newcommand{\primitiverationalsnabcrtwotorus}{\Qcal_{\infty}^{\times d}(n;a,b,c)}
\newcommand{\primitiverationalsnanbncnrtwotorus}{\Qcal_{\infty}^{\times d}(n;a_{n},b_{n},c_{n})}
\newcommand{\primitiverationalsnabctwotorus}{\Qcal^{\times d}(n;a,b,c)}
\newcommand{\primitiverationalsnanbncntwotorus}{\Qcal^{\times d}(n;a_{n},b_{n},c_{n})}
\newcommand{\measureprimitiverationalsnabctwotorusr}{\overline{\mu}_{n;a,b,c}^{\times d}}
\newcommand{\measureprimitiverationalsnanbncntwotorusr}{\overline{\mu}_{n;a_{n},b_{n},c_{n}}^{\times d}}
\newcommand{\measureprimitiverationalsnabctwotorus}{\overline{\nu}_{n;a,b,c}^{\times d}}
\newcommand{\measureprimitiverationalsnanbncntwotorus}{\overline{\nu}_{n;a_{n},b_{n},c_{n}}^{\times d}}

\newcommand{\orbitnNpiece}[2]{%
  \ifthenelse{ \equal{#2}{} }%
  {%
    \ensuremath{\mathcal{O}_{n; {#1} }(N)}%
  }%
  {%
    \ensuremath{\mathcal{O}_{n; {#1} , {#2}}(N)}%
  }%
}

\newcommand{\continuous}{C}
\newcommand{\compact}{C_{c}}
\newcommand{\smooth}{C^{\infty}}
\newcommand{\compactsmooth}{C_{c}^{\infty}}
\newcommand{\tensorcompactsmooth}{\Acal_{c}^{\infty}}

\newcommand{\Qp}{\mathbb{Q}_{p}}
\newcommand{\Zp}{\mathbb{Z}_{p}}


\newcommand{\Qinfty}{\mathbb{Q}_{\infty}}

\newcommand{\QS}{\mathbb{Q}_{S}}
\newcommand{\ZS}{\mathbb{Z}[S^{-1}]}
\newcommand{\Zs}{\mathbb{Z}_{S}}
\newcommand{\Sf}{S_{\mathrm{f}}}
\newcommand{\QSf}{\mathbb{Q}_{\Sf}}

\newcommand{\Zsf}{\mathbb{Z}_{\Sf}}

\newcommand{\SLtwo}{\mathrm{SL}_{2}}

\newcommand{\GL}{\mathrm{GL}}

\newcommand{\SL}{\mathrm{SL}}

\newcommand{\Stab}{\mathrm{Stab}}
\newcommand{\Ad}{\mathrm{Ad}}
\newcommand{\Gp}{G_{p}}
\newcommand{\Gr}{G_{\infty}}
\newcommand{\GS}{G_{S}}
\newcommand{\Gs}{G^{S}}
\newcommand{\GGamma}{G_{\Gamma}}
\newcommand{\GSK}{K[0]}
\newcommand{\Hp}{H_{p}}
\newcommand{\Hq}{H_{q}}
\newcommand{\Hr}{H_{\infty}}
\newcommand{\HS}{H_{S}}
\newcommand{\Hs}{H^{S}}
\newcommand{\HGamma}{H_{\Gamma}}
\newcommand{\HSK}{H_{\mathrm{f}}}
\newcommand{\tildeHp}{\tilde{H}_{p}}
\newcommand{\tildeHr}{\tilde{H}_{\infty}}
\newcommand{\tildeHS}{\tilde{H}_{S}}
\newcommand{\Up}{U_{p}}
\newcommand{\Ur}{U_{\infty}}
\newcommand{\US}{U_{S}}
\newcommand{\UGamma}{U_{\Gamma}}
\newcommand{\Vp}{V_{p}}
\newcommand{\Vr}{V_{\infty}}
\newcommand{\VS}{V_{S}}
\newcommand{\VGamma}{V_{\Gamma}}
\newcommand{\AS}{A_{S}}
\newcommand{\GammaS}{\Gamma_{S}}
\newcommand{\Gammar}{\Gamma_{\infty}}
\newcommand{\GammaSK}{\mathrm{SL}_{2}(\mathbb{Z})}

\newcommand{\Lie}{\mathrm{Lie}}

\newcommand{\lieg}{\mathfrak{g}}
\newcommand{\liegR}{\mathfrak{g}_{\mathbb{R}}}
\newcommand{\liegZ}{\mathfrak{g}_{\mathbb{Z}}}

\newcommand{\liegQS}{\mathfrak{g}_{\QS}}

\newcommand{\liegZS}{\mathfrak{g}_{\ZS}}

\newcommand{\liesltwo}{\mathfrak{sl}_{2}}

\newcommand{\TS}{\mathbb{T}_{S}}
\newcommand{\Tr}{\mathbb{T}_{\infty}}
\newcommand{\XS}{X_{S}}
\newcommand{\Xr}{X_{\infty}}
\newcommand{\YS}{Y_{S}}


\newcommand{\Av}{\mathrm{Av}}
\newcommand{\pr}{\mathrm{pr}}
\newcommand{\height}{\mathrm{ht}}

\newcommand{\Xfrak}{\mathfrak{X}}

\newcommand{\Acal}{\mathcal{A}}
\newcommand{\Bcal}{\mathcal{B}}
\newcommand{\Ccal}{\mathcal{C}}
\newcommand{\Dcal}{\mathcal{D}}
\newcommand{\Ecal}{\mathcal{E}}
\newcommand{\Ical}{\mathcal{I}}
\newcommand{\Pcal}{\mathcal{P}}
\newcommand{\Qcal}{\mathcal{Q}}
\newcommand{\Scal}{\mathcal{S}}
\newcommand{\Ucal}{\mathcal{U}}

\newcommand{\C}{\mathbb{C}}
\newcommand{\D}{\mathbb{D}}
\newcommand{\N}{\mathbb{N}}
\renewcommand{\P}{\mathbb{P}}
\newcommand{\Q}{\mathbb{Q}}
\newcommand{\R}{\mathbb{R}}
\renewcommand{\S}{\mathbb{S}}
\newcommand{\Z}{\mathbb{Z}}
\newcommand{\T}{\mathbb{T}}

\newtheorem{theorem}{Theorem}\numberwithin{theorem}{section}
\newtheorem{thm}[theorem]{Theorem}
\newtheorem*{thmwo}{Theorem}
\newtheorem{lem}[theorem]{Lemma}
\newtheorem{lemma}[theorem]{Lemma}
\newtheorem{proposition}[theorem]{Proposition}
\newtheorem{prop}[theorem]{Proposition}
\newtheorem{cor}[theorem]{Corollary}
\theoremstyle{definition}\newtheorem{definition}[theorem]{Definition}
\theoremstyle{remark}\newtheorem{remark}[theorem]{Remark}
\begin{document}
\title[Primitive rational points]{Primitive rational points on expanding horocycles in products of the modular surface with the torus}
\author{Manfred Einsiedler, Manuel Luethi, Nimish Shah}
\email{manuel.luethi@math.ethz.ch}
\subjclass[2000]{37A45, 11L05, 11J71 (Primary)}
\thanks{M.~Einsiedler was supported by SNSF grant 200021-178958, M.~Luethi was supported by SNSF grants 200021-178958 and P1EZP2\_181725, and N.A.~Shah was supported by NSF grant DMS-1700394.}
\date{\today}
\begin{abstract}
  We prove effective equidistribution of primitive rational points and of primitive rational points defined by monomials along long horocycle orbits in products of the torus and the modular surface. This answers a question posed in joint work by the first and the last named author with Shahar Mozes and Uri Shapira. Under certain congruence conditions we prove the joint equidistribution of conjugate rational points in the two-torus and the modular surface.
\end{abstract}
\maketitle
\allowdisplaybreaks
\setcounter{tocdepth}{1}
\tableofcontents
\section{Introduction}
Let~$n$ be a natural number and~$k\in\Z$ coprime to~$n$, denoted~$(k,n)=1$. Denote by~$\cl{k}\in\Z$ any choice of a modular inverse of~$k\,\mod n$. The examination of modular forms naturally leads to the question of statistical independence of~$k$ and~$\cl{k}$ in~$\quotient{\Z}{n\Z}$, see for example \cite{Selberg65}. Naturally, such a question would be asked in terms of asymptotics for large~$n$. To this end, it is useful to recast the formulation on the torus~$\T=\rquotient{\Z}{\R}$. Given an integer~$k\in\Z$ coprime to~$n$, the tuple~$(\frac{k}{n},\frac{\cl{k}}{n})\in\T^{2}$ is independent of the choice of the representatives~$k$ and~$\cl{k}$. The group~$\T^{2}$ carries a natural probability Haar measure~$m$ coming from the uniform measure on the real plane and a natural way to state statistical independence of the tuples~$(\frac{k}{n},\frac{\cl{k}}{n})$,~$(k,n)=1$ is to say that the average of a continuous function~$f$ on~$\continuous(\T^{2})$ over these tuples converges to the integral of~$f$ with respect to that natural measure, i.e.~
\begin{equation}\label{eq:kloosterman}
  \frac{1}{\phi(n)}\sum_{(k,n)=1}f(\tfrac{k}{n},\tfrac{\cl{k}}{n})\to\int_{\T^{2}}f(x,y)\der m(x,y).
\end{equation}
By Fourier expansion this becomes a problem of estimating certain exponential sums and in fact the above convergence has been proven by Kloosterman \cite{Kloosterman} with a rate. The rate has been optimized in seminal work by Weil \cite{WeilExponentialSums}.

More recently, Jens Marklof has interpreted the above set in terms of intersection points of certain horospheres \cite{Primitive}. To motivate the formulation of our problem, we will repeat this observation here. For this introduction, consider the lattice~$\Gamma=\SLtwo(\Z)$ inside~$G=\SLtwo(\R)$ and denote the subgroups
\begin{align*}
  U&=\set{u_{t}=\begin{pmatrix}
      1 & t \\
      0 & 1
    \end{pmatrix}}{t\in\R}\\
  V&=\set{v_{s}=\begin{pmatrix}
      1 & 0 \\
      s & 1
    \end{pmatrix}}{s\in\R}\\
  A&=\set{a_{y}=\begin{pmatrix}
      y & 0 \\
      0 & \tfrac{1}{y}
    \end{pmatrix}}{y\in(0,\infty)}
\end{align*}
It is well known that the~$U$ and~$V$-orbits of~$\Gamma$ are closed and that~$\Gamma Ua_{y}$ and~$\Gamma Va_{y}$ equidistribute in~$\rquotient{\Gamma}{G}$ as~$y\to0$ and~$y\to\infty$ respectively (cf.~\cite{Sarnak1981}). For small~$y$ one could wonder, whether the long orbit~$\Gamma Ua_{y}$ intersects the orbit~$\Gamma V$. An elementary calculation shows that intersections occur if and only if there is some~$n\in\N$ so that~$y=\frac{1}{n}$. In this case~$\Gamma u_{t}a_{y}=\Gamma v_{s}$ implies that~$t=\frac{k}{n}$ and~$s=\frac{l}{n}$ for some~$k,l\in\Z$. Finally,~$1=\det(u_{t}a_{y}v_{-s})$ yields~$kl\equiv 1\,\mod n$. In particular,~$l=\cl{k}$ in~$\quotient{\Z}{n\Z}$. As~$\Gamma U\cong\T$ and~$\Gamma V\cong\T$, the measure appearing in~\eqref{eq:kloosterman} can be identified with the normalized counting measure on the set
\begin{equation*}
  \set{(\Gamma u_{k/n},\Gamma u_{k/n}a_{n}^{-1})}{(k,n)=1}\subseteq \Gamma U\times\Gamma V.
\end{equation*}
Given~$n$ and~$\alpha\in\R$, denote
\begin{equation*}
  \Pcal(n)_{\alpha}=\set{(\Gamma u_{k/n},\Gamma u_{k/n}a_{n^{\alpha}}^{-1})}{(k,n)=1}\subseteq\Gamma U\times\rquotient{\Gamma}{G}.
\end{equation*}
We have just argued that the set~$\Pcal(n)_{1}$ equidistributes inside~$\Gamma U\times\Gamma V$ as~$n\to\infty$. The problem of equidistribution of the primitive rational points~$\Pcal(n)_{\alpha}$ inside~$\Gamma U\times\rquotient{\Gamma}{G}$ has applications to Gauss sums and have been examined in \cite{Emek}, \cite{EmekMarklof}. Our work provides a considerable strengthening of some results in the first mentioned article. For the sake of simplicity of exposition we are going to focus only on the case~$\alpha=\frac{1}{2}$. Moreover, as our method of proof allows it, we are going to discuss a more general version of the problem where instead of the primitive rational points we look at multiplies of monomial residues. More precisely, given~$a,b,d\in\N$, we let
\begin{equation*}
  \Pcal^{\times d}(n;a,b)=\big\{(\tfrac{ak^{d}}{n},\Gamma u_{bk^{d}/n}a_{\sqrt{n}}^{-1}):(k,n)=1\big\}.
\end{equation*}
We can now state our first main result, which implies equidistribution of~$\Pcal(n)_{1/2}$ as~$n\to\infty$.
\begin{thm}\label{thm:mainthm}
  Fix~$d\in\N$. There exists an~$\Ltwo$-Sobolev norm~$\Scal$ on~$\compactsmooth(\T\times\rquotient{\Gamma}{G})$ and positive constants~$\kappa,\eta,C$ such that for all~$n\in\N$ and~$a,b\in\Z$ satisfying~$(n,ab)=1$ and for all~$F\in\compactsmooth(\T\times\rquotient{\Gamma}{G})$ we have
  \begin{equation*}
    \bigg\lvert\frac{1}{\lvert\Pcal^{\times d}(n;a,b)\rvert}\sum_{(t,x)\in\Pcal^{\times d}(n;a,b)}F(t,x)-\int_{\T\times\rquotient{\Gamma}{G}}F\bigg\rvert\leq C(ab)^{\eta}n^{-\kappa}\Scal(F).
  \end{equation*}
\end{thm}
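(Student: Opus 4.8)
The plan is to lift the counting problem to a single $S$-arithmetic homogeneous space, where the set $\Pcal^{\times d}(n;a,b)$ becomes (an orbit of) a horospherical subgroup acting on a well-chosen point, and then invoke effective equidistribution of translated horospherical orbits together with spectral gap. Concretely, write $n$ in terms of its prime factorization and let $S$ be the set of primes dividing $n$ (together with $\infty$). First I would reinterpret the modular-surface coordinate: the point $\Gamma u_{bk^{d}/n} a_{\sqrt n}^{-1}$ in $\rquotient{\Gamma}{G}$ should be recognized as the archimedean component of a lattice point in $\rquotient{\GLtwo(\Q)}{\GLtwo(\QS)}$ (or an appropriate $\SLtwo$-version), where the finite components record the congruence data $k^d \bmod n$ via the $\Zp$-structure. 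The torus coordinate $\frac{ak^d}{n}\in\T$ is likewise encoded adelically as a point in $\QS/\Z[S^{-1}]$. Thus the whole tuple $(t,x)$ lives on a space of the shape $\T_S \times \rquotient{\Gamma_S}{G_S}$, and as $k$ ranges over units mod $n$ the tuples trace out the $\Up$-pieces (for $p\mid n$) of a single orbit, thickened by the archimedean horocycle.

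The second step is to replace the arithmetic sum over $(k,n)=1$ with an integral over a genuine orbit of a unipotent (horospherical) subgroup. Here the monomial $k\mapsto k^d$ and the multipliers $a,b$ enter: raising to the $d$-th power is a surjective-with-bounded-fibers map on $(\Z/p^{j}\Z)^\times$ for $p\nmid d$, and for $p\mid d$ or small $j$ one has explicit uniform control, which is exactly where the factor $(ab)^\eta$ and a harmless loss in $\kappa$ come from; I would package this as a comparison of the normalized counting measure on $\Pcal^{\times d}(n;a,b)$ with the normalized counting measure on the full set of primitive rational points $\Pcal(n)^\times$ with a quantitative (polynomial-in-$ab$, power-saving-in-$n$) error, reducing $\Theta$he case of general $d,a,b$ to the case $d=1$, $a=b=1$. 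For $d=1$ the set is the orbit of the finite-adelic unit group acting diagonally, and its equidistribution in $\rquotient{\Gamma_S}{G_S}$ is governed by decay of matrix coefficients; the torus factor is handled simultaneously because the relevant unipotent acts on $\T_S$ with the same expansion.

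The third and technical heart is the effective equidistribution input itself: one needs an effective mixing / effective equidistribution statement for the translated orbit $\Gamma_S U_S a_{\sqrt n}^{-1}$ (times the torus) with an explicit polynomial rate in the volume of the orbit, uniform over the varying place-set $S$. I would obtain this by the standard unfolding-plus-spectral-gap argument: test $F$ against the orbit, use a smooth bump in the transverse directions, expand in the unipotent direction, and apply the uniform spectral gap for $\rquotient{\SLtwo(\Q)}{\SLtwo(\A)}$ (property $(\tau)$ / Selberg's $\tfrac{3}{16}$, made $S$-uniform) to control the non-trivial part, while the torus contribution is estimated by elementary Fourier analysis exactly as in the Kloosterman-sum reformulation — indeed the exponential sums that appear after unfolding are precisely Kloosterman (or Salié, for $d=2$) sums, and Weil's bound supplies the power saving. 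The Sobolev norm $\Scal$ absorbs all derivative losses from the transverse mollification and from differentiating the test function along the expanding horocycle. The main obstacle I anticipate is the \emph{uniformity in $S$}: since $S$ grows with $n$, one must ensure the implied constants in the effective equidistribution and in the $k\mapsto k^d$ comparison do not degrade as more primes are added — this forces one to use the correct adelic normalization of volumes (so that the "volume of the orbit" is genuinely comparable to $n$, not to $n$ times a divisor-like factor) and to invoke a version of the spectral gap with a constant independent of the level, which is why property $(\tau)$ rather than a single Selberg-type bound is the right tool.
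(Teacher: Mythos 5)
Your proposal diverges from the paper's argument in two structural ways, and both matter.

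First, the choice of the place set $S$. You take $S$ to be the primes dividing $n$ (plus $\infty$), with the idea of lifting the rational points to an orbit in $\rquotient{\Gamma_{S}}{G_{S}}$ whose finite-adic coordinates record $k\bmod n$. The paper takes $S=S_{n,\beta}$ to be the set of primes \emph{coprime} to $n$ and bounded by $n^{\beta}$. This is not a cosmetic difference: the whole mechanism rests on the observation that, for $p$ coprime to $n$, the map $k\mapsto p^{2}k$ is a permutation of $(\quotient{\Z}{n\Z})^{\times}$, and $(p^{2}k)^{d}=p^{2d}k^{d}$, so the set $\Pcal^{\times d}(n;a,b)$ is invariant under the $\Z^{\Sf}$-action $M_{de_{p}}$ on $\TS\times\XS$ (Lemma~\ref{lem:invariantsets} / the discussion opening Section~\ref{sec:primitivepoints}). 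Primes dividing $n$ give no such invariance. The paper then proves effective equidistribution of the \emph{full} set $\Pcal(n)$ of rational points (Section~\ref{sec:rationalpoints}, Corollary~\ref{cor:equidistributionrationalpoints}), introduces a discrepancy operator $D_{n,\beta,d}$ that averages over $M_{de_{p}}$, $p\in\Pcal(n,n^{\beta})$, uses mixing of the $\Z^{\Sf}$-action (Proposition~\ref{prop:jointmixing}, which in turn relies on property~$(\tau)$ for the $\XS$-factor and expanding endomorphisms on $\TS$) to bound $\lVert D_{n,\beta,d}F\rVert_{2}$, and combines this with the cardinality lower bound $\lvert\Pcal^{\times d}\rvert\gg\phi(n)/d^{\omega(n)}$ (Lemma~\ref{lem:cardinalityresidues}) to pass from all rational points to the degree-$d$ residues.

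Second, and more seriously, your proposed reduction from general $(d,a,b)$ to $(d,a,b)=(1,1,1)$ is circular. You suggest comparing the normalized counting measure on $\Pcal^{\times d}(n;a,b)$ to that on $\Pcal(n)^{\times}$ "with a quantitative (polynomial-in-$ab$, power-saving-in-$n$) error." But $\Pcal^{\times d}(n;1,1)$ can be smaller than $\Pcal(n)^{\times}$ by a factor on the order of $d^{\omega(n)}$, and the $d$-th power residues are a genuinely sparse multiplicative subgroup of $(\quotient{\Z}{n\Z})^{\times}$; showing that the counting measure on this subgroup is close (in the $\Scal$-dual sense) to the counting measure on the full unit group is precisely what one needs to prove — there is no elementary input that supplies such a comparison. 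Your $(ab)^{\eta}$ loss cannot absorb a $d^{\omega(n)}$ discrepancy, and "raising to the $d$-th power is surjective with bounded fibers" is not true uniformly (the fiber size is $(\phi(p^{r}),d)$, which contributes the $d^{\omega(n)}$ factor that the paper must tame via $\omega(n)\ll\log n/\log\log n$). Finally, the paper does not invoke Weil's Kloosterman bound anywhere in the proof of Theorem~\ref{thm:mainthm}; the power saving comes entirely from effective equidistribution of pieces of long horocycles (Sarnak, Kleinbock--Margulis) plus the uniform spectral gap on congruence quotients, both packaged $S$-arithmetically via the congruence-quotient identifications of Section~\ref{sec:sobolev}. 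Your instinct about needing $S$-uniformity of the spectral gap (property $(\tau)$) is correct, but it enters through Proposition~\ref{prop:spectralgap} and Lemma~\ref{lem:discrepancybound}, not through Kloosterman sums.
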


Denote by~$\Pcal_{X}^{\times d}(n;b)\subseteq\rquotient{\Gamma}{G}$ the projection of~$\Pcal^{\times d}(n;a,b)$ to~$\rquotient{\Gamma}{G}$, which does of course not depend on~$a$. The method of proof applied in the proof of Theorem \ref{thm:mainthm} yields the following
\begin{cor}\label{cor:cormainthm}
  Fix~$d\in\N$. There exist an~$\Ltwo$-Sobolev norm~$\Scal$ on~$\compactsmooth(\rquotient{\Gamma}{G})$ and positive constants~$\kappa^{\prime},C_{1}$ such that for all~$n\in\N$ and~$b_{n}\in\Z$ satisfying~$(n,b_{n})=1$ and for all~$f\in\compactsmooth(\rquotient{\Gamma}{G})$ we have
  \begin{equation*}
    \bigg\lvert\frac{1}{\lvert\Pcal_{X}^{\times d}(n;b_{n})\rvert}\sum_{x\in\Pcal_{X}^{\times d}(n;b_{n})}f(x)-\int_{\rquotient{\Gamma}{G}}f\bigg\rvert\leq C_{1}n^{-\kappa^{\prime}}\Scal(F).
  \end{equation*}
\end{cor}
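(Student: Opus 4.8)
The plan is to deduce Corollary~\ref{cor:cormainthm} from Theorem~\ref{thm:mainthm} by applying the latter to test functions that do not depend on the torus coordinate, and then to explain in what sense one must instead re-run the proof of the theorem to obtain a clean exponent. Fix $f\in\compactsmooth(\rquotient{\Gamma}{G})$ and put $F=\one\otimes f$, i.e.\ $F(t,x)=f(x)$; since $\T$ is compact, $F\in\compactsmooth(\T\times\rquotient{\Gamma}{G})$ and $\int_{\T\times\rquotient{\Gamma}{G}}F=\int_{\rquotient{\Gamma}{G}}f$. The projection $(t,x)\mapsto x$ carries $\Pcal^{\times d}(n;a,b)$ bijectively onto $\Pcal_X^{\times d}(n;b)$: if indices $k,k'$ coprime to $n$ yield the same point of $\rquotient{\Gamma}{G}$ then $u_{b(k^{d}-k'^{d})/n}\in\SLtwo(\Z)$, hence $n\mid k^{d}-k'^{d}$ because $(b,n)=1$, hence also $n\mid a(k^{d}-k'^{d})$ and the torus coordinates agree. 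Thus $\lvert\Pcal^{\times d}(n;a,b)\rvert=\lvert\Pcal_X^{\times d}(n;b)\rvert$, and the average in Theorem~\ref{thm:mainthm} for $F=\one\otimes f$ equals the average in Corollary~\ref{cor:cormainthm}. Finally, on the closed subspace of functions constant in $t$ any fixed $\Ltwo$-Sobolev norm $\Scal$ is dominated by an $\Ltwo$-Sobolev norm $\Scal'$ in the $\rquotient{\Gamma}{G}$-variables alone, so $\Scal(\one\otimes f)\le C_{0}\Scal'(f)$.

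It remains to remove the dependence on $b=b_n$. Since $u_{m}\in\SLtwo(\Z)$ for $m\in\Z$, the set $\Pcal_X^{\times d}(n;b)$ depends only on $b\bmod n$, so we may assume $0\le b_n<n$; taking $a=1$, Theorem~\ref{thm:mainthm} then gives a bound of the form $C\,n^{\eta}n^{-\kappa}\Scal'(f)$, which already proves the corollary (with $\kappa'=\kappa-\eta$) whenever $\kappa>\eta$. In general, however, $\eta$---which governs polynomial growth in the frequencies $a,b$---need not be smaller than the equidistribution exponent $\kappa$, and this is exactly why the corollary is stated as a consequence of the \emph{method} of proof rather than of the theorem as a black box.

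To obtain a positive $\kappa'$ uncoupled from $\eta$, one re-runs the proof of Theorem~\ref{thm:mainthm} with the torus-Fourier-expansion step trivialized: for $F=\one\otimes f$ only the trivial torus frequency occurs, so the exponential sums attached to nonzero torus frequencies---the mixed monomial-plus-inverse sums in $k$ whose Weil-type estimates produce the factor $(ab)^{\eta}$---never appear. What is left is precisely the $\rquotient{\Gamma}{G}$-marginal of the argument: the effective equidistribution of $\Pcal_X^{\times d}(n;b_n)$, realized---as in the proof of the theorem---inside an $S$-arithmetic quotient with $S=\{\infty\}\cup\{p:p\mid n\}$, relying on the spectral gap together with Kloosterman-, Ramanujan-, or (for $d\ge2$) Sali\'{e}-type sums whose Weil bounds cost only $O(n^{\varepsilon})$. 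Assembling these yields the asserted estimate with $\kappa'$ and $C_{1}$ independent of $b_n$. The one point genuinely requiring care---and the main obstacle---is exactly this independence: one must verify that the relevant sub-horocycle configurations equidistribute effectively at a rate uniform over all $b_n$ coprime to $n$. This holds because, after reducing $b_n$ modulo $n$, the dependence on $b_n$ drops out of the governing exponential sums, in the same way that a Ramanujan sum $c_{n}(b_n\ell)$ depends on $b_n\ell$ only through $\gcd(b_n\ell,n)=\gcd(\ell,n)$.
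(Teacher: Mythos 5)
Your opening reduction — taking $F=\one\otimes f$, observing that the projection $(t,x)\mapsto x$ identifies $\Pcal^{\times d}(n;a,b)$ with $\Pcal_X^{\times d}(n;b)$, and noting that Theorem~\ref{thm:mainthm} as a black box only yields a useful bound when $\kappa>\eta$ — is correct, and you rightly conclude that one must re-enter the proof. The gap is in your diagnosis of where the factor $(ab)^{\eta}$ comes from and, consequently, in your plan for removing it.

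The factor $(ab)^{\eta}$ does not arise from torus-Fourier expansions or from Weil bounds for Kloosterman/Ramanujan/Sali\'e sums; no exponential-sum estimate appears anywhere in the paper. It arises from the Sobolev-norm growth under the unipotent flow: in Proposition~\ref{prop:equidistributionrationalpointscongruence} the discrepancy operator $D_K$ is built from $\gamma=u_a$, and Proposition~\ref{prop:propertiessobolevnorms}~(\ref{item:continuityofrepresentation}) gives $\Scal(D_Kf)\ll(aK)^{\exponentunipotentgrowth}\Scal(f)$; this is the source of $a^{\maxexponentunipotentgrowth}$ there, and of $b^{\maxexponentunipotentgrowth}$ after the relabelling in Corollary~\ref{cor:rationalpointscongruencejoining}. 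In particular, taking $\varphi\equiv\one$ (trivial torus frequency) kills the $a$-dependence in Corollary~\ref{cor:rationalpointscongruencejoining} but leaves the $b^{\eta}$-dependence untouched, so ``trivialising the torus-Fourier step'' does not yield a $b_n$-free exponent. Also, the $S$-arithmetic extension used is $S_{n,\beta}=\{\infty\}\cup\P(n,n^{\beta})$, the primes $p<n^{\beta}$ \emph{coprime} to $n$, not $\{\infty\}\cup\{p:p\mid n\}$ as you write.

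The mechanism the paper actually uses — and which your closing remark gestures at, though in the wrong vocabulary — is purely set-theoretic and happens \emph{before} any spectral input. After Cauchy--Schwarz one passes from the degree-$d$ residues to all primitive residues, at the cost of $d^{\omega(n)}\ll n^{\varepsilon}$ via Lemma~\ref{lem:cardinalityresidues}. The function $D_{n,\beta,d}(f\circ\pi_X)$ is constant in the torus variable, so its average over $\Pcal_\beta^{\times 1}(n;a_n,b_n)$ depends only on the modular-surface coordinates $\{\Gamma_{S}\Delta(u_{b_nk/n})a_{\sqrt n}^{-1}:(k,n)=1\}$; since $k\mapsto b_nk$ is a bijection of $(\Z/n\Z)^{\times}$, this is literally the same finite set as for $b_n=1$. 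Hence $\mu_{n,\beta;a_n,b_n}^{\times 1}\big((D_{n,\beta,d}(f\circ\pi_X))^{2}\big)=\mu_{n,\beta;1,1}^{\times 1}\big((D_{n,\beta,d}(f\circ\pi_X))^{2}\big)$, and one then applies the already-established $S$-arithmetic bound (equation~\eqref{eq:sarithmeticmainthm}) with $a=b=1$, where the factor $(ab)^{\eta}$ is identically one. No uniformity of an exponential-sum family needs to be verified: the $b_n$-dependence is eliminated exactly, not merely bounded. You should replace the second and third paragraphs by this argument.
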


A natural generalization of the problems described above is to ask for the joint distribution of primitive rational points for~$\alpha=0$,~$\alpha=\frac{1}{2}$ and~$\alpha=1$ simultaneously, that is the distribution of the sets
\begin{equation*}
  \set{(\Gamma u_{k/n},\Gamma u_{k/n}a_{n^{-1/2}},\Gamma u_{k/n}a_{n^{-1}})}{(k,n)=1}\subseteq \Gamma U\times\rquotient{\Gamma}{G}\times\Gamma V.
\end{equation*}
Rearranging factors, equidistribution of these sets can be interpreted as orthogonality of Kloosterman sums to averages along primitive rational points on expanding horocycles.
Using Theorem~\ref{thm:mainthm} and entropy arguments, we show that for~$a,b,c,d,n\in\N$ the sets
\begin{equation*}
  \Qcal^{\times d}(n;a,b,c)=\set{(\tfrac{ak^{d}}{n},\tfrac{b\cl{k^{d}}}{n},\Gamma u_{ck^{d}/n}a_{\sqrt{n}}^{-1})}{(k,n)=1}
\end{equation*}
equidistribute as~$n\to\infty$ along some congruence condition. More precisely, we prove the following
\begin{thm}\label{thm:realtwotorus}
  Let~$p,q$ be two distinct primes and let~$\D(pq)=\{n\in\N:(n,pq)=1\}$. Let~$a_{n},b_{n},c_{n}\in\Z$ be a sequence of integers coprime to~$n$. Let~$F\in\compact(\T\times\T\times\rquotient{\Gamma}{G})$. Then
  \begin{equation*}
    \frac{1}{\lvert\Qcal^{\times d}(n;a_{n},b_{n},c_{n})\rvert}\sum_{x\in\Qcal^{\times d}(n;a_{n},b_{n},c_{n})}F(x)\to\int_{\T\times\T\times\rquotient{\Gamma}{G}}F
  \end{equation*}
  as~$n\to\infty$ with~$n\in\D(pq)$.
\end{thm}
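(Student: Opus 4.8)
The plan is to deduce the joint equidistribution from Theorem~\ref{thm:mainthm} together with a measure-classification/entropy argument in the spirit of Lindenstrauss' work and of the earlier paper by Einsiedler--Shah--Mozes--Shapira. First I would set up the adelic picture: the three-torus coordinate $\frac{b_n\overline{k^d}}{n}$ is the natural ``dual'' coordinate to $\frac{c_n k^d}{n}$, and one packages the points of $\Qcal^{\times d}(n;a_n,b_n,c_n)$ as a $\Gamma$-orbit in a suitable $S$-arithmetic quotient $X_S = \rquotient{\Gamma_S}{G_S}$ with $S = \{\infty, p, q\}$ (or rather the product of such a quotient with $\T\times\T$), where the primes $p,q$ enter precisely because $n$ is coprime to $pq$ so that $n$ is invertible in $\Z_p$ and $\Z_q$. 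The congruence condition $n\in\D(pq)$ is exactly what allows the relevant acting element $a_{\sqrt n}^{-1}$ (together with the rational unipotents $u_{\cdot/n}$) to be realized as an element of $G_p\times G_q$ acting on this quotient, so that the normalized counting measures $\measureprimitiverationalsnanbncntwotorus$ become push-forwards under a $p$- and $q$-adic flow of a single natural measure.

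Next I would extract, via a weak$^*$-compactness argument, a limit measure $\mu$ of the sequence $\measureprimitiverationalsnanbncntwotorus$ along any subsequence; tightness / no-escape-of-mass in the $\rquotient{\Gamma}{G}$ factor follows from Theorem~\ref{thm:mainthm} applied to suitable test functions (or from a standard non-divergence estimate for horocycle pieces). The marginal of $\mu$ on $\T\times\rquotient{\Gamma}{G}$ is forced to be Haar by Theorem~\ref{thm:mainthm} (this is where the effective input is genuinely used), and similarly the marginal on the ``conjugate'' factor $\T\times\rquotient{\Gamma}{G}$ — coming from $\frac{b_n\overline{k^d}}{n}$ and the $\Gamma V$-picture, which is the same problem after the Weyl-element flip — is Haar by the same theorem. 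It remains to upgrade these two Haar marginals to the statement that $\mu$ is the full Haar measure on $\T\times\T\times\rquotient{\Gamma}{G}$; equivalently, to rule out that $\mu$ is supported on a proper ``diagonal-type'' joining of the two torus factors. This is done by an entropy argument: the element of $G_p$ realizing the flow acts on $\mu$ with positive entropy contributions, and the two torus directions are moved by $p$-adic, resp. $q$-adic, expanding one-parameter subgroups; invariance under both a $p$-adic and a $q$-adic flow, combined with the recurrence supplied by the Haar marginals and with an entropy/product-structure argument (à la the high-entropy or the $\times p,\times q$ rigidity philosophy), forces $\mu$ to be Haar on the product. Finally, since every weak$^*$ limit equals Haar, the whole sequence converges, giving the theorem.

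The main obstacle I expect is precisely this last step — showing that the two separately-equidistributing torus coordinates do not conspire into a lower-dimensional joining. Having the two distinct primes $p,q$ is what makes this tractable: one gets two commuting but ``arithmetically independent'' expanding directions, and the entropy of $\mu$ under the diagonal $\Z[\tfrac1p,\tfrac1q]$-action can be bounded below (using the effective equidistribution of each marginal to guarantee enough mass/recurrence) and above (using the hypothetical joining structure), and the two bounds are compatible only if $\mu$ is Haar. Making the entropy bookkeeping precise — keeping track of the fibered entropy over each of the three factors and invoking the appropriate Ledrappier--Young / Abramov-type additivity, together with a Rudolph--Johnson-style argument for the $\T$-coordinates — is the technical heart; the rest (non-divergence, identification of marginals, passing to the limit) is comparatively routine given Theorem~\ref{thm:mainthm}.
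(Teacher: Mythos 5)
Your proposal is essentially correct and follows the same route the paper takes: lift to the $S$-arithmetic setting with $S=\{\infty,p,q\}$, observe that the normalized counting measures are invariant under the $\Z^{\Sf}$-action $m.(t,s,x)=(S^{2dm}t,S^{-2dm}s,xa_{S^{dm}}^{-1})$, extract a weak$^*$ limit, use Theorem~\ref{thm:mainthm} (via Corollary~\ref{cor:equidistributionfactors}) to force the marginals to be Haar, and then apply the entropy/leafwise-measure rigidity machinery for the resulting $\Z^{\Sf}$-invariant joining. The paper does exactly this via the Abramov--Rokhlin-style entropy additivity (Corollary~6.5 of the Einsiedler--Lindenstrauss joinings paper \cite{definitivejoining}) together with the fact that full entropy contribution of a coarse Lyapunov subgroup forces invariance under it (Theorem~7.9 of \cite{Pisa}), first for the two-factor joinings (Proposition~\ref{prop:disjointnesstwofactors}) and then for the three-factor joining (Proposition~\ref{prop:disjointnessthreefactors}). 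One small conceptual point to adjust in your sketch: the two torus directions are \emph{not} moved by ``the $p$-adic, resp. the $q$-adic'' flow -- both torus factors, and both unipotent subgroups of $\SLtwo$, are moved by \emph{every} $\alpha_r$, $r\in S$. What distinguishes the factors is the \emph{sign} of the Lyapunov functional on each factor: as computed in Corollary~\ref{cor:weights}, $U_p$ and $H_p^{(-)}$ have Lyapunov exponent $-\chi_p$ while $V_p$ and $H_p^{(+)}$ have $+\chi_p$ (and the signs flip at the archimedean place). The genuine role of having two distinct finite primes is that the functionals $\chi_p,\chi_q,\chi_\infty$ on $\Z^{\Sf}$ are pairwise non-proportional, so that each coarse Lyapunov subgroup $H_\chi^-$ is exactly one of the groups in Corollary~\ref{cor:lyapunovsubgroups} and never accidentally glues the archimedean and a $p$-adic direction together; with only $S=\{\infty,p\}$ one would have $\chi_\infty=\chi_p$, merging Lyapunov subgroups and breaking the product structure that the entropy argument relies on. A secondary, harmless divergence from the paper: you propose to get both pair-marginals directly from Theorem~\ref{thm:mainthm} (one via the Weyl-element flip identifying the $\Gamma V$ picture with a $\Pcal$-type set), whereas the paper derives the pair-marginals dynamically from Proposition~\ref{prop:disjointnesstwofactors} starting only from Haar one-dimensional marginals; both work, and your route is slightly more economical given that Theorem~\ref{thm:mainthm} is already available.
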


The equidistribution of the sets~$\Pcal(n)_{1}$ has other natural generalizations, for example to~$\SL_{N}(\R)$ which was examined in \cite{Primitive}. The ineffective equidistribution proven there has been effectivized more recently, first in the case~$N=3$ by Lee and Marklof in \cite{LeeMarklof} and later for general~$N$ by El-Baz, Huang and Lee in \cite{BazHuangLee}. These generalizations all concern variations of the problem for the fixed scaling parameter~$\alpha=1$. The generalization in the present article concerns variation of the scaling parameter~$\alpha$ and we want to quickly explain why we only discuss the case~$\alpha=\frac{1}{2}$. Assume first that~$\alpha>1$, then~$\Pcal(n)_{\alpha}\subseteq\Gamma U\times\Gamma Va_{n^{1-\alpha}}$ and the orbit in the second component diverges into the cusp uniformly. Hence the limit measure of the normalized counting measures on these sets is trivial. The case~$\alpha<0$ shows similar behaviour. Hence, one can restrict to the case~$\alpha\in(0,1)$. A detailed treatment for~$\alpha\in(0,\frac{1}{2}]$ can be found in \cite{numberfield} and the case~$\alpha\in[\frac{1}{2},1)$ can be reduced to the former using the above relationship between~$\Gamma U$ and~$\Gamma V$. It becomes clear from the arguments in \cite{numberfield} that~$\alpha=\frac{1}{2}$ is the most difficult case due to the fact that the points~$\Gamma u_{k/n}a_{\sqrt{n}}^{-1}$,~$0\leq k<n$, are separated by distance one along the~$U$-orbit and along the~$V$-orbit. We also refer to \cite{survey} where a weaker version of Theorem~\ref{thm:mainthm} was announced.
\subsection*{A sketch of the proof of Theorem \ref{thm:mainthm}}
For the sake of illustration, we sketch an argument to prove equidistribution as in Theorem \ref{thm:mainthm} for the second component, assuming for simplicity that~$d,b=1$. To this end we assume equidistribution of the rational points, i.e.~assume that for all compactly supported continuous functions~$f$ on~$\rquotient{\Gamma}{G}$ we have
\begin{equation*}
  \frac{1}{n}\sum_{k=0}^{n}f(\Gamma u_{k/n}a_{\sqrt{n}}^{-1})\overset{n\to\infty}{\longrightarrow}\int_{\rquotient{\Gamma}{G}}f.
\end{equation*}
Fix a prime~$p$ and a small value~$\varepsilon>0$. Let
\begin{equation*}
  N(p,\varepsilon)=\set{n\in\N}{(p,n)=1\text{ and }\tfrac{\phi(n)}{n}>\varepsilon},
\end{equation*}
where~$\phi$ denotes Euler's totient function counting the number of units in~$\quotient{\Z}{n\Z}$. We denote
\begin{equation*}
  \Pcal(n)=\set{\Gamma u_{k/n}a_{\sqrt{n}}^{-1}}{0\leq k<n},\qquad\Pcal(n)^{\times}=\set{\Gamma u_{k/n}a_{\sqrt{n}}^{-1}}{(k,n)=1},
\end{equation*}
as well as~$\Pcal(n)^{0}=\Pcal(n)\setminus\Pcal(n)^{\times}$. We denote by~$\mu_{n},\mu_{n}^{\times}$ and~$\mu_{n}^{0}$ the corresponding normalized counting measures. Then
\begin{equation*}
  \mu_{n}=\tfrac{\phi(n)}{n}\mu_{n}^{\times}+\tfrac{n-\phi(n)}{n}\mu_{n}^{0}.
\end{equation*}
Note that for all~$n$ satisfying~$(p,n)=1$, these measures are invariant under the map given by~$\Gamma u_{k/n}a_{\sqrt{n}}^{-1}\mapsto\Gamma u_{p^{2}k/n}a_{\sqrt{n}}^{-1}$. Assume (falsely) that there was a lattice element~$\gamma\in\Gamma$ of infinite order inducing this map via right multiplication on~$\rquotient{\Gamma}{G}$ and assume furthermore that~$\gamma$ commutes with~$A$. As~$\mu_{n}$ converges to the invariant probability measure as~$n\to\infty$ along elements in~$N(p,\varepsilon)$, so does the right hand side. As~$\frac{\phi(n)}{n}>\varepsilon$ along this sequence, it follows---after possibly passing to a further subsequence---that~$\mu_{n}^{\times}$ converges to a~$\gamma$-invariant probability measure on~$\rquotient{\Gamma}{G}$ and ergodicity of the invariant probability measure with respect to right multiplication by~$\gamma$ implies that this limit measure has to be the invariant probability measure. As the limit is independent of the subsequence, it follows that $\mu_{n}^{\times}$ converges to the invariant probability measure.

In order to make this argument precise, one can find an element~$\gamma$ with the desired property by considering a different space, namely the~$p$-adic extension of~$\rquotient{\Gamma}{G}$. This is obtained by considering the group~$\SLtwo(\R\times\Qp)\cong G\times\SLtwo(\Qp)$ instead where~$\Qp$ is the completion of~$\Q$ with respect to the~$p$-adic norm. Then~$\SLtwo(\Z[\frac{1}{p}])$ is a lattice in~$\SLtwo(\R\times\Qp)$ and the element
\begin{equation*}
  a_{p}=\left(\begin{pmatrix}
      p & 0 \\ 0 & p^{-1}
    \end{pmatrix},\begin{pmatrix}
      p & 0 \\ 0 & p^{-1}
    \end{pmatrix}\right)
\end{equation*}
is an element in the lattice. For~$t_{\infty}\in\R$ and~$t_{p}\in\Qp$ one calculates
\begin{equation*}
  a_{p}u_{(t_{\infty},t_{p})}a_{p}^{-1}=u_{p^{2}(t_{\infty},t_{p})}
\end{equation*}
as desired and the proof sketched above actually works in this case. It remains to take care of the fact that~$\liminf_{n}\frac{\phi(n)}{n}=0$. To this end and for the sake of a better rate of equidistribution, we replace the proof sketched above by an effective, more general argument which uses for every~$n\in\N$ some finite collection of valid primes at once.
\subsection*{Structure of the article}
The paper is organized as follows. In Section~\ref{sec:setup} we introduce the~$S$-arithmetic groups and identify the lattice we want to consider. In Section~\ref{sec:sobolev}, we introduce~$S$-arithmetic Sobolev norms on the homogeneous spaces under consideration. In Section~\ref{sec:longhorocycles}, we prove equidistribution of long horocycle orbits in the~$S$-arithmetic extension, which illustrates a technical step occurring again in the later, notationally more heavy steps of the proofs. In Section~\ref{sec:rationalpoints}, we prove equidistribution of the rational points of distance~$1$ in the~$S$-arithemtic extension. Finally, Section~\ref{sec:primitivepoints} provides a short discussion of the~$\times p$-map in the~$S$-arithmetic setup, the fact that it is mixing and finally the proof of Theorem~\ref{thm:mainthm}. Section~\ref{sec:twotorus} gives the argument involving rigidity phenomena for higher rank actions to prove equidistribution in the product of the two-torus and the modular surface.
\subsection*{Acknowledgements}
The authors would like to thank Jens Marklof for his comments on a preliminary version of the paper and the anonymous referee for his detailed report. We also thank Shahar Mozes and Uri Shapira for many discussions related to this problem. M.L.~would like to thank Elon Lindenstrauss and Andreas Wieser for many fruitful conversations. During the process of writing, M.L.~enjoyed the hospitality of the Ohio State University and the Hebrew University of Jerusalem.
\section{The~$S$-arithmetic extension}\label{sec:setup}
\subsection{The modular surface}
Given a finite set of places~$S$ of~$\Q$, we let~$\QS=\prod_{p\in S}\Q_{p}$ be the product of the completions~$\Qp$ of~$\Q$ where~$\Q_{\infty}=\R$ and
\begin{equation*}
  \ZS=\Z\big[\big\{\tfrac{1}{p};p\in S\setminus\{\infty\}\big\}\big].
\end{equation*}
Given~$t\in\QS$ and~$p\in S$, we let~$t_{p}$ be the~$\Qp$-coordinate of~$t$. Let~$\Sf=S\setminus\{\infty\}$ denote the set of finite places in~$S$. We set~$\Zsf=\prod_{p\in\Sf}\Zp$ and if~$\infty\in S$, then we denote~$\Zs=\R\times\Zsf$. Given~$p\in S$ and~$t\in\Qp$, we denote by~$\imath_{p}:\Qp\to\QS$ the map sending~$t$ to the element~$\imath_{p}(t)$ satisfying~$\imath_{p}(t)_{p}=t$ and~$\imath_{p}(t)_{q}=0$ whenever~$q\neq p$.

We set~$\GS=\SLtwo(\QS)$,~$\Gs=\SLtwo(\Zs)$,~$\GSK=\SLtwo(\Zsf)$ and~$\GammaS=\SLtwo(\ZS)$ where we understand~$\GammaS$ as a subgroup of~$\SLtwo(\QS)$ via the diagonal embedding of~$\ZS$ in~$\QS$. Note that~$\SLtwo(\QS)$ is isomorphic to the direct product of the~$\SLtwo(\Qp)$ over all~$p\in S$. For any group, we will denote its identity element by~$\one$. Given~$p\in S$ and some~$g\in\SLtwo(\Qp)$, we will denote by~$\imath_{p}(g)\in\SLtwo(\QS)$ the element whose component equals the identity for all places in~$S\setminus\{p\}$ and~$g$ at the place~$p$. Conversely, given an element~$g\in\GS$ and a place~$p\in S$, we let~$g_{p}$ denote the~$p$-coordinate of~$g$ and more generally for a subset~$S^{\prime}\subseteq S$, we denote by~$g_{S^{\prime}}$ the projection of~$g$ to~$\SLtwo(\Q_{S^{\prime}})$. Given~$g\in\SLtwo(\Q)$, we denote by~$\Delta(g)$ the diagonal embedding in~$\SLtwo(\QS)$. Given a place~$p$ of~$\Q$, we write~$\Gp$ for~$G_{\{p\}}$. We let~$\Gr=\SLtwo(\R)$ and~$\Gammar=\SLtwo(\Z)$. The goal of this short section is to introduce general notation, to establish the well-known fact that~$\GammaS$ is a lattice in~$\GS$ if~$\infty\in S$ and to naturally relate the space~$\XS=\rquotient{\GammaS}{\GS}$ to the space~$\Xr=\rquotient{\Gammar}{\Gr}$, which is our space of interest. The relation is found by first proving that~$\XS\cong\rquotient{\GammaSK}{\Gs}$ where~$\GammaSK$ is identified with its image under the embedding in~$\Gs$ induced by the diagonal embedding of~$\Z$ in~$\Zs$. We will denote~$\YS=\rquotient{\GammaSK}{\Gs}$. The first step towards proving that~$\GammaS$ is a lattice in~$\GS$ (assuming~$\infty\in S$) is to show that~$\SLtwo$ has \emph{class number one}, which is expressed in the following proposition, for which we refer the reader to \cite{Rapinchuk1994}.
\begin{prop}\label{prop:isomorphism_qp_zp}
  The group~$\Gs$ acts transitively on~$\XS$ and the stabilizer of~$\GammaS$ in~$\Gs$ is~$\GammaSK$. In particular the map~$\GammaSK g\mapsto\GammaS g$ is an isomorphism~$\YS\cong\XS$ of~$\Gs$-spaces.
\end{prop}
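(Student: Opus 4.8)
The plan is to treat the two assertions of the proposition separately. Transitivity of the $\Gs$-action on $\XS$ is the statement $\GS=\GammaS\Gs$, and this is precisely the fact that $\SLtwo$ has class number one (for $\Q$ relative to $S$); one can cite \cite{Rapinchuk1994}, but I would give a direct argument. The identification of the stabilizer is an elementary arithmetic computation, and once both are in hand the isomorphism $\YS\cong\XS$ follows from the orbit--stabilizer principle plus a soft openness argument. The only genuine obstacle is transitivity.

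For transitivity we must show that every $g\in\SLtwo(\QS)$ can be written $\gamma h$ with $\gamma\in\SLtwo(\ZS)$ and $h\in\SLtwo(\Zs)$, equivalently that some $\gamma\in\GammaS$ satisfies $\gamma g\in\Gs$. Since $\Zs=\R\times\Zsf$ (recall $\infty\in S$), the archimedean coordinate is unconstrained, so it suffices to arrange $(\gamma g)_{p}\in\SLtwo(\Zp)$ for all $p\in\Sf$. Applying the $p$-adic Iwasawa decomposition $\SLtwo(\Qp)=B(\Qp)\SLtwo(\Zp)$ at each finite place and absorbing the $\SLtwo(\Zp)$-factors into $g$, I would reduce to the case where $g_{p}=\begin{pmatrix}\alpha_{p}&\beta_{p}\\0&\alpha_{p}^{-1}\end{pmatrix}$ with $\alpha_{p}\in\Qp^{\times}$, $\beta_{p}\in\Qp$, and then search for $\gamma=\begin{pmatrix}x&y\\0&x^{-1}\end{pmatrix}$ with $x\in\ZS^{\times}$ and $y\in\ZS$. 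A short computation shows $\gamma_{p}g_{p}\in\SLtwo(\Zp)$ forces $v_{p}(x)=-v_{p}(\alpha_{p})$, which is solved uniquely up to sign by $x=\prod_{p\in\Sf}p^{-v_{p}(\alpha_{p})}\in\ZS^{\times}$, after which the remaining condition is a congruence $y\equiv-x\alpha_{p}\beta_{p}\pmod{\alpha_{p}\Zp}$ at each $p\in\Sf$; a simultaneous solution $y\in\ZS$ exists because $\ZS$ is dense in $\QSf$ and hence surjects onto the discrete quotient $\QSf\big/\prod_{p\in\Sf}\alpha_{p}\Zp$. (Alternatively, invoke strong approximation: $\SLtwo(\ZS)$ is dense in $\SLtwo(\QSf)$ and $\SLtwo(\Zsf)$ is an open subgroup, so their product is all of $\SLtwo(\QSf)$.)

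For the stabilizer, $h\in\Gs$ fixes the base point $\GammaS\one$ exactly when $h\in\GammaS\cap\Gs=\SLtwo(\ZS)\cap\SLtwo(\Zs)=\SLtwo(\ZS\cap\Zs)$, the intersection taken entrywise inside $\Mtwo(\QS)$. A rational number that lies in $\ZS$ and in $\Zp$ for every $p\in\Sf$ has nonnegative valuation at every prime, hence lies in $\Z$; therefore $\ZS\cap\Zs=\Z$ and the stabilizer is $\SLtwo(\Z)=\GammaSK$.

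Finally, transitivity and the stabilizer computation show that $\GammaSK g\mapsto\GammaS g$ is a well-defined, $\Gs$-equivariant bijection $\YS\to\XS$: for $g,g'\in\Gs$ one has $\GammaS g=\GammaS g'\iff g'g^{-1}\in\GammaS\cap\Gs=\GammaSK\iff\GammaSK g=\GammaSK g'$, giving well-definedness and injectivity, while surjectivity is transitivity. To upgrade it to a homeomorphism it suffices to check that the orbit map $\Gs\to\XS$, $h\mapsto\GammaS h$, is open; this map is the composition of the inclusion $\Gs\hookrightarrow\GS$, which is open because $\Zsf$ is open in $\QSf$ and hence $\SLtwo(\Zs)$ is an open subgroup of $\SLtwo(\QS)$, with the quotient map $\GS\to\XS$, which is open since $\GammaS$ acts by left translations. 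Thus $\YS\to\XS$ is a continuous open bijection, i.e.\ the desired isomorphism of $\Gs$-spaces.
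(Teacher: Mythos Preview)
Your argument is correct. The paper itself gives no proof of this proposition, simply referring the reader to \cite{Rapinchuk1994} for the class-number-one statement; so there is nothing to compare at the level of detail.

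That said, it is worth noting the difference in viewpoint. The reference \cite{Rapinchuk1994} treats this as an instance of strong approximation for simply connected semisimple groups (which you also mention as an alternative): $\SLtwo(\ZS)$ is dense in $\SLtwo(\QSf)$, and since $\SLtwo(\Zsf)$ is open, $\GammaS\Gs=\GS$. Your main argument instead proceeds by an explicit hands-on construction using the $p$-adic Iwasawa decomposition and a direct solution for an upper-triangular $\gamma$; this is more elementary and entirely self-contained, at the cost of being specific to $\SLtwo$. The stabilizer computation and the orbit--stabilizer\,/\,openness argument you give for the homeomorphism are standard and correct, and fill in details the paper leaves implicit.
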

The isomorphism~$\psi_{S}:\XS\to\YS$ in Proposition~\ref{prop:isomorphism_qp_zp} is given by writing a representative~$g$ in~$G_{S}$ as~$g=\gamma\eta_{S}$ with~$\gamma\in\GammaS$ and~$\eta_{S}\in\Gs$. It is relatively easy to see that~$\GammaSK$ is a non-uniform lattice in~$\Gs$ if~$\infty\in S$. One obtains the following
\begin{cor}\label{cor:extension}
  If~$\infty\in S$, then~$\GammaS$ is a lattice in~$\GS$ and~$\quotient{\XS}{K[0]}\cong\Xr$ as~$\Gr$-spaces.
\end{cor}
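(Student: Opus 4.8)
The plan is to pull everything back along the $\Gs$-equivariant isomorphism $\psi_{S}\colon\XS\to\YS$ of Proposition~\ref{prop:isomorphism_qp_zp}, exploiting the decomposition $\Gs=\SLtwo(\Zs)=\Gr\times K[0]$ in which $K[0]=\SLtwo(\Zsf)$ is compact. For the lattice assertion I would argue in three moves. First, $\GammaS=\SLtwo(\ZS)$ is discrete in $\GS=\SLtwo(\QS)$ because $\ZS$ is discrete in $\QS$: an $S$-integer bounded at the archimedean place and $p$-integral at every finite place of $S$ is an ordinary integer, and only finitely many such lie in a bounded set. Second, $\GammaSK$ is a lattice in $\Gs$ --- the ``relatively easy'' fact already announced in the text --- which one sees from the projection $\YS\to\Xr$ induced by $(\gr,\gSf)\mapsto\gr$: it is well defined since $\GammaSK$ is embedded diagonally, its fibres are homeomorphic to the compact group $K[0]$, and $\Gammar$ is a lattice in $\Gr$, so $\YS$ has finite $\Gs$-invariant volume. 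Third, I would transfer finiteness from $\YS$ to $\XS$: since $\GS$ is unimodular, $\XS$ carries a $\GS$-invariant Radon measure $m_{S}$, unique up to scaling; its restriction is $\Gs$-invariant, and because $\Gs$ acts transitively on $\XS$ and is itself unimodular, every $\Gs$-invariant measure on $\XS$ is proportional to $(\psi_{S})_{*}$ of the finite $\Gs$-invariant measure on $\YS$. Hence $m_{S}(\XS)<\infty$ and $\GammaS$ is a lattice in $\GS$.

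For the identification $\quotient{\XS}{K[0]}\cong\Xr$ of $\Gr$-spaces I would again pass through $\psi_{S}$ and instead produce a $\Gr$-equivariant homeomorphism $\quotient{\YS}{K[0]}\cong\Xr$, where $K[0]$ acts on $\YS$ by right translation through $\{\one\}\times K[0]\subseteq\Gs$. Compactness of $K[0]$ makes this right action proper and the quotient Hausdorff. Quotienting $\Gs=\Gr\times K[0]$ on the right by $\{\one\}\times K[0]$ returns $\Gr$, and the surviving left action of the diagonally embedded $\GammaSK$ collapses to the standard left multiplication of $\SLtwo(\Z)$ on $\SLtwo(\R)$, because the $K[0]$-component of $\GammaSK$ is absorbed by the quotient. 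Concretely, the assignment $\GammaSK(\gr,\gSf)K[0]\mapsto\Gammar\gr$ is well defined and bijective, and it intertwines the right $\Gr$-actions since $\Gr\times\{\one\}$ and $\{\one\}\times K[0]$ commute in $\Gs$, so that the $\Gr$-action descends to the quotient and matches the usual one on $\Xr=\rquotient{\Gammar}{\Gr}$.

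The only step that is more than bookkeeping is the measure transfer in the second move: one must be sure it is legitimate to compare the $\GS$-invariant and $\Gs$-invariant measures on $\XS$, which rests exactly on the transitivity of the $\Gs$-action from Proposition~\ref{prop:isomorphism_qp_zp} together with unimodularity of both groups. A safer, slightly longer route avoids the comparison: once $\GammaSK$ is known to be a lattice in $\Gs$ with a Borel fundamental domain $\mathcal{F}_{0}\subseteq\Gs$ of finite Haar measure, one uses the class-number-one decomposition $\GS=\GammaS\Gs$ from Proposition~\ref{prop:isomorphism_qp_zp} to verify directly that $\mathcal{F}_{0}$, read inside $\GS$, is a Borel fundamental domain for $\GammaS$ in $\GS$ of finite Haar measure; there the one genuine point is keeping straight the interaction of the two diagonal embeddings of $\Z$ into $\Zs$.
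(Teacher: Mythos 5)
Your argument is correct and matches the route the paper indicates (the paper gives no explicit proof, only the pointer that $\GammaSK$ is a lattice in $\Gs=\Gr\times K[0]$ and that Proposition~\ref{prop:isomorphism_qp_zp} identifies $\XS$ with $\YS$ as $\Gs$-spaces). Your three moves — discreteness of $\GammaS$, the compact-fibered projection $\YS\to\Xr$ to see $\GammaSK$ is a lattice in $\Gs$, and the transfer of finiteness through $\psi_{S}$ by uniqueness of the $\Gs$-invariant measure (or, equivalently, the fundamental-domain route you sketch at the end) — are exactly the details the paper leaves implicit, and the identification $\quotient{\YS}{K[0]}\cong\Xr$ via $\GammaSK(\gr,\gSf)K[0]\mapsto\Gammar\gr$ is the intended $\Gr$-equivariant isomorphism.
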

As of~$\Gs$-equivariance, the push-forward of any invariant probability measure on~$\XS$ under~$\psi_{S}$ is an invariant probability measure on~$\YS$. In particular the systems defined by~$\Gs\curvearrowright\YS$ and~$\Gs\curvearrowright\XS$ are isomorphic as dynamical systems. In what follows, we will abuse notation and denote by~$\nu_{S}$ both the invariant probability measure on~$\YS$ and the invariant (under~$\GS$) probability measure on~$\XS$.

Let~$H\leq\GS$ be a closed subgroup and assume that~$H$ is the set of~$\QS$-points of some algebraic group defined by polynomials with coefficients in~$\Q$. The~$\Zs$-points are defined by~$\Hs=H\cap\Gs$. We define the groups~$\HS$,~$\Hp$ and so on in the corresponding fashion. Furthermore, we write~$\HSK=H\cap K[0]$ and~$\HGamma=H\cap\GammaS$.
\subsection{Periodic orbits for horospherical subgroups}\label{sec:periodicorbits}
We define the subgroup
\begin{equation*}
  U_{S}=\set{u_{t}=\begin{pmatrix}1 & t\\0 & 1\end{pmatrix}}{t\in\QS}\leq\GS,
\end{equation*}
and in analogy to the real case, i.e.~$S=\{\infty\}$ want to look at the closed~$\US$-orbits in~$\XS$. To this end we fix the Haar measure~$m_{\QS}$ on~$\QS$ as the product of the Haar measures~$m_{\Qp}$ on the components~$\Qp$, ($p\in S$) where~$m_{\Qinfty}$ is the Lebesgue measure and~$m_{\Qp}$ is normalized so that~$m_{\Qp}(\Zp)=1$ for~$p\in\Sf$. Define the Haar measure~$m_{U_{S}}$ on~$U_{S}$ to be the push-forward of~$m_{\QS}$ under the isomorphism~$t\mapsto u_{t}$. A point~$x\in\XS$ has periodic~$U_{S}$-orbit if and only if there is some~$\alpha\in\QS^{\times}$ such that~$\Stab_{U_{S}}(x)=\{u_{\alpha t};t\in\ZS\}$ (cf.~\cite[Proposition~8.1]{KleinbockTomanov}). This can be used to show that a point~$x\in\XS$ has periodic~$U_{S}$-orbit if and only if it is of the form~$x=\GammaS au$ for some~$u\in U_{S}$ and some~$a\in A_{S}$ where
\begin{equation*}
  A_{S}=\set{a_{y}=
    \begin{pmatrix}
      y & 0 \\
      0 & y^{-1}
    \end{pmatrix}
  }{y\in\Q_{S}^{\times}}.
\end{equation*}
In what follows, we write~$\Ucal_{y}=\GammaS a_{y}\US$ whenever~$y\in\QS^{\times}$. Let~$y\in\QS^{\times}$, then the volume of the orbit~$\GammaS a_{y}U_{S}$ is the covolume of~$y^{-2}\ZS$ in~$\QS$, which equals~$\lvert y^{-2}\rvert_{S}=\prod_{p\in S}\lvert y_{p}^{-2}\rvert_{p}$. Let~$\lvert y\rvert_{\Sf}=\prod_{p\in\Sf}\lvert y_{p}\rvert_{p}\in\ZS$ and note that~$a_{\lvert y^{-1}\rvert_{\Sf}}$ is contained in~$A_{S}\cap\GammaS$. Using~$\GammaS a_{y}\US=\GammaS a_{z}\US\implies yz^{-1}\in\ZS$, we obtain the following
\begin{cor}
  There is a one-to-one correspondence between~$\R_{>0}\times\prod_{p\in\Sf}\Zp^{\times}$ and periodic~$U_{S}$-orbits, given by sending an element~$y$ to~$\Ucal_{y}$.
\end{cor}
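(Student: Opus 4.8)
The plan is to reduce everything to facts already established in this subsection: that for every $y\in\QS^{\times}$ the orbit $\GammaS a_{y}\US=\Ucal_{y}$ is periodic, that conversely every periodic $U_{S}$-orbit is of this form, and that $\GammaS a_{y}\US=\GammaS a_{z}\US$ forces $yz^{-1}\in\ZS$. The only further ingredient I need is the elementary remark that $a_{z}$ lies in $\GammaS=\SLtwo(\ZS)$ exactly when $z\in\ZS^{\times}$, where $\ZS^{\times}=\{\pm\prod_{p\in\Sf}p^{n_{p}}:n_{p}\in\Z\}$; consequently $\Ucal_{zy}=\GammaS a_{z}a_{y}\US=\Ucal_{y}$ whenever $z\in\ZS^{\times}$. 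Since $\Ucal_{y}$ is periodic for every $y\in\QS^{\times}\supseteq\R_{>0}\times\prod_{p\in\Sf}\Zp^{\times}$, the map in question is well defined, and it suffices to show that its restriction to $\R_{>0}\times\prod_{p\in\Sf}\Zp^{\times}$ surjects onto the set of periodic $U_{S}$-orbits and is injective.

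For surjectivity I would start from an arbitrary periodic orbit $\Ucal=\Ucal_{y}$ with $y\in\QS^{\times}$ and normalise $y$ within its $\ZS^{\times}$-coset. Since $\lvert y\rvert_{\Sf}=\prod_{p\in\Sf}\lvert y_{p}\rvert_{p}$ is a positive element of $\ZS^{\times}$ with $\lvert\lvert y\rvert_{\Sf}\rvert_{p}=\lvert y_{p}\rvert_{p}^{-1}$ for every $p\in\Sf$, the element $\lvert y\rvert_{\Sf}\,y$ has all its finite coordinates in $\Zp^{\times}$; multiplying further by $-1\in\ZS^{\times}$ if the archimedean coordinate happens to be negative, I obtain a representative $y'\in\R_{>0}\times\prod_{p\in\Sf}\Zp^{\times}$ with $\Ucal_{y'}=\Ucal_{y}=\Ucal$.

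For injectivity, suppose $y,y'\in\R_{>0}\times\prod_{p\in\Sf}\Zp^{\times}$ give the same orbit. Applying the quoted implication once in each direction shows that $r:=y(y')^{-1}$ and its inverse both lie in $\ZS$, so $r\in\ZS^{\times}$. Under the diagonal embedding its archimedean component is $y_{\infty}/y'_{\infty}>0$, hence $r=\prod_{p\in\Sf}p^{n_{p}}$ with $n_{p}\in\Z$; and for each $p\in\Sf$ the identity $\lvert y_{p}\rvert_{p}=\lvert r\rvert_{p}\lvert y'_{p}\rvert_{p}$ together with $\lvert y_{p}\rvert_{p}=\lvert y'_{p}\rvert_{p}=1$ forces $\lvert r\rvert_{p}=1$, i.e.\ $n_{p}=0$. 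Therefore $r=1$ and $y=y'$. I do not expect a genuine obstacle here: the content is pure bookkeeping, and the only points requiring care are that the quoted implication must be used in \emph{both} directions (so that $y(y')^{-1}$ is a unit of $\ZS$ rather than merely an element), and that the explicit description of $\ZS^{\times}$ is invoked to eliminate the contributions at the finite places.
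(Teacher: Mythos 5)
Your proposal is correct and fills in exactly the steps the paper leaves implicit: the paper gives the corollary only one sentence of justification, namely the implication $\GammaS a_{y}\US=\GammaS a_{z}\US\implies yz^{-1}\in\ZS$ and the observation that $a_{\lvert y^{-1}\rvert_{\Sf}}\in A_S\cap\GammaS$, and your argument is the straightforward spelling-out of that reasoning (with the small correction that one should scale by $\lvert y\rvert_{\Sf}$, not $\lvert y^{-1}\rvert_{\Sf}$, to land in $\R_{>0}\times\prod_{p\in\Sf}\Zp^{\times}$). The approach is the same as the paper's.
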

\section{$S$-arithmetic Sobolev norms and congruence quotients}\label{sec:sobolev}
In this section, we will introduce Sobolev norms and collect several properties used. These have been discussed in greater generality in \cite{EMMV} and we will often provide references instead of detailed proofs. Along the discussion, we will have to introduce the notion of a smooth function on certain~$S$-arithmetic quotient spaces. It will turn out, that such functions will come from smooth functions on congruence quotients of~$\SLtwo(\R)$. This feature will be very useful for the subsequent effective equidistribution statements, once we have found the relation between the Sobolev norms on these real homogeneous spaces and the Sobolev norms considered in the~$S$-arithmetic setup.

\subsection{The space of smooth functions on~$\XS$}
We make use of the following notation. We denote by~$\R^{\Sf}$ the set of functions from $\Sf$ to $\R$. It is convenient to think of elements in~$\R^{\Sf}$ as vectors in~$\R^{\lvert\Sf\rvert}$ whose entries are indexed by~$\Sf$. For~$m\in\R^{\Sf}$, we denote~$S^{m}=\prod_{p\in S_{f}}p^{m_{p}}$. If~$m\in\N^{\Sf}$, then we define~$\Zsf[m]=\prod_{p\in\Sf}p^{m_{p}}\Zp$, which is an ideal in~$\Zsf$. The Chinese Remainder Theorem yields~$\quotient{\Zsf}{\Zsf[m]}\cong\quotient{\Z}{S^{m}\Z}$. Applying the projection~$\Zsf\to\quotient{\Zsf}{\Zsf[m]}$ in each entry, yields a homomorphism
\begin{equation*}
  K[0]\to\SLtwo(\quotient{\Zsf}{\Zsf[m]}),
\end{equation*}
whose kernel~$K[m]$ is a closed subgroup of finite index and in particular a compact open subgroup. The restriction of the normalized Haar measure on~$K[0]$ to the subgroups~$K[m]$ yields a finite, bi-invariant Haar measure on these subgroups. Given a continuous, compactly supported function~$f$ on~$\XS$, some~$p\in\Sf$ and~$m_{p}\in\mathbb{N}_{0}$, we denote by~$K_{p}[m_{p}]$ the kernel of the homomorphism~$K[0]\to\SLtwo(\quotient{\Zp}{p^{m_{p}}\Zp})$,~$g\mapsto g_{p}\mod p^{m_{p}}\Zp$ and we write~$\Av_{p}[m_{p}](f)$ for the function defined by
\begin{equation*}
  \Av_{p}[m_{p}](f)(x)=\frac{1}{\vol{K_{p}[m_{p}]}}\int_{K_{p}[m_{p}]}f\big(x\imath_{p}(g)\big)\der{}g.
\end{equation*}
Furthermore we define~$\pr_{p}[m_{p}]=\Av_{p}[m_{p}]-\Av_{p}[m_{p}-1]$ for~$m_{p}\geq1$ and for simplicity write~$\pr_{p}[0]=\Av_{p}[0]$. The operator~$\pr_{p}[m_{p}]$ is called the \emph{level~$m_{p}$ projection at~$p$}. We note that~$K[m]=\prod_{p\in\Sf}K_{p}[m_{p}]$ for all~$m\in\N_{0}^{\Sf}$.
\begin{definition}
  A continuous function~$f$ on~$\XS$ is called \emph{smooth}, if it is invariant under~$K[m]$ for some~$m\in\N_{0}^{\Sf}$ and if it is smooth at the real place. The space of smooth functions is denoted by~$\smooth(\XS)$ and~$\compactsmooth(\XS)$ is the space of compactly supported smooth functions.
\end{definition}
For the notion of \emph{smoothness in the real place}, recall that~$\GammaS\leq\GS$ is a lattice and thus every point in~$\XS$ has a neighbourhood which is homeomorphic to some neighbourhood of the identity in~$\GS$. This neighbourhood contains an open neighbourhood which is a direct product of neighbourhoods of the identity in~$\Gp$ ($p\in S$). On such a neighbourhood the notion of smoothness in the real component can be defined using the notion of smoothness for Lie groups. Given~$m\in\N_{0}^{\Sf}$, we let~$\pr[m]=\prod_{p\in\Sf}\pr_{p}[m_{p}]$. This is well-defined, as the projections for distinct places commute. For~$f\in\compactsmooth(\XS)$ we have~$f=\sum_{m\in\N_{0}^{\Sf}}\pr[m]f$ and the right-hand side is a finite sum. Given~$f\in\compactsmooth(\XS)$, we call~$\pr[m]f$ the \emph{pure level-$m$ component} of~$f$.

Let~$N\in\N$, then~$\Gammar(N)$ denotes the congruence lattice for level~$N$, i.e.~the kernel of the homomorphism~$\SLtwo(\Z)\to\SLtwo(\quotient{\Z}{N\Z})$ induced by the canonical projection~$\Z\to\quotient{\Z}{N\Z}$. In what follows, we denote~$\Xr(N)=\rquotient{\Gammar(N)}{\Gr}$. One can show that~$\XS[m]=\quotient{\XS}{K[m]}$ is isomorphic to~$\Xr(S^{m})$ as a~$\Gr$-space with isomorphism given by
\begin{equation}
  \label{eq:congruence}\psi^{(m)}:\Xr(S^{m})\to\XS[m],\quad\Gammar(S^{m})g\mapsto\GammaS\imath_{\infty}(g)K[m].
\end{equation}
We denote by~$\pi^{(m)}:\XS\to\XS[m]$ the canonical projection. Then a function~$f\in\continuous(\XS)$ is smooth, if and only if there is some~$m\in\N_{0}^{\Sf}$ and a smooth function~$\tilde{f}_{m}\in\smooth(\rquotient{\Gammar(S^{m})}{\Gr})$ such that~$f=\tilde{f}_{m}\circ(\psi^{(m)})^{-1}\circ\pi^{(m)}$. The picture to keep in mind is the commuting diagram given in~\eqref{eq:commutingdiagram} where~$g$ denotes the action by some element~$g\in\Gr$.
\begin{equation}\label{eq:commutingdiagram}
  \begin{gathered}
    \xymatrix{
      \XS\ar@{->>}[rr]^{\pi^{(m)}}\ar[d]_{g} && \XS[m]\ar[d]_{g} && \Xr(S^{m})\ar[ll]_{\psi^{(m)}}\ar[d]^{g}\\
      \XS\ar@{->>}[rr]_{\pi^{(m)}} && \XS[m] && \Xr(S^{m})\ar[ll]^{\psi^{(m)}}
    }
  \end{gathered}
\end{equation}
Equivariance of~$\pi^{(m)}$ for the~$\Gr$-action implies that the push-forward of the~$\GS$-invariant probability measure on~$\XS$ to~$\XS[m]$ is a~$\Gr$-invariant probability measure on~$\XS[m]$. As the~$\Gr$-invariant probability measure~$m_{\Xr(S^{m})}$ on~$\Xr(S^{m})$ is unique, equivariance of~$\psi^{(m)}$ implies that~${\psi^{(m)}}_{\ast}m_{\Xr(S^{m})}$ is the unique~$\Gr$-invariant probability measure on~$\XS[m]$ and agrees with~${\pi^{(m)}}_{\ast}\nu_{S}$. Let now~$f\in C_{c}(\XS)$ and assume that~$f$ is invariant under~$K[m]$. Then there is a unique~$\check{f}_{m}\in C_{c}(\XS[m])$ such that~$f=\check{f}_{m}\circ\pi^{(m)}$. In particular, using~$\check{f}_{m}\circ\psi^{(m)}=\tilde{f}_{m}$, it follows that
\begin{equation}\label{eq:integralinvariantfunctions}
  \int_{\XS}f\der{}\nu_{S}=\int_{\XS[m]}\check{f}_{m}\der{}{\pi^{(m)}}_{\ast}\nu_{S}=\int_{\Xr(S^{m})}\tilde{f}_{m}\der{m_{\Xr(S^{m})}}.
\end{equation}

In what follows, we will ask for effective equidistribution results, i.e.~we examine the equidistribution properties of sequences of subsets of~$\XS$ and quantify the error in terms of the parametrization of the sequence and of the test function involved. The error rates rely on smoothness properties of the functions and we will hence only use smooth test functions. The implicit equidistribution statements then follow, as~$\compactsmooth(\XS)\subseteq C_{c}(\XS)$ is a dense subspace (with respect to the uniform topology).
\subsection{Noncompactness and the height function}\label{sec:heightfunction}
In what follows, let~$\liegZ\subseteq\mathrm{Mat}_{2,2}(\Z)$ denote the submodule generated by the elements
\begin{equation}\label{eq:basissl2}
  H=\begin{pmatrix}
    -1 & 0 \\ 0 & 1
  \end{pmatrix},\quad X=
  \begin{pmatrix}
    0 & 1 \\ 0 & 0
  \end{pmatrix},\quad Y=\begin{pmatrix}
    0 & 0 \\ 1 & 0
  \end{pmatrix}.
\end{equation}
When equipped with the bracket~$[v,w]=vw-wv$, ($v,w\in\liegZ$) this is an integral Lie algebra. The commutator relations for the generating set show that for any ring~$R$ we have~$[\lieg_{R},\lieg_{R}]\subseteq\lieg_{R}$ where~$\lieg_{R}=\liegZ\otimes_{\Z}R\cong\liesltwo(R)$. An explicit calculation shows furthermore, that~$\lieg_{R}$ is preserved by the adjoint action~$\Ad$ given by conjugation with elements in~$\SLtwo(R)$. Note that~$\liegZS$ is a lattice in~$\liegQS$, in the sense that it is a finitely generated~$\ZS$-module satisfying~$\liegQS=\liegZS\otimes_{\ZS}\QS$. For what follows, given~$d\in\N$ and~$u\in\QS^{d}$, we let~$\normS{u}=\prod_{p\in S}\normp{u_{p}}$ where~$\normp{\cdot}$ is the maximum of the~$p$-adic absolute value of the entries of~$u_{p}$. Here, by the ``$\infty$-adic absolute value'' we mean the usual absolute value on~$\R$. If~$u\in\GL_{d}(\QS)$, we write~$\vvvert u\vvvert_{S}=\max\{\normS{u},\normS{u^{-1}}\}$.
\begin{definition}
  The height function on~$\XS$ is defined as
  \begin{equation*}
    \height_{\XS}:\XS\to\R,\quad\height_{\XS}(x)=\sup\set{\normS{\Ad(g^{-1})v}^{-1}}{v\in\liegZS,\GammaS g=x}.
  \end{equation*}
\end{definition}
Note that the height function does not depend on the choice of the representative~$g$ of~$x$, as~$\liegZS$ is~$\Ad(\GammaS)$-invariant.
\begin{prop}\label{prop:propertiesheight}
  The height function is a proper map bounded away from~$0$ with the following properties:
  \begin{enumerate}
    \item For all~$g\in\GS$ and~$x\in\XS$ we have~$\height_{\XS}(xg)\ll\vvvert g\vvvert_{S}^{2}\height_{\XS}(x)$. If~$g_{\infty}=\one$, then the implicit constant is~$1$.
    \item \label{item:propertiesheight-invariancecompact} For all~$x\in\XS$ and all~$g\in\GSK$, we have~$\height_{\XS}(xg)=\height_{\XS}(x)$.
    \item There exist positive constants~$\kappa_{1},c_{1}$ such that for all~$x\in\XS$ the map~$g\mapsto xg$ defined on the set~$\set{g\in\GS}{d(g_{\infty},1)\leq c_{1}\height_{\XS}(x)^{-\kappa_{1}},g_{\Sf}\in\GSK}$ is injective.
  \end{enumerate}
\end{prop}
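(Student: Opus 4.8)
The plan is to reduce every statement to the standard reduction theory of $\SLtwo$ over $\QS$, using that the height function $\height_{\XS}$ is—up to uniform multiplicative constants—a power of the usual "first minimum" function on the lattices $\Ad(g^{-1})\liegZS$ inside $\liegQS$. That properness of $\height_{\XS}$ and its being bounded away from zero are standard (this is essentially Mahler's compactness criterion combined with the fact that $\liegZS$ is a $\ZS$-lattice and $\Ad$ preserves it): the set of $x$ with $\height_{\XS}(x)\le T$ is the image of a compact subset of $\GS$, and since the nonzero vectors of $\liegZS$ have covolume bounded away from zero under $\Ad(g^{-1})$, the supremum defining $\height_{\XS}$ is always $\le C$ for some absolute $C>0$.

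For item (1), I would argue directly from the definition. If $\GammaS g=x$, then $\GammaS gh$ represents $xh$, and for any $v\in\liegZS$ one has $\normS{\Ad((gh)^{-1})v}=\normS{\Ad(h^{-1})\Ad(g^{-1})v}$. Since $\Ad$ is a group homomorphism into $\GL(\liegQS)$ and the operator $S$-norm of $\Ad(h^{-1})$ is bounded by a fixed power (here the square, coming from the adjoint representation on $2\times2$ matrices) of $\vvvert h\vvvert_{S}$, we get $\normS{\Ad((gh)^{-1})v}\le C\,\vvvert h\vvvert_{S}^{2}\,\normS{\Ad(g^{-1})v}$, hence $\normS{\Ad((gh)^{-1})v}^{-1}\ge C^{-1}\vvvert h\vvvert_{S}^{-2}\normS{\Ad(g^{-1})v}^{-1}$; taking the supremum over $v$ and then swapping the roles of $g$ and $gh$ gives the stated two-sided bound. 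When $h_{\infty}=\one$ one checks that $\Ad(h^{-1})$ acts by an $S$-norm-preserving map on $\liegZsf$ and on the real component trivially, because $h_{\Sf}$ permutes the $\ZS$-lattice structure finitely—more carefully, for $h_{\infty}=\one$ the lattice $\Ad(h^{-1})\liegZS$ has the same $S$-norm minima as $\liegZS$ up to the finite places where $h_{p}\in\SLtwo(\Zp)$ would give equality, so the constant is $1$. Item (2) is the special case of this for $h=g_{\Sf}\in\GSK=\SLtwo(\Zsf)$: then $\Ad(h^{-1})$ preserves $\liegZsf$ exactly (it is an automorphism of the $\Zsf$-module) and is trivial on the real component, so it preserves $\normS{\cdot}$ on all of $\liegZS$, whence $\height_{\XS}(xh)=\height_{\XS}(x)$.

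Item (3) is the part I expect to require the most care. The point is that on a neighbourhood of the identity in $\GS$ whose $\Sf$-component lies in the compact open subgroup $\GSK$, the map $g\mapsto xg$ fails to be injective only if there is a nontrivial $\gamma\in\GammaS$ with $g^{-1}\gamma g$ close to the identity (in the real place) and with $\Sf$-component in $\GSK$. Such a $\gamma$ would then give a nonzero element $v=\log(\gamma)$ or, more robustly, a nonzero element of $\liegZS$ fixed-ish by $\Ad$, and I would bound the real displacement needed to produce such a $\gamma$ from below by a negative power of $\height_{\XS}(x)$: precisely, if $\gamma\ne\one$ then $\Ad(g^{-1})$ applied to some nonzero vector of $\liegZS$ associated to $\gamma$ has $S$-norm at least comparable to $\height_{\XS}(x)^{-1}$, while the hypothesis $d(g_{\infty},\one)$ small and $g_{\Sf}\in\GSK$ forces $\gamma$ to move $x$ by a real amount at least some power of that, contradicting $d(g_{\infty},1)\le c_{1}\height_{\XS}(x)^{-\kappa_{1}}$ for suitable small $c_{1}$ and $\kappa_{1}$. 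This is the familiar "injectivity radius is at least a power of the height" estimate; the extra bookkeeping here is only that the $\Sf$-component must be handled via the observation that $\GSK$ is open, so closeness to the identity at finite places is automatic and contributes no constraint, while closeness at the real place is the only genuine condition. Making the exponents $\kappa_{1},c_{1}$ explicit is routine once the comparison between $\height_{\XS}$ and the covolume/first-minimum of $\Ad(g^{-1})\liegZS$ is in place, so the main obstacle is simply to set up that comparison cleanly and then run the standard argument uniformly in $S$.
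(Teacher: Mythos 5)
The paper does not prove this proposition; it delegates to Appendix~A of [EMMV]. Your proposal is a reasonable sketch of the standard argument, which is presumably what that reference does: reduce all statements to the behaviour of the lattices $\Ad(g^{-1})\liegZS$ in $\liegQS$, use that the covolume of these lattices is constant (since $\Ad$ lands in $\SL(\liegQS)$), and then run Mahler-type compactness and the usual injectivity-radius-versus-height estimate. Items (1) and (2) in your write-up are essentially correct, and the route via submultiplicativity of the operator norm of $\Ad(h^{-1})$ is the right one.

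There is, however, a genuine error in your explanation of why the implicit constant in item~(1) is $1$ when $g_{\infty}=\one$. You write that for $h_{\infty}=\one$ the lattice $\Ad(h^{-1})\liegZS$ has the same $S$-norm minima as $\liegZS$, but that is only true when $h_{\Sf}\in\GSK$; for general $h$ with $h_{\infty}=\one$ the height does change (indeed that is the whole point of the growth bound). The correct reason the constant is $1$ is different: at a nonarchimedean place $p$ the max-norm is \emph{exactly} submultiplicative, $\lVert h_p^{-1}vh_p\rVert_p\leq\vvvert h_p\vvvert_p^{2}\lVert v\rVert_p$, with no extra constant, because of the ultrametric inequality; the lossy constant $C$ in $\lVert\Ad(h^{-1})v\rVert_{\infty}\leq C\vvvert h_{\infty}\vvvert_{\infty}^{2}\lVert v\rVert_{\infty}$ comes only from the equivalence of norms at the archimedean place. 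Hence when $g_{\infty}=\one$ only the clean nonarchimedean bounds enter, giving constant $1$. This is a small but real fix, since your intermediate claim as stated contradicts the statement being proved.

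Item~(3) is only gestured at and would benefit from one concrete step you currently skip: given a nontrivial $\gamma\in\GammaS$ with $g^{-1}\gamma g$ close to the identity at the real place and in $\GSK$ at the finite places, you need to manufacture a nonzero $v\in\liegZS$ with $\normS{\Ad(g^{-1})v}$ small. The standard choice is $v=\gamma-\gamma^{-1}\in\liegZS$ (trace zero), noting $\Ad(g^{-1})v=(g^{-1}\gamma g)-(g^{-1}\gamma g)^{-1}$ is small; the degenerate case $\gamma=\pm\one$ must be handled separately since $-\one$ acts trivially on $\XS$. The suggestion ``$v=\log(\gamma)$'' does not make sense here because $\gamma$ itself need not be close to $\one$ (only $g^{-1}\gamma g$ is), and $\log\gamma$ need not lie in $\liegZS$. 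Once $v=\gamma-\gamma^{-1}$ is in place, $\normS{\Ad(g^{-1})v}\geq\height_{\XS}(x)^{-1}$ by definition of the height, while the closeness hypotheses force $\normS{\Ad(g^{-1})v}\ll d(g_{\infty},\one)$ (the finite places contribute nothing since $g_{\Sf}\in\GSK$), and comparing gives the injectivity radius bound.
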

This is discussed in Appendix A of \cite{EMMV}. Observe that for every pair~$\Lambda\leq\Gamma$ of lattices in a group~$G$, an injectivity radius at~$x=\Gamma g\in\rquotient{\Gamma}{G}$ is also an injectivity radius at~$\tilde{x}=\Lambda g\in\rquotient{\Lambda}{G}$. To this end we denote
\begin{equation*}
  \height_{\Xr(S^{m})}(x)=\sup\set{\norm{\Ad(g^{-1})v}^{-1}}{v\in\liegZ,\Gammar(S^{m})g=x}
\end{equation*}
\begin{lem}\label{lem:relationheight}
  Let~$x\in\XS$ and identify~$xK[m]\in\Xr(S^{m})$ with its image under~$(\psi^{(m)})^{-1}$. Then~$\height_{\XS}(x)=\height_{\Xr(S^{m})}(xK[m])$.
\end{lem}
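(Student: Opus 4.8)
The plan is to unwind both height functions through the isomorphism $\psi^{(m)}$ and show that the two supremum definitions range over exactly the same set of real numbers. Fix $x\in\XS$ and write $x=\GammaS g$ with $g\in\GS$. By Proposition~\ref{prop:isomorphism_qp_zp} we may choose the representative so that $g=\imath_\infty(g_\infty)\eta$ with $g_\infty\in\Gr$ and $\eta\in\GSK$; indeed $\psi^{(m)}$ was defined precisely on such representatives, sending $\Gammar(S^m)g_\infty$ to $\GammaS\imath_\infty(g_\infty)K[m]$. So the point $xK[m]\in\XS[m]$, identified via $(\psi^{(m)})^{-1}$ with a point in $\Xr(S^m)$, is $\Gammar(S^m)g_\infty$. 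It then remains to compare
\[
  \height_{\XS}(x)=\sup\set{\normS{\Ad(g^{-1})v}^{-1}}{v\in\liegZS,\ \GammaS g=x}
\]
with $\sup\{\norm{\Ad(g_\infty^{-1})w}^{-1}: w\in\liegZ,\ \Gammar(S^m)g_\infty=xK[m]\}$.

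The key step is to handle the finite places. For $p\in\Sf$ the $p$-component of any admissible representative of $x$ lies in $\GSK=\SLtwo(\Zsf)$ up to left multiplication by $\GammaS$, and $\Ad(\SLtwo(\Zp))$ preserves $\liegZp$ and the norm $\normp{\cdot}$; hence $\normp{\Ad(g_p^{-1})v_p}=\normp{v_p}$ for every $v\in\liegZS$, and the supremum over $v\in\liegZS$ of $\normp{v_p}^{-1}$ contributes the factor $1$ at each finite place (the minimal nonzero $p$-adic norm among lattice vectors being $1$, achieved e.g.\ by $X$). This is essentially property~\ref{item:propertiesheight-invariancecompact} of Proposition~\ref{prop:propertiesheight} made explicit coordinatewise. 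Consequently $\normS{\Ad(g^{-1})v}=\norm{\Ad(g_\infty^{-1})v_\infty}\cdot\prod_{p\in\Sf}\normp{v_p}$, and optimizing over $v\in\liegZS$ reduces to optimizing over those $v$ whose finite components are $p$-adic units at every $p\in\Sf$ — equivalently, over $v$ lying in the image of $\liegZ$ after scaling, so that $v_\infty$ ranges exactly over $\liegZ\setminus\{0\}$. The precise bookkeeping here (that $\ZS$-scaling does not change which real vectors appear, because $a_{|y^{-1}|_{\Sf}}\in\GammaS$ as noted in Section~\ref{sec:periodicorbits}, and that passing to level $S^m$ changes the lattice $\Gammar\rightsquigarrow\Gammar(S^m)$ but not the module $\liegZ$ over which the sup is taken) is the main obstacle and needs to be written carefully.

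Finally I would assemble the pieces: the set of values $\{\normS{\Ad(g^{-1})v}^{-1}\}$ as $g$ ranges over representatives of $x$ and $v$ over $\liegZS$ equals $\{\norm{\Ad(\gamma g_\infty^{-1})w}^{-1}: \gamma\in\Gammar(S^m),\ w\in\liegZ\}$, because left multiplication of $g$ by $\GammaS$ contributes exactly $\Ad(\Gammar(S^m))$ on the real side after accounting for the finite part, while $\Ad(\liegZS\text{-scaling})$ is absorbed. By definition this last set is what $\height_{\Xr(S^m)}(xK[m])$ optimizes, so the two heights coincide. I expect the only real subtlety to be the claim that the effective domain of $v$ can be taken to have $p$-adic-unit finite components; the inequality $\height_{\XS}(x)\le\height_{\Xr(S^m)}(xK[m])$ follows from restricting the sup, and the reverse from the coordinatewise $\Ad(\SLtwo(\Zp))$-invariance above, so no quantitative loss occurs in either direction.
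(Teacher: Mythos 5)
Your approach is the same as the paper's at the level of strategy: pick a representative $g=\imath_\infty(g_\infty)\eta$ with $\eta\in\GSK$, exploit $\SLtwo(\Zp)$-invariance of $\normp{\cdot}$ to kill the finite contribution, and then argue that the supremum over $v\in\liegZS$ can be taken over $w\in\liegZ$ whose finite components are $p$-adic units. However, the part you flag as ``the main obstacle'' and ``needs to be written carefully'' is precisely the heart of the paper's proof, and your sketch does not actually supply it. Concretely, the paper does two things you only gesture at: (i) it first establishes that the supremum is \emph{attained} at some $v\in\liegZS$ (via discreteness of $\Ad(g_\infty^{-1})\liegZS$ and properness of $\normS{\cdot}$), which is what licenses replacing the supremum by a single maximizer; and (ii) it then scales that maximizer by a common denominator $\alpha$ (a product of primes in $\Sf$), using the product formula $\prod_{q\in S}\lvert\alpha\rvert_q=1$ to see that this scaling does not change $\normS{\Ad(g_\infty^{-1})\cdot}$, after which dividing out residual $p$-factors produces $w\in\liegZ$ with $\normp{w}=1$ for all $p\in\Sf$, so that $\normS{\Ad(g_\infty^{-1})w}=\norm{\Ad(g_\infty^{-1})w}_\infty$. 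Without spelling out (i) and (ii), the claim that ``optimizing over $v\in\liegZS$ reduces to optimizing over those $v$ whose finite components are $p$-adic units'' is exactly the statement to be proved, not a consequence of anything you have written.

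Two smaller points. The reference to $a_{\lvert y^{-1}\rvert_{\Sf}}\in\GammaS$ from Section~\ref{sec:periodicorbits} is a non-sequitur here; that observation concerns the parametrization of closed $U_S$-orbits and plays no role in comparing the two height suprema. Also, the final paragraph's passage to the set $\{\norm{\Ad(\gamma g_\infty^{-1})w}^{-1}:\gamma\in\Gammar(S^m),\,w\in\liegZ\}$ is unnecessary: since $\Ad(\Gammar(S^m))$ preserves $\liegZ$, ranging over $\gamma$ is redundant once you range over $w$, and the paper sidesteps this entirely by first noting that the height on $\XS$ is $K[m]$-invariant (so descends to $\XS[m]$) and then computing both sides with the single representative $g_\infty$.
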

\begin{proof}
  Let~$v\in\liegZS$,~$g\in\GS$ and~$k\in K[m]$, then
  \begin{equation*}
    \lVert\Ad(k)\Ad(g^{-1})v\rVert_{S}=\lVert\Ad(g^{-1})v\rVert_{S},
  \end{equation*}
  as for all~$p\in\Sf$, the norm~$\lVert\cdot\rVert_{p}$ is~$\SLtwo(\Zp)$-invariant. Hence the height function on~$\XS$ descends to a well-defined function on~$\Xr(S^{m})$. It remains to show that
  \begin{equation*}
    \height_{\XS}(x)=\sup\set{\lVert\Ad(g^{-1})v\rVert_{\infty}^{-1}}{v\in\liegZ,\pi^{(m)}(x)=\Gammar(S^{m})g}.
  \end{equation*}
  Let~$g_{\infty}\in\Gr$ satisfy~$\pi^{(m)}(x)=\Gammar(S^{m})g_{\infty}$, then for any~$g\in\GS$ with the property that~$\GammaS g=x$, we know~$g\equiv g_{\infty}\mod\GSK$, so that~$\height_{\XS}(x)=\height_{\XS}(\GammaS g_{\infty})$ as argued above. We first show that the supremum is achieved for some~$w\in\liegZ$. First, it follows from discreteness of~$\Ad(g_{\infty}^{-1})\liegZS$ and properness of~$\lVert\cdot\rVert_{S}$ that the supremum is achieved for some~$v\in\liegZS$. Let~$\alpha\in\Z$ be a common denominator for the entries of~$v$, so that~$v=\alpha^{-1}w$ for some~$w\in\liegZ$. We can assume that~$\alpha$ is a product of the primes in~$\Sf$. It follows that
  \begin{equation*}
    \lVert\Ad(g_{\infty}^{-1})v\rVert_{S}=\lVert\Ad(g_{\infty}^{-1})w\rVert_{S}\prod_{p\in S}\lvert\alpha^{-1}\rvert_{p}=\lVert\Ad(g_{\infty}^{-1})w\rVert_{S}
  \end{equation*}
  and the supremum is achieved at~$w\in\liegZ$. We show that we can assume~$\lVert(\Ad(g_{\infty}^{-1})w)_{p}\rVert_{p}=1$ for all~$p\in\Sf$. First, note that~$(\Ad(g_{\infty}^{-1})w)_{p}=w$ for~$p\in\Sf$. This already implies that~$\lVert(\Ad(g_{\infty}^{-1})w)_{p}\rVert_{p}=\lVert w\rVert_{p}\leq 1$. Assume that~$\lVert w\rVert<1$, then~$w=pu$ for some~$u\in\liegZ$ and thus~$\lVert\Ad(g_{\infty}^{-1})w\rVert_{S}=\lVert\Ad(g_{\infty}^{-1})u\rVert_{S}$ as~$\lvert p\rvert_{q}=1$ for all~$q\in\Sf\setminus\{p\}$. In particular, after replacing~$w$ finitely many times in this way, we can assume that~$p^{-1}w\not\in\liegZ$ for all~$p\in\Sf$ and in particular that~$\lVert w\rVert_{p}=1$ for all~$p\in\Sf$. This shows the claim.
\end{proof}
For what follows, we denote by~$\Xfrak$ a choice of a basis of~$\liegR$ -- i.e.~a maximal linearly independent set of degree 1 differential operators at the identity in~$\Gr$ -- and by~$\Dcal_{D}(\Xfrak)$ the set of all monomials in~$\Xfrak$ of degree at most~$D$. These monomials define differential operators on~$\compactsmooth(\XS)$. To this end, a differential operator~$X$ at the identity of~$\Gr$ defines a differential operator~$\bar{X}$ on~$\XS$ which for~$f\in\compactsmooth(\XS)$ is given by
\begin{equation*}
  \bar{X}f(x)=X(f\circ p\circ l_{g})\quad(x=\GammaS g\in\XS),
\end{equation*}
where~$p:\GS\to\XS$ is the canonical projection and~$l_{g}$ is left-multiplication on~$\GS$ by~$g$. In what follows, we will abuse notation and just write~$Xf$ instead of~$\bar{X}f$.
\begin{definition}
  The~$\Ltwo$-Sobolev norm of degree~$D$ with respect to the basis~$\Xfrak$ on~$\compactsmooth(\XS)$ is the norm~$\Scal_{D}:\compactsmooth(\XS)\to[0,\infty)$ given by
  \begin{equation*}
    \Scal_{D}(f)^{2}=\sum_{m\in\N_{0}^{\Sf}}\sum_{X\in\Dcal_{D}(\Xfrak)}S^{Dm}\lVert\pr[m](1+\height_{\XS})^{D}Xf\rVert_{2}^{2}\quad(f\in\compactsmooth(\XS)).
  \end{equation*}
\end{definition}
It is easy to see that for~$D\leq D^{\prime}$ and two $\Ltwo$-Sobolev norms~$\Scal_{D}$,~$\Scal_{D^{\prime}}$ with respect to bases~$\Xfrak$ and~$\Xfrak^{\prime}$ respectively, we have
\begin{equation*}
  \Scal_{D}(f)\ll\Scal_{D^{\prime}}(f)\quad(f\in\compactsmooth(\XS)).
\end{equation*}
If~$\Xfrak=\Xfrak^{\prime}$, then the implicit constant can be set to one. In what follows, we will usually implicitly assume a fixed choice of a basis of~$\liegR$; it could be useful to think of the basis provided in~\eqref{eq:basissl2}. In what follows, we list several properties of $\Ltwo$-Sobolev norms. For the proofs we refer the reader to \cite[Appendix~A]{EMMV} and \cite[Section~3.3]{ERW}.
\begin{proposition}\label{prop:propertiessobolevnorms}
  Let~$\Scal$ be an~$\Ltwo$-Sobolev norm of degree~$D$ on~$\compactsmooth(\XS)$. Then the following are true.
  \begin{enumerate}
    \item \label{item:sobolevembedding} \emph{(Sobolev Embedding)} There is some~$D_{0}\in\N_{0}$ such that~$D\geq D_{0}$ implies
      \begin{equation*}
        \lVert f\rVert_{\infty}\ll\Scal(f)\quad\text{ for all }f\in\compactsmooth(\XS).
      \end{equation*}
    \item \label{item:continuityofrepresentation} \emph{(Continuity of the regular representation)} Let~$g\in\GS$ and~$f\in\compactsmooth(\XS)$. Define a function~$g\cdot f\in\compactsmooth(\XS)$ by~$g\cdot f(x)=f(xg)$, ($x\in\XS$). Then
      \begin{equation*}
        \Scal(g\cdot f)\ll\vvvert g\vvvert_{S}^{4D}\Scal(f).
      \end{equation*}
    \item \label{item:multiplicativity} There is some~$\Ltwo$-Sobolev norm~$\Scal_{D+r}$ of degree~$D+r$,~$r\in\N_{0}$, such that for all~$f_{1},f_{2}\in\compactsmooth(\XS)$ we have
      \begin{equation*}
        \Scal(f_{1}f_{2})\ll\Scal_{D+r}(f_{1})\Scal_{D+r}(f_{2}).
      \end{equation*}
  \end{enumerate}
\end{proposition}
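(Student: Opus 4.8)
The plan is to transport each statement to local analysis on $\Gr=\SLtwo(\R)$, using two structural facts: near any point $\GammaS g$ of $\XS$, a $K[m]$-invariant $f\in\compactsmooth(\XS)$ is the pullback under $h\mapsto\GammaS g\,\imath_{\infty}(h)$ of a smooth function on a neighbourhood of $\one$ in $\Gr$; and the height function $\height_{\XS}$ is $K[0]$-invariant by Proposition~\ref{prop:propertiesheight}(2). Besides these, two elementary observations will be used repeatedly: (a) since $S^{Dm}\ge 1$ and the $\pr[m]$ are pairwise orthogonal idempotents with $\sum_{m}\pr[m]f=f$, one has $\sum_{X\in\Dcal_{D}(\Xfrak)}\|(1+\height_{\XS})^{D}Xf\|_{2}^{2}\le\Scal_{D}(f)^{2}$; (b) $\pr[m]$ commutes with multiplication by $(1+\height_{\XS})^{k}$ for every $k\ge 0$ (by the $K[0]$-invariance of $\height_{\XS}$), so $\|(1+\height_{\XS})^{k}\pr[m]h\|_{2}$ is nondecreasing in $k$.

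For~(1), fix $x\in\XS$. By Proposition~\ref{prop:propertiesheight}(3) the map $h\mapsto x\,\imath_{\infty}(h)$ is injective on the ball $B_{\rho}$ of radius $\rho:=c_{1}\height_{\XS}(x)^{-\kappa_{1}}$ around $\one$ in $\Gr$, and there $f$ is the pullback of a smooth function on $B_{\rho}\subseteq\SLtwo(\R)$; so the classical local Sobolev inequality for $\SLtwo(\R)$ — whose constant depends only on the fixed Riemannian structure, hence is uniform in $x$ and $m$ — gives $|f(x)|\ll\rho^{-\dim\Gr/2}\sum_{Y\in\Dcal_{D_{0}}(\Xfrak)}\|Yf\|_{L^{2}(xB_{\rho})}$ for any $D_{0}>\dim\Gr/2$. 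Since $\height_{\XS}$ stays comparable to $\height_{\XS}(x)$ on $xB_{\rho}$ by Proposition~\ref{prop:propertiesheight}(1), choosing $D_{0}\ge\max\bigl(\lceil\dim\Gr/2\rceil+1,\,\lceil\kappa_{1}\dim\Gr/2\rceil\bigr)$ lets $(1+\height_{\XS})^{D_{0}}$ absorb the factor $\rho^{-\dim\Gr/2}$; passing to the supremum over $x$ and invoking~(a) yields $\|f\|_{\infty}\ll\sum_{Y\in\Dcal_{D_{0}}(\Xfrak)}\|(1+\height_{\XS})^{D_{0}}Yf\|_{2}\ll\Scal_{D_{0}}(f)\le\Scal_{D}(f)$ for $D\ge D_{0}$.

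For~(2), write $g=(g_{\infty},g_{\Sf})$. First, applying Proposition~\ref{prop:propertiesheight}(1) to $xg$ and $g^{-1}$ gives $(1+\height_{\XS}(x))^{D}\ll\vvvert g\vvvert_{S}^{2D}(1+\height_{\XS}(xg))^{D}$. Second, the identity $X(g\cdot f)=g\cdot\bigl((\Ad(g_{\infty}^{-1})X)f\bigr)$ together with the expansion of $\Ad(g_{\infty}^{-1})X$ in the basis $\Dcal_{D}(\Xfrak)$ costs only a factor $\ll\vvvert g\vvvert_{S}^{O(D)}$. Third, right translation $r_{g_{\Sf}}$ by $g_{\Sf}$ is an $L^{2}$-isometry and conjugates each $K_{p}[n]$ — a normal subgroup of $\SLtwo(\Zp)$ — into a group wedged between $K_{p}[n-2v_{p}]$ and $K_{p}[n+2v_{p}]$, where $\vvvert g_{p}\vvvert_{p}=p^{v_{p}}$; hence $\pr[m]\circ r_{g_{\Sf}}$ equals $r_{g_{\Sf}}$ composed with a bounded combination of the $\pr[m']$ with $m'$ within distance $2v$ of $m$, while $S^{Dm}\ll\vvvert g\vvvert_{S}^{O(D)}S^{Dm'}$ for such $m'$. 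Feeding these three estimates into the definition of $\Scal_{D}$ and applying Cauchy–Schwarz to the boundedly many cross terms in each level gives $\Scal_{D}(g\cdot f)\ll\vvvert g\vvvert_{S}^{O(D)}\Scal_{D}(f)$; a careful accounting of the exponents, as in~\cite[Appendix~A]{EMMV}, sharpens $O(D)$ to the stated~$4D$.

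For~(3), let $f_{1},f_{2}$ be invariant under $K[a_{0}]$ and $K[b_{0}]$, so $f_{1}f_{2}$ is invariant under $K[\max(a_{0},b_{0})]$. The Leibniz rule writes $X(f_{1}f_{2})$ as an absolutely bounded combination of products $(X'f_{1})(X''f_{2})$ with $X',X''$ of degree $\le D$, and I would attach the full weight $(1+\height_{\XS})^{D}$ to the first factor. Decomposing each factor into pure-level components ($\sum_{a}\pr[a]$ and $\sum_{b}\pr[b]$, finite sums), the product of a pure level-$a$ and a pure level-$b$ function is $K[\max(a,b)]$-invariant, so its $\pr[m]$ vanishes unless $m\le\max(a,b)$; combined with $S^{D\max(a,b)}\le S^{Da}S^{Db}$, the weight $S^{Dm}$ of $\Scal_{D}(f_{1}f_{2})$ splits across the two factors. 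One then bounds $\|\pr[b](X''f_{2})\|_{\infty}$ by part~(1), uses~(b) to dominate the resulting weighted $L^{2}$-norms by those entering $\Scal_{D+r}(f_{2})$, and absorbs the combinatorial count of levels $m\le\max(a,b)$ using $\prod_{p\in\Sf}(a_{p}+1)\le S^{a}$ and the convergent Euler product $\sum_{a\in\N_{0}^{\Sf}}S^{-a}=\prod_{p\in\Sf}(1-p^{-1})^{-1}$; after Cauchy–Schwarz this yields $\Scal_{D}(f_{1}f_{2})\ll\Scal_{D+r}(f_{1})\Scal_{D+r}(f_{2})$ for a suitable $r=O(D)$. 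The real-place ingredients of all three parts are classical; the main obstacle — and the only genuinely new point — is the $S$-arithmetic bookkeeping, i.e.\ keeping the level projections $\pr[m]$ and the weights $S^{Dm}$ controlled with an absolute implied constant, and it is exactly this that forces the loss of $r$ derivatives in~(3). Full details can be found in~\cite[Appendix~A]{EMMV} and~\cite[Section~3.3]{ERW}.
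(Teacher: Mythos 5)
The paper does not prove this proposition; it simply refers to \cite[Appendix~A]{EMMV} and \cite[Section~3.3]{ERW}, so there is no in-paper argument to compare against. Your sketch reconstructs the expected line of reasoning correctly. For part~(\ref{item:sobolevembedding}) the local Sobolev inequality on a ball of radius $\asymp\height_{\XS}(x)^{-\kappa_{1}}$ together with Proposition~\ref{prop:propertiesheight}(1) and (3) is the right mechanism, and absorbing the $\rho^{-\dim\Gr/2}$ blow-up into $(1+\height_{\XS})^{D_{0}}$ is exactly what the height weight in the definition of $\Scal_{D}$ is for. For part~(\ref{item:continuityofrepresentation}) the three ingredients (comparison of heights at $x$ and $xg$, the $\Ad(g_{\infty}^{-1})$-rewriting of the differential operator, and the $p$-adic level shift) are the correct ones, and the inclusion $K_{p}[n+2v_{p}]\subseteq g_{p}^{-1}K_{p}[n]g_{p}\subseteq K_{p}[n-2v_{p}]$ checks out. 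For part~(\ref{item:multiplicativity}) the Leibniz rule, the vanishing of $\pr[m]\bigl((\pr[a]f_{1})(\pr[b]f_{2})\bigr)$ unless $m\le\max(a,b)$, the inequality $S^{D\max(a,b)}\le S^{Da}S^{Db}$, and the absorption of the counting via the Sobolev embedding on pure levels is precisely how the loss of $r$ derivatives arises. Two caveats worth flagging: in part~(\ref{item:continuityofrepresentation}) the phrase ``boundedly many cross terms'' is not literally right --- the number of admissible $m'$ at level $m$ is $\prod_{p}(4v_{p}+1)$, unbounded in $g$, but it is $\ll\vvvert g\vvvert_{S}^{O(1)}$ and so harmless; also the assertion that $\pr[m]\circ r_{g_{\Sf}}$ \emph{equals} $r_{g_{\Sf}}$ composed with a combination of $\pr[m']$ should be replaced by the weaker but sufficient statement that $\pr[m]\circ r_{g_{\Sf}}\circ\pr[m']=0$ unless $|m-m'|\le 2v$ coordinatewise. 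And you explicitly defer the sharpening of $O(D)$ to $4D$ to the reference, while the Cauchy--Schwarz bookkeeping in part~(\ref{item:multiplicativity}) is gestured at rather than carried out. These are omissions of detail, not gaps in the idea.
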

An important feature of these Sobolev norms is their relation to Sobolev norms on the congruence quotients. Note that the definition of an $\Ltwo$-Sobolev norm as above includes the case~$S=\{\infty\}$ and in fact works for any lattice~$\Lambda\leq\Gr$. Hence by the~$\Ltwo$-Sobolev norm~$\Scal$ on~$\compactsmooth(\rquotient{\Lambda}{\Gr})$ of degree~$D$ with respect to the basis~$\Xfrak$ we mean the map
\begin{equation}
  \label{eq:realsobolevnorms}\Scal(f)^{2}=\sum_{X\in\Dcal_{D}(\Xfrak)}\lVert(1+\height)^{D}Xf\rVert_{2}^{2}\quad(f\in\compactsmooth(\rquotient{\Lambda}{\Gr})),
\end{equation}
where the height function~$\height$ is the height function for~$\Lambda$. This sort of Sobolev norm also satisfies the properties listed in Proposition~\ref{prop:propertiessobolevnorms}. The following lemma yields the desired relation between $\Ltwo$-Sobolev norms on~$\compactsmooth(\XS)$ and on~$\compactsmooth(\Xr(S^{m}))$ for~$m\in\N_{0}^{\Sf}$. It will be helpful to introduce a bit of notation.
\begin{lem}\label{lem:relationsobolevnorms}
  Let~$\Xfrak$ denote a basis of~$\liegR$ and let~$D\in\N_{0}$ be a degree. Let~$\Scal_{D}$ denote the~$\Ltwo$-Sobolev norm on~$\compactsmooth(\XS)$ of degree~$D$ with respect to~$\Xfrak$ and for~$m\in\N_{0}^{\Sf}$ let~$\Scal_{D,m}$ denote the~$\Ltwo$-Sobolev norm on~$\compactsmooth(\Xr(S^{m}))$ of degree~$D$ with respect to~$\Xfrak$. Let~$f\in\compactsmooth(\XS)$. Let~$\tilde{f}_{m}\in\compactsmooth(\Xr(S^{m}))$ such that~$\pr[m]f=\tilde{f}_{m}\circ(\psi^{(m)})^{-1}\circ\pi^{(m)}$. Then
  \begin{equation*}
    \Scal_{D}(f)^{2}=\sum_{m}S^{Dm}\Scal_{D,m}(\tilde{f}_{m})^{2}.
  \end{equation*}
\end{lem}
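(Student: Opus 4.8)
The plan is to unwind both Sobolev norms through the decomposition $f = \sum_{m} \pr[m] f$ and the identifications established in Section~\ref{sec:sobolev}, reducing the claimed identity to a place-by-place bookkeeping of the two quantities weighting the $L^2$-norms: the factor $S^{Dm}$ and the height. First I would recall that for $f \in \compactsmooth(\XS)$ the sum $f = \sum_{m \in \N_0^{\Sf}} \pr[m] f$ is finite, and that the operators $\pr[m]$ are mutually orthogonal projections in $L^2(\XS, \nu_S)$ (being differences of averaging projections over the compact groups $K_p[m_p]$). Hence, applying $\pr[m']$ to $(1+\height_{\XS})^D X f$ and using that $\height_{\XS}$ is $K[0]$-invariant (Proposition~\ref{prop:propertiesheight}\eqref{item:propertiesheight-invariancecompact}) so that $\pr[m']$ commutes with multiplication by $(1+\height_{\XS})^D$ and with the real differential operator $X$, one gets
\begin{equation*}
  \pr[m']\big((1+\height_{\XS})^D X f\big) = (1+\height_{\XS})^D X \big(\pr[m'] f\big),
\end{equation*}
and by orthogonality $\Scal_D(f)^2 = \sum_{m} \sum_{X \in \Dcal_D(\Xfrak)} S^{Dm} \lVert (1+\height_{\XS})^D X (\pr[m] f) \rVert_2^2$, which is nothing but $\sum_m S^{Dm}$ times the ``degree-$D$, $m$-th slice'' of the norm applied to the pure level-$m$ component.

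Next I would identify the $m$-th slice with the real Sobolev norm $\Scal_{D,m}(\tilde f_m)$. By definition $\pr[m] f = \tilde f_m \circ (\psi^{(m)})^{-1} \circ \pi^{(m)}$, and by Corollary~\ref{cor:extension} together with~\eqref{eq:congruence} the map $\psi^{(m)} : \Xr(S^m) \to \XS[m]$ is a $\Gr$-equivariant isomorphism carrying $m_{\Xr(S^m)}$ to ${\pi^{(m)}}_* \nu_S$, as recorded around~\eqref{eq:integralinvariantfunctions}. Equivariance gives that $X$ intertwines with itself under $\psi^{(m)} \circ \pi^{(m)}$, so $X(\pr[m] f) = (X \tilde f_m) \circ (\psi^{(m)})^{-1} \circ \pi^{(m)}$; and Lemma~\ref{lem:relationheight} gives $\height_{\XS} = \height_{\Xr(S^m)} \circ (\psi^{(m)})^{-1} \circ \pi^{(m)}$ on the locus where $f$ is $K[m]$-invariant, which covers $\pr[m] f$. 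Since $\pi^{(m)}$ is measure-preserving for $\nu_S \mapsto {\pi^{(m)}}_* \nu_S$ and $\psi^{(m)}$ is an isomorphism of measure spaces, pushing the $L^2(\XS)$-integral down to $\XS[m]$ and across to $\Xr(S^m)$ yields
\begin{equation*}
  \lVert (1+\height_{\XS})^D X(\pr[m] f) \rVert_{L^2(\XS)}^2 = \lVert (1+\height_{\Xr(S^m)})^D X \tilde f_m \rVert_{L^2(\Xr(S^m))}^2.
\end{equation*}
Summing over $X \in \Dcal_D(\Xfrak)$ gives exactly $\Scal_{D,m}(\tilde f_m)^2$ as in~\eqref{eq:realsobolevnorms}, and reinserting the $S^{Dm}$ weights produces the asserted identity $\Scal_D(f)^2 = \sum_m S^{Dm} \Scal_{D,m}(\tilde f_m)^2$.

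The only genuinely delicate points — and where I would spend the care — are the two commutation claims: that the level projections $\pr_p[m_p]$ at the finite places commute with the real differential operators $X$ and with multiplication by the (finite-place-invariant) height, and that they are genuine orthogonal projections so that no cross terms survive. The first is a matter of $\pr_p[m_p]$ being an average over right-translations by elements of $\GSK$ acting only in the $p$-component, hence commuting with left-invariant operators supported at the real place and with any $K[0]$-invariant function; the second follows from the nesting $K_p[m_p] \subseteq K_p[m_p-1]$ together with $K[m] = \prod_{p \in \Sf} K_p[m_p]$, which makes the $\pr[m]$ an orthogonal resolution of the identity on $\bigcup_m L^2(\XS)^{K[m]} \supseteq \compactsmooth(\XS)$. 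Once these are in hand the rest is the routine transport of an integral across the isomorphisms of~\eqref{eq:commutingdiagram}.
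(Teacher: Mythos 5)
Your proposal is correct and follows essentially the same route as the paper's proof: both arguments hinge on the commutation of the level-$m$ projections $\pr[m]$ with the real differential operators and with multiplication by the (finite-place invariant) height, and both transport the resulting $L^2$-norm across $(\psi^{(m)})^{-1}\circ\pi^{(m)}$ using $\Gr$-equivariance, Lemma~\ref{lem:relationheight}, and the measure relation $m_{\Xr(S^{m})}={\phi^{(m)}}_{\ast}\nu_{S}$. (The appeal to mutual orthogonality of the $\pr[m]$ is not actually needed, since the definition of $\Scal_{D}$ already places $\pr[m]$ inside each summand; the commutation identity alone suffices.)
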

\begin{proof}
  In this proof, we will write~$\phi^{(m)}=(\psi^{(m)})^{-1}\circ\pi^{(m)}$. Note that~$\Gr$-equivariance of~$\psi^{(m)}$ and~$\pi^{(m)}$ implies that for all~$X\in\liegR$ and~$h\in\compactsmooth(\Xr(S^{m}))$ we have
  \begin{equation*}
    X(h\circ\phi^{(m)})(x)=\big(Xh\circ\phi^{(m)}\big)(x).
  \end{equation*}
  Inductively this then extends to polynomials in elements of~$\liegR$. Using Lemma~\ref{lem:relationheight}, we have~$\height_{\XS}=\height_{\Xr(S^{m})}\circ\phi^{(m)}$, so that~$m_{\Xr(S^{m})}={\phi^{(m)}}_{\ast}\nu_{S}$ implies
  \begin{equation*}
  \Scal_{D,m}(\tilde{f}_{m})^{2}=\sum_{X\in\Dcal_{D}(\Xfrak)}\lVert\pr[m](1+\height_{\XS})^{D}Xf\rVert_{\Ltwo(\XS,\nu_{S})}^{2},
\end{equation*}
by Proposition~\ref{prop:propertiesheight}~(\ref{item:propertiesheight-invariancecompact}) and the fact that the differential operators in~$\liegR$ commute with the level~$m_{p}$ projections for all~$p\in\Sf$. The lemma now follows from the definition of~$\Scal_{D}$.
\end{proof}
Of course any Sobolev norm extends to the space~$\compactsmooth(\XS)\oplus\C\mathbf{1}_{\XS}$ which is defined to be the space of smooth functions~$f:\XS\to\C$ for which there is a constant~$c\in\C$ satisfying~$f-c\in\compactsmooth(\XS)$.
\subsection{Sobolev norms on~$S$-arithmetic extensions of tori}
We will also be interested in the~$S$-arithmetic extension~$\rquotient{\ZS}{\QS}$ of the torus~$\T=\rquotient{\Z}{\R}$. More generally, given an integer~$N$, we will use the notation~$\T(N)=\rquotient{N\Z}{\R}$. We will give a quick discussion of this space, as well as of smooth functions and of Sobolev norms. Most of it can be seen as a special case of what was done previously, up to some simplifications. Hence we will keep the discussion fairly brief.

Let~$S$ be a finite set of primes including~$\infty$ and denote~$\TS=\rquotient{\ZS}{\QS}$. The space~$\TS$ is locally isomorphic to~$\QS$, as~$\ZS$ is a lattice (cf.~\cite{KleinbockTomanov}). Hence we can make the following
\begin{definition}
  A function~$\varphi:\TS\to\C$ is smooth, if it is smooth in the real direction and if there is some~$m\in\N_{0}^{\Sf}$ such that~$\varphi$ is invariant under the subgroup~$\Zsf[m]$. We denote by~$\smooth(\TS)$ the vector space of smooth functions on~$\TS$.
\end{definition}
Note that in this case, the quotient~$\TS$ is compact, as follows from the discussion below. Given~$p\in\Sf$ and~$m_{p}\in\N_{0}$, we will denote by~$\Av_{p}[m_{p}]:\smooth(\TS)\to\smooth(\TS)$ the averaging operator for the subgroup~$p^{m_{p}}\Zp$. As before, denote~$\pr_{p}[0]=\Av_{p}[0]$ and~$\pr_{p}[m_{p}]=\Av_{p}[m_{p}]-\Av_{p}[m_{p}-1]$ if~$m_{p}>0$. For~$m\in\N_{0}^{\Sf}$, we again set~$\Av[m]=\prod_{p\in\Sf}\Av_{p}[m_{p}]$ and~$\pr[m]=\prod_{p\in\Sf}\pr_{p}[m_{p}]$. Every~$f\in\smooth(\TS)$ satisfies~$f=\sum_{m\in\N_{0}^{\Sf}}\pr[m]f$ and the right-hand side is a finite sum. Fix a basis~$\Xfrak$ -- i.e.~any non-zero element -- of the Lie algebra of~$\R$. The~$\Ltwo$-Sobolev norm~$\Scal$ of degree~$D$ on~$\smooth(\TS)$ with respect to the basis~$\Xfrak$ is the norm given by
\begin{equation*}
  \Scal(f)^{2}=\sum_{m\in\N_{0}^{\Sf}}S^{Dm}\sum_{X\in\Dcal_{D}(\Xfrak)}\lVert \pr[m]Xf\rVert_{2}^{2},
\end{equation*}
where again~$\Dcal_{D}(\Xfrak)$ is the set of monomials of degree at most~$D$ in~$\Xfrak$. We remark here that as of compactness of~$\TS$ (and similarly for~$\T(S^{m})$,~$m\in\N_{0}^{\Sf}$), there is a uniform injectivity radius and hence we were able to choose the \emph{height function} (cf.~Section~\ref{sec:heightfunction}) to be constant equal to~$1$.

Similarly to the discussion of~$S$-arithmetic quotients of~$\SLtwo$, one has
\begin{equation*}
  \biquotient{\ZS}{\QS}{\Zsf[m]}\cong\biquotient{\Z}{\R\times\Zsf}{\Zsf[m]}\cong\T(S^{m}).
\end{equation*}
The first isomorphism follows immediately from the fact that~$\R\times\Zsf$ acts transitively on~$\rquotient{\ZS}{\QS}$, which again follows from density of~$\ZS$ in~$\QSf$. For the second isomorphism define a map
\begin{equation*}
  \T(S^{m})\to\biquotient{\Z}{\R\times\Zsf}{\Zsf[m]},\quad S^{m}\Z+v\mapsto\Z+\imath_{\infty}(v)+\Zsf[m].
\end{equation*}
This is well-defined, onto and injective, where injectivity follows from strong approximation \cite[Chapter~3,~Lemma~3.1]{Cassels}. Using exactly the same argument as for the proof of Lemma~\ref{lem:relationsobolevnorms}, one obtains
\begin{equation}\label{eq:relationsobolevnormstorus}
  \Scal_{D}(f)^{2}=\sum_{m\in\N_{0}^{\Sf}}S^{Dm}\Scal_{D,m}(\tilde{f}_{m})^{2},
\end{equation}
where~$\Scal_{D}$ is the~$\Ltwo$-Sobolev norm of degree~$D$ on~$\smooth(\TS)$ with respect to the basis~$\Xfrak$ on the Lie algebra of~$\R$ and~$\Scal_{D,m}$ is the~$\Ltwo$-Sobolev norm of degree~$D$ on~$\smooth(\T(S^{m}))$ with respect to the basis~$\Xfrak$.

For the sake of completeness, let us point out that there is an analog to the Sobolev embedding theorem for functions on $\TS$, cf.~Proposition~\ref{prop:propertiessobolevnorms}.
\begin{proposition}\label{prop:propertiessobolevnormstorus}
  Let $\Scal$ be an $\Ltwo$-Sobolev norm of degree $D$ on $\smooth(\TS)$. Then the following are true.
    \begin{enumerate}
    \item \label{item:sobolevembeddingtorus} \emph{(Sobolev Embedding)} There is some~$D_{0}\in\N_{0}$ such that~$D\geq D_{0}$ implies
      \begin{equation*}
        \lVert f\rVert_{\infty}\ll\Scal(f)\quad\text{ for all }f\in\smooth(\TS).
      \end{equation*}
    \item \label{item:multiplicativitytorus} There is some~$\Ltwo$-Sobolev norm~$\Scal_{D+r}$ of degree~$D+r$,~$r\in\N_{0}$, such that for all~$f_{1},f_{2}\in\smooth(\TS)$ we have
      \begin{equation*}
        \Scal(f_{1}f_{2})\ll\Scal_{D+r}(f_{1})\Scal_{D+r}(f_{2}).
      \end{equation*}
  \end{enumerate}
\end{proposition}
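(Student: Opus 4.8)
The plan is to mirror the proof strategy that was already used for the $\SLtwo$-case, since Proposition~\ref{prop:propertiessobolevnormstorus} is the exact torus-analogue of Proposition~\ref{prop:propertiessobolevnorms}, only with the height function replaced by the constant $1$ and with the group being abelian. For part~(\ref{item:sobolevembeddingtorus}), I would first reduce to the classical Sobolev embedding on a single circle $\T(S^{m})=\rquotient{S^{m}\Z}{\R}$. Given $f\in\smooth(\TS)$, write $f=\sum_{m\in\N_{0}^{\Sf}}\pr[m]f$, a finite sum, and for each $m$ let $\tilde f_{m}\in\smooth(\T(S^{m}))$ be the function with $\pr[m]f=\tilde f_{m}\circ(\text{the natural projection})$ coming from the identification $\biquotient{\ZS}{\QS}{\Zsf[m]}\cong\T(S^{m})$. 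Since $\pr[m]f$ is constant along the cosets being collapsed, $\lVert\pr[m]f\rVert_{\infty}=\lVert\tilde f_{m}\rVert_{\infty}$. Now apply the one-dimensional Sobolev embedding $\lVert\tilde f_{m}\rVert_{\infty}\ll\Scal_{D,m}(\tilde f_{m})$, valid for $D\geq D_{0}$ with $D_{0}$ depending only on the fixed basis $\Xfrak$ of the Lie algebra of $\R$ (this is uniform in $m$, because on the universal cover $\R$ the differential operators are translation-invariant and the estimate over a fundamental domain of length $S^{m}$ can, after the change of variables $x\mapsto S^{-m}x$, be bounded by the estimate on $\T=\T(1)$ up to the factor $S^{Dm}$ that is exactly the weight appearing in~\eqref{eq:relationsobolevnormstorus}). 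Summing over $m$ and using $\lVert f\rVert_{\infty}\leq\sum_{m}\lVert\pr[m]f\rVert_{\infty}$ together with Cauchy--Schwarz over the finitely many nonzero levels gives $\lVert f\rVert_{\infty}\ll\big(\sum_{m}S^{Dm}\Scal_{D,m}(\tilde f_{m})^{2}\big)^{1/2}=\Scal_{D}(f)$ by~\eqref{eq:relationsobolevnormstorus}.

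For part~(\ref{item:multiplicativitytorus}), the key observation is that $\pr[m](f_{1}f_{2})$ decomposes over levels: because the averaging operators $\Av_{p}[m_{p}]$ are conditional expectations onto functions invariant under $p^{m_{p}}\Zp$, the product $\pr[m](f_{1}f_{2})$ is supported (in the level filtration) on pairs $(m',m'')$ with $\max(m'_{p},m''_{p})\leq m_{p}$ for each $p$, and in fact if $m'_{p}<m''_{p}$ then the $p$-contribution lands at level exactly $m''_{p}$. I would use the cleaner route: differential operators $X\in\liegR$ act only in the real direction and commute with all the $\pr[m]$, and by the Leibniz rule $X^{\alpha}(f_{1}f_{2})=\sum_{\beta\leq\alpha}\binom{\alpha}{\beta}(X^{\beta}f_{1})(X^{\alpha-\beta}f_{2})$ for monomials of degree $|\alpha|\leq D$. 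Hence, after choosing $r$ so that $D+r$ absorbs both the product of two degree-$\leq D$ monomials and an extra Sobolev embedding to bound one factor in $\lVert\cdot\rVert_{\infty}$, we get pointwise $\lVert\pr[m]X^{\alpha}(f_{1}f_{2})\rVert_{2}\ll\sum_{\beta}\lVert X^{\beta}f_{1}\rVert_{\infty}\lVert\pr[m]X^{\alpha-\beta}f_{2}\rVert_{2}$ (using that $\pr[m]$ is an $L^{2}$-contraction and multiplication by the $\lVert\cdot\rVert_{\infty}$-bounded factor is harmless), and then one more application of part~(\ref{item:sobolevembeddingtorus}) turns $\lVert X^{\beta}f_{1}\rVert_{\infty}$ into $\Scal_{D+r}(f_{1})$. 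Squaring, summing over $X^{\alpha}$ and over $m$ with the weights $S^{Dm}$, and recognizing the resulting sum over $m$ in $f_{2}$ as $\Scal_{D}(f_{2})^{2}\leq\Scal_{D+r}(f_{2})^{2}$ yields $\Scal_{D}(f_{1}f_{2})\ll\Scal_{D+r}(f_{1})\Scal_{D+r}(f_{2})$.

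The only mild subtlety — and the step I would be most careful about — is the level bookkeeping for the product in part~(\ref{item:multiplicativitytorus}): one must check that multiplying two functions does not create mass at levels higher than the levels already present, or if it does, that such mass is still controlled by the weighted $L^{2}$-norms. Concretely, write $f_{i}=\sum_{m^{(i)}}\pr[m^{(i)}]f_{i}$ (finite sums); then $f_{1}f_{2}=\sum_{m^{(1)},m^{(2)}}(\pr[m^{(1)}]f_{1})(\pr[m^{(2)}]f_{2})$, and the $p$-level of each summand is at most $\max(m^{(1)}_{p},m^{(2)}_{p})$, because a function that is $p^{a}\Zp$-invariant times one that is $p^{b}\Zp$-invariant is $p^{\max(a,b)}\Zp$-invariant. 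Thus only finitely many levels $m$ occur in $f_{1}f_{2}$, the identity $f_{1}f_{2}=\sum_{m}\pr[m](f_{1}f_{2})$ is a genuine finite sum, and in the weighted sum $\sum_{m}S^{Dm}(\cdots)$ one uses $S^{Dm}=S^{D\max(m^{(1)},m^{(2)})}\leq S^{Dm^{(1)}}S^{Dm^{(2)}}$ to split the double sum — after Cauchy--Schwarz in the finitely many relevant levels — into the product of a $\Scal_{D+r}$-type expression in $f_{1}$ (with one factor pushed to $\lVert\cdot\rVert_{\infty}$ via Sobolev embedding) and the $\Scal_{D}$-expression in $f_{2}$. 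This is exactly the computation referenced in the paper as ``the same argument as for the proof of Lemma~\ref{lem:relationsobolevnorms}'' combined with the embedding in part~(\ref{item:sobolevembeddingtorus}), so I would simply cite \cite[Appendix~A]{EMMV} and \cite[Section~3.3]{ERW} for the details rather than reproduce them.
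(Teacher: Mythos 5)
The overall strategy — reduce level by level to a circle $\T(S^m)$, apply the one-variable Sobolev embedding, resum with the $S^{Dm}$ weights, and for the product combine Leibniz with an embedding — is the one the paper intends, and the references you cite are the right ones. However, two of the quantitative steps you write down are incorrect, and both errors have the same shape.

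In part~(\ref{item:sobolevembeddingtorus}), the one-circle estimate $\lVert\tilde f_m\rVert_\infty\ll\Scal_{D,m}(\tilde f_m)$ is \emph{not} uniform in $m$: on the circle of circumference $S^m$ with the probability Haar measure the implied constant grows like a positive power of $S^m$; the change of variables $x\mapsto S^{-m}x$ together with the degree-$1$ embedding on $\T$ gives precisely $\lVert\tilde f_m\rVert_\infty\ll S^m\Scal_{1,m}(\tilde f_m)$. Furthermore, ``Cauchy--Schwarz over the finitely many nonzero levels'' cannot give an $f$-independent constant, since the number of nonzero levels depends on $f$. What in fact closes the argument is that the degree needed for the one-circle embedding is a fixed small number, strictly smaller than $D/2$, so the power of $S^{m}$ is beaten by the weight via a convergent \emph{geometric} Cauchy--Schwarz:
$\sum_m S^m\Scal_{1,m}(\tilde f_m)\leq\bigl(\sum_m S^{(2-D)m}\bigr)^{1/2}\Scal_D(f)$,
and $\sum_m S^{(2-D)m}=\prod_{p\in\Sf}(1-p^{2-D})^{-1}<\infty$ once $D\geq 3$. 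If one uses the degree-$D$ embedding on $\T$, as your parenthetical suggests, the factor is $S^{Dm}$ and the $m$-sum diverges.

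In part~(\ref{item:multiplicativitytorus}), the inequality $\lVert\pr[m]X^\alpha(f_1f_2)\rVert_2\ll\sum_\beta\lVert X^\beta f_1\rVert_\infty\lVert\pr[m]X^{\alpha-\beta}f_2\rVert_2$ is false: $\pr[m]$ does not commute with multiplication by a function of nonzero level, so it cannot be pulled past the factor $X^\beta f_1$. For a concrete failure take $\Sf=\{p\}$ and $f_1=f_2$ real, pure level $1$, and constant in the real direction; then $\pr[0](f_1f_2)$ is the nonzero constant $\lVert f_1\rVert_2^2$, while $\pr[0]X^{\alpha-\beta}f_2=X^{\alpha-\beta}\pr[0]f_2=0$ for every monomial, so the claimed bound would read $\lVert f_1\rVert_2^2\ll 0$. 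Your final paragraph sketches the correct repair — expand both $f_i$ into pure levels, use that $(\pr[m^{(1)}]f_1)(\pr[m^{(2)}]f_2)$ has $p$-level at most $\max(m^{(1)}_p,m^{(2)}_p)$, distribute the weight by $S^{Dm}\leq S^{Dm^{(1)}}S^{Dm^{(2)}}$, and invoke part~(\ref{item:sobolevembeddingtorus}) on one factor — but as in part~(\ref{item:sobolevembeddingtorus}) the resummation over $m^{(1)},m^{(2)}$ must be done by a convergent geometric series, not by counting levels, and this is exactly what forces the degree to increase to $D+r$.
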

We leave it to the reader to adapt to this simpler situation the corresponding proofs in the references provided for Proposition~\ref{prop:propertiessobolevnorms}.
\begin{remark}
  The above discussion also has a higher dimensional generalization, i.e.~to smooth functions on the~$S$-arithmetic cover~$\TS^{n}$ of the~$n$-dimensional torus~$\T^{n}$. 
\end{remark}
\subsection{The maximal cross norm on the product}\label{sec:maximalcrossnorms}
We are mainly interested in examining equidistribution properties of subsets in the product~$\TS\times\XS$. For this we make use of a special kind of Sobolev norms on~$\TS\times\XS$, the so-called maximal cross norms (cf.~\cite{BEG}). We will consider the following set of test functions. Let~$\tensorcompactsmooth(\TS\times\XS)$ be the linear hull generated by the set of functions~$\varphi\otimes f$ for~$\varphi\in\smooth(\TS)$ and~$f\in\compactsmooth(\XS)$ where we define~$\varphi\otimes f(t,x)=\varphi(t)f(x)$ for all~$t\in\TS$ and all~$x\in\XS$. Then~$\tensorcompactsmooth(\TS\times\XS)$ is a dense subspace of~$\compact(\TS\times\XS)$.
\begin{definition}\label{def:sobolevnormproduct}
An~$\Ltwo$-maximal cross norm of degree~$(D_{1},D_{2})$ on~$\tensorcompactsmooth(\TS\times\XS)$ is a norm~$\Scal_{\Acal}$ which for~$F\in\tensorcompactsmooth(\TS\times\XS)$ is given by
\begin{equation*}
  \Scal_{\Acal}(F)=\inf\set{\sum_{i}\Scal_{D_{1},\TS}(\varphi_{i})\Scal_{D_{2},\XS}(f_{i})}{F=\sum_{i}\varphi_{i}\otimes f_{i}},
\end{equation*}
where~$\Scal_{D_{1},\TS}$ and~$\Scal_{D_{2},\XS}$ denote the~$\Ltwo$-Sobolev norms of degree~$D_{1}$ and~$D_{2}$ on~$\smooth(\TS)$ and~$\compactsmooth(\XS)$ for some fixed bases of~$\Lie(\R)$ and~$\liesltwo(\R)$ respectively.
\end{definition}
In what follows we will call~$\Ltwo$-maximal cross norms on~$\tensorcompactsmooth(\TS\times\XS)$ just cross norms.
We note that for any cross norm~$\Scal_{\Acal}$ on~$\tensorcompactsmooth(\TS\times\XS)$ of degree~$(D_{1},D_{2})$ there is a cross norm~$\Scal_{\Acal}^{\prime}$ on~$\tensorcompactsmooth(\TS\times\XS)$ of degree~$(D_{1}^{\prime},D_{2}^{\prime})$ with~$D_{i}\leq D_{i}^{\prime}$ ($i=1,2$) such that for all~$F,G\in\tensorcompactsmooth(\TS\times\XS)$ we have
\begin{equation}\label{eq:productsobolevnormonproduct}
  \Scal_{\Acal}(FG)\leq\Scal_{\Acal}^{\prime}(F)\Scal_{\Acal}^{\prime}(G).
\end{equation}
To this end assume~$F=\sum_{i}\varphi_{i}\otimes f_{i}$ and~$G=\sum_{j}\psi_{j}\otimes g_{j}$. Then using Propositions~\ref{prop:propertiessobolevnorms}~(\ref{item:multiplicativity}) and~\ref{prop:propertiessobolevnormstorus}~(\ref{item:multiplicativitytorus}), we obtain
\begin{align*}
  \Scal_{\Acal}(FG)&\leq\sum_{i,j}\Scal_{D_{1}}(\varphi_{i}\psi_{j})\Scal_{D_{2}}(f_{i}g_{j})\\
  &\leq\left(\sum_{i}\Scal_{D_{1}+r}(\varphi_{i})\Scal_{D_{2}+r}(f_{i})\right)\left(\sum_{j}\Scal_{D_{1}+r}(\psi_{j})\Scal_{D_{2}+r}(g_{j})\right)
\end{align*}
for~$r$ sufficiently large but independent of~$F$ and~$G$. Hence choose~$D_{i}^{\prime}=D_{i}+r$,~$(i=1,2)$ and let~$\Scal_{\Acal}^{\prime}$ be the cross norm defined using~$\Scal_{D_{1}^{\prime}}$ and~$\Scal_{D_{2}^{\prime}}$. Let~$\varepsilon>0$ arbitrary and assume that the representations of~$F$ and~$G$ were chosen so that
\begin{equation*}
  \Scal_{\Acal}^{\prime}(F)+\varepsilon\geq\sum_{i}\Scal_{D_{1}^{\prime}}(\varphi_{i})\Scal_{D_{2}^{\prime}}(f_{i})\quad\text{and}\quad\Scal_{\Acal}^{\prime}(G)+\varepsilon\geq\sum_{j}\Scal_{D_{1}^{\prime}}(\psi_{j})\Scal_{D_{2}^{\prime}}(g_{j}).
\end{equation*}
Using this for the preceding estimate, one obtains
\begin{equation*}
  \Scal_{\Acal}(FG)\leq\Scal_{\Acal}^{\prime}(F)\Scal_{\Acal}^{\prime}(G)+O_{F,G}(\varepsilon),
\end{equation*}
and as~$\varepsilon$ was arbitrary, the claim follows.

For later use, let us explicitly state the following
\begin{prop}\label{prop:sobolevembeddingcrossnorm}
  There exist $D_{1},D_{2}\in\N$ such that the following is true. Given any cross norm $\Scal_{\Acal}$ on $\tensorcompactsmooth(\TS\times\XS)$ of degree $(D_{1}^{\prime},D_{2}^{\prime})$ with $D_{1}\leq D_{1}^{\prime}$ and $D_{2}\leq D_{2}^{\prime}$ and any function $F\in\tensorcompactsmooth(\TS\times\XS)$, we have
  \begin{equation*}
    \lVert F\rVert_{\infty}\ll_{D_{1}^{\prime},D_{2}^{\prime}}\Scal_{\Acal}(F).
  \end{equation*}
\end{prop}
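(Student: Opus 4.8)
The plan is to deduce the Sobolev embedding for the cross norm from the one-variable Sobolev embeddings (Proposition~\ref{prop:propertiessobolevnorms}~(\ref{item:sobolevembedding}) and Proposition~\ref{prop:propertiessobolevnormstorus}~(\ref{item:sobolevembeddingtorus})), combined with the definition of $\Scal_{\Acal}$ as an infimum over representations. First I would invoke the two embedding theorems to fix degrees $D_{1},D_{2}\in\N$ with the property that $\lVert\varphi\rVert_{\infty}\ll\Scal_{D_{1},\TS}(\varphi)$ for all $\varphi\in\smooth(\TS)$ and $\lVert f\rVert_{\infty}\ll\Scal_{D_{2},\XS}(f)$ for all $f\in\compactsmooth(\XS)$; here the implicit constants depend only on $D_{1},D_{2}$. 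Monotonicity of Sobolev norms in the degree (as recorded just after the definition of $\Scal_{D}$ in the excerpt) guarantees that these inequalities persist with $D_{1}^{\prime}\geq D_{1}$ and $D_{2}^{\prime}\geq D_{2}$ in place of $D_{1},D_{2}$, at the cost of enlarging the implicit constants.

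Next I would take an arbitrary $F\in\tensorcompactsmooth(\TS\times\XS)$ and an arbitrary representation $F=\sum_{i}\varphi_{i}\otimes f_{i}$ with $\varphi_{i}\in\smooth(\TS)$ and $f_{i}\in\compactsmooth(\XS)$. For every $(t,x)\in\TS\times\XS$ the triangle inequality gives
\begin{equation*}
  \lvert F(t,x)\rvert\leq\sum_{i}\lvert\varphi_{i}(t)\rvert\,\lvert f_{i}(x)\rvert\leq\sum_{i}\lVert\varphi_{i}\rVert_{\infty}\lVert f_{i}\rVert_{\infty}\ll_{D_{1}^{\prime},D_{2}^{\prime}}\sum_{i}\Scal_{D_{1}^{\prime},\TS}(\varphi_{i})\Scal_{D_{2}^{\prime},\XS}(f_{i}),
\end{equation*}
where the last step applies the two one-variable embeddings termwise. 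Since the right-hand side is independent of $(t,x)$, this bounds $\lVert F\rVert_{\infty}$ by a constant multiple of $\sum_{i}\Scal_{D_{1}^{\prime},\TS}(\varphi_{i})\Scal_{D_{2}^{\prime},\XS}(f_{i})$ for every admissible representation of $F$; taking the infimum over all such representations yields $\lVert F\rVert_{\infty}\ll_{D_{1}^{\prime},D_{2}^{\prime}}\Scal_{\Acal}(F)$, which is exactly the claim.

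There is essentially no obstacle here: the only mild point to be careful about is that the implicit constant in the final bound must be uniform over all representations of a fixed $F$, which is automatic because the constants from the one-variable embeddings depend only on the degrees $D_{1}^{\prime},D_{2}^{\prime}$ and not on the individual factors $\varphi_{i},f_{i}$. One should also note that the sum $\sum_{i}$ is finite for each representation (elements of $\tensorcompactsmooth(\TS\times\XS)$ are finite linear combinations of pure tensors), so no convergence issue arises, and the infimum defining $\Scal_{\Acal}$ is over a nonempty set. The argument is a direct transcription of the standard proof that the projective tensor norm of two spaces with continuous evaluation functionals again has continuous evaluation, specialized to the Sobolev setting.
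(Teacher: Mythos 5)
Your argument is correct and is essentially the proof given in the paper: bound pure tensors via the one-variable Sobolev embeddings on $\TS$ and $\XS$, then handle a general element and the infimum defining $\Scal_{\Acal}$ by the triangle inequality. You simply spell out the step of taking an arbitrary finite representation $F=\sum_{i}\varphi_{i}\otimes f_{i}$ and passing to the infimum, which the paper leaves implicit in the phrase ``the claim now follows from the triangle inequality.''
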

\begin{proof}
  Let $\varphi\otimes f$ be any pure tensor, then
  \begin{equation*}
    \lVert\varphi\otimes f\rVert_{\infty}=\lVert\varphi\lVert\rVert f\rVert_{\infty}.
  \end{equation*}
  Hence, as of Propositions~\ref{prop:propertiessobolevnorms}~(\ref{item:sobolevembedding}) and~\ref{prop:propertiessobolevnormstorus}~(\ref{item:sobolevembeddingtorus}), we obtain
  \begin{equation*}
    \lVert\varphi\otimes f\rVert_{\infty}\ll_{D_{1}^{\prime},D_{2}^{\prime}}\Scal_{D_{1}^{\prime}}(\varphi)\Scal_{D_{2}^{\prime}}(f)
  \end{equation*}
  for all sufficiently large $D_{1}^{\prime}$ and $D_{2}^{\prime}$. The claim now follows from the triangle inequality. 
\end{proof}
\subsection{Comparing maximal cross-norms and $\Ltwo$-Sobolev norms}
The maximal cross-norms defined above are well-defined on a dense subspace of~$\compactsmooth(\TS\times\XS)$. We want to close the discussion with a treatment of the relation between maximal cross-norms and Sobolev norms on~$\compactsmooth(\TS\times\XS)$. This justifies our decision to restrict to functions in~$\tensorcompactsmooth(\TS\times\XS)$ for the remainder of the article.

Let us first say what we mean by an $\Ltwo$-Sobolev norm of degree $D$ on $\compactsmooth(\TS\times\XS)$. Given $m\in\N_{0}^{\Sf}$ we denote by the operator $\pr[m]$ from $\compactsmooth(\TS\times\XS)$ to $\compactsmooth(\T(S^{m})\times\Xr(S^{m}))$ the level-$m$ projection with respect to the compact-open subgroup $\Zsf[m]\times K[m]$ of $\QS\times\GS$. Let $\Xfrak$ denote a basis of the Lie algebra of $\R\times\Gr$ and let $\Dcal_{D}(\Xfrak)$ denote the set of monomials of degree at most $D$ in $\Xfrak$. An $\Ltwo$-Sobolev norm of degree $D$ is a map $\Scal$ from $\compactsmooth(\TS\times\XS)$ to $[0,\infty)$ of the form
\begin{equation*}
  \Scal(F)^{2}=\sum_{m\in\N_{0}^{\Sf}}S^{Dm}\sum_{X\in\Dcal_{D}(\Xfrak)}\lVert\pr[m](1+\height_{\XS})^{D}XF\rVert_{2}^{2}.
\end{equation*}

In order to give a clear relation between maximal cross-norms and $\Ltwo$-Sobolev norms, we are going to show the following
\begin{thm}\label{thm:boundsobolevnorm}
  Let~$\Scal_{\Acal}$ be a cross norm on~$\tensorcompactsmooth(\TS\times\XS)$. There exists an $\Ltwo$-Sobolev norm~$\Scal$ on~$\compactsmooth(\TS\times\XS)$ such that:
  \begin{enumerate}
    \item For all~$F\in\tensorcompactsmooth(\TS\times\XS)$ we have~$\Scal_{\Acal}(F)\leq\Scal(F)$.
    \item The space~$\tensorcompactsmooth(\TS\times\XS)\subseteq\compactsmooth(\TS\times\XS)$ is a dense subspace for the topology defined by $\Scal$. That is, for every~$F\in\compactsmooth(\TS\times\XS)$ there is a sequence~$F_{N}\in\tensorcompactsmooth(\TS\times\XS)$ such that~$\Scal(F-F_{N})\overset{N\to\infty}{\longrightarrow}0$.
  \end{enumerate}
\end{thm}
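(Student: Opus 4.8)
The plan is to construct $\Scal$ as an $\Ltwo$-Sobolev norm on $\compactsmooth(\TS\times\XS)$ of some sufficiently large degree $D$, chosen so that $D\geq D_1,D_2$ where $(D_1,D_2)$ is the degree of the given cross norm $\Scal_{\Acal}$. For part~(1), I would use the decomposition $F=\sum_{m\in\N_0^{\Sf}}\pr[m]F$ and the corresponding descent to the congruence covers: by the same argument as in Lemma~\ref{lem:relationsobolevnorms} (applied to both factors and combined with equation~\eqref{eq:relationsobolevnormstorus}), an $\Ltwo$-Sobolev norm on $\compactsmooth(\TS\times\XS)$ is, up to the weights $S^{Dm}$, an $\ell^2$-aggregate over $m$ of ordinary $\Ltwo$-Sobolev norms on the finite-level quotients $\compactsmooth(\T(S^m)\times\Xr(S^m))$. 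On each fixed level, the classical fact that the $\Ltwo$-Sobolev norm dominates the projective (maximal cross) tensor norm — which is exactly the estimate in \cite{BEG} and is a direct consequence of the definition of $\Scal_{\Acal}$ as an infimum over tensor decompositions — gives the pointwise-in-$m$ comparison. Summing in $m$ with the matching weights $S^{Dm}$ on both sides (and absorbing constants into a slightly larger degree, which is harmless by the degree-monotonicity of Sobolev norms noted after their definition) yields $\Scal_{\Acal}(F)\leq\Scal(F)$ for all $F\in\tensorcompactsmooth(\TS\times\XS)$. The key point making this clean is that the weight $S^{Dm}$ is \emph{multiplicative} and factors as $S^{D m}$ in each variable, so it is compatible with the tensor-product structure.

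For part~(2), density of $\tensorcompactsmooth(\TS\times\XS)$ in $\compactsmooth(\TS\times\XS)$ for the $\Scal$-topology, I would again work level-by-level. For a fixed $m$, a function in $\compactsmooth(\T(S^m)\times\Xr(S^m))$ can be approximated in any fixed-level $\Ltwo$-Sobolev norm by finite sums of pure tensors: on the compact factor $\T(S^m)$ one uses a finite Fourier (trigonometric) approximation, and on $\Xr(S^m)$ one uses density of $\compactsmooth$ together with the fact that convolution against an approximate identity supported near the identity in $\Gr$ converges in Sobolev norm; tensoring these two approximations and using Proposition~\ref{prop:propertiessobolevnorms}~(\ref{item:multiplicativity}) / Proposition~\ref{prop:propertiessobolevnormstorus}~(\ref{item:multiplicativitytorus}) controls the error. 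Since for $F\in\compactsmooth(\TS\times\XS)$ the decomposition $F=\sum_{m}\pr[m]F$ is a \emph{finite} sum (each such $F$ is invariant under some $\Zsf[m_0]\times K[m_0]$), it suffices to approximate each of the finitely many $\tilde F_m$ and reassemble; a diagonal choice of approximation parameters then produces the desired sequence $F_N\in\tensorcompactsmooth(\TS\times\XS)$ with $\Scal(F-F_N)\to0$.

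I expect the main obstacle to be part~(2) rather than part~(1): one must check that the approximation in the fixed-level Sobolev norm is \emph{uniform enough} across levels, or rather — since only finitely many levels occur for any given $F$ — that the approximants one builds on the congruence covers actually lift to functions in $\tensorcompactsmooth(\TS\times\XS)$, i.e.\ to honest finite sums $\sum_i\varphi_i\otimes f_i$ with $\varphi_i\in\smooth(\TS)$ and $f_i\in\compactsmooth(\XS)$. This is where one needs to be careful that the lift of a pure tensor on $\T(S^m)\times\Xr(S^m)$ via $(\psi^{(m)})^{-1}\circ\pi^{(m)}$ in each variable is again a pure tensor (it is, because $\pi^{(m)}$ and $\psi^{(m)}$ act separately on the two factors), and that the $K[m]$-averaging used to define $\pr[m]$ does not destroy the tensor form. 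The weights $S^{Dm}$ cause no trouble here because the sum over $m$ is finite; the only genuine analytic content is the single-level density statement, which is standard. A secondary, purely bookkeeping, obstacle is tracking the degree increase $r$ coming from the multiplicativity of Sobolev norms, but as in the discussion preceding Proposition~\ref{prop:sobolevembeddingcrossnorm} this is absorbed by enlarging $D$ at the outset.
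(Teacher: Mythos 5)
Your overall plan (expand in Fourier modes on the torus factor, track levels with $\pr[m]$, and use the finiteness of the level sum) is broadly aligned with the paper's, but part~(1) — which is in fact the harder half of the theorem — is treated as if it were automatic, and it is not.

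You claim that the bound $\Scal_{\Acal}(F)\leq\Scal(F)$ is ``a direct consequence of the definition of $\Scal_{\Acal}$ as an infimum over tensor decompositions.'' This is backwards. The infimum definition gives you
\begin{equation*}
\Scal_{\Acal}(F)\leq\sum_{i}\Scal_{D_{1}}(\varphi_{i})\Scal_{D_{2}}(f_{i})
\end{equation*}
for \emph{every} decomposition $F=\sum_i\varphi_i\otimes f_i$, so to exploit it you must exhibit a specific decomposition and prove that the resulting $\ell^{1}$-sum of products of Sobolev norms is controlled by $\Scal(F)$. That is the entire content of the inequality, not a triviality, and an arbitrary algebraic decomposition of $F\in\tensorcompactsmooth(\TS\times\XS)$ gives no control at all. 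Moreover, the slogan ``the $\Ltwo$-Sobolev norm dominates the projective (maximal cross) tensor norm'' at matching degrees is false in general: the projective cross norm is the \emph{largest} reasonable cross norm, and an $\Ltwo$-type product norm is smaller, not larger, unless you pay in degree. The paper's proof uses the Fourier decomposition $\pr[m]F=\sum_n a_n^{(m)}(F)\chi_n^{(m)}$ precisely because the characters $\chi_n^{(m)}$ form an orthogonal family of pure tensors, so Parseval (Equation~\eqref{eq:componentssobolevnorm}) computes the Sobolev norm of the product exactly; the passage from the $\ell^1$-sum (needed for $\Scal_{\Acal}$) to the $\ell^2$-sum (which is what Parseval controls) is a Cauchy–Schwarz in $n$ that introduces a weight $(1+n^{2D})$, and absorbing that weight is what forces the degree jump from $D$ to $2D$ (Lemmas~\ref{lem:sobolevparseval} and~\ref{lem:fourierseriesnormbound}). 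None of this appears in your sketch; without specifying the decomposition and performing the $\ell^1\to\ell^2$ step, the inequality is not established.

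For part~(2), your instinct is essentially correct but overcomplicated: the convolution against an approximate identity on $\Xr(S^m)$ is unnecessary. The Fourier coefficients $a_n^{(m)}(F)$ already lie in $\compactsmooth(\Xr(S^m))$, so the Fourier truncation in the torus variable alone,
\begin{equation*}
F_N(t,x)=\sum_{m\in\N_0^{\Sf}}\sum_{\lvert n\rvert\leq N}a_n^{(m)}(F)(x)\chi_n^{(m)}(t),
\end{equation*}
is already a finite sum of pure tensors in $\tensorcompactsmooth(\TS\times\XS)$, and $\Scal(F-F_N)\to 0$ follows from Lemma~\ref{lem:sobolevparseval} via~\eqref{eq:boundsobolevnorm}. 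Your observation that the lift of a pure tensor through $(\psi^{(m)})^{-1}\circ\pi^{(m)}$ factorwise is again a pure tensor is correct and worth keeping, but the second regularization you propose is not needed.
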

This implies the following
\begin{cor}\label{cor:effectiveequidistcrossnormimplieseffectiveequidist}
  Let~$\mu,\nu$ be probability measures on~$\TS\times\XS$ and assume that~$\varepsilon>0$ is such that
  \begin{equation*}
    \bigg\lvert\int_{\TS\times\XS}F\der\mu-\int_{\TS\times\XS}F\der\nu\bigg\rvert\leq\varepsilon\Scal_{\Acal}(F)
  \end{equation*}
  for all~$F\in\tensorcompactsmooth(\TS\times\XS)$ and some cross norm~$\Scal_{\Acal}$ on~$\tensorcompactsmooth(\TS\times\XS)$. Then there is an $\Ltwo$-Sobolev norm~$\Scal$ on~$\compactsmooth(\TS\times\XS)$ such that
  \begin{equation*}
    \bigg\lvert\int_{\TS\times\XS}F\der\mu-\int_{\TS\times\XS}F\der\nu\bigg\rvert\ll\varepsilon\Scal(F)
  \end{equation*}
  for all~$F\in\compactsmooth(\TS\times\XS)$.
\end{cor}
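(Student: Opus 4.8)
The plan is to derive Corollary~\ref{cor:effectiveequidistcrossnormimplieseffectiveequidist} directly from Theorem~\ref{thm:boundsobolevnorm}. Let $\Scal_\Acal$ be the given cross norm, and let $\Scal$ be the $\Ltwo$-Sobolev norm on $\compactsmooth(\TS\times\XS)$ provided by Theorem~\ref{thm:boundsobolevnorm}, which by part~(1) dominates $\Scal_\Acal$ on $\tensorcompactsmooth(\TS\times\XS)$ and by part~(2) makes $\tensorcompactsmooth(\TS\times\XS)$ a dense subspace. The hypothesis then reads $\lvert\int F\der\mu-\int F\der\nu\rvert\leq\varepsilon\Scal_\Acal(F)\leq\varepsilon\Scal(F)$ for all $F\in\tensorcompactsmooth(\TS\times\XS)$, so the inequality we want already holds on the dense subspace with implicit constant $1$; it remains to upgrade it to all of $\compactsmooth(\TS\times\XS)$ by continuity.

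Concretely, I would fix $F\in\compactsmooth(\TS\times\XS)$ and invoke density to choose $F_N\in\tensorcompactsmooth(\TS\times\XS)$ with $\Scal(F-F_N)\to0$. The linear functional $\ell(G)=\int_{\TS\times\XS}G\der\mu-\int_{\TS\times\XS}G\der\nu$ satisfies $\lvert\ell(G)\rvert\leq2\lVert G\rVert_\infty$ for any continuous $G$, and by the Sobolev embedding (Proposition~\ref{prop:propertiessobolevnorms}~(\ref{item:sobolevembedding}), which applies to $\Scal$ provided its degree is at least $D_0$ — one may enlarge the degree of $\Scal$ from the outset, since Theorem~\ref{thm:boundsobolevnorm} leaves the degree free to increase and enlarging it only strengthens both conclusions) we have $\lVert G\rVert_\infty\ll\Scal(G)$. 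Hence $\ell$ is $\Scal$-continuous on $\compactsmooth(\TS\times\XS)$: $\lvert\ell(F)-\ell(F_N)\rvert=\lvert\ell(F-F_N)\rvert\ll\Scal(F-F_N)\to0$, while $\Scal(F_N)\to\Scal(F)$. Passing to the limit in $\lvert\ell(F_N)\rvert\leq\varepsilon\Scal(F_N)$ yields $\lvert\ell(F)\rvert\leq\varepsilon\Scal(F)$, with the stated implicit constant absorbing nothing beyond the harmless $1$ (the $\ll$ in the statement allows for the flexibility of first replacing $\Scal$ by a comparable norm of possibly larger degree, as needed to guarantee the Sobolev embedding).

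There is essentially no obstacle here: the entire content is in Theorem~\ref{thm:boundsobolevnorm}, and the corollary is a soft consequence combining its two clauses with the Sobolev embedding and the elementary bound $\lvert\ell\rvert\leq2\lVert\cdot\rVert_\infty$. The only point requiring a moment's care is bookkeeping on the degree of $\Scal$ — ensuring it is large enough for Proposition~\ref{prop:propertiessobolevnorms}~(\ref{item:sobolevembedding}) while still being the norm delivered by Theorem~\ref{thm:boundsobolevnorm}; this is handled by observing that the theorem's construction works verbatim (or with trivial monotonicity adjustments) for any sufficiently large degree, so one simply takes the degree to be $\max$ of what the theorem needs and what Sobolev embedding needs.
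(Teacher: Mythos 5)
Your argument is correct and takes essentially the same route as the paper's proof: both combine Theorem~\ref{thm:boundsobolevnorm} parts (1) and (2) with the Sobolev embedding on the product space and the elementary bound $\lvert\ell(G)\rvert\leq2\lVert G\rVert_{\infty}$, yours by passing to the limit along the approximating sequence $F_{N}$ and the paper's by working with a single approximant $F_{\varepsilon}$. The one small imprecision is a citation: the Sobolev embedding you need is the one on $\compactsmooth(\TS\times\XS)$, namely~\eqref{eq:sobolevembeddingproduct} (established immediately before the corollary via Proposition~\ref{prop:sobolevembeddingcrossnorm} and Theorem~\ref{thm:boundsobolevnorm}), rather than Proposition~\ref{prop:propertiessobolevnorms}~(\ref{item:sobolevembedding}), which is stated only for $\compactsmooth(\XS)$.
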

Before we deduce Corollary~\ref{cor:effectiveequidistcrossnormimplieseffectiveequidist} from Theorem~\ref{thm:boundsobolevnorm}, for the sake of completeness, we want to quickly deduce a Sobolev embedding theorem for $\Ltwo$-Sobolev norms on $\compactsmooth(\TS\times\XS)$ of sufficiently large degree. Of course, it would also be possible to prove a more general version of Proposition~\ref{prop:propertiessobolevnorms} instead. We fix a cross norm $\Scal_{\Acal}$ as in Proposition~\ref{prop:sobolevembeddingcrossnorm} and let $\Scal$ be a corresponding $\Ltwo$-Sobolev norm on $\compactsmooth(\TS\times\XS)$ as in Theorem~\ref{thm:boundsobolevnorm}. Let $F\in\compactsmooth(\TS\times\XS)$ arbitrary. Choose any sequence of functions $F_{n}\in\tensorcompactsmooth(\TS\times\XS)$ such that $F_{n}\to F$ as $n\to\infty$ with respect to $\Scal$. As $\Scal$ bounds the $\Ltwo$-norm of functions on $\TS\times\XS$, we can assume without loss of generality that $F_{n}$ converges to $F$ pointwise almost surely. In particular continuity implies that $F_{n}$ converges to $F$ pointwise. Let now $(t,x)\in\TS\times\XS$ arbitrary, then
\begin{equation*}
  \lvert F(t,x)\rvert=\lim_{n\to\infty}\lvert F_{n}(t,x)\rvert\ll\lim_{n\to\infty}\Scal_{\Acal}(F_{n})\leq\lim_{n\to\infty}\Scal(F_{n})\leq\Scal(F).
\end{equation*}
As $(t,x)$ was arbitrary, it follows that
\begin{equation}\label{eq:sobolevembeddingproduct}
  \lVert F\rVert_{\infty}\ll\Scal(F)
\end{equation}
with implicit constant independent of $F$.
\begin{proof}[Proof of Corollary~\ref{cor:effectiveequidistcrossnormimplieseffectiveequidist}]
  We write~$\mu(F)$ and~$\nu(F)$ for the integral of~$F$ against~$\mu$ and~$\nu$ respectively. We choose an $\Ltwo$-Sobolev norm as in Theorem~\ref{thm:boundsobolevnorm} and assume without loss of generality that the degree of~$\Scal$ is sufficiently large for Equation~\eqref{eq:sobolevembeddingproduct} to hold.  Using Theorem~\ref{thm:boundsobolevnorm}, there is a function~$F_{\varepsilon}\in\tensorcompactsmooth(\TS\times\XS)$ such that
  \begin{equation*}
    \Scal(F-F_{\varepsilon})\leq\min\{\varepsilon,1\}\Scal(F).
  \end{equation*}
  Hence we have~$\Scal_{\Acal}(F_{\varepsilon})\leq 2\Scal(F)$ and thus
  \begin{align*}
    \lvert\mu(F)-\nu(F)\rvert&\leq 2\lVert F-F_{\varepsilon}\rVert_{\infty}+\lvert\mu(F_{\varepsilon})-\nu(F_{\varepsilon})\rvert\\
    &\leq 2\varepsilon\Scal(F)+\varepsilon\Scal_{\Acal}(F_{\varepsilon})\leq4\varepsilon\Scal(F).
  \end{align*}
\end{proof}
Corollary~\ref{cor:effectiveequidistcrossnormimplieseffectiveequidist} implies that effective equidistribution of a sequence of measures (with a rate) with respect to test functions in~$\tensorcompactsmooth(\TS\times\XS)$ and a maximum cross norm~$\Scal_{\Acal}$ implies effective equidistribution (with the same rate) of the sequence with respect to test functions in~$\compactsmooth(\TS\times\XS)$ and some~$\Ltwo$-Sobolev norm~$\Scal$. Hence after the proof of Theorem~\ref{thm:boundsobolevnorm} we will use~$\tensorcompactsmooth(\TS\times\XS)$ as our set of test functions and we will use the term $\Ltwo$-Sobolev norm also for cross norms. The proof of Theorem~\ref{thm:boundsobolevnorm} makes use of Fourier series. Let~$F\in\compactsmooth(\TS\times\XS)$ and~$m\in\N_{0}^{\Sf}$. The function~$\pr[m]F$ is a function on~$\T(S^{m})\times\Xr(S^{m})$ and hence it has an associated Fourier expansion, i.e.~for all~$t\in\TS$,~$x\in\XS$ we have
\begin{equation}\label{eq:fourierseriespurelevel}
  \pr[m]F(t,x)=\sum_{n\in\Z}a_{n}^{(m)}(F)(x)\chi_{n}^{(m)}(t),
\end{equation}
where
\begin{align*}
  \chi_{n}^{(m)}(t)&=e^{2\pi\ii n\frac{t}{S^{m}}}&\big(t\in\T(S^{m})\big),\\
  a_{n}^{(m)}(F)(x)&=\frac{1}{S^{m}}\int_{\T(S^{m})}\pr[m]F(t,x)\chi_{-n}^{(m)}(t)\der t&\big(x\in\XS\big).
\end{align*}
Let~$X$ be a differential operator on~$\TS\times\XS$ and assume that~$X$ can be written as~$X=X_{2}X_{1}$ where~$X_{1}$ is a differential operator on~$\TS$ and~$X_{2}$ is a differential operator on~$\XS$. We denote by~$\lVert X_{1}\rVert_{1}$ the total degree of the differential operator~$X_{1}$. Then by Parseval's and Fubini's Theorems we get
\begin{equation}\label{eq:componentssobolevnorm}
  \lVert\pr[m](1+\height_{\XS})^{D}XF\rVert_{2}^{2}=(2\pi S^{-m})^{2\lVert X_{1}\rVert_{1}}\sum_{n\in\Z}n^{2\lVert X_{1}\rVert_{1}}\lVert (1+\height_{\Xr(S^{m})})^{D}X_{2}a_{n}^{(m)}(F)\rVert_{2}^{2}.
\end{equation}
In particular, for any~$\Ltwo$-Sobolev norm~$\Scal$ of degree~$D$ on~$\compactsmooth(\TS\times\XS)$ we obtain
\begin{equation}\label{eq:boundsobolevnorm}
  \Scal(F)^{2}\ll\sum_{m\in\N_{0}^{\Sf}}S^{Dm}\sum_{n\in\Z}(1+n^{2D})\Scal_{D,m}\big(a_{n}^{(m)}(F)\big)^{2},
\end{equation}
where~we denote by $\Scal_{D,m}$ a family of~$\Ltwo$-Sobolev norm on~$\compactsmooth(\Xr(S^{m}))$ for a uniform choice of a basis of the Lie algebra of $\Gr$. In what follows, we will suppress the dependence of the norm on $m$ using the correspondence between smooth functions on $\Xr(S^{m})$ and functions on $\XS$ invariant exactly under $K[m]$. At first glance it might not be obvious why the right-hand side is finite. Recall however that smoothness of~$F$ implies that the outer sum is actually a finite sum. It suffices to prove finiteness for each of the finitely many summands. This is done in the following
\begin{lem}\label{lem:sobolevparseval}
  Let~$F\in\compactsmooth\big(\T(S^{m})\times\Xr(S^{m})\big)$. Let~$\Scal_{D}$ be an~$\Ltwo$-Sobolev norm of degree~$D$ on~$\compactsmooth\big(\Xr(S^{m})\big)$. Then
  \begin{equation*}
    \sum_{n\in\Z}(1+n^{2D})\Scal_{D}\big(a_{n}^{(m)}(F)\big)^{2}<\infty.
  \end{equation*}
\end{lem}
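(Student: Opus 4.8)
The plan is to exploit that smoothness of $F$ in the torus direction makes the Fourier coefficients $a_n^{(m)}(F)$ decay in $\Scal_D$ faster than any fixed power of $|n|$, which more than compensates for the factor $1+n^{2D}$. First I would separate off $n=0$: since $a_0^{(m)}(F)(x)=\tfrac{1}{S^m}\int_{\T(S^m)}F(t,x)\,\der t$ belongs to $\compactsmooth(\Xr(S^m))$, the term $\Scal_D\big(a_0^{(m)}(F)\big)^2$ is finite, so only $n\neq 0$ matters. For such $n$, writing $\partial_t$ for differentiation in the real coordinate $t$ on the $\R$-factor, I would integrate by parts $k$ times in $t$ inside the defining integral for $a_n^{(m)}(F)$; the boundary terms vanish by periodicity on $\T(S^m)$ and $\partial_t^k F\in\compactsmooth(\T(S^m)\times\Xr(S^m))$, giving the identity $a_n^{(m)}(F)=\big(\tfrac{S^m}{2\pi\ii n}\big)^{k}a_n^{(m)}\big(\partial_t^k F\big)$ in $\compactsmooth(\Xr(S^m))$ and hence $\Scal_D\big(a_n^{(m)}(F)\big)=\big(\tfrac{S^m}{2\pi|n|}\big)^{k}\Scal_D\big(a_n^{(m)}(\partial_t^k F)\big)$.

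The one step needing care is to bound $\Scal_D\big(a_n^{(m)}(\partial_t^k F)\big)$ \emph{uniformly in} $n$. Each monomial $X\in\Dcal_D(\Xfrak)$ differentiates only in the $\Gr$-direction, so it commutes with integration over $\T(S^m)$ (differentiation under the integral sign is legitimate since $F$ is smooth and compactly supported) and with multiplication by the $t$-only character $\chi_{-n}^{(m)}$, and $(1+\height_{\Xr(S^m)})^D$ depends only on the $\Xr(S^m)$-variable; hence
\[
  (1+\height_{\Xr(S^m)})^D\,X\,a_n^{(m)}(\partial_t^k F)(x)=\tfrac{1}{S^m}\int_{\T(S^m)}(1+\height_{\Xr(S^m)}(x))^D\,(X\partial_t^k F)(t,x)\,\chi_{-n}^{(m)}(t)\,\der t .
\]
Taking the $L^2(\Xr(S^m))$-norm in $x$, applying Minkowski's integral inequality and using $\lvert\chi_{-n}^{(m)}\rvert\equiv 1$, gives
\[
  \big\lVert(1+\height_{\Xr(S^m)})^D\,X\,a_n^{(m)}(\partial_t^k F)\big\rVert_2\le\tfrac{1}{S^m}\int_{\T(S^m)}\big\lVert(1+\height_{\Xr(S^m)})^D\,(X\partial_t^k F)(t,\cdot)\big\rVert_2\,\der t ,
\]
whose right-hand side is finite and independent of $n$, because $X\partial_t^k F$ is smooth and compactly supported, so $t\mapsto\lVert(1+\height_{\Xr(S^m)})^D(X\partial_t^k F)(t,\cdot)\rVert_2$ is continuous on the compact torus $\T(S^m)$. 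Summing the squares over the finitely many $X\in\Dcal_D(\Xfrak)$ yields $\Scal_D\big(a_n^{(m)}(\partial_t^k F)\big)\le C_k$ with $C_k$ independent of $n$.

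Combining the two displays, for $n\neq 0$ we get $(1+n^{2D})\Scal_D\big(a_n^{(m)}(F)\big)^2\le C_k^2\,(S^m/2\pi)^{2k}\,(1+n^{2D})/n^{2k}$, and choosing $k=D+1$ makes $\sum_{n\neq 0}(1+n^{2D})/n^{2k}$ convergent; together with the $n=0$ term this proves the claimed finiteness. I do not expect any genuine obstacle here: the argument is the standard ``integration by parts plus Minkowski'' estimate, and the only thing to be a little careful about is the interchange of the $\Gr$-differential operators, the height weight, and the $t$-integration, all of which is licensed by $F\in\compactsmooth(\T(S^m)\times\Xr(S^m))$.
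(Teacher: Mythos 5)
Your proof is correct, but it follows a different route from the paper's. The paper's proof uses Parseval's theorem in the torus variable: after observing that $Xa_n^{(m)}(F)=a_n^{(m)}(XF)$ and that the height weight is bounded on the compact projection $K$ of $\supp F$, the sum $\sum_n(1+n^{2D})\lvert a_n^{(m)}(XF)(x)\rvert^2$ is recognized (for each fixed $x$) as $\lVert XF(\cdot,x)\rVert_2^2+\lVert XF^{(D)}(\cdot,x)\rVert_2^2$, and Fubini then converts this into $\lVert XF\rVert_2^2+\lVert XF^{(D)}\rVert_2^2$. You instead integrate by parts $k=D+1$ times in $t$ to get the pointwise decay $a_n^{(m)}(F)=(S^m/2\pi\ii n)^k a_n^{(m)}(\partial_t^k F)$, bound $\Scal_D(a_n^{(m)}(\partial_t^k F))$ uniformly in $n$ via Minkowski's integral inequality, and close by summability of $\sum_{n\ne0}(1+n^{2D})/n^{2k}$. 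Both approaches are valid and both consume the same resource (smoothness of $F$ in $t$); the Parseval route is tighter in that it spends exactly $D$ $t$-derivatives and produces an identity rather than a one-sided bound, which is exactly the shape needed to promote Lemma~\ref{lem:sobolevparseval} to the quantitative Lemma~\ref{lem:fourierseriesnormbound}, whereas your IBP-plus-Minkowski argument is a bit more elementary but needs $D+1$ derivatives and a wasteful $\ell^\infty$ bound in $n$. One small remark: you should note, as the paper does, that $a_n^{(m)}(F)\in\compactsmooth(\Xr(S^m))$ (smoothness by differentiation under the integral sign, compact support inherited from $F$), so that $\Scal_D(a_n^{(m)}(F))$ is defined in the first place; your passing reference to differentiation under the integral sign covers this, but it is worth making explicit.
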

\begin{proof}
  Let~$X$ be any differential operator on~$\Xr(S^{m})$, then~$a_{n}^{(m)}(XF)=Xa_{n}^{(m)}(F)$ as of Lebesgue's Dominated Convergence Theorem. We denote by~$F^{(l)}$ the~$l$-th derivative of~$F$ in the torus component. Note that the set~$K$ defined as the projection of the support of~$F$ to~$\Xr(S^{m})$ is a compact set and that~$a_{n}^{(m)}(F)(x)=0$ for all~$x\not\in K$ and the height function is bounded on~$K$. For every fixed~$x\in\Xr(S^{m})$, the function defined by~$t\mapsto XF(t,x)$ is smooth in~$t$ and thus by Fubini's Theorem we get
  \begin{align*}
    \int_{\Xr(S^{m})}\sum_{n\in\Z}&(1+n^{2D})\big\lvert \big(1+\height_{\Xr(S^{m})}(x)\big)^{D}Xa_{n}^{(m)}(F)(x)\big\rvert^{2}\der x\\
    &\ll_{K}\int_{\Xr(S^{m})}\sum_{n\in\Z}(1+n^{2D})\big\lvert a_{n}^{(m)}(XF)(x)\big\rvert^{2}\der x\\
    &=\int_{\Xr(S^{m})}\lVert XF(\cdot,x)\rVert_{2}^{2}+\lVert XF^{(D)}(\cdot,x)\rVert_{2}^{2}\der x\\
    &=\lVert XF\rVert_{2}^{2}+\lVert XF^{(D)}\rVert_{2}^{2}<\infty,
  \end{align*}
  where we again used Parseval's Theorem in the first equality. Applying Fubini once more, we can exchange integration and summation for the expression we need to bound, i.e.~letting $\mathfrak{X}$ denote the basis of the Lie algebra of $\Gr$ used to define $\Scal_{D}$, we have
  \begin{align*}
    \sum_{n\in\Z}(1+n^{2D})\Scal_{D}\big(a_{n}^{(m)}(F)\big)^{2}&=\sum_{n\in\Z}(1+n^{2D})\sum_{X\in\Dcal_{D}(\Xfrak)}\lVert(1+\height_{\Xr(S^{m})})^{D}Xa_{n}^{(m)}(F)\rVert_{2}^{2} \\
    &\ll_{K}\sum_{X\in\Dcal_{D}(\Xfrak)}\lVert XF\rVert_{2}^{2}+\lVert XF^{(D)}\rVert_{2}^{2}.
  \end{align*}
  By the preceding discussion, the latter is a finite sum of finite expressions.
\end{proof}
We next show that the right hand side in~\eqref{eq:boundsobolevnorm} can be bounded from above by an $\Ltwo$-Sobolev norm of larger degree. Using this, we will be able to finally prove Theorem~\ref{thm:boundsobolevnorm}.
\begin{lem}\label{lem:fourierseriesnormbound}
  Let~$\Scal_{D}$ be an~$\Ltwo$-Sobolev norm of degree~$D$ on~$\compactsmooth(\XS)$, then there is an~$\Ltwo$-Sobolev norm~$\Scal_{2D}$ of degree~$2D$ on~$\compactsmooth(\TS\times\XS)$ such that for all~$F\in\compactsmooth(\TS\times\XS)$ we have
  \begin{equation*}
    \sum_{m\in\N_{0}^{\Sf}}S^{Dm}\sum_{n\in\Z}(1+n^{2D})\Scal_{D}\big(a_{n}^{(m)}(F)\big)^{2}\ll\Scal_{2D}(F)^{2}
  \end{equation*}
  with implicit constant independent of~$F$.
\end{lem}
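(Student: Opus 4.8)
The plan is to relate, term by term, the sum appearing on the left-hand side to the components of an $\Ltwo$-Sobolev norm on $\compactsmooth(\TS\times\XS)$ of degree $2D$, using the Parseval identity~\eqref{eq:componentssobolevnorm} backwards. The main point is that the factor $S^{Dm}$ together with the factor $(1+n^{2D})$ must be absorbed into a single $\Ltwo$-Sobolev norm on the product space whose degree is large enough to produce both the torus-derivative weight $n$ and the level weight $S^{m}$. Concretely, let $\Xfrak_{\R}$ be a basis of $\Lie(\R)$ (say a single element $X_{1}$), let $\Xfrak_{\Gr}$ be the basis of $\liesltwo(\R)$ used to define $\Scal_{D}$ on $\compactsmooth(\XS)$, and take $\Xfrak=\Xfrak_{\R}\cup\Xfrak_{\Gr}$ as a basis of $\Lie(\R\times\Gr)$. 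I would let $\Scal_{2D}$ be the $\Ltwo$-Sobolev norm of degree $2D$ on $\compactsmooth(\TS\times\XS)$ with respect to $\Xfrak$.

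First I would fix $m\in\N_{0}^{\Sf}$ and a monomial $X\in\Dcal_{D}(\Xfrak_{\Gr})$ on $\XS$, and consider the monomial $X_{1}^{D}X$ on $\TS\times\XS$, which has degree at most $2D$ and hence occurs (up to reindexing) among the monomials contributing to $\Scal_{2D}$. Applying identity~\eqref{eq:componentssobolevnorm} to this operator gives
\begin{equation*}
  \lVert\pr[m](1+\height_{\XS})^{D}X_{1}^{D}Xf\rVert_{2}^{2}=(2\pi S^{-m})^{2D}\sum_{n\in\Z}n^{2D}\lVert(1+\height_{\Xr(S^{m})})^{D}Xa_{n}^{(m)}(F)\rVert_{2}^{2},
\end{equation*}
and likewise applying it to $X$ alone (i.e.\ with the trivial torus operator) gives the analogous identity with $n^{2D}$ replaced by $1$ and no power of $S^{-m}$. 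Summing the first identity over $X\in\Dcal_{D}(\Xfrak_{\Gr})$ recovers $S^{-2Dm}$ times $\sum_{n}n^{2D}\Scal_{D}(a_{n}^{(m)}(F))^{2}$, and summing the second recovers $\sum_{n}\Scal_{D}(a_{n}^{(m)}(F))^{2}$. Multiplying through by the appropriate power of $S^{m}$ and using $1+n^{2D}\le 1+n^{2D}$, I would obtain
\begin{equation*}
  S^{Dm}\sum_{n\in\Z}(1+n^{2D})\Scal_{D}\big(a_{n}^{(m)}(F)\big)^{2}\ll S^{3Dm}\lVert\pr[m](1+\height_{\XS})^{D}X_{1}^{D}(\cdot)F\rVert_{2}^{2}+S^{Dm}\lVert\pr[m](1+\height_{\XS})^{D}(\cdot)F\rVert_{2}^{2},
\end{equation*}
where $(\cdot)$ ranges over $\Dcal_{D}(\Xfrak_{\Gr})$. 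The weight $S^{3Dm}$ is the natural obstruction here: the definition of $\Scal_{2D}$ carries the weight $S^{2Dm}$, not $S^{3Dm}$. I would handle this by choosing the degree of the target norm large enough that $S^{2D'm}$ dominates $S^{3Dm}$, i.e.\ replacing $2D$ by $3D$ in the definition of the target Sobolev norm; since the statement of the lemma only asserts the existence of \emph{some} $\Ltwo$-Sobolev norm of degree $2D$, and larger-degree norms dominate smaller-degree ones (with $\Xfrak$ fixed), it suffices to exhibit a norm of degree $3D$ and invoke that monotonicity—alternatively, and more cleanly, one simply reads off from~\eqref{eq:componentssobolevnorm} that the left-hand side of the lemma equals, up to absolute constants, a \emph{sub}sum of the squared Sobolev norm $\Scal_{2D}(F)^{2}$ provided one replaces the level weight exponent appropriately, and absorbs the discrepancy into the degree.

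Finally I would sum over the finitely many $m\in\N_{0}^{\Sf}$ with $\pr[m]F\ne 0$ (finiteness is guaranteed by smoothness of $F$, and finiteness of each summand is Lemma~\ref{lem:sobolevparseval}), collect all the monomials $X_{1}^{j}X$ with $j\le D$ and $X\in\Dcal_{D}(\Xfrak_{\Gr})$ into the full set $\Dcal_{2D}(\Xfrak)$, and conclude that the left-hand side is $\ll\Scal_{2D}(F)^{2}$ with an implicit constant depending only on $D$ and $|\Sf|$, hence independent of $F$. The main obstacle is purely bookkeeping: matching the $S^{m}$-weights on the two sides so that the weight coming from the $S^{-m}$ factors in Parseval~\eqref{eq:componentssobolevnorm} combines correctly with the $S^{Dm}$ weight in the definition of the Sobolev norm; once the degree of the target norm is chosen generously this matching is automatic, and no genuinely new estimate beyond Parseval and Fubini (already used in Lemma~\ref{lem:sobolevparseval}) is needed.
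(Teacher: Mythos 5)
Your computation is carried out with enough care that it actually exposes a gap in the paper's own argument. The paper's first displayed $\gg$ silently drops the factor $(2\pi S^{-m})^{2\lVert X_1\rVert_1}$ coming from~\eqref{eq:componentssobolevnorm}; but since $S^{-m}\leq 1$ this factor is \emph{not} bounded below uniformly in $m$ (for $\lVert X_1\rVert_1=D$ it equals $(2\pi)^{2D}S^{-2Dm}$, which entirely cancels the $S^{2Dm}$ weight carried by a degree-$2D$ norm), so the paper's chain of inequalities does not close with $\Scal_{2D}$ as stated. Your bookkeeping correctly identifies the required weight as $S^{3Dm}$: to dominate $S^{Dm}\sum_{n}n^{2D}\Scal_{D}\big(a_{n}^{(m)}(F)\big)^{2}$ one needs the $j=D$ torus-derivative term of a degree-$D'$ norm, which contributes $S^{(D'-2D)m}$ after the $(S^{-m})^{2D}$ from Parseval, forcing $D'\geq 3D$. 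So the honest conclusion of the lemma should read degree $3D$ (or $4D$, to be safe), not $2D$, and this is harmless because the only downstream use, in the proof of Theorem~\ref{thm:boundsobolevnorm}, asks only for the existence of \emph{some} $\Ltwo$-Sobolev norm.

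The one misstep in your proposal is the remark that one can exhibit a degree-$3D$ norm and then "invoke monotonicity" to return to degree $2D$. That monotonicity says $\Scal_{2D}\ll\Scal_{3D}$, which goes the wrong way: a bound of the left-hand side by $\Scal_{3D}(F)^{2}$ does not yield a bound by $\Scal_{2D}(F)^{2}$. Simply restate the lemma with degree $3D$; no appeal to monotonicity is needed or available.
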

\begin{proof}
  We assume that~$\Scal_{D}$ is defined using the basis~$\Xfrak$. We fix a generator $Y$ of the Lie algebra of $\R$ and let $\Xfrak^{\prime}=\Xfrak\cup\{Y\}$. Define an $\Ltwo$-Sobolev norm $\Scal_{2D}$ on $\compactsmooth(\TS\times\XS)$ using the basis $\Xfrak^{\prime}$. We again denote by~$\lVert X_{1}\rVert_{1}$ the total degree of the differential operator~$X_{1}$. Using~\eqref{eq:componentssobolevnorm}, Proposition~\ref{prop:propertiesheight} and Lemma~\ref{lem:relationheight} one calculates
  \begin{align*}
    \Scal_{2D}(F)^{2}
    &\gg\sum_{m\in\N_{0}^{\Sf}}S^{2Dm}\sum_{X\in\Dcal_{2D}(\Xfrak^{\prime})}\sum_{n\in\Z}n^{2\lVert X_{1}\rVert_{1}}\lVert(1+\height_{\Xr(S^{m})})^{2D}X_{2}a_{n}^{(m)}(F)\rVert_{2}^{2}\\
    &=\sum_{m\in\N_{0}^{\Sf}}S^{2Dm}\sum_{X\in\Dcal_{2D}(\Xfrak)}\sum_{n\in\Z}\lVert(1+\height_{\Xr(S^{m})})^{2D}Xa_{n}^{(m)}(F)\rVert_{2}^{2}\\
    &\qquad+\sum_{m\in\N_{0}^{\Sf}}S^{2Dm}\sum_{\ell=1}^{2D}\sum_{X\in\Dcal_{2D-\ell}(\Xfrak)}\sum_{n\in\Z}n^{2\ell}\lVert(1+\height_{\Xr(S^{m})})^{2D}Xa_{n}^{(m)}(F)\rVert_{2}^{2}\\
    &\geq
    \sum_{m\in\N_{0}^{\Sf}}S^{2Dm}\sum_{n\in\Z}(1+n^{2D})\sum_{X\in\Dcal_{D}(\Xfrak)}\lVert(1+\height_{\Xr(S^{m})})^{D}Xa_{n}^{(m)}(F)\rVert_{2}^{2}\\
    &=\sum_{m\in\N_{0}^{\Sf}}S^{Dm}\sum_{n\in\Z}(1+n^{2D})\Scal_{D}\big(a_{n}^{(m)}(F)\big)^{2}.
  \end{align*}
\end{proof}
\begin{proof}[Proof of Theorem~\ref{thm:boundsobolevnorm}]
  Before we start with the proof, let us introduce a natural way to approximate a compactly supported smooth function on $\TS\times\XS$. Let~$F\in\compactsmooth(\TS\times\XS)$ arbitrary and fix~$N\in\N_{0}$. We define~$F_{N}$ by
  \begin{equation*}
    F_{N}(t,x)=\sum_{m\in\N_{0}^{\Sf}}\sum_{\lvert n\rvert\leq N}a_{n}^{(m)}(F)(x)\chi_{n}^{(m)}(t)\quad(t\in\TS,x\in\XS).
  \end{equation*}
  It will be convenient to set~$F_{-1}=0$. Note that~$F_{N}\in\tensorcompactsmooth(\TS\times\XS)$ for all~$N\geq -1$ and that there is some~$D\in\N$ such that
  \begin{equation*}
    \Scal_{\Acal}(F_{N}-F_{N+k})\leq\sum_{m\in\N_{0}}\sum_{\lvert n\rvert=N+1}^{N+k}\Scal_{D,\TS}(\chi_{n}^{(m)})\Scal_{D,\XS}\big(a_{n}^{(m)}(F)\big).
  \end{equation*}
  We also note for later use that
  \begin{equation*}
    \sum_{m\in\N_{0}^{\Sf}}S^{-Dm}=\prod_{p\in\Sf}(1-p^{D})^{-1}<\infty
  \end{equation*}
  as well as
  \begin{equation*}
    \sum_{n\in\Z}\frac{1}{(1+n^{2D})^{\alpha}}<\infty
  \end{equation*}
  for all~$\alpha>\frac{1}{2}$. Furthermore we notice that~$\Scal_{D,\TS}(\chi_{n}^{(m)})\ll(1+n^{D})$. Using Cauchy-Schwarz, we obtain
  \begin{align}
      \Scal_{\Acal}(F_{N}-F_{N+k})^{2}&\ll\sum_{m\in\N_{0}^{\Sf}}S^{Dm}\Big(\sum_{\lvert n\rvert=N+1}^{N+k}\Scal_{D,\TS}(\chi_{n}^{(m)})\Scal_{D,\XS}\big(a_{n}^{(m)}(F)\big)\Big)^{2}\nonumber\\
      &\ll\sum_{m\in\N_{0}^{\Sf}}S^{Dm}\sum_{\lvert n\rvert=N+1}^{N+k}(1+n^{2D})\Scal_{D,\XS}\big(a_{n}^{(m)}(F)\big)^{2}\label{eq:intermediatebound}.
    \end{align}
  
  We now turn to the actual proof of the theorem. We assume throughout the discussion that all Sobolev norms are chosen of sufficiently large degree for the various Sobolev Embedding Theorems to hold. In order to show the first claim, let~$F\in\tensorcompactsmooth(\TS\times\XS)$ arbitrary. Combining the bound from~\eqref{eq:intermediatebound} with the finiteness in Lemma~\ref{lem:fourierseriesnormbound} the sequence of approximations~$F_{N}$ is a Cauchy-sequence with respect to~$\Scal_{\Acal}$. Using the Sobolev Embedding Theorem on the product, we know that~$F_{N}$ converges to~$F$ pointwise, i.e.~by smoothness that $F$ is the limit of the sequence $F_{N}$ with respect to $\Scal_{\Acal}$. Hence, using the bound from~\eqref{eq:intermediatebound} once again, we find
  \begin{align*}
    \Scal_{\Acal}(F)^{2}&\ll\lim_{N\to\infty}\Scal_{\Acal}(F_{N})^{2}=\lim_{N\to\infty}\Scal_{\Acal}(F_{-1}-F_{N})^{2}\\
    &\ll\lim_{N\to\infty}\sum_{m\in\N_{0}^{\Sf}}S^{Dm}\sum_{n=-N}^{N}(1+n^{2D})\Scal_{D,\XS}\big(a_{n}^{(m)}(F)\big)^{2}.
  \end{align*}
  Hence the first claim of Theorem~\ref{thm:boundsobolevnorm} follows from Lemma~\ref{lem:fourierseriesnormbound} with~$\Scal$ of degree~$2D$.

  For the second claim, let~$F\in\compactsmooth(\TS\times\XS)$ and define the approximations~$F_{N}$ as above. Using~\eqref{eq:boundsobolevnorm} and Lemma~\ref{lem:sobolevparseval}, we know that
  \begin{equation*}
    \Scal_{2D}(F-F_{N})^{2}\ll\sum_{m\in\N_{0}^{\Sf}}S^{2Dm}\sum_{\lvert n\rvert>N}(1+n^{4D})\Scal_{2D,\XS}\big(a_{n}^{(m)}(F)\big)^{2}\overset{N\to\infty}{\longrightarrow}0.
  \end{equation*}
  As~$F_{N}\in\tensorcompactsmooth(\TS\times\XS)$ for all~$N\in\N$, the second part of Theorem~\ref{thm:boundsobolevnorm} follows.
\end{proof}
\section{Congruence quotients and effective~$S$-arithmetic equidistribution}\label{sec:longhorocycles}
In this section we want to illustrate the relation between equidistribution of orbits in congruence quotients and equidistribution in~$S$-arithmetic quotients. We illustrate the relationship by proving effective equidistribution of horocycle orbits~$\Ucal_{y}$ (cf.~Section~\ref{sec:periodicorbits}) for~$y\in(0,\infty)$ as~$y\to0$. This is not new and strictly speaking, the equidistribution in the~$S$-arithmetic quotient is formally not required for what follows. However, we will later prove equidistribution of certain sparse subsets of~$\Ucal_{y}$, so that it is natural to ask whether the full set equidistributes. Moreover, the argument used here gives a simple illustration of part of the procedure that will be applied for the equidistribution of rational points.

The effective equidistribution of the orbits~$\Gammar(S^{m}) a_{y}U_{\infty}$ for varying~$m\in\Z^{\Sf}$ follows from a theorem by Sarnak \cite{Sarnak1981}. We will use this to prove equidistribution of rational points of a certain denominator along these periodic orbits. From our perspective, these results lie in the realm of unitary representations, which naturally occur as follows. Given a locally compact,~$\sigma$-compact group~$G$ acting continuously on the right of a locally compact space~$X$ equipped with a Borel measure~$\mu$ invariant under some closed subgroup~$H\leq G$, we obtain a unitary representation of~$H$ on the space~$L_{\mu}^{2}(X)$ of functions on~$X$ defined~$\mu$-almost everywhere, whose absolute value squared is integrable with respect to~$\mu$. The unitary representation is induced by the action of~$G$ on~$X$ and the element~$h\in H$ sends the element~$f\in L_{\mu}^{2}(X)$ to the element~$h\cdot f\in L_{\mu}^{2}(X)$ which is given by~$(h\cdot f)(x)=f(xh)$ for almost all~$x\in X$. We will denote by~$g$ the map defined by the action of an element~$g\in G$ on~$X$. Note that for any~$f\in C_{c}(X)$ the function~$g\cdot f$ is again continuous with compact support. In particular, the measure~$g_{\ast}\mu$ is the measure defined by
\begin{equation*}
  \int_{X}f\der{}g_{\ast}\mu=\int_{X}g\cdot f\der{}\mu\quad(f\in C_{c}(X)).
\end{equation*}

We denote by~$\mu_{\Ucal_{1}}$ the~$U_{S}$-invariant measure on the periodic orbit~$\Ucal_{1}=\GammaS U_{S}$. We are interested in the behaviour of the push-forward~$(a_{y})_{\ast}\mu_{\Ucal_{1}}$ as~$y_{\infty}\to 0$. As discussed previously, the calculation~$a_{y}u_{t}a_{y}^{-1}=u_{y^{2}t}$ implies that~$\Ucal_{y}=\GammaS U_{S}a_{y}$ has volume~$\lvert y^{-2}\rvert_{S}$. As one would expect, the behaviour of long periodic orbits~$\Ucal_{y}$ can be deduced from the equidistribution of long periodic horocycle orbits in~$\Xr(S^{m})$,~$m\in\N_{0}^{\Sf}$.

Recall that the map sending~$\Z+t\in\rquotient{\Z}{\Zs}$ to~$\ZS+t\in\rquotient{\ZS}{\QS}$ is an isomorphism and the projection~$\rquotient{\Z}{\Zs}\to\Tr$ sending~$\Z+t$ to~$\Z+t_{\infty}$ is onto with fibers homeomorphic to~$\Zsf$. Hence~$[0,1)\times\Zsf$ is a fundamental domain for~$\ZS$ in~$\QS$. In particular, the orbit measure on~$\Ucal_{1}$ is given by
\begin{equation*}
  \int_{\XS}f(x)\der\mu_{\Ucal_{1}}(x)=\int_{\Zsf}\int_{0}^{1}f(\GammaS u_{(t_{\infty},t_{\Sf})})\der{}t_{\infty}\der{}t_{\Sf}\quad(f\in C_{c}(\XS)).
\end{equation*}
As the smooth functions form a dense subset of~$C_{c}(\XS)$, it suffices to show that
\begin{equation*}
  \lim_{\substack{y_{\infty}\to0\\ y\in\R_{>0}\times\Zsf^{\times}}}\int_{\Zsf}\int_{0}^{1}f(\GammaS(u_{t_{\infty}},u_{t_{\Sf}})a_{y})\der{}t_{\infty}\der{}t_{\Sf}=\int_{\XS}f(x)\der\nu_{S}(x)\quad\big(f\in\compactsmooth(\XS)\big).
\end{equation*}
In fact, we will prove this with a bound on the error term. Recall that the isomorphism~$\psi^{(m)}:\Xr(S^{m})\to\XS[m]$ maps~$\Gammar(S^{m})g$ to~$\GammaS\imath_{\infty}(g)K[m]$. Thus~$\Gammar(S^{m})U_{\infty}a_{y_{\infty}}$ is mapped to the set
\begin{equation*}
  \set{\GammaS\imath_{\infty}(u_{t_{\infty}}a_{y_{\infty}})K[m]}{t_{\infty}\in\R}=\GammaS\US\imath_{\infty}(a_{y_{\infty}})K[m].
\end{equation*}
The equality of the sets follows from compactness of~$\Gammar(S^{m})U_{\infty}a_{y_{\infty}}$ and density of~$\ZS$ in~$\Q_{\Sf}$. Indeed, for arbitrary~$t\in\QS$ we can find some~$n\in\ZS$ such that~$u_{(t_{\Sf}-n)}\in K[m]$. Using $u_{n}\in\GammaS$ and $\GammaS gK[m]=\GammaS g_{\infty}K[m]$ whenever $g_{\Sf}\in K[m]$, we thus obtain
\begin{align*}
  \GammaS u_{t}\imath_{\infty}(a_{y_{\infty}})K[m]&=\GammaS u_{t-n}\imath_{\infty}(a_{y_{\infty}})K[m]\\
  &=\GammaS\imath_{\infty}(u_{(t_{\infty}-n)}a_{y_{\infty}})K[m].
\end{align*}
Let now~$a\in\AS$ such that~$a_{\Sf}=\one$. In what follows, we identify~$a$ with its projection~$a_{\infty}$. Let~$\mu_{\GammaS\US a}$ be the unique~$\US$-invariant probability measure on~$\GammaS\US a$. This measure extends to a measure on~$\XS$ and thus~${\pi^{(m)}}_{\ast}\mu_{\GammaS\US a}$ is a probability measure on~$\XS[m]$ with support given by~$\GammaS\US aK[m]$. As~$\pi^{(m)}$ is~$\Gr$-equivariant, the push-forward measure is invariant under~$U_{\infty}$. Using the isomorphism~$\psi^{(m)}:\Xr(S^{m})\to\XS[m]$ and the preceding discussion, it follows that~${\pi^{(m)}}_{\ast}\mu_{\GammaS\US a}$ is actually the push-forward of the unique~$U_{\infty}$-invariant measure~$\mu_{\Gammar(S^{m})\Ur a}$ on the orbit~$\Gammar(S^{m})U_{\infty}a$ under~$\psi^{(m)}$. If now~$f\in\compactsmooth(\XS)$, then~$f=\sum_{m\in\N_{0}^{\Sf}}\pr[m]f$ is a finite sum. Denote by~$\tilde{f}_{m}\in\compactsmooth(\Xr(S^{m}))$ the unique function so that~$\pr[m](f)=\tilde{f}_{m}\circ(\psi^{(m)})^{-1}\circ\pi^{(m)}$. It follows that 
\begin{align*}
  \mu_{\GammaS\US a}(f)&=\sum_{m\in\N_{0}^{\Sf}}{\pi^{(m)}}_{\ast}\mu_{\GammaS\US a}\big(\tilde{f}_{m}\circ(\psi^{(m)})^{-1}\big)\\
  &=\sum_{m\in\N_{0}^{\Sf}}{\psi^{(m)}}_{\ast}\mu_{\Gammar(S^{m})\Ur a}\big(\tilde{f}_{m}\circ(\psi^{(m)})^{-1}\big)\\
  &=\sum_{m\in\N_{0}^{\Sf}}\mu_{\Gammar(S^{m})\Ur a}(\tilde{f}_{m}).
\end{align*}
Fix any basis of~$\liegR$. For~$D\in\N_{0}$ let~$\Scal_{D,m}$ denote the~$\Ltwo$-Sobolev norm of degree~$D$ with respect to this basis on~$\compactsmooth(\Xr(S^{m}))$, and let~$\Scal_{D}$ denote the~$\Ltwo$-Sobolev norm of degree~$D$ with respect to this basis on~$\compactsmooth(\XS)$. Given~$y\in(0,\infty)$, the formula~\eqref{eq:integralinvariantfunctions} and the above description of~$\mu_{\GammaS\US a}$ in terms of measures on closed horocycle orbits in congruence quotients combined with the effective equidistribution of long horocycles \cite{Sarnak1981} and the uniformity of the spectral gap on congruence quotients \cite{Selberg65} implies that there is some degree~$D$ (in fact one can choose~$D=1$) and some~$\exponentdecayhorocycles>0$ such that
\begin{align*}
  \lvert\mu_{\GammaS\US a_{y}}(f)-\nu_{S}(f)\rvert&\leq\sum_{m\in\N_{0}^{\Sf}}\lvert\mu_{\Gammar(S^{m})\Ur a_{y}}(\tilde{f}_{m})-m_{\Xr(S^{m})}(\tilde{f}_{m})\rvert\\
  &\ll y^{-\exponentdecayhorocycles}\sum_{m\in\N_{0}^{\Sf}}\Scal_{D,m}(\tilde{f}_{m})\ll_{\Sf}y^{-\exponentdecayhorocycles}\Scal_{D}(f),
\end{align*}
where the last bound follows from Cauchy-Schwarz and Lemma~\ref{lem:relationsobolevnorms}.
\section{Effective equidistribution of rational points in the torus and the modular surface}\label{sec:rationalpoints}
Let~$n\in\mathbb{N}$ and set
\begin{equation*}
  \rationalsnr=\set{\big(\tfrac{k}{n},a_{\sqrt{n}}u_{k/n}\cdot\Gammar\big)}{k\in\{0,1,\cdots,n-1\}}\subseteq\Tr\times\Xr.
\end{equation*}
It is well-known that the projection of~$\rationalsnr$ to~$\T$ equidistributes effectively and it was recently explained in \cite{numberfield} that the same holds for the projection of~$\rationalsnr$ to the modular surface. However, we will have to refine this result a bit and hence will give a complete proof of said statement here. The goal is to show joint equidistribution with a rate. To this end we will proceed as follows. Equidistribution for the modular surface can be improved to rational points along pieces of closed horocycle orbits, i.e.~to sets of the form
\begin{equation*}
  \rationalsnrXpiece=\set{a_{\sqrt{n}}u_{k/n}\cdot\Gammar}{k/n\in[\alpha,\beta]},\qquad0\leq\alpha<\beta\leq1.
\end{equation*}
Using effective equidistribution for these sets, one can show effective equidistribution of the sets~$\rationalsnr$ in the following way. Given any smooth function~$\varphi$ on the torus and a smooth function~$f$ on the modular surface, uniform continuity of~$\varphi$ implies that~$\rationalsnr$ decomposes as a disjoint union of sets of the form
\begin{equation*}
  \rationalsnrpiece=\set{(\tfrac{k}{n},a_{\sqrt{n}}u_{k/n}\cdot\Gammar)}{k/n\in[\alpha,\beta]}\subseteq\Tr\times \Xr,\qquad0\leq\alpha<\beta\leq1,
\end{equation*}
on each of which the function~$\varphi\otimes f$ is, up to some small error, constant in the first component. The choice of~$\alpha$ and~$\beta$ depends on the smoothness properties of~$\varphi$ and can be captured in the Sobolev norm of~$\varphi\otimes f$. Applying effective equidistribution of rational points in short pieces of closed horocycle orbits will then imply the statement on the product space.

In fact, we prove equidistribution of rational points for products of a congruence quotient with a torus. This is then used to prove effective equidistribution of the lift of~$\rationalsnr$ to~$S$-arithmetic extensions. For what follows, it will be useful to introduce some additional notation. Given~$m,m^{\prime}\in\Z^{\Sf}$, we denote by~$m\vee m^{\prime}\in\Z^{\Sf}$ the coordinate-wise maximum of~$m$ and~$m^{\prime}$. Furthermore, we denote by~$\Delta$ the embedding of the~$\Q$-points of a group in the~$\QS$-points of the group. Let
\begin{equation*}
  \rationalsn=\set{\big(\ZS+\Delta(\tfrac{k}{n}),\GammaS\Delta(u_{\frac{k}{n}})\big)}{k\in\Z}\subseteq\TS\times\XS
\end{equation*}
be the lift of the rational points to the~$S$-arithmetic extension.
\begin{lemma}\label{lem:projectionrationalpoints}
  Let~$n\in\N$ and let~$S$ be a finite set of places including~$\infty$ such that~$(\Sf,n)=1$. Given~$l,m\in\mathbb{N}_{0}^{\Sf}$, let
  \begin{equation*}
    \Pi_{l,m}:\rquotient{\ZS}{\QS}\times\rquotient{\GammaS}{\GS}\to\rquotient{S^{l}\Z}{\R}\times\rquotient{\Gammar(S^{m})}{\Gr}
  \end{equation*}
  denote the canonical projection. Then
  \begin{equation*}
    \Pi_{l,m}\rationalsn=\set{(S^{l}\Z+S^{l\vee m}\tfrac{k}{n},\Gammar(S^{m})u_{S^{l\vee m}\frac{k}{n}})}{k\in\Z}.
  \end{equation*}
\end{lemma}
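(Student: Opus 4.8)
The plan is to reduce the whole statement to one well-chosen change of representative. By construction each factor of $\Pi_{l,m}$ is the corresponding quotient map $\rquotient{\ZS}{\QS}\to\biquotient{\ZS}{\QS}{\Zsf[l]}$, resp.\ $\XS\to\quotient{\XS}{K[m]}$, composed with the explicit isomorphisms $\biquotient{\ZS}{\QS}{\Zsf[l]}\cong\rquotient{S^{l}\Z}{\R}$ and $(\psi^{(m)})^{-1}$ from~\eqref{eq:congruence}; in both cases the isomorphism merely reads off the archimedean coordinate, \emph{provided} one uses a representative whose non-archimedean components lie in $\Zsf[l]$, resp.\ in $K[m]$. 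So everything comes down to exhibiting, for each $k\in\Z$, such a representative of the point $\big(\ZS+\Delta(\tfrac kn),\GammaS\Delta(u_{k/n})\big)$.

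First I would build this representative. Since $(\Sf,n)=1$, the residue of $n$ is a unit in $\quotient{\Z}{S^{l\vee m}\Z}$; I would fix $\overline n\in\Z$ with $n\overline n\equiv1\pmod{S^{l\vee m}}$ and put $j=j(k)=k\overline n$, so $S^{l\vee m}\mid(k-nj)$. Because $j\in\Z\subseteq\ZS$ and $\Delta(u_{-j})\in\GammaS$, one may replace representatives to obtain $\ZS+\Delta(\tfrac kn)=\ZS+\Delta(t)$ and $\GammaS\Delta(u_{k/n})=\GammaS\Delta(u_{t})$, where $t=\tfrac kn-j=\tfrac{k-nj}{n}$. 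For every $p\in\Sf$ one has $v_{p}(k-nj)\ge(l\vee m)_{p}\ge\max\{l_{p},m_{p}\}$ and $v_{p}(n)=0$, hence $t\in p^{l_{p}}\Zp\cap p^{m_{p}}\Zp$. Consequently the non-archimedean components of $\Delta(t)\in\QS$ lie in $\Zsf[l]$, so the first factor of $\Pi_{l,m}$ carries our point to $S^{l}\Z+t$; likewise the non-archimedean components of $\Delta(u_{t})\in\GS$ lie in $K[m]$, so $\GammaS\Delta(u_{t})K[m]=\GammaS\imath_{\infty}(u_{t})K[m]=\psi^{(m)}\big(\Gammar(S^{m})u_{t}\big)$ and the second factor carries our point to $\Gammar(S^{m})u_{t}$. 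Thus $\Pi_{l,m}\big(\ZS+\Delta(\tfrac kn),\GammaS\Delta(u_{k/n})\big)=\big(S^{l}\Z+t,\Gammar(S^{m})u_{t}\big)$ with $t=\tfrac{k(1-n\overline n)}{n}$.

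Next I would identify the image as $k$ runs through $\Z$. Writing $1-n\overline n=S^{l\vee m}\mu$ with $\mu\in\Z$, one gets $t=S^{l\vee m}\tfrac{\mu k}{n}$, so the image equals $\set{\big(S^{l}\Z+S^{l\vee m}\tfrac{\mu k}{n},\Gammar(S^{m})u_{S^{l\vee m}\mu k/n}\big)}{k\in\Z}$. The decisive point is that $\gcd(\mu,n)=1$: any common divisor of $1-n\overline n$ and $n$ divides $1$, and $\gcd(S^{l\vee m},n)=1$. Since moreover $S^{l}\mid S^{l\vee m}$ and $S^{m}\mid S^{l\vee m}$, the displayed pair depends only on $k\bmod n$, and as $\mu$ is a unit modulo $n$ the substitution $k'\equiv\mu k\pmod n$ permutes $\quotient{\Z}{n\Z}$; it turns the image into exactly $\set{\big(S^{l}\Z+S^{l\vee m}\tfrac{k'}{n},\Gammar(S^{m})u_{S^{l\vee m}k'/n}\big)}{k'\in\Z}$, which is the claimed description.

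The step I expect to need the most care is the last one: replacing $k/n$ by the modular inverse $\overline n$ unavoidably twists the parametrisation by the factor $\mu$, and one must check that this twist is invertible modulo $n$, so that the image is the \emph{entire} asserted set rather than a proper subset (or the same set with a permuted parametrisation). Everything else is bookkeeping with the isomorphisms recalled in Sections~\ref{sec:setup} and~\ref{sec:sobolev}, once one notices that a single integer $j$ can serve simultaneously for the level-$l$ reduction on the torus and the level-$m$ reduction on $\XS$, precisely because $S^{l\vee m}$ is divisible by both $S^{l}$ and $S^{m}$.
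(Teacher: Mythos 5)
Your proof is correct and follows essentially the same path as the paper: both replace $\Delta(\tfrac{k}{n})$ by a representative whose non-archimedean components lie in $\Zsf[l\vee m]$, and then read off the archimedean coordinate under the isomorphisms recalled in Section~\ref{sec:sobolev}. Where the paper invokes strong approximation to produce the integer shift and then checks surjectivity separately by observing that $k'=S^{l\vee m}k$ is a valid parameter, you construct the shift explicitly via a modular inverse of $n$ modulo $S^{l\vee m}$, which introduces the unit $\mu$ whose invertibility modulo $n$ then yields surjectivity in one stroke; the two variants are interchangeable.
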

\begin{proof}
  Let~$k\in\Z$ be fixed. Using strong approximation, we can find some~$r\in\Z$ such that for all~$p\in\Sf$ we have
  \begin{equation*}
    \lvert r-\tfrac{k}{n}\rvert_{p}<S^{-(l\vee m)}.
  \end{equation*}
  In particular,~$\frac{k}{n}=r+q$ for a~$q\in\Q$ satisfying~$\lvert q\rvert_{p}<S^{-(l\vee m)}$ for all~$p\in\Sf$. Let~$b^{\prime}=k-nr$, so that~$q=\frac{b^{\prime}}{n}$. The preceding bound combined with~$(\Sf,n)=1$ implies that~$S^{l\vee m}|b^{\prime}$. Thus there is some~$b\in\Z$ such that
  \begin{align*}
    \ZS+\Delta(\tfrac{k}{n})+S^{l}\Zsf&=\ZS+\imath_{\infty}(S^{l\vee m}\tfrac{b}{n})+S^{l}\Zsf,\\
    \GammaS\Delta(u_{\frac{k}{n}})K[m]&=\GammaS\imath_{\infty}(u_{S^{l\vee m}b/n})K[m].
  \end{align*}
  It remains to show that the map is onto this set of points. To this end let~$k\in\Z$ be given arbitrarily. Then~$\lvert S^{l\vee m}\frac{k}{n}\rvert_{p}\leq p^{-\max\{l_{p},m_{p}\}}$ for all~$p\in\Sf$ and thus
  \begin{align*}
    \ZS+\imath_{\infty}(S^{l\vee m}\tfrac{k}{n})+S^{l}\Zsf&=\ZS+\Delta(S^{l\vee m}\tfrac{k}{n})+S^{l}\Zsf,\\
    \GammaS\imath_{\infty}(u_{S^{l\vee m}\frac{k}{n}})K[m]&=\GammaS\Delta(u_{S^{l\vee m}\frac{k}{n}})K[m].
  \end{align*}
\end{proof}
\begin{cor}\label{cor:equidistributionrationalpointssolenoid}
  Let~$S$ be a finite set of places of~$\mathbb{Q}$ containing the infinite place. There is some~$\Ltwo$-Sobolev norm~$\Scal$ on~$\smooth(\TS)$, such that the following is true. Let~$\varphi\in \smooth(\TS)$ and~$n\in\mathbb{N}$ such that~$(\Sf,n)=1$. Then
  \begin{equation*}
    \bigg\lvert\frac{1}{n}\sum_{k=0}^{n-1}\varphi\big(\Delta(\tfrac{k}{n})\big)-\int_{\TS}\varphi(x)\der m_{\TS}(x)\bigg\rvert\ll\frac{\mathcal{S}(\varphi)}{n}.
  \end{equation*}
\end{cor}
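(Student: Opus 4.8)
The plan is to reduce the statement to finitely many instances of the elementary effective equidistribution of $n$ equally spaced points on a circle --- one instance on each congruence quotient $\T(S^{m})$ --- and then to sum the resulting estimates over the levels $m\in\N_{0}^{\Sf}$ by means of the relation~\eqref{eq:relationsobolevnormstorus} between the $S$-arithmetic and the congruence Sobolev norms, much as in Section~\ref{sec:longhorocycles}. Concretely, I would first write $\varphi=\sum_{m\in\N_{0}^{\Sf}}\pr[m]\varphi$, a finite sum since $\varphi$ is smooth, and let $\tilde\varphi_{m}\in\smooth(\T(S^{m}))$ be the function with $\pr[m]\varphi=\tilde\varphi_{m}\circ\Pi_{m}$, where $\Pi_{m}\colon\TS\to\T(S^{m})$ is the canonical quotient map (as in the discussion preceding~\eqref{eq:relationsobolevnormstorus}); note $\Pi_{m}$ sends the Haar probability measure of $\TS$ to that of $\T(S^{m})$, so $\int_{\TS}\varphi=\sum_{m}\int_{\T(S^{m})}\tilde\varphi_{m}$. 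By Lemma~\ref{lem:projectionrationalpoints} and the hypothesis $(\Sf,n)=1$ (which gives $\gcd(S^{m},n)=1$), the map $\Pi_{m}$ restricts to a bijection from $\{\ZS+\Delta(\tfrac{k}{n}):0\le k<n\}$ onto the $n$ equally spaced points $\{S^{m}\Z+S^{m}\tfrac{k}{n}:0\le k<n\}$ in $\T(S^{m})=\rquotient{S^{m}\Z}{\R}$. Hence $\tfrac1n\sum_{k=0}^{n-1}\varphi(\Delta(\tfrac{k}{n}))=\sum_{m}\tfrac1n\sum_{k=0}^{n-1}\tilde\varphi_{m}(S^{m}\tfrac{k}{n})$, and it suffices to bound, for each of the finitely many relevant $m$,
\begin{equation*}
  E_{m}:=\Bigl\lvert\tfrac1n\sum_{k=0}^{n-1}\tilde\varphi_{m}\bigl(S^{m}\tfrac{k}{n}\bigr)-\int_{\T(S^{m})}\tilde\varphi_{m}\Bigr\rvert,
\end{equation*}
and then to sum the bounds.

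The second step is the per-level estimate $E_{m}\ll\tfrac{S^{m}}{n}\,\Scal_{1,m}(\tilde\varphi_{m})$, where $\Scal_{1,m}$ is an $\Ltwo$-Sobolev norm of degree $1$ on $\smooth(\T(S^{m}))$. Expanding $\tilde\varphi_{m}=\sum_{\ell\in\Z}c_{\ell}^{(m)}\chi_{\ell}^{(m)}$ into its (absolutely convergent) Fourier series on $\T(S^{m})$ and using $\chi_{\ell}^{(m)}(S^{m}\tfrac{k}{n})=e^{2\pi\ii\ell k/n}$ together with the vanishing of $\tfrac1n\sum_{k=0}^{n-1}e^{2\pi\ii\ell k/n}$ unless $n\mid\ell$, one gets $\tfrac1n\sum_{k}\tilde\varphi_{m}(S^{m}\tfrac kn)=\sum_{j\in\Z}c_{jn}^{(m)}$, whose $j=0$ term is precisely $\int_{\T(S^{m})}\tilde\varphi_{m}$; thus $E_{m}=\bigl\lvert\sum_{j\neq0}c_{jn}^{(m)}\bigr\rvert$. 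Cauchy--Schwarz in $j$ and Parseval now yield $\sum_{j\neq0}\lvert c_{jn}^{(m)}\rvert\ll\tfrac1n\bigl(\sum_{\ell}\ell^{2}\lvert c_{\ell}^{(m)}\rvert^{2}\bigr)^{1/2}\ll\tfrac{S^{m}}{n}\,\Scal_{1,m}(\tilde\varphi_{m})$, the last step because $\sum_{\ell}\ell^{2}\lvert c_{\ell}^{(m)}\rvert^{2}=S^{2m}\sum_{\ell}(\ell/S^{m})^{2}\lvert c_{\ell}^{(m)}\rvert^{2}\ll S^{2m}\Scal_{1,m}(\tilde\varphi_{m})^{2}$ by the definition of the degree-one Sobolev norm on $\T(S^{m})$. (Equivalently, rescale $\T(S^{m})\cong\T$ and invoke the classical effective equidistribution of $\{\tfrac kn\}_{0\le k<n}$ in $\T$, keeping track of the factor picked up by the Sobolev norm under the rescaling by $S^{m}$.)

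Finally, I would fix a degree $D\geq3$, let $\Scal=\Scal_{D}$ be the corresponding $\Ltwo$-Sobolev norm on $\smooth(\TS)$, sum the per-level estimates, and apply Cauchy--Schwarz over $m$ together with $\Scal_{1,m}\le\Scal_{D,m}$ and~\eqref{eq:relationsobolevnormstorus}:
\begin{align*}
  \Bigl\lvert\tfrac1n\sum_{k=0}^{n-1}\varphi\bigl(\Delta(\tfrac{k}{n})\bigr)-\int_{\TS}\varphi\Bigr\rvert
  &\leq\sum_{m\in\N_{0}^{\Sf}}E_{m}\ll\frac1n\sum_{m\in\N_{0}^{\Sf}}S^{m}\,\Scal_{1,m}(\tilde\varphi_{m})\\
  &\leq\frac1n\Bigl(\sum_{m\in\N_{0}^{\Sf}}S^{(2-D)m}\Bigr)^{1/2}\Bigl(\sum_{m\in\N_{0}^{\Sf}}S^{Dm}\Scal_{D,m}(\tilde\varphi_{m})^{2}\Bigr)^{1/2}=\frac{C}{n}\,\Scal(\varphi),
\end{align*}
where $C=\bigl(\prod_{p\in\Sf}(1-p^{2-D})^{-1}\bigr)^{1/2}<\infty$ because $D\geq3$. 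This is the desired bound.

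The step I expect to be the main obstacle is the last one. Unlike the horocycle estimate of Section~\ref{sec:longhorocycles}, whose congruence-level inputs come with no $m$-dependent prefactor, here each level is burdened by a factor $S^{m}$ --- the equally spaced points sit at scale $S^{m}/n$ while the degree-one Sobolev norm on $\T(S^{m})$ only measures oscillation at scale $S^{m}$ --- and this loss has to be absorbed into the weights $S^{Dm}$ built into the $S$-arithmetic Sobolev norm. That forces the degree $D$ to be chosen large enough ($D\geq3$ suffices) for the geometric series $\sum_{m\in\N_{0}^{\Sf}}S^{(2-D)m}=\prod_{p\in\Sf}(1-p^{2-D})^{-1}$ to converge.
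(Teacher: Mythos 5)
Your proposal is correct and follows essentially the same route as the paper: decompose $\varphi$ into pure-level components, use Lemma~\ref{lem:projectionrationalpoints} to identify the projected points with $n$ equally spaced points on $\T(S^{m})$, establish a per-level bound of the form $E_{m}\ll\tfrac{S^{m}}{n}\Scal_{D,m}(\tilde\varphi_{m})$, and then recombine via Cauchy--Schwarz and~\eqref{eq:relationsobolevnormstorus} with $D\geq3$ to control the geometric factor $S^{m}$. The only (immaterial) difference is that the paper obtains the per-level bound via the Mean Value Theorem and the $L^{\infty}$ Sobolev embedding to control $\lVert\tilde\varphi_{m}'\rVert_{\infty}$, whereas you obtain it via Fourier expansion, orthogonality of $\tfrac1n\sum_{k}e^{2\pi\ii\ell k/n}$, and Parseval.
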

\begin{proof}
  Assume that~$\varphi\in\smooth(\TS)$. For~$m\in\N_{0}^{\Sf}$ denote by~$\pi_{\TS}^{(m)}:\TS\to\rquotient{S^{m}\Z}{\R}$ the projection obtained by the isomorphism~$\quotient{\TS}{S^{m}\Zsf}\cong\rquotient{S^{m}\Z}{\R}$. We denote by~$\tilde{\varphi}_{m}\in\smooth(\rquotient{S^{m}\Z}{\R})$ the unique function such that~$\pr[m]\varphi=\tilde{\varphi}_{m}\circ\pi_{\TS}^{(m)}$. If~$E_{\pr[m]\varphi}$ denotes the integral of~$\pr[m]\varphi$ on~$\TS$, then
  \begin{equation*}
    E_{\pr[m]\varphi}=\int_{\TS}\pr[m]\varphi(x)\der m_{\TS}(x)=\int_{\rquotient{S^{m}\Z}{\R}}\tilde{\varphi}_{m}(t)\der t
  \end{equation*}
  combined with Lemma~\ref{lem:projectionrationalpoints} and the Mean Value Theorem, imply that
  \begin{align*}
    \bigg\lvert\frac{1}{n}\sum_{k=0}^{n-1}&\varphi\big(\Delta(\tfrac{k}{n})\big)-E_{\varphi}\bigg\rvert\leq\sum_{m\in\N_{0}^{\Sf}}\bigg\lvert\frac{1}{n}\sum_{k=0}^{n-1}\pr[m]\varphi\big(\Delta(\tfrac{k}{n})\big)-E_{\pr[m]\varphi}\bigg\rvert\\
    &=\sum_{m\in\N_{0}^{\Sf}}\bigg\lvert\frac{1}{n}\sum_{k=0}^{n-1}\tilde{\varphi}_{m}(S^{m}\tfrac{k}{n})-\int_{\rquotient{S^{m}\Z}{\R}}\tilde{\varphi}_{m}(t)\der t\bigg\rvert\\
    &\leq\sum_{m\in\N_{0}^{\Sf}}\frac{1}{n}\sum_{k=0}^{n-1}\frac{1}{S^{m}}\int_{-\frac{S^{m}}{2}}^{\frac{S^{m}}{2}}\lvert\tilde{\varphi}_{m}(S^{m}\tfrac{k}{n})-\tilde{\varphi}_{m}(S^{m}\tfrac{k}{n}+\tfrac{t}{n})\rvert\der t\\
    &\ll\sum_{m\in\N_{0}^{\Sf}}\frac{S^{m}}{n}\lVert\tilde{\varphi}_{m}^{\prime}\rVert_{\infty}\ll\frac{1}{n}\sum_{m\in\N_{0}^{\Sf}}S^{m}\Scal_{D,m}(\tilde{\varphi}_{m})\ll_{D,\Sf}\frac{1}{n}\Scal_{D}(\varphi),
  \end{align*}
  where in the last line, we used the Sobolev Embedding Theorem~\ref{prop:propertiessobolevnorms}~(\ref{item:sobolevembedding}) for an~$\Ltwo$-Sobolev norm~$\Scal_{D,m}$ on~$\smooth(\rquotient{S^{m}\Z}{\R})$ with respect to a basis independent of~$m$, as well as the Cauchy-Schwarz inequality and~\eqref{eq:relationsobolevnormstorus} for the~$\Ltwo$-Sobolev norm of degree~$D$ on~$\smooth(\TS)$ with respect to the same basis.
\end{proof}
In order to obtain equidistribution of the rational points in the product, we want to apply the above equidistribution in the~$S$-arithmetic extension of the torus and the corresponding result in the modular surface. As mentioned in the beginning, we partition the torus into pieces on which the function varies very little and reduce it to a problem on the~$S$-arithmetic extension of the modular surface only. To this end we need to show the equidistribution of rational points for pieces of the horocycle orbits, i.e.~of the sets
\begin{equation*}
  \set{a_{\sqrt{n}}u_{N\frac{k}{n}}\cdot\Gammar(N)}{k\in \Z\cap[\tfrac{n\alpha}{N},\tfrac{n\beta}{N}]}\quad(0\leq\alpha<\beta<N).
\end{equation*}
In what follows, given real numbers~$a<b$, we will denote~$[a,b]_{\Z}=[a,b]\cap\Z$.
\begin{prop}\label{prop:equidistributionrationalpointscongruence}
  There is a basis~$\Xfrak$ of~$\liesltwo(\R)$, some~$D\in\mathbb{N}$ and positive constants~$\exponentdecayrationalpointscongruence,\maxexponentunipotentgrowth>0$ such that the following is true. If~$N\in\N$,~$a\in\N$,~$f\in\compactsmooth\big(\rquotient{\Gammar(N)}{\Gr}\big)$, and~$0\leq\alpha<\beta<N$ satisfy~$\beta-\alpha<1$, then
  \begin{equation*}
    \bigg\lvert\frac{1}{\lvert[\tfrac{n\alpha}{a},\tfrac{n\beta}{a}]_{\Z}\rvert}\sum_{k\in[\frac{n\alpha}{a},\frac{n\beta}{a}]_{\Z}}f\big(a_{\sqrt{n}}u_{a\frac{k}{n}}\cdot\Gammar(N)\big)-\int_{\rquotient{\Gammar(N)}{\Gr}}f(x)\der x\bigg\rvert\ll\frac{a^{\maxexponentunipotentgrowth}}{\beta-\alpha}n^{-\exponentdecayrationalpointscongruence}\Scal_{D}(f),
  \end{equation*}
  where~$\Scal_{D}$ is an $\Ltwo$-Sobolev norm defined by the monomials of degree at most~$D$ in~$\Xfrak$.
\end{prop}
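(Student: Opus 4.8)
The plan is to prove this by the spectral method, essentially as the case $N=a=1$ with $[\alpha,\beta]=[0,1]$ is treated in~\cite{numberfield}, paying attention to the dependence on the level~$N$, the dilation~$a$, and the arc length~$\beta-\alpha$. First I would reduce the role of~$a$: writing~$g=(a,n)$, $a^{\prime}=a/g$, $n^{\prime}=n/g$ we have~$a\tfrac{k}{n}=a^{\prime}\tfrac{k}{n^{\prime}}$ with~$(a^{\prime},n^{\prime})=1$, so the sample points are rational points of denominator~$n^{\prime}$ and multiplication by~$a^{\prime}$ is a bijection of~$\quotient{\Z}{n^{\prime}\Z}$; the sum over~$k\in[\tfrac{n\alpha}{a},\tfrac{n\beta}{a}]_{\Z}$ thus runs over the~$a^{\prime}$-dilate of an interval~$I$ of length~$L\asymp n^{\prime}(\beta-\alpha)/a^{\prime}$. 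Since the effective rate will emerge as a power of~$n^{\prime}\geq n/a$, a bound of shape~$(n^{\prime})^{-\kappa}$ turns into~$a^{\kappa}n^{-\kappa}$, which accounts for the factor~$a^{c_{1}}$; from now on one may think of~$n^{\prime}$ as the relevant modulus.

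Next, decompose~$f$ under the right~$\SO(2)$-action and each isotypic component spectrally on~$L^{2}(\rquotient{\Gammar(N)}{\Gr})$ into Maass cusp forms, unitary Eisenstein series, and the constant function; the constant produces the main term~$\int f$. Since~$\Scal_{D}$ dominates all derivatives of order~$\leq D$, hence powers of the Casimir operator, the spectral coefficients of~$f$ decay faster than any fixed power of~$1+|\lambda_{\phi}|$ in the Casimir eigenvalue~$\lambda_{\phi}$, so with~$D$ chosen large it suffices to estimate the contribution of one spectral component~$\phi$ at a time and absorb polynomial losses in the spectral parameter at the end. For such a~$\phi$ I would expand it in its Fourier series along~$U$ at the cusp~$\infty$ (period~$N$): evaluated on the horocycle of height~$\sim 1/n$ this takes the shape~$\phi(a_{\sqrt{n}}u_{t}\cdot\Gammar(N))=\sum_{\mu}\rho_{\phi}(\mu)\,W(\mu;\tfrac1n)\,e^{2\pi\ii\mu t/N}$, with~$\rho_{\phi}(\mu)$ the~$\mu$-th Fourier coefficient and~$W(\mu;y)$ a Whittaker function of argument~$\asymp|\mu|y/N$ (the term~$\mu=0$ being absent for cusp forms and, for Eisenstein series,~$\ll n^{-1/2}$ and handled separately). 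Substituting~$t=a^{\prime}k/n^{\prime}$ and exchanging the order of summation, the sum over~$k\in I$ becomes an incomplete geometric sum, bounded by~$\ll\min\bigl(L,\,\lVert\mu a^{\prime}/(n^{\prime}N)\rVert^{-1}\bigr)$, where~$\lVert\xi\rVert$ is the distance from~$\xi$ to the nearest integer.

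Now split the~$\mu$-sum into resonant terms (where~$\lVert\mu a^{\prime}/(n^{\prime}N)\rVert\lesssim 1/L$) and the rest. The non-resonant part is bounded by Cauchy--Schwarz together with the Rankin--Selberg mean-value bound~$\sum_{|\mu|\leq X}|\rho_{\phi}(\mu)|^{2}\ll X(1+|\lambda_{\phi}|)^{O(1)}$ and the rapid, eventually exponential, decay of~$W(\mu;\tfrac1n)$ in~$|\mu|/n$, which truncates the sum at~$|\mu|\ll n^{1+\varepsilon}$. Among the resonant~$\mu$, those near~$0$ (within~$O(N/(\beta-\alpha))$) have Whittaker argument~$\ll 1/n^{\prime}$, so~$W(\mu;\tfrac1n)\ll (n^{\prime})^{-\delta}$ by Selberg's uniform spectral gap~\cite{Selberg65}; the remaining resonant~$\mu$ satisfy~$|\mu|\gtrsim n^{\prime}N/(a^{\prime},N)\gg n^{\prime}$, and their contribution is governed by the size of~$\rho_{\phi}(\mu)$ for~$|\mu|$ of order~$n^{\prime}$ combined with the decay of~$W$. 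Dividing by~$L$ yields the factor~$1/(\beta-\alpha)$ and residual powers of~$a^{\prime}$, and summing the resulting bounds against the rapidly decaying spectral coefficients of~$f$ gives the asserted inequality.

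The main obstacle is the last family of resonant frequencies together with uniformity in the level. The contribution of~$\mu$ with~$|\mu|\asymp n^{\prime}$, summed over the automorphic spectrum, is precisely the arithmetically delicate quantity---via the Kuznetsov formula it becomes a sum of Kloosterman sums to which Weil's estimate applies, exactly as in the treatment of~\eqref{eq:kloosterman}---and one must carry this out with constants independent of~$N$, using size bounds for~$\rho_{\phi}$ and a spectral gap uniform over the tower~$\{\Gammar(N)\}$, and with at most polynomial dependence on~$a$. By contrast the reduction in~$a$, the non-resonant estimate, the small-frequency resonant estimate, and the subsequent passage from~$\Gammar(N)$ to the product of the torus and the modular surface in the rest of the section are comparatively routine.
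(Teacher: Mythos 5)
Your proposal takes a genuinely different route from the paper. You pursue the classical spectral approach: decompose $f$ into automorphic eigenforms, Fourier-expand each along $U$ at the cusp, substitute the sample points $t=ak/n$, and split the resulting frequency sum into non-resonant and resonant ranges, invoking Rankin--Selberg and Whittaker decay for the former and Kuznetsov plus Weil for the critical range $\lvert\mu\rvert\asymp n^{\prime}$. The paper's proof is instead purely dynamical and avoids Fourier expansions of eigenforms, Kuznetsov, and Kloosterman sums entirely. It introduces a discrepancy operator $D_{K}f(x)=\frac{1}{K}\sum_{\ell=0}^{K-1}f(xu_{a}^{\ell})-E_{f}$, shows the point average differs from the average of $D_{K}f$ only by boundary terms of size $O(K/\lvert\text{sample}\rvert)\Scal(f)$, thickens the discrete sample set into a small $U$-neighbourhood (a piece of a long closed horocycle), applies Cauchy--Schwarz to reduce to $\lVert D_{K}f\rVert_{L^{2}}$ over that piece, invokes effective equidistribution of pieces of closed horocycles (Kleinbock--Margulis, with $N$-uniformity from Selberg) to replace the piece by Haar measure, and finally bounds $\lVert D_{K}f\rVert_{L^{2}}^{2}$ by the uniform polynomial decay of matrix coefficients $\lvert\langle u_{ka}\cdot f,f\rangle-E_{f}^{2}\rvert\ll(1+ka)^{-2\exponentdecayharishchandraunipotent}\Scal(f)^{2}$. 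The only arithmetic input is the uniform spectral gap; everything else is soft ergodic theory, which is exactly what makes the dependence on $N$ and $a$ come out cleanly.

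The comparison is instructive, but there is also a genuine gap in your plan at the step you yourself flag as the main obstacle. Your route is arithmetically sharper and is the historical one (Kloosterman, Weil for $\alpha=1$; the spectral treatment of $\alpha\in(0,\tfrac12]$ in \cite{numberfield}), but the resonant range $\lvert\mu\rvert\asymp n^{\prime}$ is left as an assertion that ``Kuznetsov plus Weil applies exactly as for \eqref{eq:kloosterman}.'' That is not the case here: you must bound $\sum_{\phi}a_{\phi}(f)\sum_{\text{resonant }\mu}\rho_{\phi}(\mu)W(\mu;\tfrac1n)$ for an \emph{arbitrary} smooth test function and \emph{uniformly} in $N$, which forces you to Cauchy--Schwarz in the spectral parameter, open the square, separate diagonal from off-diagonal, run Kuznetsov for $\Gammar(N)$ with Kloosterman sums at every pair of cusps, and treat the continuous spectrum separately --- substantially more than the treatment of \eqref{eq:kloosterman}, and none of it is carried out. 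As written, the proposal therefore outlines a plausible alternative strategy but does not prove the proposition; the paper's discrepancy/thickening/mixing argument delivers the same conclusion with far less arithmetic machinery and with the required uniformities built in.
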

\begin{proof}
  Given a real-valued function~$f\in C_{c}^{\infty}(\rquotient{\Gammar(N)}{\Gr})$, define the discrepancy (for~$K$ and~$\gamma\in\Gr$) as
\begin{equation*}
  D_{K}f(x)=\frac{1}{K}\sum_{\ell=0}^{K-1}f(x\gamma^{\ell})-E_{f}\qquad\big(x\in\rquotient{\Gammar(N)}{\Gr}\big),
\end{equation*}
where~$E_{f}$ is the integral of~$f$ over~$\rquotient{\Gammar(N)}{\Gr}$. The goal is to use the spectral gap for the action of~$\gamma=u_{a}$ in combination with the right degree of averaging in the discrepancy. Given a function~$f:\rquotient{\Gammar(N)}{\Gr}\to\C$ we denote by~$A_{n,a}^{\alpha,\beta}(f)$ the average of~$f$ on the set~$[\frac{n\alpha}{a},\frac{n\beta}{a}]_{\Z}$ along the stretched horocycle orbit~$a_{\sqrt{n}}\Ur\cdot\Gammar(N)$. First note that~$A_{n,a}^{\alpha,\beta}(f-E_{f})-A_{n,a}^{\alpha,\beta}(D_{K}f)$ is the difference between the average~$A_{n,a}^{\alpha,\beta}(f-E_{f})$ and---exchanging the order of summation---an average of the moving averages~$A_{n,a}^{\alpha,\beta}(f\circ\gamma^{\ell}-E_{f})$ for~$0\leq \ell<K$ where~$f\circ\gamma^{\ell}$ is defined by precomposing~$f$ with right-multiplication by~$\gamma^{\ell}$. More concisely, the term in question is the difference between an average and an average of moving averages. Such a difference is bounded by an appropriate count of the boundary terms and the maximum norm of the underlying sequence. Hence Proposition~\ref{prop:propertiessobolevnorms}~(\ref{item:sobolevembedding}) implies the existence of an~$\Ltwo$-Sobolev norm~$\Scal_{D_{0}}$ of degree~$D_{0}$ on~$\compactsmooth(\rquotient{\Gammar(N)}{\Gr})$ such that for general~$F\in\compactsmooth(\rquotient{\Gammar(N)}{\Gr})$ we have
\begin{equation}\label{eq:averagetodiscrepancy}
  \big\lvert A_{n,a}^{\alpha,\beta}(F-E_{F})-A_{n,a}^{\alpha,\beta}(D_{K}F)\big\rvert\ll\tfrac{K}{\lvert[\frac{n\alpha}{a},\frac{n\beta}{a}]_{\Z}\rvert}\Scal_{D_{0}}(F).
\end{equation}
Let~$I=(-\frac{a}{2n^{\delta}},\frac{a}{2n^{\delta}})$ for~$\delta\in(0,1)$ to be determined later. For any such~$\delta$, the map sending~$t\in I$ and~$k\in[\frac{n\alpha}{a},\frac{n\beta}{a}]_{\Z}$ to~$u_{t}a_{\sqrt{n}}u_{a\frac{k}{n}}\cdot\Gammar(N)$ is injective. In order to see injectivity, note that for~$s\in(-1,1)$ we have
\begin{equation*}
  u_{s\frac{a}{2n^{\delta}}}a_{\sqrt{n}}u_{a\frac{k}{n}}=a_{\sqrt{n}}u_{(k+\frac{s}{2n^{\delta}})\frac{a}{n}}.
\end{equation*}
Using the Mean Value Theorem for~$f$ and Proposition~\ref{prop:propertiessobolevnorms}~(\ref{item:sobolevembedding}) there is some~$\Ltwo$-Sobolev norm of degree~$D_{1}$ so that by~\eqref{eq:averagetodiscrepancy}
\begin{align*}
  A_{n,a}^{\alpha,\beta}(f)-E_{f}&=\tfrac{n^{\delta}}{a}\int_{I}A_{n,a}^{\alpha,\beta}(u_{-t}\cdot f-E_{f})\der t+O(\tfrac{a}{n^{\delta}})\Scal_{D_{1}}(f)\\
  &=\tfrac{n^{\delta}}{a}\int_{I}A_{n,a}^{\alpha,\beta}(D_{K}u_{-t}\cdot f)\der t+O\Big(\tfrac{K}{\lvert[\frac{n\alpha}{a},\frac{n\beta}{a}]_{\Z}\rvert}\Big)\tfrac{n^{\delta}}{a}\int_{I}\Scal_{D_{0}}(u_{-t}\cdot f)\der t\\
  &\qquad+O(\tfrac{a}{n^{\delta}})\Scal_{D_{1}}(f)\\
\end{align*}
Using Proposition~\ref{prop:propertiessobolevnorms}~(\ref{item:continuityofrepresentation}),~$\Scal_{D_{0}}(u_{-t}\cdot f)\ll(1+\lvert t\rvert)^{\exponentpolynomialsobolevunipotent_{0}}\Scal_{D_{0}}(f)$ for some~$\exponentpolynomialsobolevunipotent_{0}>0$ depending on~$D_{0}$, so that
\begin{align*}
  \lvert A_{n,a}^{\alpha,\beta}(f)-E_{f}\rvert&\ll\tfrac{n^{\delta}}{a\lvert[\frac{n\alpha}{a},\frac{n\beta}{a}]_{\Z}\rvert}\sum_{k\in[\frac{n\alpha}{a},\frac{n\beta}{a}]_{\Z}}\int_{I}\lvert D_{K}f(a_{\sqrt{n}}u_{\frac{ak+t}{n}}\cdot\Gammar(N))\rvert\der t\\
  &\qquad+\tfrac{Kn^{\delta}}{a\lvert[\frac{n\alpha}{a},\frac{n\beta}{a}]_{\Z}\rvert}\Scal_{D_{0}}(f)\int_{I}(1+\lvert t\rvert)^{\exponentpolynomialsobolevunipotent_{0}}\der t+\tfrac{a}{n^{\delta}}\Scal_{D_{1}}(f)
\end{align*}
The first summand is the average of~$\lvert D_{K}(f)\rvert$ along the set~$\orbitnNpiece{E}{}=a_{\sqrt{n}}U_{E}\cdot\Gammar(N)$ where we denote~$E=\bigsqcup_{q\in\frac{a}{n}\Z\cap[\alpha,\beta]}q+\tfrac{1}{n}I$. If $E=[s,t]$ for $s<t$, we denote this by $\orbitnNpiece{s}{t}$. Given a real number~$t$, write~$t_{-}(n^{\delta})=t-\frac{a}{2n^{\delta}}$ and similarly~$t_{+}(n^{\delta})=t+\frac{a}{2n^{\delta}}$. Using Cauchy-Schwarz, we have
\begin{align*}
  \int_{\orbitnNpiece{E}{}}\lvert D_{K}f(x)\rvert\der x&=\int_{\orbitnNpiece{\alpha_{-}(n^{\delta})}{\beta_{+}(n^{\delta})}}\mathbf{1}_{\orbitnNpiece{E}{}}(x)\lvert D_{K}f(x)\rvert\der x\\
  &\leq\sqrt{\vol{\orbitnNpiece{E}{}}}\bigg(\int_{\orbitnNpiece{\alpha_{-}(n^{\delta})}{\beta_{+}(n^{\delta})}}\lvert D_{K}f(x)\rvert^{2}\der x\bigg)^{\frac{1}{2}}.
\end{align*}
Given~$0\leq\alpha^{\prime}<\beta^{\prime}\leq N$, denote by~$\mu_{N,n;\alpha^{\prime},\beta^{\prime}}$ the probability measure defined by
\begin{equation*}
  \mu_{N,n;\alpha^{\prime},\beta^{\prime}}(F)=\frac{1}{\beta^{\prime}-\alpha^{\prime}}\int_{\alpha^{\prime}}^{\beta^{\prime}}F(a_{\sqrt{n}}u_{t}\cdot\Gammar(N))\der t\quad(F\in\compactsmooth(\rquotient{\Gammar(N)}{\Gr})).
\end{equation*}
Then it follows from the preceding bound, that
\begin{align}
  \label{eq:boundbydiscrepancyonpiece}\lvert A_{n,a}^{\alpha,\beta}(f)-E_{f}\rvert&\ll\Big(\tfrac{\vol{\orbitnNpiece{\alpha_{-}(n^{\delta})}{\beta_{+}(n^{\delta})}}}{\vol{\orbitnNpiece{E}{}}}\Big)^{\frac{1}{2}}\mu_{N,n;\alpha_{-}(n^{\delta}),\beta_{+}(n^{\delta})}\big(\lvert D_{K}f\rvert^{2}\big)^{\frac{1}{2}}\\
  \notag&\qquad+\big(1+\tfrac{a}{n^{\delta}}\big)^{1+\exponentpolynomialsobolevunipotent_{0}}\tfrac{K}{\vol{\orbitnNpiece{E}{}}(1+\exponentpolynomialsobolevunipotent_{0})}\Scal_{D_{0}}(f)+\tfrac{a}{n^{\delta}}\Scal_{D_{1}}(f),
\end{align}
Using effective equidistribution of pieces of closed horocycle orbits (cf.~\cite{KleinbockMargulisPieces}), we know that
\begin{equation}\label{eq:longhorocyclediscrepancy}
  \mu_{N,n;\alpha_{-}(n^{\delta}),\beta_{+}(n^{\delta})}(F)=\int_{\rquotient{\Gammar(N)}{\Gr}}F\der m_{\rquotient{\Gammar(N)}{\Gr}}+O\big(\tfrac{n^{-\exponentdecayhorocycles}}{\sqrt{\beta-\alpha+n^{-\delta}a}}\big)\Scal_{D_{2}}(F)
\end{equation}
for all~$F\in\compactsmooth(\rquotient{\Gammar(N)}{\Gr})\oplus\C$, for some~$\exponentdecayhorocycles>0$ and some~$\Ltwo$-Sobolev norm of degree~$D_{2}$, where neither the implicit constant nor~$\exponentdecayhorocycles$ depend on~$N$ by the uniformity of the spectral gap, cf.~\cite{Selberg65}. We will apply this bound to the function~$F=(D_{K}f)^{2}$. We first try to control the error term on the right of~\eqref{eq:longhorocyclediscrepancy}. To this end we use Proposition~\ref{prop:propertiessobolevnorms}~(\ref{item:multiplicativity}) to find some~$\Ltwo$-Sobolev norm of degree~$D_{3}$, such that for all~$F,\tilde{F}\in\compactsmooth(\rquotient{\Gammar(N)}{\Gr})$ we have
\begin{equation*}
  \Scal_{D_{2}}(F\tilde{F})\ll\Scal_{D_{3}}(F)\Scal_{D_{3}}(\tilde{F}).
\end{equation*}
In order to control $\Scal_{D_{3}}(D_{K}f)$ use Proposition~\ref{prop:propertiessobolevnorms}~(\ref{item:continuityofrepresentation}) in combination with the choice~$\gamma=u_{a}$ to find some exponent~$\exponentunipotentgrowth>0$ depending only on the degree~$D_{3}$, such that
\begin{equation*}
  \Scal_{D_{3}}(D_{K}f)\ll\frac{1}{K}\sum_{k=0}^{K-1}(1+ka)^{\exponentunipotentgrowth}\Scal_{D_{3}}(f)\ll(aK)^{\exponentunipotentgrowth}\Scal_{D_{3}}(f).
\end{equation*}
Hence we have shown
\begin{equation*}
  \Scal_{D_{2}}\big((D_{K}f)^{2}\big)\ll(aK)^{2\exponentunipotentgrowth}\Scal_{D_{3}}(f)^{2}.
\end{equation*}

We now turn to bounding the~$\Ltwo$-norm of $D_{K}f$ on~$\rquotient{\Gammar(N)}{\Gr}$, i.e.~the first term in the expression resulting from~\eqref{eq:longhorocyclediscrepancy}. 
As of \cite{Selberg65} there is some~$\exponentdecayharishchandraunipotent>0$ independent of $N$ and without loss of generality less than~$\frac{1}{2}$ such that for all $k\in\N$ we have
\begin{equation*}
  \big\lvert\langle u_{k}\cdot f,f\rangle_{L^{2}(\rquotient{\Gammar(N)}{\Gr})}-\lvert E_{f}\rvert^{2}\big\rvert\ll\big(1+k\big)^{-2\exponentdecayharishchandraunipotent}\Scal_{D_{4}}(f)^{2}.
\end{equation*}
The independence of $N$ is known as uniform effective decay of matrix coefficients for the action of $\Gr$ on congruence quotients. For the explicit calculation of the Harish-Chandra spherical function, we note that the maximal singular value of the matrix $u_{t}$ is comparable to $1+\lvert t\rvert$ and refer the reader to \cite[Sect.~3.7]{HeeOh} for further details. Observe that for any sequence~$(x_{k})_{k\in\N}$ we have
\begin{equation}\label{eq:summation}
  \sum_{k_{1}=1}^{K-1}\sum_{k_{2}=0}^{k_{1}-1}x_{k_{1}-k_{2}}=\sum_{k_{1}=0}^{K-2}\sum_{k_{2}=k_{1}+1}^{K-1}x_{k_{2}-k_{1}}.
\end{equation}
Combining these two facts and using that~$f$ is real-valued we obtain
\begin{align*}
  \int_{\rquotient{\Gammar(N)}{\Gr}}(D_{K}f)^{2}&\der m_{\rquotient{\Gammar(N)}{\Gr}}\leq\frac{1}{K^{2}}\sum_{k_{1},k_{2}=0}^{K-1}\big\lvert\langle u_{(k_{1}-k_{2})a}\cdot f,f\rangle_{L^{2}(\rquotient{\Gammar(N)}{\Gr})}-E_{f}^{2}\big\rvert\\
  &\ll\frac{1}{K^{2}}\sum_{k_{1}=1}^{K-1}\sum_{k_{2}=0}^{k_{1}-1}\big\lvert\langle u_{(k_{1}-k_{2})a}\cdot f,f\rangle_{L^{2}(\rquotient{\Gammar(N)}{\Gr})}-E_{f}^{2}\big\rvert\\
  &\qquad+\frac{1}{K}\big\lvert E_{f^{2}}-E_{f}^{2}\big\rvert\\
  &\ll\frac{1}{K^{2}}\sum_{k_{1}=1}^{K-1}\sum_{k_{2}=0}^{k_{1}-1}\big(1+(k_{1}-k_{2})a\big)^{-2\exponentdecayharishchandraunipotent}\Scal_{D_{4}}(f)^{2}+K^{-1}\Scal_{D_{0}}(f)^{2}.
\end{align*}

It remains to bound the first sum. To this end one calculates
\begin{align*}
  \frac{1}{K^{2}}\sum_{k_{1}=1}^{K-1}\sum_{k_{2}=0}^{k_{1}-1}\big(1+(k_{1}-k_{2})a\big)^{-2\exponentdecayharishchandraunipotent}&\leq\frac{1}{a K^{2}}\sum_{k_{1}=1}^{K-1}\int_{1}^{1+k_{1}a}t^{-2\exponentdecayharishchandraunipotent}\der t\\
  &\leq\frac{1}{a(1-2\exponentdecayharishchandraunipotent)K^{2}}\sum_{k_{1}=1}^{K-1}(1+k_{1}a)^{1-2\exponentdecayharishchandraunipotent}\\
  &\leq\frac{1}{a^{2}(1-2\exponentdecayharishchandraunipotent)K^{2}}\int_{1+a}^{1+Ka}t^{1-2\exponentdecayharishchandraunipotent}\der t\\
  &\ll\frac{(1+Ka)^{2-2\exponentdecayharishchandraunipotent}}{(2-2\exponentdecayharishchandraunipotent)(1-2\exponentdecayharishchandraunipotent)(Ka)^{2}}\\
  &\ll (1+Ka)^{-2\exponentdecayharishchandraunipotent}.
\end{align*}
Thus, combining the two steps we obtain
\begin{equation}
  \label{eq:boundmeansquarediscrepancy}
  \int_{\rquotient{\Gammar(N)}{\Gr}}(D_{K}f)^{2}\ll (1+aK)^{-2\exponentdecayharishchandraunipotent}\Scal_{D_{4}}(f)^{2}+K^{-1}\Scal_{D_{0}}(f)^{2}
\end{equation}
for some~$\Ltwo$-Sobolev norm of degree~$D_{4}$.
As~$\lvert E\rvert\asymp\frac{\beta-\alpha}{n^{\delta}}$, we have
\begin{equation*}
  \vol{\orbitnNpiece{E}{}}\asymp\tfrac{\beta-\alpha}{n^{\delta-1}}.
\end{equation*}
We also note that 
\begin{equation*}
  \vol{\orbitnNpiece{\alpha_{-}(n^{\delta})}{\beta_{+}(n^{\delta})}}=n(\beta-\alpha+\tfrac{a}{n^{\delta}}).
\end{equation*}
Let~$D=\max\{D_{0},\ldots,D_{4}\}$, so that~$\Scal_{D_{i}}\ll\Scal_{D}$. Combining this with the bounds from~\eqref{eq:boundbydiscrepancyonpiece},~\eqref{eq:longhorocyclediscrepancy} and~\eqref{eq:boundmeansquarediscrepancy}, denoting~$\maxexponentunipotentgrowth=\max\{1+\eta_{0},\exponentunipotentgrowth\}$, and plugging in the bounds for the volumes of the pieces~$\orbitnNpiece{E}{}$ and~$\orbitnNpiece{\alpha_{-}(n^{\delta})}{\beta_{+}(n^{\delta})}$ respectively, we found that
\begin{align*}
  \lvert A_{n,a}^{\alpha,\beta}(f)-E_{f}\rvert&\ll\Scal_{D}(f)\Big\{\Big(\tfrac{n^{\delta}(\beta-\alpha+n^{-\delta}a)}{\beta-\alpha}\Big)^{\frac{1}{2}}\Big[\tfrac{1}{(1+aK)^{\exponentdecayharishchandraunipotent}}+\tfrac{n^{-\exponentdecayhorocycles/2}(aK)^{\maxexponentunipotentgrowth}}{(\beta-\alpha+n^{-\delta}{a})^{1/4}}+K^{-\frac{1}{2}}\Big]\\
  &\qquad+\tfrac{Kn^{\delta-1}}{(\beta-\alpha)(1+\exponentpolynomialsobolevunipotent_{0})}\Big(1+\tfrac{a}{n^{\delta}}\Big)^{1+\exponentpolynomialsobolevunipotent_{0}}+\tfrac{a}{n^{\delta}}\Big\}\\
  &\ll a^{\maxexponentunipotentgrowth}\Scal_{D}(f)\Big\{\Big(\tfrac{n^{\delta}(\beta-\alpha+n^{-\delta}a)}{\beta-\alpha}\Big)^{\frac{1}{2}}\Big[K^{-\exponentdecayharishchandraunipotent}+\tfrac{n^{-\exponentdecayhorocycles/2}K^{\maxexponentunipotentgrowth}}{(\beta-\alpha+n^{-\delta}{a})^{1/4}}+K^{-\frac{1}{2}}\Big]\\
  &\qquad+\tfrac{Kn^{\delta-1}}{(\beta-\alpha)(1+\exponentpolynomialsobolevunipotent_{0})}\Big(1+\tfrac{1}{n^{\delta}}\Big)^{1+\exponentpolynomialsobolevunipotent_{0}}+\tfrac{1}{n^{\delta}}\Big\}\\
\end{align*}
We can always assume that~$\delta,\exponentdecayhorocycles$ and~$\exponentdecayharishchandraunipotent$ are sufficiently small. In particular, we assume that~$0<\delta<\min\{\frac{\exponentdecayharishchandraunipotent\exponentdecayhorocycles}{\maxexponentunipotentgrowth+\exponentdecayharishchandraunipotent},1-\frac{\exponentdecayharishchandraunipotent\exponentdecayhorocycles}{2\maxexponentunipotentgrowth+\exponentdecayharishchandraunipotent}\}$. This implies, that
\begin{equation*}
  \exponentdecayrationalpointscongruence=\min\big\{\exponentdecayharishchandraunipotent\tfrac{\exponentdecayhorocycles}{2\exponentdecayharishchandraunipotent+2\maxexponentunipotentgrowth}-\tfrac{\delta}{2},\tfrac{\exponentdecayhorocycles-\delta}{2}-\maxexponentunipotentgrowth\tfrac{\exponentdecayhorocycles}{2\exponentdecayharishchandraunipotent+2\maxexponentunipotentgrowth},1-\delta-\tfrac{\exponentdecayhorocycles}{2\exponentdecayharishchandraunipotent+2\maxexponentunipotentgrowth},\delta\big\},
\end{equation*}
is a positive number. If we choose~$K\asymp n^{\frac{\exponentdecayhorocycles}{2\exponentdecayharishchandraunipotent+2\maxexponentunipotentgrowth}}$ and assume that~$\beta-\alpha+n^{-\delta}a\leq 1$, we finally obtain
\begin{equation*}
  \lvert A_{n,a}^{\alpha,\beta}(f)-E_{f}\rvert\ll\tfrac{a^{\maxexponentunipotentgrowth}}{\beta-\alpha}n^{-\exponentdecayrationalpointscongruence}\Scal_{D}(f).
\end{equation*}
\end{proof}
For~$\XS$, the equidistribution of rational points on long horocycles follows from Proposition~\ref{prop:equidistributionrationalpointscongruence} by combining the relation between rational points on long horocycles in~$\XS$ and rational points on long horocycles in congruence quotients~$\Xr(S^{m})$,~$m\in\N_{0}^{\Sf}$ as explicated in the proof of Corollary~\ref{cor:equidistributionrationalpointssolenoid}. We will not use this later and thus leave this case to the reader.

As a corollary, we can now show effective equidistribution of the rational points in the product of a torus and a congruence quotient. More generally, we have the following
\begin{cor}\label{cor:rationalpointscongruencejoining}
  There are~$\exponentpolynomialsobolevunipotent_{1},\exponentdecayrationalpointscongruencejoining>0$, fixed bases~$\Xfrak_{1},\Xfrak_{2}$ of~$\Lie(\R)$ and~$\liesltwo$ respectively, and a degree~$D\in\N$ such that for all~$N_{1},N_{2},a,b\in\N$, for all~$\varphi\in\smooth(\rquotient{N_{1}\Z}{\R})$, and~$f\in\compactsmooth(\rquotient{\Gammar(N_{2})}{\Gr})$, for all~$n\in\N$, we have
  \begin{equation*}
    \bigg\lvert\frac{1}{nN_{1}N_{2}}\sum_{k=0}^{nN_{1}N_{2}-1}\varphi(N_{1}\Z+\tfrac{ak}{n})f(\Gammar(N_{2})u_{\tfrac{bk}{n}}a_{\sqrt{n}}^{-1})-E_{\varphi}E_{f}\bigg\rvert\ll (ab)^{\exponentpolynomialsobolevunipotent_{1}}n^{-\exponentdecayrationalpointscongruencejoining}\Scal_{1}(\varphi)\Scal_{2}(f),
  \end{equation*}
  where~$\Scal_{1},\Scal_{2}$ denote the~$\Ltwo$-Sobolev norm of degree~$D$ with respect to~$\Xfrak_{1}$ and~$\Xfrak_{2}$ on~$\smooth(\rquotient{N_{1}\Z}{\R})$ and~$\compactsmooth(\Xr(N_{2}))$ respectively.
\end{cor}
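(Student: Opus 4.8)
The plan is to freeze the torus factor $\varphi$ on short blocks of summation indices and feed each resulting one-dimensional average of $f$ into Proposition~\ref{prop:equidistributionrationalpointscongruence}. Write $M=nN_{1}N_{2}$, so that the left-hand side is the average of the ($M$-periodic) summand over one full period, and fix a block length $L\in\N$ to be chosen at the end. I begin by recording, via Sobolev embedding (Proposition~\ref{prop:propertiessobolevnorms}~(\ref{item:sobolevembedding}) and Proposition~\ref{prop:propertiessobolevnormstorus}~(\ref{item:sobolevembeddingtorus})), that $\lVert\varphi\rVert_{\infty},\lVert\varphi'\rVert_{\infty}\ll\Scal_{1}(\varphi)$ and $\lVert f\rVert_{\infty}\ll\Scal_{2}(f)$ once $D$ is large, and by noting that for a consecutive block $I$ of indices the points $\Gammar(N_{2})u_{bk/n}a_{\sqrt n}^{-1}$, $k\in I$, form a $\tfrac bn$-spaced arithmetic progression of samples along a stretched closed horocycle orbit in $\rquotient{\Gammar(N_{2})}{\Gr}$ of length $\asymp N_{2}$. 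Consequently Proposition~\ref{prop:equidistributionrationalpointscongruence} --- or rather the variant of it for this family, proved verbatim since the equidistribution of closed horocycle pieces and the uniform decay of matrix coefficients it uses are insensitive to the direction of the horocycle and to the position and phase of the sampled arc (cf.\ the reduction from $\alpha\in[\tfrac12,1)$ to $\alpha\in(0,\tfrac12]$ sketched in the introduction) --- gives, with $b$ and $N_{2}$ in place of $a$ and $N$ and with the same $N$-independent constants, $\tfrac1{\lvert I\rvert}\sum_{k\in I}f(\cdots)=E_{f}+O\bigl(\tfrac{b^{\maxexponentunipotentgrowth}}{b\lvert I\rvert/n}\,n^{-\exponentdecayrationalpointscongruence}\Scal_{2}(f)\bigr)$ whenever $\tfrac{b\lvert I\rvert}{n}<1$.

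Now partition $\{0,1,\dots,M-1\}$ into $\lceil M/L\rceil$ consecutive intervals $I_{j}$, each of length $\leq L$ and with left endpoint $k_{j}$. On $I_{j}$ the argument $\tfrac{ak}{n}$ varies over an interval of length $\leq\tfrac{aL}{n}$, so the mean value theorem gives $\lvert\varphi(\tfrac{ak}{n})-\varphi(\tfrac{ak_{j}}{n})\rvert\ll\tfrac{aL}{n}\Scal_{1}(\varphi)$ for $k\in I_{j}$; since $\lVert f\rVert_{\infty}\ll\Scal_{2}(f)$, replacing each $\varphi(\tfrac{ak}{n})$ by $\varphi(\tfrac{ak_{j}}{n})$ alters the average by $O\bigl(\tfrac{aL}{n}\Scal_{1}(\varphi)\Scal_{2}(f)\bigr)$. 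Applying the horocycle estimate from the previous paragraph to each block (legitimate since $\tfrac{b\lvert I_{j}\rvert}{n}\leq\tfrac{bL}{n}<1$, an inequality we may assume --- the complementary range being trivial, see below) yields $\sum_{k\in I_{j}}f(\cdots)=\lvert I_{j}\rvert E_{f}+O\bigl(b^{\maxexponentunipotentgrowth-1}n^{1-\exponentdecayrationalpointscongruence}\Scal_{2}(f)\bigr)$; weighting by $\varphi(\tfrac{ak_{j}}{n})$, summing over the $\lceil M/L\rceil$ blocks and dividing by $M$, the error contributes $O\bigl(\tfrac{b^{\maxexponentunipotentgrowth-1}n^{1-\exponentdecayrationalpointscongruence}}{L}\Scal_{1}(\varphi)\Scal_{2}(f)\bigr)$, while the main terms sum to $E_{f}\cdot\tfrac1M\sum_{k=0}^{M-1}\varphi(\tfrac{ak_{j(k)}}{n})$, where $j(k)$ is the block containing $k$.

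Finally, replacing $k_{j(k)}$ by $k$ in the last sum costs a further $O\bigl(\tfrac{aL}{n}\Scal_{1}(\varphi)\Scal_{2}(f)\bigr)$, and $\tfrac1M\sum_{k=0}^{M-1}\varphi(\tfrac{ak}{n})$ is an exact left-endpoint Riemann sum for $E_{\varphi}$ of mesh $\gcd(a,nN_{1})/n\leq a/n$ --- the residues $\tfrac{ak}{n}\bmod N_{1}$, $0\leq k<M$, sweep the progression $\tfrac{\gcd(a,nN_{1})}{n}\Z/N_{1}\Z$ with equal multiplicities --- hence equals $E_{\varphi}+O\bigl(\tfrac an\Scal_{1}(\varphi)\bigr)$, the elementary non-$S$-arithmetic analogue of Corollary~\ref{cor:equidistributionrationalpointssolenoid}. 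Collecting the estimates,
\begin{equation*}
  \Bigl\lvert\tfrac1M\sum_{k=0}^{M-1}\varphi(N_{1}\Z+\tfrac{ak}{n})f\bigl(\Gammar(N_{2})u_{bk/n}a_{\sqrt n}^{-1}\bigr)-E_{\varphi}E_{f}\Bigr\rvert\ll\Bigl(\tfrac{aL}{n}+\tfrac{b^{\maxexponentunipotentgrowth-1}n^{1-\exponentdecayrationalpointscongruence}}{L}\Bigr)\Scal_{1}(\varphi)\Scal_{2}(f),
\end{equation*}
and the choice $L\asymp a^{-1/2}b^{(\maxexponentunipotentgrowth-1)/2}n^{1-\exponentdecayrationalpointscongruence/2}$, which equates the two terms, gives a bound $O\bigl(a^{1/2}b^{(\maxexponentunipotentgrowth-1)/2}n^{-\exponentdecayrationalpointscongruence/2}\Scal_{1}(\varphi)\Scal_{2}(f)\bigr)$; this is the assertion with $\exponentdecayrationalpointscongruencejoining=\exponentdecayrationalpointscongruence/2$ and $\exponentpolynomialsobolevunipotent_{1}$ any constant large enough in terms of $\maxexponentunipotentgrowth$. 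In the remaining degenerate ranges --- where this optimal $L$ falls outside $[1,n/b)$, forcing $a$ or $b$ so large that the claimed right-hand side already dominates $\Scal_{1}(\varphi)\Scal_{2}(f)$ --- the conclusion follows at once from $\lvert\cdots\rvert\leq 2\lVert\varphi\rVert_{\infty}\lVert f\rVert_{\infty}$.

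The step I expect to be the main obstacle is the partition in the middle: one is dissecting a system carrying two incommensurate periods, arising from the moduli $a$ and $b$, so the blocks must be simultaneously short enough for $\varphi$ to be essentially constant on them and long enough for Proposition~\ref{prop:equidistributionrationalpointscongruence} to be effective, and the entire estimate has to be kept uniform in $N_{1}$ and $N_{2}$ --- which is precisely what the $N$-independence of the constants in Proposition~\ref{prop:equidistributionrationalpointscongruence}, a consequence of the uniform spectral gap, is there to provide.
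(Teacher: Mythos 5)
Your proof is correct and gets the same exponent $\exponentdecayrationalpointscongruencejoining=\exponentdecayrationalpointscongruence/2$ as the paper, but the decomposition is organized differently. The paper partitions the \emph{target torus} $\rquotient{N_1\Z}{\R}$ into intervals $P_l$ of length $\delta$, pulls each back to a set of indices $\frac{n}{a}P_l\cap\Z$, and keeps track of the multiplicity with which the residues $\frac{ak}{n}\bmod N_1$ are hit (via the $(a,nN_1)$ count and the outer $r$-sum over $N_2$ periods); the weight $\varphi(t_l)$ on each piece is literally a value of $\varphi$, so the $\varphi$-average comes out automatically. You instead partition the \emph{index range} $\{0,\dots,M-1\}$ directly into consecutive blocks of tunable length $L$, freeze $\varphi$ by MVT on each block, apply Proposition~\ref{prop:equidistributionrationalpointscongruence} blockwise, and then handle the residual sum $\frac1M\sum_k\varphi(\frac{ak}{n})$ by a separate elementary Riemann-sum estimate. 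This avoids the multiplicity bookkeeping entirely, at the cost of the extra (but easy) Riemann-sum step; both approaches ultimately rest on the same two pillars --- $N$-uniform constants in Proposition~\ref{prop:equidistributionrationalpointscongruence} and MVT/Sobolev embedding for $\varphi$ --- and the balancing of the two error terms is identical, so I regard this as a genuinely different but equally valid route. Two small points worth tightening: (i) your degenerate-range check speaks only of $L$ falling outside $[1,n/b)$, but one also implicitly needs $L<n/a$ for the MVT term to be below the trivial bound; this causes no harm because if $L_{\mathrm{opt}}\geq n/a$ then the common value $aL_{\mathrm{opt}}/n\geq 1$, so the claimed right-hand side already exceeds $\Scal_1(\varphi)\Scal_2(f)$ and the trivial estimate suffices, exactly as you argue for the other degenerate cases --- it just deserves explicit mention; (ii) when a block's image $[\tfrac{bk_j}{n},\tfrac{b(k_j+L)}{n})$ straddles a multiple of the period $N_2$ one needs to split it into at most two arcs before invoking Proposition~\ref{prop:equidistributionrationalpointscongruence}, which your ``position and phase'' remark correctly anticipates (the proof of the proposition is translation-invariant along the orbit, so this is harmless), but is worth flagging since the proposition as stated requires $0\leq\alpha<\beta<N$.
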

\begin{proof}
  We can assume that~$f\in\compactsmooth(\rquotient{\Gammar(N_{2})}{\Gr})\oplus\C\mathbf{1}_{\rquotient{\Gammar(N_{2})}{\Gr}}$ and in particular we assume~$E_{\varphi}=E_{f}=0$. Using Proposition~\ref{prop:equidistributionrationalpointscongruence}, we can assume without loss of generality that~$\varphi$ is non-constant. Let~$\delta\in(0,1)$ arbitrary. By the Mean Value Theorem, we have
  \begin{equation*}
    d(s,t)<\delta\implies\lvert\varphi(s)-\varphi(t)\rvert<\delta\lVert\varphi^{\prime}\rVert_{\infty}\quad\big(s,t\in\rquotient{N_{1}\Z}{\R}\big).
  \end{equation*}
  Set~$K_{\delta}=\lfloor\frac{N_{1}}{\delta}\rfloor$ and write
  \begin{equation*}
    \rquotient{N_{1}\Z}{\R}=\underbrace{\big[K_{\delta}\delta,N_{1}\big)}_{=P_{K_{\delta}}}\sqcup\bigsqcup_{l=0}^{K_{\delta}-1}\underbrace{\big[l\delta,(l+1)\delta\big)}_{=P_{l}},
  \end{equation*}
  and note that by \ref{prop:propertiessobolevnorms}~(\ref{item:sobolevembedding}), for all~$t_{l}\in P_{l}$, if~$t\in P_{l}$, then~$\varphi(t)=\varphi(t_{l})+O(\delta)\Scal_{D_{\T}}(\varphi)$ where~$\Scal_{D_{\T}}$ is some~$\Ltwo$-Sobolev norm on~$\smooth(\rquotient{N_{1}\Z}{\R})$ with its degree denoted~$D_{\T}\in\N$. For the remainder the points~$\{t_{l};l=0,\ldots,K_{\delta}\}\subseteq\rquotient{N_{1}\Z}{\R}$ are chosen so that~$t_{l}\in P_{l}$ and we denote~$z_{l}=\varphi(t_{l})$. We will write
  \begin{equation*}
    B_{n}(\varphi,f)=\frac{1}{nN_{1}N_{2}}\sum_{k=0}^{nN_{1}N_{2}-1}\varphi(N_{1}\Z+\tfrac{ak}{n})f(\Gammar(N_{2})u_{\frac{bk}{n}}a_{\sqrt{n}}^{-1}).
  \end{equation*}
  Note that the sum might contain some multiplicity which we will have to take into account. In total, the interval~$[0,N_{1})$ contains~$\frac{nN_{1}}{(a,nN_{1})}$-many points of the form~$\frac{ak}{n}\mod N_{1}$ with~$0\leq k<nN_{1}N_{2}$. To this end consider the map~$\quotient{\Z}{nN_{1}\Z}\to\quotient{\Z}{nN_{1}\Z}$ given by~$k\mapsto ak$. Denote by~$L$ the lowest common multiple of~$a$ and~$nN_{1}$. The kernel of the map then is a cyclic subgroup generated by~$\frac{L}{a}$ and in particular has cardinality~$\frac{nN_{1}}{L/a}=(a,nN_{1})$, or alternatively, the map is~$(a,nN_{1})$-to-one. Thus the image has cardinality~$\frac{nN_{1}}{(a,nN_{1})}$ and as we let~$k$ run through a full set of representatives of~$\quotient{\Z}{nN_{1}\Z}$, the claim follows. Using this, we can rewrite
  \begin{equation*}
    B_{n}(\varphi,f)=\frac{1}{N_{2}}\sum_{r=0}^{N_{2}-1}\frac{(a,nN_{1})}{nN_{1}}\sum_{l=0}^{K_{\delta}}\sum_{k\in \frac{n}{a}P_{l}\cap\Z}\varphi(N_{1}\Z+\tfrac{ak}{n})f(\Gammar(N_{2})u_{b\frac{rnN_{1}+k}{n}}a_{\sqrt{n}}^{-1}).
  \end{equation*}
  Given~$0\leq r<N_{2}$, let
  \begin{equation*}
    B_{n,r}=\frac{(a,nN_{1})}{nN_{1}}\sum_{l=0}^{K_{\delta}}\sum_{k\in \frac{n}{a}P_{l}\cap\Z}\varphi(N_{1}\Z+\tfrac{ak}{n})f(\Gammar(N_{2})u_{b\frac{rnN_{1}+k}{n}}a_{\sqrt{n}}^{-1}).
  \end{equation*}
  Using the notation introduced and applying Proposition~\ref{prop:equidistributionrationalpointscongruence}, assuming~$\delta<\frac{a}{b}$, we have
  \begin{equation*}
    \Big\lvert\frac{1}{\lvert\frac{n}{a}P_{l}\cap\Z\rvert}\sum_{k\in\frac{n}{a}P_{l}\cap\Z}f(\Gammar(N_{2})u_{b\frac{rnN_{1}+k}{n}}a_{\sqrt{n}}^{-1})\Big\rvert\ll\delta^{-1}ab^{\maxexponentunipotentgrowth-1}n^{-\exponentdecayrationalpointscongruence}\Scal_{D}(f)
  \end{equation*}
  for some degree-$D$~$\Ltwo$-Sobolev norm~$\Scal_{D}$ on~$\compactsmooth(\rquotient{\Gammar(N_{2})}{\Gr})$, where we assume without loss of generality that~$d$ was chosen so that the Sobolev Embedding Theorem \ref{prop:propertiessobolevnorms}~(\ref{item:sobolevembedding}) applies. We also used that the $\Ur$-orbit of $\Gammar(N_{2})$ identifies with $\rquotient{\R}{N_{2}\Z}$, which implies that the bound is valid independent of the value of~$r$. Next we note that~$\lvert\frac{n}{a}P_{l}\cap\Z\rvert\asymp\delta\frac{n}{a}$ and thus again denoting by~$L$ the lowest common multiple of~$a$ and~$nN_{1}$, we get~$\frac{\lvert\frac{n}{a}P_{l}\cap\Z\rvert}{nN_{1}/(a,nN_{1})}\asymp\delta\frac{n}{L}$. Note next that~$N_{1}\asymp\delta K_{\delta}$ and thus~$K_{\delta}\delta\frac{n}{L}\leq 1$. Hence combining all these, we find
  \begin{align*}
    \lvert B_{n,r}\rvert&\ll\Big\lvert\sum_{l=0}^{K_{\delta}-1}z_{l}\tfrac{\lvert\frac{n}{a}P_{l}\cap\Z\rvert}{nN_{1}/(a,nN_{1})}\tfrac{1}{\lvert\frac{n}{a}P_{l}\cap\Z\rvert}\sum_{k\in\frac{n}{a}P_{l}\cap\Z}f(\Gammar(N_{2})u_{b\frac{rnN_{1}+k}{n}}a_{\sqrt{n}}^{-1})\Big\rvert\\
    &\qquad+\delta\Scal_{D_{\T}}(\varphi)\Big\lvert\sum_{l=0}^{K_{\delta}-1}\tfrac{\lvert\frac{n}{a}P_{l}\cap\Z\rvert}{nN_{1}/(a,nN_{1})}\tfrac{1}{\lvert\frac{n}{a}P_{l}\cap\Z\rvert}\sum_{k\in\frac{n}{a}P_{l}\cap\Z}f(\Gammar(N_{2})u_{b\frac{rnN_{1}+k}{n}}a_{\sqrt{n}}^{-1})\Big\rvert\\
    &\qquad+\tfrac{\lvert\frac{n}{a}P_{K_{\delta}}\cap\Z\rvert}{nN_{1}/(a,nN_{1})}\lVert\varphi\rVert_{\infty}\lVert f\rVert_{\infty}\\
    &\ll\delta^{-1}ab^{\maxexponentunipotentgrowth-1}n^{-\exponentdecayrationalpointscongruence}\lVert\varphi\rVert_{\infty}K_{\delta}\delta\tfrac{n}{L}+\delta\Scal_{D_{\T}}(\varphi)\lVert f\rVert_{\infty}K_{\delta}\delta\tfrac{n}{L}+\delta\tfrac{n}{L}\lVert\varphi\rVert_{\infty}\lVert f\rVert_{\infty}\\
    &\ll\big(\delta^{-1}ab^{\maxexponentunipotentgrowth-1}n^{-\exponentdecayrationalpointscongruence}+2\delta\big)\Scal_{D^{\prime}}(\varphi)\Scal_{D}(f),
  \end{align*}
  where~$D_{\T}\leq D^{\prime}$ was chosen so that Proposition~\ref{prop:propertiessobolevnormstorus}~(\ref{item:sobolevembeddingtorus}) applies. Choose~$\exponentdecayrationalpointscongruencejoining=\frac{\exponentdecayrationalpointscongruence}{2}$ and~$\exponentpolynomialsobolevunipotent_{1}=\max\{1,\maxexponentunipotentgrowth\}$. If~$n^{-\exponentdecayrationalpointscongruencejoining}<\frac{a}{b}$, then we can choose~$\delta=n^{-\exponentdecayrationalpointscongruencejoining}$ and obtain
  \begin{equation*}
    \lvert B_{n}(\varphi,f)\rvert\leq\frac{1}{N_{2}}\sum_{r=0}^{N_{2}-1}\lvert B_{n,r}\rvert\ll (ab)^{\exponentpolynomialsobolevunipotent_{1}}n^{-\exponentdecayrationalpointscongruencejoining}\Scal_{D^{\prime}}(\varphi)\Scal_{D}(f).
  \end{equation*}
  Otherwise, we have~$(ab)^{\exponentpolynomialsobolevunipotent_{1}}n^{-\exponentdecayrationalpointscongruencejoining}\geq\tfrac{a^{2}b}{b}\geq 1$ and thus for these~$n$ the inequality holds with implicit constant equal to two. This proves the Corollary.
\end{proof}
We can now prove an effective equidistribution statement for rational points of a certain denominator along expanding closed horospheres.
\begin{cor}\label{cor:equidistributionrationalpoints}
  Let~$F\in\tensorcompactsmooth(\TS\times\XS)$,~$a,b\in\N$. Then
  \begin{equation*}
    \bigg\lvert\frac{1}{n}\sum_{k=0}^{n-1}F\big(\ZS+\Delta(\tfrac{ak}{n}),\GammaS\Delta(u_{bk/n})a_{\sqrt{n}}^{-1}\big)-\int_{\TS\times\XS}F\bigg\rvert\leq C(ab)^{\exponentpolynomialsobolevunipotent_{1}}n^{-\exponentdecayrationalpointsS}\Scal(F),
  \end{equation*}
  for positive constants~$\exponentdecayrationalpointsS,C,\exponentpolynomialsobolevunipotent_{1}$ which are independent of~$n,S$ and~$F$, and some~$\Ltwo$-Sobolev norm~$\Scal$ on~$\tensorcompactsmooth(\TS\times\XS)$ that does not depend on~$F$ or~$n$.
\end{cor}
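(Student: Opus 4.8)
The plan is to reduce the statement to a pure tensor, transfer the $S$‑arithmetic average to a congruence quotient, and then invoke Corollary~\ref{cor:rationalpointscongruencejoining}. Throughout I assume $(\Sf,n)=1$; this is the only case needed in the sequel, and it is genuinely necessary here (as in Lemma~\ref{lem:projectionrationalpoints}): if a prime of $\Sf$ divides $n$, the $\TS$‑component of the set in question degenerates. By bilinearity of both the average and the integral, and by the definition of the maximal cross norm as an infimum over tensor decompositions (Definition~\ref{def:sobolevnormproduct}), it suffices to prove that for a suitable fixed degree $D$ and for all $\varphi\in\smooth(\TS)$, $f\in\compactsmooth(\XS)$,
\begin{equation*}
  \Bigl\lvert\tfrac1n\sum_{k=0}^{n-1}\varphi\bigl(\ZS+\Delta(\tfrac{ak}{n})\bigr)f\bigl(\GammaS\Delta(u_{bk/n})a_{\sqrt{n}}^{-1}\bigr)-m_{\TS}(\varphi)\,\nu_{S}(f)\Bigr\rvert\ll(ab)^{\exponentpolynomialsobolevunipotent_{1}}n^{-\exponentdecayrationalpointsS}\Scal_{D}(\varphi)\Scal_{D}(f),
\end{equation*}
where $\Scal_{D}$ denotes the $\Ltwo$‑Sobolev norms of degree $D$ on $\smooth(\TS)$ and on $\compactsmooth(\XS)$ with respect to fixed bases; summing over a tensor decomposition of a general $F\in\tensorcompactsmooth(\TS\times\XS)$ and passing to the infimum then yields the corollary with $\Scal$ the cross norm of degree $(D,D)$ -- an $\Ltwo$‑Sobolev norm in the sense agreed upon after Corollary~\ref{cor:effectiveequidistcrossnormimplieseffectiveequidist} (or one may invoke that corollary to pass to a genuine Sobolev norm on $\compactsmooth(\TS\times\XS)$).

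For the pure tensor, I would first decompose $\varphi=\sum_{m_{1}\in\N_{0}^{\Sf}}\pr[m_{1}]\varphi$ and $f=\sum_{m_{2}\in\N_{0}^{\Sf}}\pr[m_{2}]f$ into pure level components (finite sums). Since the average is additive in each factor and a pure level component has vanishing integral unless its level is $0$, it is enough to bound the discrepancy of $\pr[m_{1}]\varphi\otimes\pr[m_{2}]f$ for each pair. Put $m=m_{1}\vee m_{2}$; both factors are invariant at level $m$ and descend to functions $\tilde{\varphi}_{m}$ on $\rquotient{S^{m}\Z}{\R}$ and $\tilde{f}_{m}$ on $\Xr(S^{m})$ (the pull‑backs of their level‑$m_{1}$, resp.\ level‑$m_{2}$, descents $\tilde{\varphi}_{m_{1}}$, $\tilde{f}_{m_{2}}$). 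Using $\Gr$‑equivariance of $\pi^{(m)}$ and $\psi^{(m)}$ -- so these commute with right multiplication by $a_{\sqrt{n}}^{-1}\in\Gr$ -- together with the computation of the projection of rational points in the proof of Lemma~\ref{lem:projectionrationalpoints}, one produces an integer $\theta=\theta(n,S,m)$, coprime to $n$ (here $(\Sf,n)=1$ is used to invert $n$ modulo $S^{m}$), with
\begin{equation*}
  \pi_{\TS}^{(m)}\bigl(\ZS+\Delta(\tfrac{ck}{n})\bigr)=S^{m}\Z+\tfrac{S^{m}\theta ck}{n},\qquad(\psi^{(m)})^{-1}\pi^{(m)}\bigl(\GammaS\Delta(u_{dk/n})a_{\sqrt{n}}^{-1}\bigr)=\Gammar(S^{m})u_{S^{m}\theta dk/n}\,a_{\sqrt{n}}^{-1}.
\end{equation*}
Substituting $j$ for $\theta k$ modulo $n$ (a bijection of $\Z/n\Z$, as $\theta$ is a unit) and using that the summand, as a function of this residue, is $n$‑periodic (since $u_{S^{m}}\in\Gammar(S^{m})$ and the $u$'s commute), the average becomes
\begin{equation*}
  \frac{1}{nS^{2m}}\sum_{j=0}^{nS^{2m}-1}\tilde{\varphi}_{m}\bigl(S^{m}\Z+\tfrac{S^{m}aj}{n}\bigr)\,\tilde{f}_{m}\bigl(\Gammar(S^{m})u_{S^{m}bj/n}\,a_{\sqrt{n}}^{-1}\bigr),
\end{equation*}
which is precisely the average in Corollary~\ref{cor:rationalpointscongruencejoining} with $N_{1}=N_{2}=S^{m}$ and dilation parameters $S^{m}a$, $S^{m}b$.

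Applying Corollary~\ref{cor:rationalpointscongruencejoining} (uniformly in $N_{1},N_{2}$), the discrepancy of $\pr[m_{1}]\varphi\otimes\pr[m_{2}]f$ is $\ll(S^{m}a\cdot S^{m}b)^{\exponentpolynomialsobolevunipotent_{1}}n^{-\exponentdecayrationalpointsS}\Scal_{D,m}(\tilde{\varphi}_{m})\Scal_{D,m}(\tilde{f}_{m})$ for the degree furnished there, which we are free to enlarge. I would then use that pull‑back along a finite congruence covering (of a torus, resp.\ of $\Xr(N)$) preserves the $\Ltwo$‑Sobolev norm of fixed degree: it preserves the $L^{2}$‑norm for the normalized Haar measures, commutes with the differential operators from $\liegR$, and preserves the height function -- the last because $\height_{\Xr(N)}(\Gammar(N)g)=\sup_{v\in\liegZ}\norm{\Ad(g^{-1})v}^{-1}$ is independent of $N$ by $\Ad(\SLtwo(\Z))$‑invariance of $\liegZ$. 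Hence $\Scal_{D,m}(\tilde{\varphi}_{m})=\Scal_{D,m_{1}}(\tilde{\varphi}_{m_{1}})$ and $\Scal_{D,m}(\tilde{f}_{m})=\Scal_{D,m_{2}}(\tilde{f}_{m_{2}})$. Summing the resulting bound over $(m_{1},m_{2})$, using $S^{2(m_{1}\vee m_{2})\exponentpolynomialsobolevunipotent_{1}}\leq S^{2m_{1}\exponentpolynomialsobolevunipotent_{1}}S^{2m_{2}\exponentpolynomialsobolevunipotent_{1}}$ so that the double sum factorizes, and applying Cauchy--Schwarz against the weights $S^{Dm_{i}}$, reduces each factor via~\eqref{eq:relationsobolevnormstorus} and Lemma~\ref{lem:relationsobolevnorms} to $\Scal_{D}(\varphi)$, resp.\ $\Scal_{D}(f)$, at the cost of $\bigl(\sum_{m\in\N_{0}^{\Sf}}S^{(4\exponentpolynomialsobolevunipotent_{1}-D)m}\bigr)^{1/2}=\bigl(\prod_{p\in\Sf}(1-p^{4\exponentpolynomialsobolevunipotent_{1}-D})^{-1}\bigr)^{1/2}$; choosing $D\geq4\exponentpolynomialsobolevunipotent_{1}+2$ (and at least the degree from Corollary~\ref{cor:rationalpointscongruencejoining}) bounds this by $\zeta(2)^{1/2}$, independently of $S$ and $n$. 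This gives the displayed pure‑tensor estimate and hence the corollary.

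The step I expect to be the main obstacle is the identification in the second paragraph of the projected $S$‑arithmetic average with a congruence‑quotient average of exactly the form treated in Corollary~\ref{cor:rationalpointscongruencejoining}. The delicate points are: keeping track of the ``inversion twist'' $\theta$ arising when one clears the finite‑place denominators and checking $\gcd(\theta,n)=1$, so that a single substitution removes it -- which in turn forces passing to the common level $m=m_{1}\vee m_{2}$ (so that the $\TS$‑ and the $\XS$‑projections exhibit the \emph{same} twist) rather than using $N_{1}=S^{m_{1}}$, $N_{2}=S^{m_{2}}$ directly; disposing of the $a_{\sqrt{n}}^{-1}$‑shift, which is harmless because it lies in $\Gr$; and, crucially for the asserted independence of the constants from $n$ and $S$, arranging that the dilation parameters grow only by the bounded factor $S^{m}$, so that the geometric weights $S^{Dm}$ in the Sobolev norms can absorb it. The cross‑norm reduction and the level‑by‑level bookkeeping are then routine.
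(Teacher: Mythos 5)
Your proof is correct and follows the same route as the paper: reduce to pure tensors via the infimum definition of the cross norm, decompose into pure level components, project via Lemma~\ref{lem:projectionrationalpoints} to congruence quotients, apply Corollary~\ref{cor:rationalpointscongruencejoining}, and sum over levels with Cauchy--Schwarz against the weights $S^{Dm}$ using Lemma~\ref{lem:relationsobolevnorms} and Equation~\eqref{eq:relationsobolevnormstorus}. One remark on a step you flag as delicate: passing to the common level $m=m_{1}\vee m_{2}$ to align the ``inversion twist'' is not actually forced. Lemma~\ref{lem:projectionrationalpoints} already treats the mixed projection $\Pi_{l,m}$ (torus at level $l$, surface at level $m$) and delivers the \emph{common} scaling $S^{l\vee m}$ on both components, so the same twist $\theta\equiv\overline{S^{l\vee m}}\pmod n$ appears simultaneously in both factors even at mixed levels $N_{1}=S^{m_{1}}$, $N_{2}=S^{m_{2}}$; one can then apply Corollary~\ref{cor:rationalpointscongruencejoining} directly with these $N_{i}$ and dilation parameters $S^{m_{1}\vee m_{2}}a$, $S^{m_{1}\vee m_{2}}b$. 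Your detour through a single common congruence level is harmless (and, as you note, the Sobolev norms are preserved under pullback along congruence covers), but it is an unnecessary precaution. Also a small imprecision in parentheses: the twist $\theta$ is the inverse of $S^{l\vee m}$ modulo $n$, while the inverse of $n$ modulo $S^{l\vee m}$ enters separately when producing the CRT approximant $r$; both inversions are guaranteed by $(\Sf,n)=1$, so nothing breaks.
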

\begin{proof}
  By the triangle inequality it suffices to prove that for any pure tensor~$F=\varphi\otimes f$ we have
  \begin{equation*}
    \bigg\lvert\frac{1}{n}\sum_{k=0}^{n-1}\varphi\big(\ZS+\Delta(\tfrac{ak}{n})\big)f\big(\GammaS\Delta(u_{bk/n})a_{\sqrt{n}}^{-1}\big)-\int_{\TS}\varphi\int_{\XS}f\bigg\rvert\leq C(ab)^{\exponentpolynomialsobolevunipotent_{1}}n^{-\exponentdecayrationalpointsS}\Scal_{\TS}(\varphi)\Scal_{\XS}(f),
  \end{equation*}
  where~$C$ and~$\exponentdecayrationalpointsS$ do not depend on~$n,S$ and~$F$. But this is a by now immediate consequence of Lemma~\ref{lem:projectionrationalpoints}, Corollary~\ref{cor:rationalpointscongruencejoining}, Lemma~\ref{lem:relationsobolevnorms}, Equation~\eqref{eq:relationsobolevnormstorus} and the Cauchy-Schwarz inequality. Note that we have to apply Corollary~\ref{cor:rationalpointscongruencejoining} to the pure level components with multiplicative parameters of the form~$S^{l}a$ and~$S^{m}b$ for varying~$l,m\in\N_{0}^{\Sf}$.
\end{proof}
\section{Effective equidistribution of degree-$d$ residues}\label{sec:primitivepoints}
In this section, we will prove Theorem~\ref{thm:mainthm}. Given~$g\in\GS$ and a point~$(t,x)\in\TS\times\XS$, we write~$(t,x)g=(t,xg)$. Let~$S$ be a finite set of places of~$\Q$ such that~$\infty\in S$ and~$(S_{f},n)=1$ and define
\begin{equation*}
  \primitiverationalsnproduct=\set{\big(\ZS+\Delta(\tfrac{k}{n}),\GammaS\Delta(u_{k/n})\big)}{(k,n)=1}\subseteq\TS\times\XS.
\end{equation*}
The set~$\primitiverationalsnproduct$ is an invariant subset of~$\TS\times\XS$ for the action~$\Z^{\Sf}\curvearrowright\TS\times\XS$ given by~$M_{m}(t,x)=(S^{2m}t,x)a_{S^{-m}},$ where~$m\in\Z^{\Sf}$,~$t\in\QS$ and~$x\in\rquotient{\GammaS}{\GS}$. Indeed,~$S^{2m}$ is a unit mod~$n$ and hence the equality~$\GammaS u_{\frac{k}{n}}a_{S^{-m}}=\GammaS a_{S^{-m}}u_{S^{2m}\frac{k}{n}}$ together with~$a_{S^{-m}}\in\GammaS$ implies the claim. For every~$a\in\AS$ the set~$\primitiverationalsnproduct a$ is also invariant under~$\Z^{\Sf}$, as~$\AS$ is abelian.

\subsection{Effective mixing for the~$\times q$ map}
The proof of Theorem~\ref{thm:mainthm} will exploit effective mixing of the action~$M$ on the torus component, which we want to discuss in the beginning. In fact, the proof of the desired result works quite a bit more generally, i.e.~we will prove effective mixing for a class of toral endomorphisms on the~$S$-arithmetic extension.

For~$R\in(0,\infty)^{n}$, denote~$\lvert R\rvert=\prod_{i=1}^{n}R_{i}$ and~$C_{R}=\prod_{i=1}^{n}[0,R_{i})\subseteq\R^{n}$. We denote by~$\T(R)$ the torus~$\T(R)=\prod_{i=1}^{n}\T(R_{i})$. The Pontryagin dual~$\widehat{\T(R)}\cong\Z^{n}$ is given by the family of functions
\begin{equation*}
  \chi_{\mathbf{n},R}:\T(R)\to\S^{1},\quad x\mapsto\exp(2\pi\ii\textstyle{\sum_{i=1}^{n}}\tfrac{x_{i}n_{i}}{R_{i}}).
\end{equation*}
For what follows, we will use the following notation: Given~$v\in\R^{n}$ and~$R\in(0,\infty)^{n}$, we denote by~$v/R\in\R^{n}$ the vector obtained by componentwise division of the entries of~$v$ by the corresponding entries of~$R$. For any smooth function~$f:\T(R)\to\C$, we have the Fourier series expansion
\begin{equation*}
  f=\sum_{\mathbf{n}\in\Z^{n}}\alpha_{\mathbf{n}}(f)\chi_{\mathbf{n},R},\quad\text{where }\alpha_{\mathbf{n}}(f)=\frac{1}{\lvert R\rvert}\int_{C_{R}}f(t)\overline{\chi_{\mathbf{n},R}(t)}\der t,
\end{equation*}
and the convergence for the series holds both for the uniform topology and the topology defined by the~$\Ltwo$-norm. The~$\Ltwo$-norm of~$f\in\continuous(\T(R))$ is then given by the norm of the sequence of Fourier coefficients~$(\alpha_{\mathbf{n}}(f))_{\mathbf{n}\in\Z^{n}}$ in~$l^{2}(\Z^{n})$, i.e.
\begin{equation*}
  \lVert f\rVert_{2}^{2}=\sum_{\mathbf{n}\in\Z^{n}}\lvert\alpha_{\mathbf{n}}(f)\rvert^{2}.
\end{equation*}
Note that~$\lvert R\rvert a_{0}(f)=\int_{\T(R)}f$. Differentiability is characterized by the rate of convergence of the Fourier series: For a continuous function~$f$ on~$\T(R)$ to be~$k$ times continuously differentiable implies~$\sum_{\mathbf{n}\in\Z^{n}}\lvert \alpha_{\mathbf{n}}(f)\rvert^{2}\lVert \mathbf{n}/R\rVert^{2k}<\infty$. Combining these, any degree-$D$~$\Ltwo$-Sobolev norm~$\Scal$ on~$C^{\infty}(\T(R))$ hence satisfies
\begin{equation*}
  \Scal(f)^{2}\asymp\sum_{\mathbf{n}\in\Z^{n}}\lvert \alpha_{\mathbf{n}}(f)\rvert^{2}(1+\lVert\mathbf{n}/R\rVert)^{2D}.
\end{equation*}
Let $N\in\N^{n}$ arbitrary. An \emph{expanding endomorphism} of~$\T(N)$ is a map~$T_{A}:\T(N)\to\T(N)$ defined by multiplication with a matrix $A\in M_{n}(\Z)\cap\GL_{n}(\Q)$ which is diagonalizable over~$\C$ and whose eigenvalues are all larger than~$1$ in absolute value. We can now prove the following
\begin{prop}\label{prop:effectivemixingendomorphismstorus}
  Let~$N\in\N^{n}$ and let~$T_{A}:\T(N)\to\T(N)$ be an expanding endomorphism. There exists some~$\varrho>0$ independent of~$N$ such that the following is true. If~$D\in\N$ and~$f,g\in\smooth(\T(N))$. Then
  \begin{equation*}
    \lvert\langle f\circ T_{A},g\rangle_{\Ltwo(\T(N))}-\alpha_{0}(f)\overline{\alpha_{0}(g)}\rvert\ll \lvert N\rvert^{D}e^{-\varrho D}\Scal(f)\Scal(g),
  \end{equation*}
  where~$\Scal$ is the~$\Ltwo$-Sobolev norm on~$\smooth(\T(R))$ defined by
  \begin{equation*}
    \Scal(f)^{2}=\sum_{\mathbf{n}\in\Z^{n}}\lvert \alpha_{\mathbf{n}}(f)\rvert^{2}(1+\lVert\mathbf{n}/N\rVert)^{2D}.
  \end{equation*}
  The implicit constant depends only on~$A$.
\end{prop}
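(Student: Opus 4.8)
The plan is to pass to Fourier series on $\T(N)$, reduce the matrix coefficient to a single bilinear sum in the Fourier coefficients of $f$ and $g$, and estimate it by Cauchy--Schwarz with weights matched to the norm $\Scal$.

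First I would record how $T_{A}$ acts on characters. Since $T_{A}$ preserves the lattice $\mathrm{diag}(N)\Z^{n}$, the matrix $\widetilde{A}=\mathrm{diag}(N)^{-1}A\,\mathrm{diag}(N)$ lies in $M_{n}(\Z)\cap\GL_{n}(\Q)$; being conjugate to $A$ it is again diagonalizable over $\C$ with all eigenvalues of absolute value $>1$, and in particular $\widetilde{A}^{t}$ is injective on $\Z^{n}$ and maps $\Z^{n}$ into $\Z^{n}$. A direct computation gives
\[
\chi_{\mathbf{n},N}\circ T_{A}=\chi_{\widetilde{A}^{t}\mathbf{n},N}\qquad(\mathbf{n}\in\Z^{n}),
\]
together with the compatibility $\widetilde{A}^{t}\mathbf{n}/N=A^{t}(\mathbf{n}/N)$. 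Expanding $f=\sum_{\mathbf{n}}\alpha_{\mathbf{n}}(f)\chi_{\mathbf{n},N}$ and $g=\sum_{\mathbf{m}}\alpha_{\mathbf{m}}(g)\chi_{\mathbf{m},N}$ and using that the $\chi_{\mathbf{n},N}$ form an orthonormal basis of $\Ltwo(\T(N))$, I obtain
\[
\langle f\circ T_{A},g\rangle_{\Ltwo(\T(N))}=\sum_{\mathbf{n}\in\Z^{n}}\alpha_{\mathbf{n}}(f)\,\overline{\alpha_{\widetilde{A}^{t}\mathbf{n}}(g)},
\]
whose $\mathbf{n}=\mathbf{0}$ term is precisely $\alpha_{0}(f)\overline{\alpha_{0}(g)}$.

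It remains to bound the tail $\sum_{\mathbf{n}\neq\mathbf{0}}\alpha_{\mathbf{n}}(f)\overline{\alpha_{\widetilde{A}^{t}\mathbf{n}}(g)}$. Applying Cauchy--Schwarz with the weights $(1+\lVert\mathbf{n}/N\rVert)^{\pm D}$, the factor carrying $\alpha_{\mathbf{n}}(f)$ is at most $\Scal(f)$ by the very definition of $\Scal$, while the other factor squared is $\sum_{\mathbf{n}\neq\mathbf{0}}\lvert\alpha_{\widetilde{A}^{t}\mathbf{n}}(g)\rvert^{2}(1+\lVert\mathbf{n}/N\rVert)^{-2D}\le\sum_{\mathbf{n}\neq\mathbf{0}}\lvert\alpha_{\widetilde{A}^{t}\mathbf{n}}(g)\rvert^{2}$. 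Because $\widetilde{A}^{t}$ is injective and carries $\Z^{n}\setminus\{\mathbf{0}\}$ into $\Z^{n}\setminus\{\mathbf{0}\}$, the substitution $\mathbf{m}=\widetilde{A}^{t}\mathbf{n}$ bounds this last sum by $\sum_{\mathbf{m}\in\Z^{n}\setminus\{\mathbf{0}\}}\lvert\alpha_{\mathbf{m}}(g)\rvert^{2}$, and inserting the identity $1=(1+\lVert\mathbf{m}/N\rVert)^{2D}(1+\lVert\mathbf{m}/N\rVert)^{-2D}$ gives the upper bound $\big(\sup_{\mathbf{m}\neq\mathbf{0}}(1+\lVert\mathbf{m}/N\rVert)^{-2D}\big)\,\Scal(g)^{2}$. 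The remaining point is elementary: a nonzero integer vector $\mathbf{m}$ satisfies $\lVert\mathbf{m}/N\rVert\ge 1/\max_{i}N_{i}$, and since $\lvert N\rvert=\prod_{i}N_{i}\ge\max_{i}N_{i}$ one checks that $(1+1/\max_{i}N_{i})^{-2D}\le\lvert N\rvert^{2D}e^{-2\varrho D}$ for every fixed $\varrho\le\log 2$ (the inequality to verify being $e^{\varrho}\le\lvert N\rvert\,(\max_{i}N_{i}+1)/\max_{i}N_{i}$, whose right-hand side is at least $\max_{i}N_{i}+1\ge 2$). Multiplying the two factors proves the proposition with $\varrho=\log 2$ and an absolute implicit constant.

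The only delicate point, and the reason the statement carries the innocuous-looking factor $\lvert N\rvert^{D}$, is that $A^{t}$ --- diagonalizable but not normal --- need not expand the Euclidean norm after a \emph{single} application, so one cannot compare $\lVert A^{t}(\mathbf{n}/N)\rVert$ with $\lVert\mathbf{n}/N\rVert$ frequency by frequency without paying the condition number of an eigenbasis of $A$. The argument above circumvents this entirely by using only that the nonzero frequencies feeding into $f\circ T_{A}$ are bounded away from the origin by $1/\max_{i}N_{i}$; the expansion and diagonalizability hypotheses then serve only to guarantee that $T_{A}$ is a genuine finite-to-one endomorphism (so that $\widetilde{A}$ is integral and $\lvert\det\widetilde{A}\rvert>1$) and that the resulting $\varrho$ and implicit constant can be chosen uniformly in $N$. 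Genuine exponential decay uniform in $\lvert N\rvert$ would instead require applying the estimate to a high power $A^{k}$, which is unnecessary here.
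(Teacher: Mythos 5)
Your proof is formally valid for the proposition as literally stated --- it exhibits some $\varrho>0$ independent of $N$ with an absolute implicit constant --- and the weighted Cauchy--Schwarz followed by the substitution $\mathbf{m}=\widetilde{A}^{t}\mathbf{n}$ is a clean route; you also correctly point out that the relevant dual endomorphism is $\widetilde{A}^{t}=\mathrm{diag}(N)\,{}^{t}\!A\,\mathrm{diag}(N)^{-1}$ rather than ${}^{t}\!A$, a point the paper glosses over. However, by enlarging the image lattice $\widetilde{A}^{t}(\Z^{n}\setminus\{\mathbf{0}\})$ to all of $\Z^{n}\setminus\{\mathbf{0}\}$ before taking the supremum, you throw away exactly the information that the expansion hypothesis supplies, and you come away with the $A$-independent value $\varrho=\log 2$. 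The paper instead bounds $\big(\sum_{\mathbf{n}\neq\mathbf{0}}\lvert\alpha_{\widetilde{A}^{t}\mathbf{n}}(g)\rvert^{2}\big)^{1/2}\leq\Scal(g)\sup_{\mathbf{n}\neq\mathbf{0}}\lVert{}^{t}\!A(\mathbf{n}/N)\rVert^{-D}$ and then uses an eigenbasis norm with $\lVert{}^{t}\!Av\rVert_{\mathcal{B}}\geq\lVert v\rVert_{\mathcal{B}}\min_{i}\lvert\lambda_{i}\rvert$ to obtain $\varrho=\log\min_{i}\lvert\lambda_{i}\rvert$.

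That quantitative difference is not cosmetic, and your closing remark that the sharper rate is ``unnecessary here'' is mistaken. Corollary~\ref{cor:effectivemixingsolenoid} applies the proposition (via Corollary~\ref{cor:effectivemixingsolenoidtoral}) to the $\times q$-maps and reads the bound as $e^{-\varrho D}=q^{-D}$ precisely because $\varrho=\log q$; the uniformity in $q$ comes from Remark~\ref{rem:effectivemixingendomorphismstorusindependence}. That $q^{-D}$ decay --- for a \emph{fixed} Sobolev degree, uniformly in $q$ --- is what supplies the $S^{-\exponentdecaytimesjoint\underline{m}}$ rate for the torus factor in Proposition~\ref{prop:jointmixing}, which is then summed over primes in Lemma~\ref{lem:discrepancybound}. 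With $\varrho=\log 2$, the torus contribution in Proposition~\ref{prop:jointmixing} would be a constant independent of $m$ and the whole discrepancy estimate would collapse. The fix within your framework is small: retain the constraint $\mathbf{m}\in\widetilde{A}^{t}(\Z^{n}\setminus\{\mathbf{0}\})$ after the substitution, note via the eigenbasis-norm comparison that $\lVert\mathbf{m}/N\rVert=\lVert{}^{t}\!A(\mathbf{n}/N)\rVert\gg_{A}\min_{i}\lvert\lambda_{i}\rvert/\max_{i}N_{i}$ for such $\mathbf{m}$, and take the supremum over this restricted set; this recovers $\varrho=\log\min_{i}\lvert\lambda_{i}\rvert$ with implicit constant depending only on $A$, as the later corollaries require.
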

\begin{proof}
  Using Fourier series, the orthogonality relations for unitary characters and Cauchy-Schwarz this becomes a relatively simple calculation:
  \begin{align*}
    \lvert\langle f\circ T_{A},g\rangle_{\Ltwo(\T(N))}-\alpha_{0}(f)\overline{\alpha_{0}(g)}\rvert&=\bigg\lvert\sum_{\mathbf{n}\in\Z^{n}\setminus\{0\}}\alpha_{\mathbf{n}}(f)\cl{\alpha_{{}^{t}\!A\mathbf{n}}(g)}\bigg\rvert\\
    &\leq\lVert f\rVert_{2}\bigg(\sum_{\mathbf{n}\in\Z^{n}\setminus\{0\}}\lvert \alpha_{{}^{t}\!A\mathbf{n}}(g)\rvert^{2}\bigg)^{\frac{1}{2}}\\
    &\leq\lVert f\rVert_{2}\Scal(g)\sup_{\mathbf{n}\in\Z^{n}\setminus\{0\}}\frac{1}{\lVert{}^{t}\!A\mathbf{n}/N\rVert^{D}}.
  \end{align*}
  If~$A$ is diagonalizable over~$\C$, then so is~${}^{t}A$ and thus fix an eigenbasis~$\mathcal{B}=(v_{i})_{i=1}^{n}$ of~$\C^{n}$. Denote by~$\lambda_{i}\in\C$ the eigenvalue corresponding to~$v_{i}$ and define a norm
  \begin{equation*}
    \lVert\alpha_{1}v_{1}+\cdots+\alpha_{n}v_{n}\rVert_{\mathcal{B}}=\max\set{\lvert\alpha_{i}\rvert}{i=1,\ldots,n}.
  \end{equation*}
  Then~$\lVert {}^{t}\!Av\rVert_{\mathcal{B}}\geq\lVert v\rVert_{\mathcal{B}}\min\set{\lvert\lambda_{i}\rvert}{i=1,\ldots,n}$. Hence setting
  \begin{equation*}
    \varrho=\log\min\set{\lvert\lambda_{i}\rvert}{i=1,\ldots,n},
  \end{equation*}
  the claim follows from equivalence of norms on finite dimensional vector spaces.
\end{proof}
\begin{remark}\label{rem:effectivemixingendomorphismstorusindependence}
  The implicit constant in Proposition~\ref{prop:effectivemixingendomorphismstorus} depends only on the choice of the norm~$\lVert\cdot\rVert_{\mathcal{B}}$, i.e.~the choice of an eigenbasis for the matrix~${}^{t}\!A$. Given a commuting family of diagonalizable matrices~$\{A_{i};i\in I\}$ as in the proposition, the implicit constant can hence be chosen uniformly for this family.
\end{remark}
By Proposition~\ref{prop:effectivemixingendomorphismstorus} we obtain effective mixing of expanding toral endomorphisms on the~$S$-arithmetic extension of the torus. First, we define an extension~$T_{A}:\rquotient{\Z^{n}}{\Zs^{n}}\to\rquotient{\Z^{n}}{\Zs^{n}}$ by~$\Z^{n}+x\mapsto\Z^{n}+Ax$. We note that the isomorphism~$\rquotient{\ZS^{n}}{\QS^{n}}\cong\rquotient{\Z^{n}}{\Zs^{n}}$ is~$T_{A}$ equivariant. If~$\ell^{(1)},\ldots,\ell^{(n)}\in\N_{0}^{\Sf}$ are arbitrary, then the same is true for the isomorphism~$\biquotient{\Z^{n}}{\Zs^{n}}{(\Zsf[\ell^{(1)}]\times\cdots\times\Zsf[\ell^{(n)}])}\cong\T(S^{\ell^{(1)}},\ldots,S^{\ell^{(n)}})$. For what follows, we abuse notation as follows. Given a matrix~$\ell\in\N_{0}^{\Sf\times n}$ and denoting by~$\ell^{(1)},\ldots,\ell^{(n)}\in\N_{0}^{\Sf}$ the columns of~$\ell$, we denote
\begin{equation*}
  \Zsf[\ell]=\Zsf[\ell^{(1)}]\times\cdots\times\Zsf[\ell^{(n)}].
\end{equation*}
Similarly, we denote by~$S^{\ell}\in\R^{n}$ the vector~$(S^{\ell^{(1)}},\ldots,S^{\ell^{(n)}})$.
\begin{cor}\label{cor:effectivemixingsolenoidtoral}
  Let~$A$ be an expanding toral endomorphism,~$S$ be a finite set of places of~$\Q$. For every degree-$2D$ $\Ltwo$-Sobolev norm~$\Scal$ on~$\smooth(\TS^{n})$ and for all~$f,g\in\smooth(\TS^{n})$ we have
  \begin{equation*}
    \bigg\lvert\langle f\circ T_{A},g\rangle_{\Ltwo(\TS^{n})}-\int_{\TS^{n}}f\int_{\TS^{n}}\overline{g}\bigg\rvert\ll_{\Scal}e^{-\varrho D}\Scal(f)\Scal(g),
  \end{equation*}
  with~$\varrho>0$ as in Proposition~\ref{prop:effectivemixingendomorphismstorus}.
\end{cor}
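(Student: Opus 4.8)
The plan is to factor everything through the finite-level congruence quotients $\T(N)$, $N\in\N^{n}$, that the smooth functions on $\TS^{n}$ factor through, to invoke Proposition~\ref{prop:effectivemixingendomorphismstorus} there, and then to observe that the factor $|N|^{D}$ it produces is precisely what the level weights built into the $\Ltwo$-Sobolev norm on $\smooth(\TS^{n})$ are designed to absorb.

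Concretely, I would first decompose $f=\sum_{\ell}\pr[\ell]f$ and $g=\sum_{\ell'}\pr[\ell']g$ into pure-level components, which are finite sums: $\pr[\ell]f$ is invariant under a compact-open subgroup $\Zsf[\ell]$ and descends to a function $\tilde f_{\ell}$ on a congruence torus $\T(N_{\ell})$ with $|N_{\ell}|=S^{e(\ell)}$, where $e(\ell)\in\N_{0}^{\Sf}$ is the total level exponent; and, arguing exactly as for~\eqref{eq:relationsobolevnormstorus}, every degree-$2D$ $\Ltwo$-Sobolev norm $\Scal$ on $\smooth(\TS^{n})$ satisfies $\Scal(h)^{2}\asymp\sum_{\ell}S^{2De(\ell)}\Scal_{2D,\ell}(\tilde h_{\ell})^{2}$, with $\Scal_{2D,\ell}$ a degree-$2D$ Sobolev norm on $\smooth(\T(N_{\ell}))$. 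Since averaging over a subgroup preserves the integral, only the $\ell=0$ component of $f$ (resp.\ $g$) has nonzero mean, so
\begin{equation*}
  \langle f\circ T_{A},g\rangle_{\Ltwo(\TS^{n})}-\int_{\TS^{n}}f\int_{\TS^{n}}\overline{g}=\sum_{\ell,\ell'}\Big(\langle (\pr[\ell]f)\circ T_{A},\pr[\ell']g\rangle_{\Ltwo(\TS^{n})}-\alpha_{0}(\tilde f_{\ell})\overline{\alpha_{0}(\tilde g_{\ell'})}\Big),
\end{equation*}
and it suffices to estimate each summand.

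For a fixed pair $\ell,\ell'$, note that $(\pr[\ell]f)\circ T_{A}$ is again $\Zsf[\ell]$-invariant because $A$ has integer entries; hence both $(\pr[\ell]f)\circ T_{A}$ and $\pr[\ell']g$ descend to the common refinement $\T(N)$, $N=N_{\ell\vee\ell'}$, with $|N|=S^{e(\ell\vee\ell')}$. Via the $T_{A}$-equivariant isomorphism $\biquotient{\Z^{n}}{\Zs^{n}}{\Zsf[\ell\vee\ell']}\cong\T(N)$ recorded just before the statement, the $(\ell,\ell')$-summand equals $\langle F\circ T_{A},G\rangle_{\Ltwo(\T(N))}-\alpha_{0}(F)\overline{\alpha_{0}(G)}$, where $F,G$ are the pull-backs of $\tilde f_{\ell},\tilde g_{\ell'}$ along the coverings $\T(N)\to\T(N_{\ell}),\T(N_{\ell'})$ and $T_{A}$ is the induced endomorphism of $\T(N)$ --- expanding with the \emph{same} matrix $A$, hence, by Remark~\ref{rem:effectivemixingendomorphismstorusindependence}, with the same $\varrho$ and a uniform implicit constant. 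A direct comparison of Fourier coefficients shows that pull-back along such a covering leaves the Fourier-defined Sobolev norm of any degree unchanged, so $\Scal_{D-1,\T(N)}(F)=\Scal_{D-1,\ell}(\tilde f_{\ell})$ and similarly for $G$; applying Proposition~\ref{prop:effectivemixingendomorphismstorus} on $\T(N)$ with degree $D-1$ then gives
\begin{equation*}
  \big|\langle F\circ T_{A},G\rangle-\alpha_{0}(F)\overline{\alpha_{0}(G)}\big|\ll_{A}S^{(D-1)e(\ell\vee\ell')}\,e^{-\varrho(D-1)}\,\Scal_{D-1,\ell}(\tilde f_{\ell})\,\Scal_{D-1,\ell'}(\tilde g_{\ell'}).
\end{equation*}

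Finally I would sum over $\ell,\ell'$. Using $e(\ell\vee\ell')\leq e(\ell)+e(\ell')$ the bound factors, and by Cauchy--Schwarz with the summable weight $\sum_{\ell}S^{-e(\ell)}=\prod_{p\in\Sf}(1-p^{-1})^{-1}<\infty$,
\begin{equation*}
  \sum_{\ell}S^{(D-1)e(\ell)}\Scal_{D-1,\ell}(\tilde f_{\ell})\ll_{\Sf}\Big(\sum_{\ell}S^{(2D-1)e(\ell)}\Scal_{D-1,\ell}(\tilde f_{\ell})^{2}\Big)^{1/2}\leq\Scal(f),
\end{equation*}
the last inequality because $\Scal_{D-1,\ell}\leq\Scal_{2D,\ell}$ and $S^{e(\ell)}\geq 1$, so $S^{(2D-1)e(\ell)}\Scal_{D-1,\ell}(\tilde f_{\ell})^{2}\leq S^{2De(\ell)}\Scal_{2D,\ell}(\tilde f_{\ell})^{2}$; the same bound holds for $g$. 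Combining the displays yields $|\langle f\circ T_{A},g\rangle_{\Ltwo(\TS^{n})}-\int_{\TS^{n}}f\int_{\TS^{n}}\overline{g}|\ll_{A,S}e^{-\varrho(D-1)}\Scal(f)\Scal(g)\ll e^{-\varrho D}\Scal(f)\Scal(g)$, as claimed (for $D=1$ one uses the trivial $\Ltwo$ bound in place of Proposition~\ref{prop:effectivemixingendomorphismstorus}). The step carrying the actual content --- and the reason the hypothesis is stated in degree $2D$ while the conclusion only gains $e^{-\varrho D}$ --- is this last one: the factor $|N|^{D-1}$ that Proposition~\ref{prop:effectivemixingendomorphismstorus} unavoidably produces on each congruence torus is exactly cancelled by the level weights $S^{2De(\ell)}$ hard-wired into the $S$-arithmetic Sobolev norm, with just enough slack left for the passage to the common refinement $\ell\vee\ell'$ and for the summability over levels.
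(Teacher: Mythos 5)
Your proof follows the same route as the paper: decompose into pure-level components, descend to the finite tori $\T(S^{\ell\vee\ell'})$ via the $T_{A}$-equivariant isomorphism, apply Proposition~\ref{prop:effectivemixingendomorphismstorus}, note that pull-back along coverings preserves the Sobolev norm, bound $\lvert S^{\ell\vee\ell'}\rvert\leq\lvert S^{\ell+\ell'}\rvert$, and sum via Cauchy--Schwarz against the level weights and~\eqref{eq:relationsobolevnormstorus}. The one place you deviate --- invoking Proposition~\ref{prop:effectivemixingendomorphismstorus} at degree $D-1$ instead of $D$ so that a factor $S^{-e(\ell)/2}$ is left over to make the $\ell^{1}$-to-$\ell^{2}$ passage over levels genuinely summable --- is a sound refinement of the paper's final Cauchy--Schwarz step, which as written claims $\sum_{\ell}\lvert S^{\ell}\rvert^{D}\Scal_{D,\ell}(\tilde f_{\ell})\leq\Scal_{2D}(f)$ without producing the geometric decay in $\ell$ needed for that inequality to hold with a constant uniform in $f$.
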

\begin{proof}
  Assume that~$f,g\in\smooth(\TS^{n})$ are invariant under~$\Zsf[\ell]$ and~$\Zsf[\ell^{\prime}]$,~$\ell,\ell^{\prime}\in\N_{0}^{\Sf\times n}$, respectively. Recall that $\ell\vee \ell^{\prime}\in\N_{0}^{\Sf\times n}$ denotes the coordinate-wise maximum of $\ell$ and $\ell^{\prime}$, so that $\Z_{\Sf}[\ell\vee \ell^{\prime}]\leq \Z_{\Sf}[\ell]$ and $\Z_{\Sf}[\ell\vee \ell^{\prime}]\leq \Z_{\Sf}[\ell^{\prime}]$. In particular, both $f$ and $g$ are invariant under~$\Zsf[\ell\vee \ell^{\prime}]$. Let~$\tilde{f}_{\ell\vee \ell^{\prime}},\tilde{g}_{\ell\vee \ell^{\prime}}\in\smooth(\T(S^{\ell\vee \ell^{\prime}}))$ such that~$f=\tilde{f}_{\ell\vee \ell^{\prime}}\circ\pi_{\TS^{n}}^{(\ell\vee \ell^{\prime})}$ and~$g=\tilde{g}_{\ell\vee \ell^{\prime}}\circ\pi_{\TS^{n}}^{(\ell\vee \ell^{\prime})}$. As~$\pi_{\TS^{n}}^{(\ell\vee \ell^{\prime})}\circ T_{A}=T_{A}\circ\pi_{\TS^{n}}^{(\ell\vee \ell^{\prime})}$, it follows that
  \begin{equation*}
    \langle f\circ T_{A},g\rangle_{\Ltwo(\TS^{n})}=\langle\tilde{f}_{\ell\vee \ell^{\prime}}\circ T_{A},\tilde{g}_{\ell\vee \ell^{\prime}}\rangle_{\Ltwo(\T(S^{\ell\vee \ell^{\prime}}))}.
  \end{equation*}
  Using Proposition~\ref{prop:effectivemixingendomorphismstorus}, we know
  \begin{equation*}
    \bigg\lvert\langle f\circ T_{A},g\rangle_{\Ltwo(\TS^{n})}-\int_{\TS^{n}}f\int_{\TS^{n}}\overline{g}\bigg\rvert\ll \lvert S^{\ell\vee \ell^{\prime}}\rvert^{D}e^{-\varrho D}\Scal(\tilde{f}_{\ell\vee \ell^{\prime}})\Scal(\tilde{g}_{\ell\vee \ell^{\prime}}).
  \end{equation*}
  If~$q_{\ell}^{\ell\vee \ell^{\prime}}:\T(S^{\ell\vee\ell^{\prime}})\to\T(S^{\ell})$ denotes the quotient map and~$\tilde{f}_{\ell}\in\smooth(\T(S^{\ell}))$ is the unique function satisfying~$f=\tilde{f}_{\ell}\circ\pi_{\TS^{n}}^{(\ell)}$, then~$\tilde{f}_{\ell\vee\ell^{\prime}}=\tilde{f}_{\ell}\circ q_{\ell}^{\ell\vee \ell^{\prime}}$ and similarly to the proof of Lemma~\ref{lem:relationsobolevnorms} we have~$X(\tilde{f}_{\ell\vee\ell^{\prime}})=X(\tilde{f}_{\ell})\circ q_{\ell}^{\ell\vee \ell^{\prime}}$ for any differential operator~$X$ in the Lie algebra of~$\R^{n}$. If~$\lambda_{\ell}$ and~$\lambda_{\ell\vee\ell^{\prime}}$ denote the Haar probability measures on~$\T(S^{\ell})$ and~$\T(S^{\ell\vee\ell^{\prime}})$ respectively, then~$\lambda_{\ell}=(q_{\ell}^{\ell\vee \ell^{\prime}})_{\ast}\lambda_{\ell\vee \ell^{\prime}}$. Combining these, we find~$\Scal(\tilde{f}_{\ell\vee \ell^{\prime}})=\Scal(\tilde{f}_{\ell})$ and similarly for~$g$. Clearly $\lvert S^{\ell\vee\ell^{\prime}}\rvert^{D}\leq\lvert S^{\ell+\ell^{\prime}}\rvert^{D}$. Hence
  \begin{equation}\label{eq:mixingfirststep}
    \bigg\lvert\langle f\circ T_{A},g\rangle_{\Ltwo(\TS^{n})}-\int_{\TS^{n}}f\int_{\TS^{n}}\overline{g}\bigg\rvert\ll\lvert S^{\ell+\ell^{\prime}}\rvert^{D}e^{-\varrho D}\Scal(\tilde{f}_{\ell})\Scal(\tilde{g}_{\ell^{\prime}}).
  \end{equation}

  Using this, the proof now works just like the effective equidistribution of periodic horocycle orbits discussed in Section~\ref{sec:longhorocycles}. Given~$\ell\in\N^{\Sf\times n}$, denote by~$\tilde{f}_{\ell},\tilde{g}_{\ell}\in\smooth(\T(S^{\ell}))$ the unique functions so that~$\pr[\ell]f=\tilde{f}_{\ell}\circ\pi_{\TS^{n}}^{(\ell)}$ and~$\pr[\ell]g=\tilde{g}_{\ell}\circ\pi_{\TS^{n}}^{(\ell)}$. Here,~$\pr[\ell]$ denotes componentwise application of the operator~$\pr[\ell^{(i)}]$,~$i=1,\ldots,n$. Let~$\Scal_{D,\ell}$ denote the~$\Ltwo$-Sobolev norm of degree~$D$ on~$\smooth(\T(S^{\ell}))$ and~$\Scal_{2D}$ the~$\Ltwo$-Sobolev norm of degree~$2D$ on~$\smooth(\TS^{n})$. Then what we just showed combined with the Cauchy-Schwarz inequality and Equation~\eqref{eq:relationsobolevnormstorus} implies
  \begin{align*}
    \bigg\lvert\langle f\circ T_{A},g\rangle_{\Ltwo(\TS^{n})}&-\int_{\TS^{n}}f\int_{\TS^{n}}\overline{g}\bigg\rvert\\
    &\leq\sum_{\ell,\ell^{\prime}\in\N^{\Sf\times n}}\bigg\lvert\langle\pr[\ell]f\circ T_{A},\pr[\ell^{\prime}]g\rangle_{\Ltwo(\TS^{n})}-\int_{\TS^{n}}\pr[\ell]f\int_{\TS^{n}}\overline{\pr[\ell^{\prime}]g}\bigg\rvert\\
    &\ll e^{-\varrho D}\sum_{\ell,\ell^{\prime}\in\N^{\Sf\times n}}\lvert S^{\ell+\ell^{\prime}}\rvert^{D}\Scal_{D,\ell}(\tilde{f}_{\ell})\Scal_{D,\ell}(\tilde{g}_{\ell^{\prime}})\leq e^{-\varrho D}\Scal_{2D}(f)\Scal_{2D}(g).
  \end{align*}
\end{proof}
\begin{cor}\label{cor:effectivemixingsolenoid}
  Let~$q\in\N\setminus\{1\}$. Then the~$\times q$-map~$T_{q}$ on~$\TS$ is exponentially mixing at arbitrary rate, i.e.~given~$D\in\N$, there is some~$\Ltwo$-Sobolev norm~$\Scal$ depending only on~$D$, such that for all~$f,g\in\smooth(\TS)$, we have
  \begin{equation*}
    \bigg\lvert\langle f\circ T_{q},g\rangle_{\Ltwo(\TS)}-\int_{\TS}f\int_{\TS}\cl{g}\bigg\rvert\ll q^{-D}\Scal(f)\Scal(g).
  \end{equation*}
  The implicit constant is independent of~$q$.
\end{cor}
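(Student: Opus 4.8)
The plan is to obtain this as the one‑dimensional case of Corollary~\ref{cor:effectivemixingsolenoidtoral}. Indeed, $T_{q}$ is exactly the map $T_{A}$ associated with the $1\times1$ integer matrix $A=(q)$; since $q\in\N\setminus\{1\}$, this matrix lies in $M_{1}(\Z)\cap\GL_{1}(\Q)$, is trivially diagonalizable over $\C$, and has its unique eigenvalue $q$ of absolute value strictly greater than $1$. Hence $T_{q}$ is an expanding toral endomorphism in the sense of Proposition~\ref{prop:effectivemixingendomorphismstorus}, acting on $\TS$ (the case $n=1$).

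First I would note that the exponent $\varrho$ attached to $A=(q)$ by Proposition~\ref{prop:effectivemixingendomorphismstorus} is $\varrho=\log\min_{i}\lvert\lambda_{i}\rvert=\log q$, so that the factor $e^{-\varrho D}$ appearing in Corollary~\ref{cor:effectivemixingsolenoidtoral} is precisely $q^{-D}$. Let $\Scal$ be a degree-$2D$ $\Ltwo$-Sobolev norm on $\smooth(\TS)$ (say, with respect to a fixed basis of the Lie algebra of $\R$); this norm depends only on $D$. Corollary~\ref{cor:effectivemixingsolenoidtoral}, applied with $n=1$ and $A=(q)$, then gives, for all $f,g\in\smooth(\TS)$,
\begin{equation*}
  \bigg\lvert\langle f\circ T_{q},g\rangle_{\Ltwo(\TS)}-\int_{\TS}f\int_{\TS}\cl{g}\bigg\rvert\ll q^{-D}\Scal(f)\Scal(g),
\end{equation*}
which is the desired estimate.

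The only point that needs a word is the claim that the implicit constant is independent of $q$. Following the proof of Corollary~\ref{cor:effectivemixingsolenoidtoral}, the constant is controlled by the one furnished by Proposition~\ref{prop:effectivemixingendomorphismstorus} together with purely $q$-independent manipulations (passing from $\ell\vee\ell'$ to $\ell+\ell'$, Cauchy--Schwarz, and~\eqref{eq:relationsobolevnormstorus}). By Remark~\ref{rem:effectivemixingendomorphismstorusindependence} the constant in Proposition~\ref{prop:effectivemixingendomorphismstorus} depends only on the choice of an eigenbasis for ${}^{t}\!A$, and for every $q$ one may use the same eigenbasis $\{1\}$ of $\C$---equivalently the same norm $\lVert\cdot\rVert_{\mathcal{B}}=\lvert\cdot\rvert$---so the constant can be chosen uniformly over all $q\in\N\setminus\{1\}$. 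There is essentially no obstacle here: the argument is a direct specialization, and the single subtle point is to extract the uniformity in $q$ from Remark~\ref{rem:effectivemixingendomorphismstorusindependence} while checking that $\varrho=\log q$, so that the prescribed rate $q^{-D}$ really comes out.
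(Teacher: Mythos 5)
Your proof is correct and takes essentially the same route as the paper: both reduce to Corollary~\ref{cor:effectivemixingsolenoidtoral} applied with $n=1$ and $A=(q)$, note that $\varrho=\log q$, and obtain uniformity of the implicit constant via Remark~\ref{rem:effectivemixingendomorphismstorusindependence}. The paper's one-line proof just phrases this by observing that the $\times q$-maps form a commuting family of expanding toral endomorphisms with smallest eigenvalue $q$, which is exactly your observation that the single eigenbasis $\{1\}$ serves for all $q$.
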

\begin{proof}
  The family of~$\times q$-maps,~$q\in\N\setminus\{1\}$, is a commuting family of expanding toral endomorphisms with smallest eigenvalue~$q$.
\end{proof}

For the remainder of the paper we will look at the~$\times S^{2m}$ map on~$\TS$ and the action of~$a_{S^{-m}}$ on~$\XS$. The latter is also mixing with a spectral gap. In fact this holds for any element which is not contained in a compact subgroup.
\begin{prop}\label{prop:spectralgap}
  There are an~$\Ltwo$-Sobolev norm~$\Scal$ of degree~$D$ on~$\compactsmooth(\XS)$ and a positive constant~$\exponentdecaytimes$ such that for all~$f_{1},f_{2}\in\compactsmooth(\XS)$ and for all~$g\in\GS$ we have
  \begin{equation*}
    \bigg\lvert\langle g\cdot f_{1},f_{2}\rangle-\int_{\XS}f_{1}\int_{\XS}\cl{f}_{2}\bigg\rvert\ll \vvvert g\vvvert_{S}^{-\exponentdecaytimes}\Scal(f_{1})\Scal(f_{2}).
  \end{equation*}
  The degree of~$\Scal$, the implicit constant and~$\exponentdecaytimes$ are independent of~$S$.
\end{prop}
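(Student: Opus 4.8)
The plan is to deduce the estimate from the uniform spectral gap for congruence quotients of~$\SLtwo(\R)$, together with the description of~$\XS$ as an inverse limit of congruence covers of~$\Xr$, and to then invoke the standard machinery relating decay of matrix coefficients to Sobolev norms developed in~\cite[Appendix~A]{EMMV}. The only point that genuinely requires care is the independence of all constants from~$S$.

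First I would record the representation-theoretic input: the unitary~$\GS$-representation on~$\Ltwo_{0}(\XS)$, the orthogonal complement of the constants, is~$\vartheta$-tempered at every place of~$S$ for some~$\vartheta<\tfrac12$ independent of~$S$. Every~$K[m]$-invariant vector in~$\Ltwo_{0}(\XS)$ corresponds to a vector in~$\Ltwo_{0}(\Xr(S^{m}))$, and by strong approximation (cf.~\cite{Cassels}) the~$\GS$-representation on~$\Ltwo_{0}(\XS)$ embeds~$\GS$-equivariantly into~$\Ltwo_{0}\big(\SLtwo(\Q)\backslash\SLtwo(\mathbb{A})\big)$; hence each irreducible constituent has the form~$\bigotimes_{p}\pi_{p}$ with every local factor~$\pi_{p}$, $p\in S$, non-trivial and either tempered or a complementary-series representation whose parameter is bounded by Selberg's eigenvalue bound~\cite{Selberg65}, uniformly over~$S$.

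Given this, I would run the usual argument. Writing~$g$ as the commuting product~$\prod_{p\in S}\imath_{p}(g_{p})$ of its local components and applying the Cartan decomposition at each place reduces matters to bounding matrix coefficients of~$\GSK$- and~$K_{\infty}$-finite vectors, which are controlled by a product over~$p\in S$ of local Harish--Chandra functions~$\Xi_{\SLtwo(\Qp)}(g_{p})^{1-2\vartheta}$. For~$\SLtwo$ each such factor is~$\ll\vvvert\imath_{p}(g_{p})\vvvert_{S}^{-\exponentdecaytimes}$ up to a logarithmic factor absorbed into a slightly smaller exponent, and the submultiplicativity~$\vvvert g\vvvert_{S}\leq\prod_{p\in S}\vvvert\imath_{p}(g_{p})\vvvert_{S}$ turns the product into~$\vvvert g\vvvert_{S}^{-\exponentdecaytimes}$. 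To pass from~$K$-finite to smooth vectors I would decompose~$f_{1},f_{2}$ into their~$K_{\infty}$-isotypic components in the archimedean direction and into their pure level components~$\pr[m]f_{i}$ in the finite directions: the archimedean part of the Sobolev norm handles the~$K_{\infty}$-types exactly as in the real case, while the weights~$S^{Dm}$ built into the~$S$-arithmetic Sobolev norm dominate the index growth~$[\SLtwo(\Zp):K_{p}[m_{p}]]\asymp p^{3m_{p}}$ entering the~$p$-adic bounds. Summing over the level parameters and applying Cauchy--Schwarz together with Lemma~\ref{lem:relationsobolevnorms} then yields the stated inequality with~$D$ and the implicit constant depending only on~$\vartheta$.

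The hard part is exactly this~$S$-uniformity, on two fronts. First,~$\GammaS$ is not itself a congruence subgroup, so the spectral gap at the finite places of~$S$ cannot be read off directly and must be imported from the adelic picture as above, which is why strong approximation enters. Second, the sum over the a priori unbounded set of levels~$m\in\N_{0}^{\Sf}$ must not produce an~$S$-dependent constant, which is arranged by the same weighting that already appeared in Section~\ref{sec:longhorocycles} and in the proof of Corollary~\ref{cor:effectivemixingsolenoidtoral}. Everything else --- the Cartan reduction, the~$\Xi$-function estimates, and the interpolation between~$K$-finite and smooth vectors --- is routine and may be cited from~\cite[Appendix~A]{EMMV}.
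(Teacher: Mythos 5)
The paper does not prove Proposition~\ref{prop:spectralgap}; it simply states ``We do not give a proof of Proposition~\ref{prop:spectralgap}, but refer the reader to \cite{HeeOh} and \cite[p.~19]{GorodnikMaucourantOh}.'' Your sketch is a correct and reasonably faithful summary of what those references supply: the uniform spectral gap for $\Ltwo_{0}(\XS)$ extracted from the adelic picture and Selberg's bound, the Cowling--Haagerup--Howe/Oh-type decay of $K$-finite matrix coefficients via Harish--Chandra spherical functions place by place, passage to smooth vectors through Sobolev norms, and the observation that the geometrically weighted levels $S^{Dm}$ absorb the index growth $[\SLtwo(\Zp):K_{p}[m_{p}]]\asymp p^{3m_{p}}$ so that all constants can be taken independent of~$S$ once $D$ is large enough (and the level sum is then bounded by a value of $\zeta$ independent of~$\Sf$). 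Two small corrections: strong approximation for $\SLtwo$ should be cited from \cite{Rapinchuk1994}, not \cite{Cassels} (which the paper uses only for the additive group); and it is worth stating explicitly that no irreducible constituent of $\Ltwo_{0}(\XS)$ has a nonzero $\Gp$-invariant vector for any single $p\in S$ --- this is what makes every local factor $\pi_{p}$ nontrivial and follows from irreducibility of the $S$-arithmetic lattice $\GammaS$ in $\GS$ (equivalently, ergodicity of each noncompact factor on $\XS$).
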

\begin{remark}
  In the discussions to follow,~$g$ will be a diagonal matrix with diagonal entries of all components given by~$S^{m}$ and~$S^{-m}$ with~$m\in\Z^{\Sf}$ fixed. In this case,~$\vvvert g\vvvert_{S}^{-\exponentdecaytimes}\leq S^{-\exponentdecaytimesprime\underline{m}}$ for some~$\exponentdecaytimesprime>0$ where~$\underline{m}\in\N_{0}^{\Sf}$ denotes the vector whose entries are the absolute values of the entries of~$m$.
\end{remark}
We do not give a proof of Proposition~\ref{prop:spectralgap}, but refer the reader to \cite{HeeOh} and \cite[p.~19]{GorodnikMaucourantOh} for more details. Assuming Proposition~\ref{prop:spectralgap}, we can deduce that~$\Z^{\Sf}\curvearrowright\TS\times\XS$ as introduced in the beginning of this section is mixing.
\begin{prop}\label{prop:jointmixing}
  Let~$S$ be a finite set of primes containing~$\infty$ and~$m\in\Z^{\Sf}$. Then~$M_{m}$ is a mixing transformation on~$\TS\times\XS$. Moreover, there exists an~$\Ltwo$-Sobolev norm~$\Scal$ on~$\tensorcompactsmooth(\TS\times\XS)$ and some~$\exponentdecaytimesjoint>0$ independent of~$m$ such that for all~$F,G\in\tensorcompactsmooth(\TS\times\XS)$ we have
  \begin{equation*}
    \bigg\lvert\langle F\circ M_{m},G\rangle-\int_{\XS}F\int_{\XS}\cl{G}\bigg\rvert\ll S^{-\exponentdecaytimesjoint\underline{m}}\Scal(F)\Scal(G).
  \end{equation*}
  The constants do not depend on~$S$.
\end{prop}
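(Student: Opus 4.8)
The plan is to reduce the mixing estimate for $M_m$ on the product to the two factor-wise estimates already at our disposal: Corollary~\ref{cor:effectivemixingsolenoid} (effective mixing of the $\times S^{2m}$-map $T_{S^{2m}}$ on $\TS$, at arbitrary polynomial rate in the multiplier, hence exponentially in $\underline m$ after noting $S^{2m}\geq 2^{\underline m}$) together with Proposition~\ref{prop:spectralgap} (the uniform spectral gap for the $\GS$-action on $\XS$, applied to $g=a_{S^{-m}}$, which by the remark following that proposition gives a bound $\vvvert a_{S^{-m}}\vvvert_S^{-\exponentdecaytimes}\leq S^{-\exponentdecaytimesprime\underline m}$). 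Since $M_m(t,x)=(S^{2m}t,\,xa_{S^{-m}})$, on a pure tensor $F=\varphi\otimes f$ and $G=\psi\otimes g$ we have $F\circ M_m=(\varphi\circ T_{S^{2m}})\otimes(a_{S^{-m}}\cdot f)$, and by Fubini
\begin{equation*}
  \langle F\circ M_m,G\rangle=\langle\varphi\circ T_{S^{2m}},\psi\rangle_{\Ltwo(\TS)}\cdot\langle a_{S^{-m}}\cdot f,g\rangle_{\Ltwo(\XS)}.
\end{equation*}

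The core computation is then the standard telescoping identity for a product of two inner products. Write $\langle\varphi\circ T_{S^{2m}},\psi\rangle=E_\varphi\cl{E_\psi}+\delta_1$ and $\langle a_{S^{-m}}\cdot f,g\rangle=E_f\cl{E_g}+\delta_2$, where $E$ denotes the relevant integral and, by the two cited results, $|\delta_1|\ll S^{-\exponentdecaytimesprime\underline m}\Scal_{\TS}(\varphi)\Scal_{\TS}(\psi)$ and $|\delta_2|\ll S^{-\exponentdecaytimesprime\underline m}\Scal_{\XS}(f)\Scal_{\XS}(g)$ for appropriate $\Ltwo$-Sobolev norms of sufficiently large degree. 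Multiplying out, $\langle F\circ M_m,G\rangle-(\int_{\TS}\varphi\int_{\XS}f)(\cl{\int_{\TS}\psi\int_{\XS}g})$ equals $E_\varphi\cl{E_\psi}\,\delta_2+E_f\cl{E_g}\,\delta_1+\delta_1\delta_2$. Bounding $|E_\varphi|,|E_f|$ etc.\ by the relevant $L^\infty$-norms, and those in turn by the Sobolev norms via the Sobolev embedding (Proposition~\ref{prop:propertiessobolevnorms}~(\ref{item:sobolevembedding}) and Proposition~\ref{prop:propertiessobolevnormstorus}~(\ref{item:sobolevembeddingtorus})), every term is $\ll S^{-\exponentdecaytimesjoint\underline m}\Scal_{\TS}(\varphi)\Scal_{\XS}(f)\Scal_{\TS}(\psi)\Scal_{\XS}(g)$ with $\exponentdecaytimesjoint=\exponentdecaytimesprime$. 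Recognizing $\Scal_{\TS}(\varphi)\Scal_{\XS}(f)$ as one admissible decomposition of a cross norm of $F=\varphi\otimes f$ (and likewise for $G$), this is exactly the desired bound on pure tensors in terms of a cross norm $\Scal_\Acal$ of suitable degree.

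To finish, I would pass from pure tensors to general $F,G\in\tensorcompactsmooth(\TS\times\XS)$ by bilinearity: writing $F=\sum_i\varphi_i\otimes f_i$ and $G=\sum_j\psi_j\otimes g_j$, the left-hand side is $\sum_{i,j}\langle(\varphi_i\otimes f_i)\circ M_m,\psi_j\otimes g_j\rangle-\ldots$, each term is controlled by the pure-tensor estimate, and summing gives $\ll S^{-\exponentdecaytimesjoint\underline m}\big(\sum_i\Scal_{D}(\varphi_i)\Scal_{D}(f_i)\big)\big(\sum_j\Scal_{D}(\psi_j)\Scal_{D}(g_j)\big)$; taking the infimum over representations yields the bound with $\Scal_\Acal(F)\Scal_\Acal(G)$ for the maximal cross norm, and then Corollary~\ref{cor:effectiveequidistcrossnormimplieseffectiveequidist} (equivalently, the fact that by \eqref{eq:productsobolevnormonproduct} there is a single cross norm dominating both factors multiplicatively) lets us absorb everything into one $\Ltwo$-Sobolev norm $\Scal$ on $\tensorcompactsmooth(\TS\times\XS)$, giving $|\langle F\circ M_m,G\rangle-\int F\int\cl G|\ll S^{-\exponentdecaytimesjoint\underline m}\Scal(F)\Scal(G)$. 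Mixing of $M_m$ in the measure-theoretic sense then follows from density of $\tensorcompactsmooth(\TS\times\XS)$ in $\Ltwo(\TS\times\XS)$. I do not anticipate a genuine obstacle here; the only mild subtlety is bookkeeping the various Sobolev degrees so that a \emph{single} norm works simultaneously for the embedding bounds on $E_\varphi,E_f,\dots$ and for the two mixing inputs — this is handled exactly as in the passage following Definition~\ref{def:sobolevnormproduct}, by enlarging the degree once at the end. The uniformity in $S$ is inherited directly from the uniformity in $S$ of Corollary~\ref{cor:effectivemixingsolenoid} and Proposition~\ref{prop:spectralgap}.
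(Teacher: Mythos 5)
Your proposal is correct and takes essentially the same approach as the paper: reduce to pure tensors, apply Corollary~\ref{cor:effectivemixingsolenoid} and Proposition~\ref{prop:spectralgap} on the two factors, and pass to general elements of~$\tensorcompactsmooth(\TS\times\XS)$ by bilinearity and the definition of the cross norm. The only cosmetic difference is that the paper uses the two-term telescope $\langle F\circ M_m,G\rangle-E_F\cl{E_G}=(\langle\varphi\circ T_{S^{2m}},\psi\rangle-E_\varphi\cl{E_\psi})\langle a_{S^m}\cdot f,g\rangle+E_\varphi\cl{E_\psi}(\langle a_{S^m}\cdot f,g\rangle-E_f\cl{E_g})$, bounding the middle inner product by Cauchy--Schwarz, whereas you expand into three terms $a_0\delta_2+b_0\delta_1+\delta_1\delta_2$; both give the same conclusion.
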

\begin{proof}
  This follows immediately from Corollary~\ref{cor:effectivemixingsolenoid} and Proposition~\ref{prop:spectralgap}. Assume first that~$F=\varphi\otimes f$ and~$G=\psi\otimes g$ with~$\varphi,\psi\in\smooth(\TS)$ and~$f,g\in\compactsmooth(\XS)$. Then
  \begin{align*}
    \lvert\langle F\circ M_{m},G\rangle-E_{F}\cl{E_{G}}\rvert&\leq\big\lvert\langle\varphi\circ T_{S^{2m}},\psi\rangle-E_{\varphi}\cl{E_{\psi}}\big\rvert\lVert f\rVert_{2}\lVert g\rVert_{2}\\
    &\quad+\lvert E_{\varphi}\rvert\lvert E_{\psi}\rvert\big\lvert\langle f\circ a_{S^{m}},g\rangle-E_{f}\cl{E_{g}}\big\rvert\\
    &\ll S^{-D\underline{m}}\Scal_{D,\TS}(\varphi)\Scal_{D,\TS}(\psi)\Scal_{D,\XS}(f)\Scal_{D,\XS}(g)\\
    &\quad+\vvvert a_{S^{m}}\vvvert_{S}^{-\exponentdecaytimes}\Scal_{D,\TS}(\varphi)\Scal_{D,\TS}(\psi)\Scal_{D,\XS}(f)\Scal_{D,\XS}(g),
  \end{align*}
  where~$D$ was chosen so that Proposition~\ref{prop:propertiessobolevnorms}~(\ref{item:sobolevembedding}) holds on~$\TS$ and so that~$\Scal_{D,\XS}$ is a valid choice in Proposition~\ref{prop:spectralgap}. Using the remark following Proposition~\ref{prop:spectralgap}, we deduce the claim. For general functions in~$\tensorcompactsmooth(\TS\times\XS)$, the statement now follows from the triangle inequality.
\end{proof}
\subsection{An adelic discrepancy operator}\label{sec:adelicdiscrepancy}
In order to complete the proof of Theorem~\ref{thm:mainthm}, we will need a similar tool like the discrepancy operator introduced in the proof of Proposition~\ref{prop:equidistributionrationalpointscongruence}. Given~$n\in\N$ and~$x>0$ we denote by~$\P(n,x)$ the set of primes~$p$ coprime to~$n$ and satisfying~$1<p<x$. We denote by~$\pi_{n}(x)$ the cardinality of~$\P(n,x)$. We fix~$0<\beta<\frac{1}{2}$ and focus on primes contained in~$\P(n,n^{\beta})$. Note that throughout this section we allow implicit constants to depend on~$\beta$. This dependency is often implicit. Let~$\pi:(0,\infty)\to\N$ denote the prime counting function, i.e.~$\pi(x)$ is defined to be the number of primes~$p$ satisfying~$p\leq x$. We know from the Prime Number Theorem, that
\begin{equation*}
  \pi(x)\asymp\frac{x}{\log x}
\end{equation*}
for sufficiently large~$x$. Let~$\omega(n)$ denote the number of distinct prime divisors of~$n$, then
\begin{equation*}
  \omega(n)\leq\log n,
\end{equation*}
and hence
\begin{equation}\label{eq:boundpinalphabeta}
  \pi(n^{\beta})\geq \pi_{n}(n^{\beta})\geq \pi(n^{\beta})-\log n\gg\pi(n^{\beta})
\end{equation}
for sufficiently large~$n$. We will later depend on the stronger result that~$\frac{\omega(n)\log\log n}{\log n}$ is bounded \cite[Sec.~22.10]{HardyWright}. We let~$S_{n,\beta}=\{\infty\}\cup\P(n,n^{\beta})$, so that~$(S_{n,\beta})_{\mathrm{f}}=\P(n,n^{\beta})$. Given some natural number~$d\in\N$, a function~$F\in\compactsmooth(\T_{S_{n,\beta}}\times X_{S_{n,\beta}})$ and~$(t,x)\in \T_{S_{n,\beta}}\times X_{S_{n,\beta}}$, we define
\begin{equation}\label{def:discrepancy}
  (D_{n,\beta,d}F)(t,x)=\frac{1}{\pi_{n}(n^{\beta})}\sum_{p\in \P(n,n^{\beta})}F\circ M_{de_{p}}(t,x)-E_{F},
\end{equation}
where~$e_{p}\in\Z^{\P(n,n^{\beta})}$ is defined by~$e_{p}(q)=\delta_{p,q}$. Using Proposition~\ref{prop:jointmixing}, we can bound the~$\Ltwo$-norm of~$D_{n,\beta,d}f$.
\begin{lemma}\label{lem:discrepancybound}
  There exists an~$\Ltwo$-Sobolev norm~$\Scal$ on~$\tensorcompactsmooth(\T_{S_{n,\beta}}\times X_{S_{n,\beta}})$ and some~$\sigma>0$ independent of~$n$ and~$\beta$ such that for all real-valued~$F\in\tensorcompactsmooth(\T_{S_{n,\beta}}\times X_{S_{n,\beta}})$ we have
  \begin{equation}\label{eq:bounddiscrepancy}
    \int_{\T_{S_{n,\beta}}\times X_{S_{n,\beta}}}(D_{n,\beta,d}F)^{2}\ll \beta n^{-\sigma\beta}\Scal(F)^{2}.
  \end{equation}
\end{lemma}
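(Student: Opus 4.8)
The plan is to expand $(D_{n,\beta,d}F)^2$ into a double sum over primes and estimate the resulting correlation sums by the joint mixing estimate of Proposition~\ref{prop:jointmixing}. Writing $E_F$ for the integral of $F$, we have
\begin{equation*}
  \int (D_{n,\beta,d}F)^2 = \frac{1}{\pi_n(n^\beta)^2}\sum_{p,q\in\P(n,n^\beta)}\Big(\langle F\circ M_{de_p},F\circ M_{de_q}\rangle - E_F^2\Big).
\end{equation*}
Since $M$ is an action of the abelian group $\Z^{\Sf}$, $\langle F\circ M_{de_p},F\circ M_{de_q}\rangle = \langle F\circ M_{d(e_p-e_q)},F\rangle$ (using that $M_{de_q}$ is measure-preserving). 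For $p\neq q$ the vector $d(e_p-e_q)\in\Z^{\Sf}$ has $\underline{m} = d e_p + d e_q$ in the notation of Proposition~\ref{prop:jointmixing}, so $S^{-\exponentdecaytimesjoint\underline{m}} = (pq)^{-d\exponentdecaytimesjoint}$; applying Proposition~\ref{prop:jointmixing} gives
\begin{equation*}
  \big|\langle F\circ M_{d(e_p-e_q)},F\rangle - E_F^2\big| \ll (pq)^{-d\exponentdecaytimesjoint}\Scal(F)^2 \leq 1\cdot\Scal(F)^2
\end{equation*}
for the $\Ltwo$-Sobolev norm $\Scal$ furnished by that proposition (one must note here that $\Scal$ depends only on the abstract data, not on $n$ or $\beta$, which is exactly what Proposition~\ref{prop:jointmixing} asserts). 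The key point is that the bound is summable: $\sum_{p,q}(pq)^{-d\exponentdecaytimesjoint} = \big(\sum_{p\in\P(n,n^\beta)}p^{-d\exponentdecaytimesjoint}\big)^2$, and since $d\exponentdecaytimesjoint>0$ this inner sum is $O(1)$ uniformly (it is at most $\sum_{p\text{ prime}}p^{-d\exponentdecaytimesjoint}$, or even just $O(1)$ crudely since the smallest prime is $2$).

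The diagonal terms $p=q$ contribute $\pi_n(n^\beta)^{-2}\sum_p(\langle F,F\rangle - E_F^2) = \pi_n(n^\beta)^{-1}(\|F\|_2^2 - E_F^2) \ll \pi_n(n^\beta)^{-1}\Scal(F)^2$ after bounding $\|F\|_2\ll\Scal(F)$ (Sobolev embedding, or trivially since $\Scal$ dominates the $\Ltwo$-norm). By the off-diagonal estimate above, the off-diagonal terms contribute $\pi_n(n^\beta)^{-2}\cdot O(1)\cdot\Scal(F)^2 \ll \pi_n(n^\beta)^{-2}\Scal(F)^2$, which is even smaller. Hence the whole expression is $O\big(\pi_n(n^\beta)^{-1}\Scal(F)^2\big)$. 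Now invoke \eqref{eq:boundpinalphabeta}: $\pi_n(n^\beta)\gg\pi(n^\beta)\asymp n^\beta/(\beta\log n)$ for large $n$. Therefore
\begin{equation*}
  \int(D_{n,\beta,d}F)^2 \ll \frac{\beta\log n}{n^\beta}\Scal(F)^2.
\end{equation*}
Absorbing the $\log n$ factor at the cost of shrinking the exponent slightly — i.e. $\log n \ll_\epsilon n^{\epsilon\beta}$ — and setting $\sigma$ to be, say, $d\exponentdecaytimesjoint$ if that is $\leq 1$, or more simply any positive constant strictly less than $1$, we get $\int(D_{n,\beta,d}F)^2 \ll \beta n^{-\sigma\beta}\Scal(F)^2$, as claimed. (Small $n$ are handled by adjusting the implicit constant, since there are only finitely many and for each the left side is finite.)

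The main obstacle, and the only subtle point, is making sure that the $\Ltwo$-Sobolev norm $\Scal$ and the constant $\sigma$ can be chosen \emph{independently of $n$ and $\beta$}, even though the underlying space $\T_{S_{n,\beta}}\times X_{S_{n,\beta}}$ changes with $n$ and $\beta$. This is precisely the content of the uniformity clause in Proposition~\ref{prop:jointmixing} (whose constants do not depend on $S$), together with the uniformity in Corollary~\ref{cor:effectivemixingsolenoid} and Proposition~\ref{prop:spectralgap}; one should remark explicitly that the relevant expanding toral endomorphisms here are the $\times S^{2de_p}$ maps, all of which have smallest eigenvalue $\geq p^{2d}\geq 2$, so Remark~\ref{rem:effectivemixingendomorphismstorusindependence} applies and the mixing constant is uniform across the whole family. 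Beyond that, the computation is routine bookkeeping of the diagonal versus off-diagonal split and an application of the prime-counting bound \eqref{eq:boundpinalphabeta}.
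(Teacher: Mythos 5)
Your general strategy is the same as the paper's (expand $(D_{n,\beta,d}F)^2$ into a double sum over primes, split off the diagonal, and apply the joint mixing estimate of Proposition~\ref{prop:jointmixing} to the off-diagonal terms), and the points you raise about uniformity of the Sobolev norm in $n$ and $\beta$ via Remark~\ref{rem:effectivemixingendomorphismstorusindependence} are exactly on target. However, there is a genuine gap in your treatment of the off-diagonal sum.

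You assert that $\sum_{p\in\P(n,n^\beta)}p^{-d\exponentdecaytimesjoint}=O(1)$ merely because $d\exponentdecaytimesjoint>0$. This is false in general: $\sum_{p\;\mathrm{prime}}p^{-s}$ converges if and only if $s>1$, so the uniform $O(1)$ bound requires $d\exponentdecaytimesjoint>1$, which you have no a priori control over --- $\exponentdecaytimesjoint$ is whatever constant Proposition~\ref{prop:jointmixing} delivers (ultimately coming from the spectral gap), and it can certainly be smaller than $1/d$. Your fallback ``crude'' justification --- ``the smallest prime is $2$'' --- only bounds each individual summand; the sum has $\pi_n(n^\beta)\asymp n^\beta/(\beta\log n)$ terms, so that bound gives $\ll n^\beta/(\beta\log n)$, not $O(1)$. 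As a consequence your conclusion that the off-diagonal contribution is dominated by the diagonal term $\pi_n(n^\beta)^{-1}$ does not follow when $d\exponentdecaytimesjoint\leq 1$; in that regime the off-diagonal piece scales like $n^{-2\beta d\exponentdecaytimesjoint}$ (up to logs) and is the dominant term whenever $2d\exponentdecaytimesjoint<1$.

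The paper sidesteps this issue by deliberately \emph{weakening} the mixing exponent so that $d\exponentdecaytimesjoint<1$ (this is always permissible since a weaker exponent yields a weaker, hence still valid, estimate). With this choice the off-diagonal sum is controlled by the explicit integral bounds $\sum_{1<q<p}q^{-d\exponentdecaytimesjoint}\leq\int_1^p t^{-d\exponentdecaytimesjoint}\,\der t\ll p^{1-d\exponentdecaytimesjoint}$ and $\sum_{p\in\P(n,n^\beta)}p^{1-d\exponentdecaytimesjoint}\ll n^{\beta(2-d\exponentdecaytimesjoint)}$, which after dividing by $\pi_n(n^\beta)^2\asymp (n^\beta/(\beta\log n))^2$ leaves the $n^{-\beta d\exponentdecaytimesjoint}$ decay; the diagonal contributes $n^{-\beta}$, and both are absorbed into $\beta n^{-\sigma\beta}$ after soaking up the logarithms. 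If you wish to keep your cleaner bookkeeping you must either prove $d\exponentdecaytimesjoint>1$ or, more robustly, replace $\exponentdecaytimesjoint$ by $\min\{\exponentdecaytimesjoint,1/(2d)\}$ (say) at the outset and run the integral estimate as in the paper.
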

\begin{proof}
  We can always choose a weaker exponent in Proposition~\ref{prop:jointmixing} so that~$d\exponentdecaytimesjoint<1$. Combining Proposition~\ref{prop:propertiessobolevnorms}~(\ref{item:sobolevembedding}) with Proposition~\ref{prop:jointmixing}, for sufficiently large~$n$ we obtain
\begin{align}
  \notag\int_{\T_{S_{n,\beta}}\times X_{S_{n,\beta}}}&(D_{n,\beta,d}F)^{2}\leq\frac{1}{\pi_{n}(n^{\beta})^{2}}\sum_{p,q\in \P(n,n^{\beta})}\lvert\langle F\circ M_{d(e_{p}-e_{q})},F\rangle-E_{F}^{2}\rvert\\
  \label{2}&\ll\frac{1}{\pi_{n}(n^{\beta})}\lvert E_{F^{2}}-E_{F}^{2}\rvert+\frac{\mathcal{S}(F)^{2}}{\pi_{n}(n^{\beta})^{2}}\sum_{p\in \P(n,n^{\beta})}p^{-d\exponentdecaytimesjoint}\sum_{1<q<p}q^{-d\exponentdecaytimesjoint}\\
  \label{3}&\ll\frac{\Scal(F)^{2}}{\pi_{n}(n^{\beta})}+\frac{\mathcal{S}(F)^{2}}{\pi_{n}(n^{\beta})^{2}}\sum_{p\in \P(n,n^{\beta})}p^{1-d\exponentdecaytimesjoint}.
\end{align}
The last inequality is obtained using that
\begin{align*}
  \sum_{1<q<p}q^{-d\exponentdecaytimesjoint}\leq\int_{1}^{p}t^{-d\exponentdecaytimesjoint}\der t\ll p^{1-d\exponentdecaytimesjoint}.
\end{align*}
Let~$\delta>0$ arbitrary, then we similarly get
\begin{equation}\label{eq:boundgeneralizedprimenumbertheorem}
  \sum_{p\in\P(n,n^{\beta})}p^{\delta}\leq\int_{1}^{n^{\beta}+1}t^{\delta}\der t\ll n^{\beta(1+\delta)}.
\end{equation}
Using that~$\pi_{n}(n^{\beta})\asymp\frac{n^{\beta}}{\beta\log n}$ for sufficiently large~$n$, we obtain that for all~$d\in\N$
\begin{equation}\label{eq:bounddiscrepancytwo}
  \int_{X_{S_{n,\beta}}}(D_{n,\beta,d}f)^{2}\ll\beta\mathcal{S}(f)^{2}(n^{-\beta}+n^{-\beta d\exponentdecaytimesjoint})(\log n)^{2}\ll \beta n^{-\sigma\beta}\mathcal{S}(f)^{2}
\end{equation}
for some~$\sigma>0$.
\end{proof}
The next step is to bound~$\Scal(D_{n,\beta,d}f)$.
\begin{cor}\label{cor:sobolevdiscrepancy}
  Let~$\Scal$ be an~$\Ltwo$-Sobolev norm on~$\tensorcompactsmooth(\T_{S_{n,\beta}}\times X_{S_{n,\beta}})$. There exists a constant~$c>0$ depending only on the degree of~$\Scal$ such that for all~$F\in\tensorcompactsmooth(\T_{S_{n,\beta}}\times X_{S_{n,\beta}})$ we have
  \begin{equation*}
    \Scal(D_{n,\beta,d}F)\ll n^{\beta c}\Scal(F),
  \end{equation*}
  where the implicit constant depends only on the degree of~$\Scal$ and on~$\beta$.
\end{cor}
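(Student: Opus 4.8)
The plan is to estimate $\Scal\big(F\circ M_{de_{p}}\big)$ for a single prime $p\in\P(n,n^{\beta})$ and then average over $\P(n,n^{\beta})$. Write $S=S_{n,\beta}$ and let $(D_{1},D_{2})$ denote the degree of the cross norm $\Scal$, so that $\Scal(G)=\inf\{\sum_{i}\Scal_{D_{1}}(\varphi_{i})\Scal_{D_{2}}(f_{i}):G=\sum_{i}\varphi_{i}\otimes f_{i}\}$ with $\Scal_{D_{1}},\Scal_{D_{2}}$ fixed $\Ltwo$-Sobolev norms on $\smooth(\TS)$ and $\compactsmooth(\XS)$. Since $M_{de_{p}}$ preserves the measure one has $E_{F\circ M_{de_{p}}}=E_{F}$, so $D_{n,\beta,d}F$ equals the function $\frac{1}{\pi_{n}(n^{\beta})}\sum_{p\in\P(n,n^{\beta})}F\circ M_{de_{p}}\in\tensorcompactsmooth(\TS\times\XS)$ minus the constant $E_{F}$; as every $\Ltwo$-Sobolev norm dominates the $\Ltwo$-norm, the constant part contributes at most $\ll\lvert E_{F}\rvert\leq\lVert F\rVert_{2}\leq\Scal(F)$, so it suffices to prove $\Scal(F\circ M_{de_{p}})\ll n^{\beta c}\Scal(F)$ for each $p$.

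Here I would argue pure-tensor by pure-tensor, using the infimum defining the cross norm. For $F=\varphi\otimes f$ one has $F\circ M_{de_{p}}=(\varphi\circ T_{p^{2d}})\otimes(a_{p^{-d}}\cdot f)$, where $T_{q}$ denotes the $\times q$ endomorphism of $\TS$ and $a_{p^{-d}}=\Delta\big(\mathrm{diag}(p^{-d},p^{d})\big)\in\GS$ acts on $\XS$ by right translation. The $\XS$-factor is immediate from Proposition~\ref{prop:propertiessobolevnorms}~(\ref{item:continuityofrepresentation}): the archimedean and the $p$-adic components of $a_{p^{-d}}$ each have norm $p^{d}$ and all other finite components have norm $1$, so $\vvvert a_{p^{-d}}\vvvert_{S}=p^{2d}$ and hence $\Scal_{D_{2}}(a_{p^{-d}}\cdot f)\ll p^{8dD_{2}}\Scal_{D_{2}}(f)$, with $S$-uniform implicit constant.

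The torus factor is the main point. I would use the Fourier description of $\Scal_{D_{1}}$ on $\smooth(\TS)$ obtained by combining \eqref{eq:relationsobolevnormstorus} with the Fourier characterisation of the toral Sobolev norm recorded just before Proposition~\ref{prop:effectivemixingendomorphismstorus}: writing $\varphi=\sum_{c\in\ZS}a_{c}(\varphi)\chi_{c}$, one has $\Scal_{D_{1}}(\varphi)^{2}\asymp\sum_{c\in\ZS}\lvert a_{c}(\varphi)\rvert^{2}\,(1+\lvert c\rvert)^{2D_{1}}\prod_{p'\in\Sf}p'^{D_{1}\max\{0,-v_{p'}(c)\}}$, with implicit constants independent of $S$. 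Since $p^{2d}$ is a unit in $\ZS$, precomposition with $T_{p^{2d}}$ merely relabels the Fourier support by $c\mapsto p^{2d}c$; this multiplies the archimedean frequency $\lvert c\rvert$ by $p^{2d}$, but it does \emph{not} increase any finite-place weight, because $v_{p'}(p^{2d}c)\geq v_{p'}(c)$ for every $p'\in\Sf$. Hence $\Scal_{D_{1}}(\varphi\circ T_{p^{2d}})\ll p^{2dD_{1}}\Scal_{D_{1}}(\varphi)$.

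Combining the two factor bounds gives $\Scal(F\circ M_{de_{p}})\leq p^{2dD_{1}+8dD_{2}}\Scal_{D_{1}}(\varphi)\Scal_{D_{2}}(f)$ for a pure tensor; summing over a representation $F=\sum_{i}\varphi_{i}\otimes f_{i}$ and passing to the infimum yields $\Scal(F\circ M_{de_{p}})\ll p^{2dD_{1}+8dD_{2}}\Scal(F)$. As $p<n^{\beta}$ this is $\ll n^{\beta c}\Scal(F)$ with $c=2dD_{1}+8dD_{2}$, depending only on the degree of $\Scal$ ($d$ being fixed throughout the section); averaging over $p\in\P(n,n^{\beta})$ and restoring the constant term finishes the argument, the dependence of the implicit constant on $\beta$ entering only through how large $n$ must be for $\P(n,n^{\beta})$ to be nonempty. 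The step that genuinely needs care is the torus estimate: one must keep the bound in $p$ polynomial, and this works precisely because the expanding endomorphism $T_{p^{2d}}$ compresses the finite ($p$-adic) part of $\ZS$, so that a power of $p$ is paid only through the archimedean frequencies and not through the level weights $S^{Dm}$.
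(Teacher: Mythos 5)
Your argument is correct and follows the same strategy as the paper's two-line proof: bound $\Scal(F\circ M_{de_{p}})$ by a fixed power of $p$ and then average over $p\in\P(n,n^{\beta})$. The paper simply cites Proposition~\ref{prop:propertiessobolevnorms}~(\ref{item:continuityofrepresentation}), which as stated only covers the $\compactsmooth(\XS)$ factor of a pure tensor, so your explicit Fourier-series computation for the torus factor---precomposition with $T_{p^{2d}}$ relabels $\widehat{\TS}\cong\ZS$ by $c\mapsto p^{2d}c$, which leaves every $p'$-adic level weight unchanged or smaller and only rescales the archimedean frequency by $p^{2d}$---supplies the half of the estimate the paper does not spell out, and is exactly the right computation. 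Your averaging ($p<n^{\beta}$ termwise) is also slightly cleaner than the paper's, which sums $p^{c/2}$ via \eqref{eq:boundgeneralizedprimenumbertheorem} and then absorbs a $\log n$. One small informality worth noting (shared with the paper): the estimate $\Scal(E_{F})\ll\lvert E_{F}\rvert$ for the constant term is not literally available, since $(1+\height_{\XS})^{D}$ fails to be square-integrable for $D\geq1$ and so the constant function has infinite $\Ltwo$-Sobolev norm; the clean fix is to read $\Scal$ on $\tensorcompactsmooth\oplus\C\mathbf{1}$ as the seminorm that ignores the constant part, after which $\Scal(D_{n,\beta,d}F)\leq\tfrac{1}{\pi_{n}(n^{\beta})}\sum_{p}\Scal(F\circ M_{de_{p}})$ holds directly and the $-E_{F}$ issue disappears.
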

\begin{proof}
  Using the bound in~\eqref{eq:boundgeneralizedprimenumbertheorem} and Proposition~\ref{prop:propertiessobolevnorms}~(\ref{item:continuityofrepresentation}), we find some~$c>0$ depending only on the degree of~$\Scal$ such that
\begin{equation*}
  \Scal(D_{n,\beta,d}f)\leq\frac{1}{\pi_{n}(n^{\beta})}\sum_{p\in\P(n,n^{\beta})}p^{\frac{c}{2}}\Scal(f)\ll\frac{(\log n)n^{\beta(\frac{c}{2}+1)}}{n^{\beta}}\Scal(f)\ll n^{\beta c}\Scal(f).
\end{equation*}
The implicit constant is given by~\eqref{eq:boundgeneralizedprimenumbertheorem}, the inequalities~\eqref{eq:boundpinalphabeta}, and the bound~$\log n\ll n^{\beta c/2}$. In particular, it depends only on~$\beta$ and~$c$.
\end{proof}
Given~$a,b,n,l\in\N$ with~$(ab,n)=1$, define
\begin{align*}
  \Pcal_{\infty}^{\times d}(n;a,b)&=\{(\Z+\tfrac{ak^{d}}{n},\Gammar u_{bk^{d}/n}a_{\sqrt{n}}^{-1}):(k,n)=1\}\subseteq\Tr\times\Xr\\
  \Pcal_{\beta}^{\times d}(n;a,b)&=\big\{\big(\Z[(S_{n,\beta})^{-1}]+\Delta(\tfrac{ak^{d}}{n}),\Gamma_{S_{n,\beta}}\Delta(u_{bk^{d}/n})a_{\sqrt{n}}^{-1}\big):(k,n)=1\big\}\subseteq\T_{S_{n,\beta}}\times X_{S_{n,\beta}}.
\end{align*}
We denote by~$\mu_{n;a,b}^{\times d}$ the normalized counting measure on~$\Pcal_{\infty}^{\times d}(n;a,b)$ and by~$\mu_{n,\beta;a,b}^{\times d}$ the normalized counting measure on~$\Pcal_{\beta}^{\times d}(n;a,b)$. Using Lemma~\ref{lem:projectionrationalpoints}, we know that the natural projection
\begin{equation*}
  \T_{S_{n,\beta}}\times X_{S_{n,\beta}}\to\Tr\times\Xr
\end{equation*}
maps the set~$\Pcal_{\beta}^{\times d}(n;a,b)$ injectively onto~$\Pcal_{\infty}^{\times d}(n;a,b)$ and thus the push-forward of~$\mu_{n,\beta;a,b}^{\times d}$ under the natural projection equals~$\mu_{n;a,b}^{\times d}$.
\begin{lem}\label{lem:cardinalityresidues}
  Let~$n,a,b,d$ as above, then
  \begin{equation*}
    \lvert\Pcal_{\infty}^{\times d}(n;a,b)\rvert\gg\frac{\phi(n)}{d^{\omega(n)}},
  \end{equation*}
  where~$\phi(n)$ denotes the Euler totient function.
\end{lem}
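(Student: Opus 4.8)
The plan is to reduce the estimate to a purely elementary count of $d$-th power residues modulo $n$ and then to invoke the Chinese Remainder Theorem. First I would show that, for $k_{1},k_{2}$ coprime to $n$, the corresponding points of $\Pcal_{\infty}^{\times d}(n;a,b)$ coincide if and only if $k_{1}^{d}\equiv k_{2}^{d}\pmod n$. Indeed, equality of the torus components means $n\mid a(k_{1}^{d}-k_{2}^{d})$, which is equivalent to $n\mid k_{1}^{d}-k_{2}^{d}$ since $(a,n)=1$; conversely, when $n\mid k_{1}^{d}-k_{2}^{d}$ one has $b(k_{1}^{d}-k_{2}^{d})/n\in\Z$, so $u_{bk_{1}^{d}/n}a_{\sqrt n}^{-1}\bigl(u_{bk_{2}^{d}/n}a_{\sqrt n}^{-1}\bigr)^{-1}=u_{b(k_{1}^{d}-k_{2}^{d})/n}\in\Gammar$, whence the modular surface components agree as well. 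Consequently
\begin{equation*}
  \lvert\Pcal_{\infty}^{\times d}(n;a,b)\rvert=\bigl\lvert\{k^{d}\bmod n:(k,n)=1\}\bigr\rvert,
\end{equation*}
i.e.\ the cardinality of the image of the $d$-th power endomorphism of $(\Z/n\Z)^{\times}$.

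Next I would decompose $(\Z/n\Z)^{\times}$ using the Chinese Remainder Theorem. For odd prime powers the unit group is cyclic, while the $2$-part contributes at most two cyclic factors, so $(\Z/n\Z)^{\times}$ is a direct product of at most $\omega(n)+1$ cyclic groups. On a cyclic group of order $m$ the map $x\mapsto x^{d}$ has kernel of size $\gcd(m,d)$, hence image of index $\gcd(m,d)\leq d$; taking the product over the factors, the image of the $d$-th power map on $(\Z/n\Z)^{\times}$ has index at most $d^{\omega(n)+1}$. Therefore
\begin{equation*}
  \lvert\Pcal_{\infty}^{\times d}(n;a,b)\rvert\geq\frac{\phi(n)}{d^{\omega(n)+1}}=\frac{1}{d}\cdot\frac{\phi(n)}{d^{\omega(n)}}\gg\frac{\phi(n)}{d^{\omega(n)}},
\end{equation*}
since $d$ is fixed, which is the desired bound.

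This argument is essentially bookkeeping, so I do not anticipate a genuine obstacle; the only point requiring a little care is that the $2$-part of $(\Z/n\Z)^{\times}$ fails to be cyclic when $8\mid n$, but this merely introduces one extra factor of $d$, which is harmless because $d$ is a constant.
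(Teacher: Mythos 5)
Your proof is correct and follows essentially the same route as the paper: reduce to counting $d$-th power residues among units mod $n$, decompose $(\Z/n\Z)^{\times}$ via the Chinese Remainder Theorem, and bound the index of the image of the $d$-th power map by a power of $d$. The only cosmetic difference is that the paper works out the $2$-part explicitly (computing the kernel on $\Z/2\Z\times\Z/2^{r-2}\Z$), whereas you absorb the extra cyclic factor at $2$ into a single constant factor of $d$, and you also spell out the reduction to $a=b=1$ that the paper only asserts.
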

\begin{proof}
  Since~$(n,ab)=1$, we can assume without loss of generality that~$a=b=1$. Furthermore, using the Chinese Remainder Theorem, we know that~$m\in(\quotient{\Z}{n\Z})^{\times}$ is a degree-$d$ residue mod~$n$, i.e.~there is some~$k\in(\quotient{\Z}{n\Z})^{\times}$ such that~$k^{d}\equiv m\,\mod n$, if and only if~$m$ is a degree-$d$ residue mod~$p^{\nu_{p}(n)}$ for all primes~$p$ dividing~$n$ where
  \begin{equation*}
    \nu_{p}(n)=\sup\{\nu\in\N:p^{\nu}|n\}.
  \end{equation*}
  In particular, we have
  \begin{equation*}
    \lvert\Pcal_{\infty}^{\times d}(n;1,1)\rvert=\prod_{p|n}\lvert\Pcal_{\infty}^{\times d}(p^{\nu_{p}(n)};1,1)\rvert
  \end{equation*}
  and it suffices to consider prime powers. Fix an arbitrary prime~$p$ and~$r\in\N$. Define
  \begin{equation*}
    f_{d}:\big(\quotient{\Z}{p^{r}\Z}\big)^{\times}\to\big(\quotient{\Z}{p^{r}\Z}\big)^{\times}, k\mapsto k^{d}.
  \end{equation*}
  By the First Isomorphism Theorem, we know that
  \begin{equation*}
    \lvert\Pcal_{\infty}^{\times d}(p^{r};1,1)\rvert=\frac{p^{r-1}(p-1)}{\lvert\ker f_{d}\rvert}.
  \end{equation*}
  In particular, we know that~$\big(d,p^{r-1}(p-1)\big)=1\implies\lvert\Pcal_{\infty}^{\times d}(p^{r};1,1)\rvert=\phi(p^{r})$. Now assume that~$\big(d,p^{r-1}(p-1)\big)\neq 1$ and assume that~$p$ is odd. In that case, it is well known that~$(\quotient{\Z}{p^{r}\Z})^{\times}$ is a cyclic group, i.e.
  \begin{equation*}
    (\quotient{\Z}{p^{r}\Z})^{\times}\cong\quotient{\Z}{p^{r-1}(p-1)\Z}
  \end{equation*}
  and as in general
  \begin{equation*}
    m|dk\iff\frac{m}{(m,d)}|k,
  \end{equation*}
  the kernel of multiplication by~$d$ in~$\quotient{\Z}{m\Z}$ is given by
  \begin{equation*}
    \{k\tfrac{m}{(m,d)}:k=0,\ldots,(m,d)-1\},
  \end{equation*}
  i.e.~the kernel of~$f_{d}$ in~$\quotient{\Z}{p^{r-1}(p-1)}$ has cardinality~$\big(d,p^{r-1}(p-1)\big)$. In particular
  \begin{equation*}
    \lvert\Pcal_{\infty}^{\times d}(p^{r};1,1)\rvert=\frac{\phi(p^{r})}{(\phi(p^{r}),d)}.
  \end{equation*}
  If~$p=2$ and~$r\geq 2$, then
  \begin{equation*}
    \big(\quotient{\Z}{2^{r}\Z}\big)^{\times}\cong\quotient{\Z}{2\Z}\times\quotient{\Z}{2^{r-2}\Z}.
  \end{equation*}
  Let~$x\in\quotient{\Z}{2\Z}$ and~$k\in\quotient{\Z}{2^{r-2}\Z}$. Then~$d(x,k)=0$ if and only if~$2|dx$ and~$2^{r-2}|dk$. As~$2|d$ by assumption, we know that~$2|dx$. Let~$d=(2^{r-2},d)d^{\prime}$. If~$(2^{r-2},d)=2^{r-2}$, then~$d(x,k)=0$ for all~$(x,k)\in\quotient{\Z}{2\Z}\times\quotient{\Z}{2^{r-2}\Z}$. Otherwise let~$(2^{r-2},d)=2^{m}$. Then in the second component the kernel of multiplication by~$d$ is generated by~$2^{r-2-m}$ and thus has cardinality~$2^{m}=(2^{r-2},d)$ and in particular
  \begin{equation*}
    \lvert\Pcal_{\infty}^{\times d}(2^{r};1,1)\rvert=\frac{\phi(2^{r})}{2(2^{r-2},d)}.
  \end{equation*}
  Combining all of these, we obtain
  \begin{equation*}
    \lvert\Pcal_{\infty}^{\times d}(n;1,1)\rvert=\prod_{p|n}\lvert\Pcal_{\infty}^{\times d}(p^{\nu_{p}(n)};1,1)\rvert\asymp\phi(n)\prod_{p|n}\frac{1}{(p^{\nu_{p}(n)},d)}\gg\frac{\phi(n)}{d^{\omega(n)}}.
  \end{equation*}
\end{proof}
\begin{proof}[Proof of Theorem~\ref{thm:mainthm}]
In what follows, we denote by~$\mu_{n,\beta;a,b}$ the normalized counting measure over the rational points as in Corollary~\ref{cor:equidistributionrationalpoints} for the places~$S_{n,\beta}$. Assume that~$F\in\tensorcompactsmooth(\T_{S_{n,\beta}}\times X_{S_{n,\beta}})$ is real-valued. Note that the set~$\Pcal_{\beta}^{\times d}(n;a,b)$ is invariant under~$M_{de_{p}}$ for all~$p\in S_{n,\beta}$. Using Corollary~\ref{cor:equidistributionrationalpoints} and Lemma~\ref{lem:cardinalityresidues}, we have
\begin{align*}
  \lvert\mu_{n,\beta;a,b}^{\times d}(F)-E_{F}\rvert^{2}&=\lvert\mu_{n,\beta;a,b}^{\times d}(D_{n,\beta,d}F)\rvert^{2}\ll\tfrac{d^{\omega(n)}n}{\phi(n)}\mu_{n,\beta;a,b}\big((D_{n,\beta,d}F)^{2}\big)\\
  &\ll(ab)^{\exponentpolynomialsobolevunipotent_{1}}\tfrac{ne^{(\log d)\omega(n)}}{\phi(n)}\bigg(\int_{\T_{S_{n,\beta}}\times X_{S_{n,\beta}}}(D_{n,\beta,d}F)^{2}+n^{-\exponentdecayrationalpointsS}\Scal\big((D_{n,\beta,d}F)^{2}\big)\bigg),
\end{align*}
for some~$\Ltwo$-Sobolev norm~$\Scal$ on~$\tensorcompactsmooth(\T_{S_{n,\beta}}\times X_{S_{n,\beta}})$. Using Proposition~\ref{prop:propertiessobolevnorms}~(\ref{item:multiplicativity}) and Corollary~\ref{cor:sobolevdiscrepancy}, we can find an~$\Ltwo$-Sobolev norm~$\Scal_{1}$ on~$\tensorcompactsmooth(\T_{S_{n,\beta}}\times X_{S_{n,\beta}})$ such that
\begin{equation*}
  \Scal\big((D_{n,\beta,d}F)^{2}\big)\leq\Scal_{1}(D_{n,\beta,d}f)^{2}\ll n^{2\beta c}\Scal_{1}(F)^{2}.
\end{equation*}
Recall that~$\omega(n)\ll\frac{\log n}{\log\log n}$ (cf.~\cite[\textsection22.10]{HardyWright}). 
Set $\beta=\frac{\exponentdecayrationalpointsS}{2c+\sigma}$. This choice is independent of~$n,a,b$ and~$F$. Combining all this with the result in Lemma~\ref{lem:discrepancybound}, we find an~$\Ltwo$-Sobolev norm~$\Scal_{2}$ and positive numbers~$c^{\prime},\kappa_{3}>0$ such that
\begin{align*}
  \lvert\mu_{n,\beta;a,b}^{\times d}(F)-E_{F}\rvert^{2}&\ll(ab)^{\exponentpolynomialsobolevunipotent_{1}}\frac{n^{1+(\log d)\frac{c^{\prime}}{\log\log n}}}{\phi(n)}(n^{-\sigma\beta}+n^{2\beta c-\exponentdecayrationalpointsS})\Scal_{2}(f)^{2}\\
  &\ll(ab)^{\exponentpolynomialsobolevunipotent_{1}}\Scal_{2}(f)^{2}\frac{n^{1-\kappa_{3}}}{\phi(n)}.
\end{align*}
Now we use
\begin{equation*}
  \lim\inf\frac{\phi(n)\log\log n}{n}>0
\end{equation*}
to obtain that for sufficiently large~$n$ we have
\begin{equation*}
  \frac{n^{1-\delta}}{\phi(n)}=\frac{n}{\phi(n)\log\log n}\frac{\log\log n}{n^{\delta}}\ll_{\delta}n^{-\delta/2}
\end{equation*}
for all~$\delta\in(0,1)$. Plugging this into the above result, we find
\begin{equation}\label{eq:sarithmeticmainthm}
  \lvert\mu_{n,\beta;a,b}^{\times d}(F)-E_{F}\rvert^{2}\ll(ab)^{\exponentpolynomialsobolevunipotent_{1}}\Scal_{2}(F)^{2}n^{-\frac{\kappa_{3}}{2}}.
\end{equation}
Identifying~$\tensorcompactsmooth(\Tr\times\Xr)$ with a subspace of~$\tensorcompactsmooth(\T_{S_{n,\beta}}\times X_{S_{n,\beta}})$ as outlined in Section \ref{sec:sobolev}, Theorem~\ref{thm:mainthm} now follows from Theorem~\ref{thm:boundsobolevnorm}.
\end{proof}
\begin{remark}
  It follows immediately from the argument that the degree-$d$ residues without coprimality assumption equidistribute with a rate. Indeed, the argument only used the invariance of the subsets under~$M_{de_{p}}$ and the fact that it is not too small in comparison to the set of all rational points.
\end{remark}
We end this section with a proof of Corollary~\ref{cor:cormainthm}, which shows the equidistribution of sequences of cosets of degree-$d$ residues in the separate factors. That is, we show that for any sequence~$b_{n}\in\Z$ satisfying~$(b_{n},n)=1$ the sets
\begin{equation}\label{eq:primitivepointsfactor}
  \{\Gamma_{\infty}u_{b_{n}k^{d}/n}a_{\sqrt{n}}^{-1}:(k,n)=1\}\subseteq\Xr
\end{equation}
equidistribute with a polynomial rate independent of the sequence~$b_{n}$. It will be immediate that the proof can easily be adapted to prove the analog for the torus. It will be useful for later purposes to state the corollary in the~$S$-arithmetic setup. In what follows, we denote by~$\pi_{X}:\T_{S_{n,\beta}}\times X_{S_{n,\beta}}\to X_{S_{n,\beta}}$ the projection onto the second coordinate. Note that for any~$a_{n}\in\Z$ coprime to~$n$, the image of the set~$\Pcal_{\beta}^{\times d}(n;a_{n},b_{n})$ under~$\pi_{X}$ is independent of~$a_{n}$ and coincides with the lift of the set in~\eqref{eq:primitivepointsfactor} to~$X_{S_{n,\beta}}$.
\begin{cor}\label{cor:equidistributionfactors}
  Fix~$d\in\N$ and a finite set~$S$ of places containing the infinite place. There exist an~$\Ltwo$-Sobolev norm~$\Scal_{\XS}$ on~$\compactsmooth(\XS)$ and positive constants~$C_{1},\exponentdecayprimitiverationalpointsSfactor>0$ such that the following holds. Given~$n\in\N$ and~$b\in\Z$, denote by~$\mu_{n,b;\XS}^{\times d}$ the normalized counting measure on
  \begin{equation*}
    \{\GammaS\Delta(u_{bk^{d}/n})a_{\sqrt{n}}^{-1}:(k,n)=1\}\subseteq\XS.
  \end{equation*}
  For all~$f\in\compactsmooth(\XS)$ and for all sequences~$b_{n}\in\Z$ satisfying~$(n,b_{n})=1$ we have
  \begin{equation*}
    \bigg\lvert\mu_{n,b_{n};\XS}^{\times d}(f)-\int_{\XS}f\der\nu_{S}\bigg\rvert\leq C_{1}n^{-\exponentdecayprimitiverationalpointsSfactor}\Scal_{\XS}(f).
  \end{equation*}
\end{cor}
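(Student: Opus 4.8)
The plan is to run the proof of Theorem~\ref{thm:mainthm} with test functions living only on the $\XS$-factor, and to notice that the resulting rate no longer sees the sequence $b_n$. It suffices to prove the bound when $S=S_{n,\beta}=\{\infty\}\cup\P(n,n^{\beta})$ with $\beta=\tfrac{\kappa_2}{2c+\sigma}$ ($c$ as in Corollary~\ref{cor:sobolevdiscrepancy}, $\sigma$ as in Lemma~\ref{lem:discrepancybound}), the finitely many small $n$ being absorbed into $C_1$; a prescribed ambient $S$ with $(\Sf,n)=1$ is then recovered from the $S\cup S_{n,\beta}$-case by pulling test functions back along the natural projection, using the Sobolev-norm comparisons of Section~\ref{sec:sobolev}, just as Theorem~\ref{thm:mainthm} is deduced at the end of Section~\ref{sec:primitivepoints}. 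So fix $S=S_{n,\beta}$ and write $P=\{\GammaS\Delta(u_{b_{n}k^{d}/n})a_{\sqrt{n}}^{-1}:(k,n)=1\}\subseteq\XS$. As in Section~\ref{sec:primitivepoints}, $P$ is invariant under the maps $x\mapsto xa_{p^{-d}}$, $p\in\P(n,n^{\beta})$, because $a_{p^{-d}}\in\GammaS$, the subgroup $\AS$ is abelian, and $p^{2d}k^{d}=(p^{2}k)^{d}$ with $p^{2}k$ again coprime to $n$; and $|P|\gg\phi(n)/d^{\omega(n)}$ by Lemma~\ref{lem:cardinalityresidues}.

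The rate becomes uniform in $b_n$ because every $n$-dependent quantity entering the estimate is $b_n$-free. Since $(b_n,n)=1$, the map $k\mapsto b_nk$ permutes $\Z/n\Z$, so the full set of rational points $\{\GammaS\Delta(u_{b_{n}k/n})a_{\sqrt{n}}^{-1}:0\le k<n\}$ coincides with the one for $b_n=1$; write $\mu_n$ for its normalized counting measure, which is then independent of $b_n$. The averaging maps $x\mapsto xa_{p^{-d}}$ do not involve $b_n$, and $|P|$ is $b_n$-free. Since $P\subseteq\supp\mu_n$ and each fibre of $k\mapsto b_nk^{d}$ over $P$ has the same cardinality, the normalized counting measure $\mu_{n,b_n;\XS}^{\times d}$ on $P$ satisfies $\mu_{n,b_n;\XS}^{\times d}(g)\ll\tfrac{nd^{\omega(n)}}{\phi(n)}\mu_n(g)$ for all $g\ge0$, with implicit constant independent of $b_n$.

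With this, the computation is a transcription of the proof of Theorem~\ref{thm:mainthm} applied to $F=\one\otimes f$ (the constant function $\one$ lies in $\smooth(\TS)$ since $\TS$ is compact, and has Sobolev norm $O(1)$). We may assume $f$ real-valued with $E_f:=\int_{\XS}f\,\der\nu_{S}=0$. Then invariance of $P$ under $a_{p^{-d}}$ gives $\mu_{n,b_n;\XS}^{\times d}(f)=\mu_{n,b_n;\XS}^{\times d}(D^{X}f)$, where
\[
(D^{X}f)(x)=\frac{1}{\pi_n(n^{\beta})}\sum_{p\in\P(n,n^{\beta})}f(xa_{p^{-d}})\in\compactsmooth(\XS),
\]
so that $D_{n,\beta,d}(\one\otimes f)=\one\otimes(D^{X}f)$. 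By Cauchy-Schwarz and the domination above, $\lvert\mu_{n,b_n;\XS}^{\times d}(f)\rvert^{2}\ll\tfrac{nd^{\omega(n)}}{\phi(n)}\mu_n\bigl((D^{X}f)^{2}\bigr)$. Applying Corollary~\ref{cor:equidistributionrationalpoints} to $\one\otimes(D^{X}f)^{2}$ with $a=b=1$---whence no $(ab)^{\eta_{1}}$ loss---replaces $\mu_n\bigl((D^{X}f)^{2}\bigr)$ by $\int_{\XS}(D^{X}f)^{2}\,\der\nu_{S}+O(n^{-\kappa_{2}})\Scal\bigl(\one\otimes(D^{X}f)^{2}\bigr)$; Lemma~\ref{lem:discrepancybound} (with $F=\one\otimes f$) bounds $\int_{\XS}(D^{X}f)^{2}\,\der\nu_{S}\ll\beta n^{-\sigma\beta}\Scal_{\XS}(f)^{2}$, and Corollary~\ref{cor:sobolevdiscrepancy} together with multiplicativity of the cross norm bounds $\Scal\bigl(\one\otimes(D^{X}f)^{2}\bigr)\ll n^{2\beta c}\Scal_{\XS}(f)^{2}$. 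Hence $\lvert\mu_{n,b_n;\XS}^{\times d}(f)\rvert^{2}\ll\tfrac{nd^{\omega(n)}}{\phi(n)}\bigl(\beta n^{-\sigma\beta}+n^{2\beta c-\kappa_{2}}\bigr)\Scal_{\XS}(f)^{2}$, and the choice $\beta=\tfrac{\kappa_{2}}{2c+\sigma}$ makes the bracket $\ll n^{-\kappa_{3}}$ for some $\kappa_{3}>0$. Finally $\omega(n)\ll\log n/\log\log n$ gives $d^{\omega(n)}=n^{o(1)}$, and $\liminf\phi(n)\log\log n/n>0$ turns $\tfrac{n^{1-\kappa_{3}}d^{\omega(n)}}{\phi(n)}$ into $O(n^{-\kappa})$ for some $\kappa>0$ and all large $n$, exactly as at the end of the proof of Theorem~\ref{thm:mainthm}; taking square roots gives the assertion with $\kappa_5=\kappa/2$ and an appropriate $\Ltwo$-Sobolev norm $\Scal_{\XS}$ on $\compactsmooth(\XS)$, all constants independent of $b_n$.

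The only genuinely new ingredient, beyond this bookkeeping, is the observation of the second paragraph: one must verify that passing to the $\XS$-factor does not reintroduce $b_n$. This reduces to the elementary facts that $(b_n,n)=1$ makes the background set of all rational points $b_n$-free, and that the averages $x\mapsto xa_{p^{-d}}$, together with the cardinality lower bound of Lemma~\ref{lem:cardinalityresidues}, carry no $b_n$. Everything else is the $S_{n,\beta}$-adelic spectral-gap machinery of Sections~\ref{sec:rationalpoints}--\ref{sec:primitivepoints} used verbatim, so I expect no serious difficulty there.
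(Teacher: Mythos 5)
Your proposal is correct and closely tracks the paper's own proof: both reduce to $S_{n,\beta}$, exploit invariance of the degree-$d$ residue set under $x\mapsto xa_{p^{-d}}$ to replace $f$ by the averaged function, apply Cauchy--Schwarz, pass to a measure that is manifestly free of the sequence $b_n$, and then invoke Lemma~\ref{lem:discrepancybound}, Corollary~\ref{cor:sobolevdiscrepancy}, the bound $\omega(n)\ll\log n/\log\log n$ and the lower bound on $\phi(n)$. The identity $D_{n,\beta,d}(\one\otimes f)=\one\otimes D^{X}f$, the $b_n$-free cardinality bound via Lemma~\ref{lem:cardinalityresidues}, and the permutation argument are all used in exactly the same way.

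The one genuine (if minor) divergence is in the domination step. The paper dominates $\mu_{n,\beta;a_n,b_n}^{\times d}$ by the normalized counting measure $\mu_{n,\beta;a_n,b_n}^{\times 1}$ on the \emph{primitive} rational points with a factor $d^{\omega(n)}$, identifies it (for functions constant in the torus component) with $\mu_{n,\beta;1,1}^{\times 1}$, and then cites Theorem~\ref{thm:mainthm} (its $S$-arithmetic form~\eqref{eq:sarithmeticmainthm} at $d=1$) as a black box to pass to the integral. You instead dominate directly by the measure $\mu_n$ on \emph{all} $n$ rational points, gaining a factor $\frac{nd^{\omega(n)}}{\phi(n)}$, note that $\mu_n$ itself is $b_n$-free, and then apply the more elementary Corollary~\ref{cor:equidistributionrationalpoints} with $a=b=1$. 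Your route is a bit cleaner: the $n/\phi(n)$ loss is handled explicitly in the final optimization rather than being pre-baked into the cited bound, and because the error term you invoke is $n^{-\kappa_{2}}$ from Corollary~\ref{cor:equidistributionrationalpoints} rather than the already-optimized (and smaller) exponent of Theorem~\ref{thm:mainthm}, your $\beta$-optimization is a one-pass computation with no need to recheck that the cited exponent beats $2\beta c$. This is the same exponent bookkeeping that appears in the proof of Theorem~\ref{thm:mainthm} itself, so in effect you are running that proof verbatim on $\one\otimes f$ while noticing that $a=b=1$ kills the $(ab)^{\eta_1}$ loss; the paper's choice to cite the finished Theorem~\ref{thm:mainthm} instead is shorter on the page but slightly less transparent. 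Both are valid.
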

\begin{proof}
  We assume without loss of generality that~$f$ is real-valued. Given two finite sets of places~$S\subseteq S^{\prime}$, denote~$K_{\neg\Sf}=\prod_{p\in\Sf^{\prime}\setminus\Sf}K_{p}[0]$. Fix a function~$f\in\compactsmooth(\XS)$ and note that for any~$\beta>0$ and sufficiently large~$n$ we can view~$f$ as an element in~$\compactsmooth(X_{S_{n,\beta}})$, as similarly to Corollary~\ref{cor:extension} we have 
  \begin{equation*}
    \biquotient{\SLtwo(\Z[S_{n,\beta}^{-1}])}{\SLtwo(\Q_{S_{n,\beta}})}{K_{\neg\Sf}}\cong\rquotient{\SLtwo(\ZS)}{\SLtwo(\QS)}
  \end{equation*}
  as~$\SLtwo(\QS)$-spaces. Thus we can embed~$\compactsmooth(\XS)$ in~$\compactsmooth(X_{S_{n,\beta}})$, identifying it with the subspace of~$K_{\neg\Sf}$-invariant functions for~$n\in\D(\prod_{p\in\Sf}p)$ sufficiently large, where we recall that for any natural number $m\in\N$ we denote by $\D(m)$ the set of natural numbers coprime to $m$. For any basis~$\Xfrak$ of~$\liesltwo(\R)$ and any fixed degree~$D$, with respect to the corresponding~$\Ltwo$-Sobolev norms this embedding is an isometry onto that subspace. Note that the analog of this holds for smooth functions defined on~$\TS$. In particular, it suffices to prove the bound for functions on~$X_{S_{n,\beta}}$ for sufficiently large~$n$. Also note that~$\mu_{n,b;X_{S_{n,\beta}}}^{\times d}=(\pi_{X})_{\ast}\mu_{n,\beta;a,b}^{\times d}$ for all~$(a,n)=1$, as the projection is independent of the torus component.

  The crucial point of the argument to follow is that the degree-$d$ residues form a not too thin subset of the set of the primitive rational points, and the latter, for a single factor, are invariant under multiplication by units mod~$n$. Hence, after projection to a single factor, we can apply Theorem~\ref{thm:mainthm} for the primitive rational points. More precisely, Lemma~\ref{lem:cardinalityresidues} implies that for all sequences~$a_{n},b_{n}\in\Z$ with~$(n,a_{n}b_{n})=1$ we have
  \begin{align*}
    \lvert\mu_{n,\beta;a_{n},b_{n}}^{\times d}(f\circ \pi_{X})-E_{f\circ \pi_{X}}\rvert^{2}&=\big\lvert\mu_{n,\beta;a_{n},b_{n}}^{\times d}\big(D_{n,\beta,d}(f\circ \pi_{X})\big)\big\rvert^{2}\\
    &\ll d^{\omega(n)}\mu_{n,\beta;a_{n},b_{n}}^{\times 1}\big(\big(D_{n,\beta,d}(f\circ \pi_{X})\big)^{2}\big).
  \end{align*}
  As~$D_{n,\beta,d}(f\circ \pi_{X})$ is constant in the~$\T_{S_{n,\beta}}$-component and as multiplication by~$b_{n}$ acts by permutation on the group of units mod~$n$, we have
  \begin{equation*}
    \mu_{n,\beta;a_{n},b_{n}}^{\times 1}\big(\big(D_{n,\beta,d}(f\circ \pi_{X})\big)^{2}\big)=\mu_{n,\beta;1,1}^{\times 1}\big(\big(D_{n,\beta,d}(f\circ \pi_{X})\big)^{2}\big).
  \end{equation*}
  In particular, Theorem~\ref{thm:mainthm} implies that
  \begin{align*}
    \lvert\mu_{n,\beta;a_{n},b_{n}}^{\times d}(f\circ \pi_{X})-E_{f\circ \pi_{X}}\rvert^{2}&\ll d^{\omega(n)}\int_{\T_{S_{n,\beta}}\times X_{S_{n,\beta}}}\big(D_{n,\beta,d}(f\circ \pi_{X})\big)^{2}\\
    &\qquad\qquad+d^{\omega(n)}n^{-\kappa}\Scal\big(\big(D_{n,\beta,d}(f\circ \pi_{X})\big)^{2}\big).
  \end{align*}
  Now we again combine Lemma~\ref{lem:discrepancybound} and Corollary~\ref{cor:sobolevdiscrepancy} with~$\omega(n)\ll\frac{\log n}{\log\log n}$ to obtain
  \begin{equation*}
    \lvert\mu_{n,\beta;a_{n},b_{n}}^{\times d}(f\circ \pi_{X})-E_{f\circ \pi_{X}}\rvert\leq C_{1}n^{-\exponentdecayprimitiverationalpointsSfactor}\Scal_{X_{S_{n,\beta}}}(f)
  \end{equation*}
  just like we did in the proof of Theorem~\ref{thm:mainthm}. Combining this with our initial remarks, the corollary follows.
\end{proof}
\section{Equidistribution in the product of the two-torus and the modular surface}\label{sec:twotorus}
In this section, we are going to provide an ineffective argument for equidistribution of the primitive rational points in both the two-torus and the unit tangent bundle to the modular surface under certain congruence conditions. For what follows, given~$n,a,b,c,d\in\N$, we will denote
\begin{equation*}
  \primitiverationalsnabcrtwotorus=\set{(\tfrac{ak^{d}}{n},\tfrac{b\overline{k}^{d}}{n},\Gammar u_{ck^{d}/n}a_{\sqrt{n}}^{-1})}{(k,n)=1}\subseteq\T_{\infty}^{2}\times\Xr,
\end{equation*}
where~$\overline{k}\in\Z$ is any integer satisfying~$k\overline{k}\equiv1\mod n$.
Recall that we have previously identified~$\T_{\infty}\cong\Gammar\Ur\cong\Gammar\Vr$. Using this identification, the set~$\primitiverationalsnabcrtwotorus$ identifies with
\begin{equation*}
  \set{(\Gammar u_{ak^{d}/n},\Gammar u_{ck^{d}/n}a_{\sqrt{n}}^{-1},\Gammar u_{\overline{b}k^{d}/n}a_{\sqrt{n}}^{-2})}{(k,n)=1}\subseteq \Gammar\Ur\times\Xr\times\Gammar\Vr.
\end{equation*}
Denote by~$\measureprimitiverationalsnabctwotorusr$ the normalized counting measure on~$\primitiverationalsnabcrtwotorus$. In what follows, we will have to restrict ourselves to denominators~$n$ coprime to two distinct, fixed finite primes~$p,q$. For convenience we recall Theorem~\ref{thm:realtwotorus} in this notation.
\begin{thmwo}
  Let~$p,q$ be distinct finite places for~$\Q$ and let~$\kappa,\eta>0$ as in Theorem~\ref{thm:mainthm}. If~$a_{n},b_{n},c_{n}\in\Z$ satisfy~$(a_{n}b_{n}c_{n},n)=1$, then~$\measureprimitiverationalsnanbncntwotorusr$ equidistributes towards the invariant probability measure on the product~$\Tr^{2}\times\Xr$ as~$n\to\infty$ with~$n\in\D(pq)$.
\end{thmwo}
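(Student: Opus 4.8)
The plan is to transport the problem to an $S$-arithmetic extension, produce a limit measure of the counting measures, read off the structure forced on it by Theorem~\ref{thm:mainthm} and Corollary~\ref{cor:equidistributionfactors}, and finally identify it as the Haar measure by a measure-rigidity argument for the rank-two action that becomes available once $n$ is coprime to the two primes $p,q$.

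First I would fix $S=\{\infty,p,q\}$, so $\ZS=\Z[\tfrac1p,\tfrac1q]$ and the hypothesis $n\in\D(pq)$ is exactly $(\Sf,n)=1$, and lift $\primitiverationalsnanbncnrtwotorus$ to the set
\[
  \set{\big(\ZS+\Delta(\tfrac{a_nk^{d}}{n}),\ \ZS+\Delta(\tfrac{b_n\overline{k}^{d}}{n}),\ \GammaS\Delta(u_{c_nk^{d}/n})a_{\sqrt n}^{-1}\big)}{(k,n)=1}\subseteq\TS\times\TS\times\XS.
\]
The substitution $k\mapsto p^{2}k$ permutes the units modulo $n$ and hence the set of degree-$d$ residues, and it is realised on $\TS\times\TS\times\XS$ by the automorphism $\Phi_p(t_1,t_2,x)=(p^{2d}t_1,\,p^{-2d}t_2,\,x\,a_{p^{-d}})$, where $a_{p^{-d}}=\mathrm{diag}(p^{-d},p^{d})$: on the first torus this is immediate; on the third coordinate one uses $\Delta(a_{p^{d}})\in\GammaS$ and $\Delta(a_{p^{d}})u_s\Delta(a_{p^{d}})^{-1}=u_{p^{2d}s}$ together with the commutativity of $\AS$; and on the second torus one uses $p^{2d}\overline{p}^{\,2d}\equiv1\pmod n$ and $p^{-2d}\in\ZS$, which give $\Delta(\overline{p}^{\,2d}\tfrac mn)=p^{-2d}\Delta(\tfrac mn)$ in $\TS$ for every $m\in\Z$. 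The analogous automorphism $\Phi_q$ commutes with $\Phi_p$. Passing to a subsequence $n_r\to\infty$ with $n_r\in\D(pq)$, let $\mu$ be a weak-$\ast$ limit of the normalized counting measures on these lifted sets; it is invariant under the $\Z^{2}$-action generated by $\Phi_p$ and $\Phi_q$.

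Next I would pin down the marginals. By Corollary~\ref{cor:equidistributionfactors} applied with this $S$, the $\XS$-marginal of each counting measure converges to $\nu_{S}$; in particular there is no escape of mass, $\mu$ is a probability measure, and its $\XS$-marginal is $\nu_S$. By Theorem~\ref{thm:mainthm}, in the $S$-arithmetic form underlying its proof (cf.~\eqref{eq:sarithmeticmainthm}, which applies to functions pulled back from $\TS\times\XS$ once $p,q\in\P(n,n^{\beta})$), the marginal of $\mu$ on the first $\TS$ and $\XS$ is $m_{\TS}\otimes\nu_S$. For the marginal on the second $\TS$ and $\XS$, I use the isomorphism $\Gammar\Vr\cong\Tr$ and the relation $\Gammar u_{m/n}=\Gammar v_{\overline{m}/n}a_{n}$ to rewrite the third coordinate in $v$-coordinates and then substitute $k\mapsto\overline{k}$; the resulting pair of coordinates is of the form treated in Theorem~\ref{thm:mainthm} (with the roles of $\Ur$ and $\Vr$ interchanged, as in the reduction of \cite{numberfield}), so this marginal is $m_{\TS}\otimes\nu_S$ as well.

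The heart of the argument is then the rigidity step. Disintegrating $\mu=\int_{\XS}\mu_x\,\der\nu_S(x)$ with $\mu_x$ a probability measure on $\TS^{2}$, the two marginal computations show that for $\nu_S$-a.e.\ $x$ the measure $\mu_x$ is a joining of $m_{\TS}$ with $m_{\TS}$, while $\Phi_p$- and $\Phi_q$-invariance give the equivariances $(\Psi_p)_\ast\mu_x=\mu_{a_{p^{-d}}x}$ and $(\Psi_q)_\ast\mu_x=\mu_{a_{q^{-d}}x}$, where $\Psi_p=(\times p^{2d})\times(\times p^{-2d})$ on $\TS^{2}$. It then suffices to prove $\mu_x=m_{\TS}\otimes m_{\TS}$ for a.e.\ $x$: that gives $\mu=m_{\TS^{2}}\otimes\nu_S$, and pushing down to $\Tr^{2}\times\Xr$, since the limit is independent of the subsequence, proves the theorem. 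To prove $\mu_x=m_{\TS^{2}}$ I would run a counting-to-entropy argument in the style of the measure-rigidity theorems for higher-rank diagonalizable actions: the bound $\lvert\Pcal_{\infty}^{\times d}(n;1,1)\rvert\gg\phi(n)/d^{\omega(n)}\gg n^{1-o(1)}$ of Lemma~\ref{lem:cardinalityresidues} shows that the degree-$d$ residues occupy $\gg n^{1-o(1)}$ well-separated points of the first torus, which forces $\mu$ to have positive conditional entropy under $\Phi_p$ along the first-torus direction relative to the $\XS$-factor; invariance under the commuting pair $\Phi_p,\Phi_q$ — this is where the rank-two structure, and hence the hypothesis $n\in\D(pq)$, enters — upgrades this, via the measure-classification for such actions, to the statement that the conditional measures of $\mu_x$ along the first torus are Haar, and combined with the fact that both $\TS$-marginals of $\mu_x$ are already Haar this forces $\mu_x=m_{\TS}\otimes m_{\TS}$. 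The main obstacle is precisely this step: one has to make the counting-to-entropy mechanism and the invocation of the classification theorem work inside the non-compact quotient $\TS^{2}\times\XS$, where the dynamics on the $\SL_2$-factor is only partially hyperbolic and mass could a priori escape into the cusp (the latter being ruled out by Corollary~\ref{cor:equidistributionfactors}), and one must verify that the two fixed primes genuinely supply a rank-two action to which the rigidity applies — which is why the method retains the congruence restriction.
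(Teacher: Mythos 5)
Your overall plan — lifting to the $S$-arithmetic quotient with $S=\{\infty,p,q\}$, exhibiting the rank-two $\Z^{\Sf}$-invariance, pinning down marginals via Corollary~\ref{cor:equidistributionfactors} and (the $S$-arithmetic form of) Theorem~\ref{thm:mainthm}, and then invoking higher-rank rigidity — is the same skeleton the paper uses, and the first two stages are sound. A small structural difference: you compute both two-factor marginals directly, using the $\Gammar u_{m/n}a_n^{-1}=\Gammar v_{\overline{m}/n}$ relation and $k\mapsto\overline{k}$ for the second one, whereas the paper only computes single-factor marginals directly and obtains the two-factor marginals as a \emph{consequence} of the two-factor disjointness result (Proposition~\ref{prop:disjointnesstwofactors}) inside the proof of Proposition~\ref{prop:disjointnessthreefactors}.

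The rigidity step, however, has a genuine gap that you yourself flag as ``the main obstacle.'' You disintegrate $\mu=\int\mu_x\,\der\nu_S(x)$ and want to show each $\mu_x=m_{\TS}\otimes m_{\TS}$ by a ``counting-to-entropy'' argument plus ``measure-classification for such actions.'' Two problems: first, the fiber measures $\mu_x$ satisfy only the equivariance $(\Psi_p)_\ast\mu_x=\mu_{a_{p^{-d}}x}$ and are \emph{not} individually invariant under any $\times p,\times q$ action, so a Rudolph-type classification theorem cannot be applied fiberwise without substantial relative machinery which you do not supply; second, the counting bound of Lemma~\ref{lem:cardinalityresidues} concerns the finite approximating measures, and turning it into positive (let alone maximal) fiber entropy for the weak-$\ast$ limit is a nontrivial step that the paper never performs. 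The paper's actual mechanism is different and more direct: it works with $\nu$ on $\rquotient{\HGamma}{\HS}$ without disintegrating over $\XS$, computes the entropy contributions along the coarse Lyapunov subgroups via the Abramov--Rokhlin-type Corollary~6.5 of~\cite{definitivejoining} (using only that the relevant marginals are Haar, with trivial intersection of the Lyapunov subgroup with the complementary factor), and then upgrades maximal entropy contribution to invariance via Theorem~7.9 of~\cite{Pisa}. This is carried out in Propositions~\ref{prop:disjointnesstwofactors} and~\ref{prop:disjointnessthreefactors}, the first feeding into the second. Without identifying these specific tools, the ``measure-classification'' you invoke does not close the argument.
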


As was the case for the preceding results, we will prove Theorem~\ref{thm:realtwotorus} via a corresponding statement in the~$S$-arithmetic extension. From now on, unless stated otherwise,~$S$ is a fixed set of places containing the infinite place and at least two distinct finite places, i.e.~$\lvert S\rvert\geq 3$. Given~$n\in\N$ such that~$(n,\prod_{p\in\Sf}p)=1$, we consider the subsets
\begin{equation*}
  \primitiverationalsnabctwotorus=\set{\big(\ZS+\Delta(\tfrac{ak^{d}}{n}),\ZS+\Delta(\tfrac{b\overline{k^{d}}}{n}),\GammaS\Delta(u_{ck^{d}/n})a_{\sqrt{n}}^{-1}\big)}{(k,n)=1}
\end{equation*}
of~$\TS^{2}\times\XS$. Denote by~$\measureprimitiverationalsnabctwotorus$ the normalized counting measure on~$\primitiverationalsnabctwotorus$.
\begin{proposition}\label{prop:sarithmetictwotorus}
  Let~$a_{n},b_{n},c_{n}\in\Z$ be chosen so that~$(a_{n}b_{n}c_{n},n)=1$, then~$\measureprimitiverationalsnanbncntwotorus$ equidistributes towards the invariant probability measure on the product~$\TS^{2}\times\XS$ as~$n\to\infty$ with~$n\in\D(\prod_{p\in\Sf}p)$.
\end{proposition}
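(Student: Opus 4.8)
The plan is to obtain Proposition~\ref{prop:sarithmetictwotorus} from the effective equidistribution of lower–dimensional marginals (already available from Theorem~\ref{thm:mainthm}, Corollary~\ref{cor:equidistributionfactors} and classical exponential sum bounds) together with an entropy and measure rigidity argument for the rank--$\geq 2$ diagonal action carried by $\TS^{2}\times\XS$. Fix $S$ with two distinct finite places $p\neq q$, and given a sequence $n_{j}\to\infty$ in $\D(\prod_{r\in\Sf}r)$ pass to a subsequence along which $\measureprimitiverationalsnanbncntwotorus$ (with $n=n_{j}$) converges weak-$^{*}$ to a measure $\mu$ on $\TS^{2}\times\XS$. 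Since $\TS^{2}$ is compact and the $\XS$--marginal of $\measureprimitiverationalsnanbncntwotorus$ is the measure $\mu_{n,c_{n};\XS}^{\times d}$ of Corollary~\ref{cor:equidistributionfactors}, which converges to $\nu_{S}$, no mass escapes and $\mu$ is a probability measure with $(\pi_{\XS})_{\ast}\mu=\nu_{S}$. As recorded at the beginning of Section~\ref{sec:twotorus} (using $\GammaS\Delta(u_{S^{2md}k^{d}/n})a_{\sqrt{n}}^{-1}=\GammaS\Delta(u_{k^{d}/n})a_{S^{-md}}a_{\sqrt{n}}^{-1}$ and $\overline{S^{2md}}\equiv S^{-2md}\bmod n$, valid as $\Sf$ is coprime to $n$), each $\primitiverationalsnabctwotorus$ is invariant under the commuting homeomorphisms $R_{m}(t_{1},t_{2},x)=(S^{2md}t_{1},S^{-2md}t_{2},xa_{S^{-md}})$, $m\in\Z^{\Sf}$; hence $\mu$ is invariant under this $\Z^{\Sf}$--action $R$, which has rank $|\Sf|\geq 2$.

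Next I identify the pairwise marginals of $\mu$. Projecting to factors $1$ and $3$ gives the normalized counting measure on $\{(\ZS+\Delta(\tfrac{a_{n}k^{d}}{n}),\GammaS\Delta(u_{c_{n}k^{d}/n})a_{\sqrt{n}}^{-1}):(k,n)=1\}$, whose convergence to $m_{\TS}\otimes\nu_{S}$ is the $S$--arithmetic form of Theorem~\ref{thm:mainthm} for the fixed set $S$, obtained exactly as in the proof of Corollary~\ref{cor:equidistributionfactors}. Projecting to factors $1$ and $2$ gives the counting measure on $\{(\ZS+\Delta(\tfrac{a_{n}k^{d}}{n}),\ZS+\Delta(\tfrac{b_{n}\overline{k^{d}}}{n})):(k,n)=1\}$; passing to congruence covers as in Section~\ref{sec:sobolev}, its equidistribution to $m_{\TS^{2}}$ reduces to the joint equidistribution of $(k^{d},\overline{k^{d}})$ modulo $n$, which follows from Weil's bound for twisted Kloosterman sums summed over $d$--th power residues. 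Finally the factor--$2$ marginal converges to $m_{\TS}$ because $\{\overline{k^{d}}:(k,n)=1\}=\{j^{d}:(j,n)=1\}$ and one applies the torus analogue of Corollary~\ref{cor:equidistributionfactors}. Thus $\mu$ is a joining of $(\TS^{2},m_{\TS^{2}})$ with the $a_{S^{md}}$--automorphism action and $(\XS,\nu_{S})$ with the $a_{S^{-md}}$--translation action, all of whose pairwise marginals are the Haar, resp.\ invariant, measures.

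Now the entropy step. Disintegrate $\mu=\int_{\TS\times\XS}\mu_{y}\,d(m_{\TS}\otimes\nu_{S})(y)$ over the factor $\pi_{13}$; since $R_{m}$ acts on the factor--$2$ coordinate by the solenoid automorphism $\times S^{-2md}$, the family $\{\mu_{y}\}_{y}$ is $R$--equivariant, and it suffices to prove $\mu_{y}=m_{\TS}$ for $(m_{\TS}\otimes\nu_{S})$--a.e.\ $y$, for this gives $\mu=m_{\TS}\otimes m_{\TS}\otimes\nu_{S}=m_{\TS^{2}}\otimes\nu_{S}$. I will establish this by showing that the relative entropy $h_{\mu}(R_{e_{p}}\mid\pi_{13})$ of the transformation $R_{e_{p}}$ given the factor $\pi_{13}$ is positive, and then invoking the Rudolph--Johnson theorem for the multiplicatively independent commuting automorphisms $\times p^{-2d},\times q^{-2d}$ of the one--dimensional solenoid $\TS$ in a relative version over the base $(\TS\times\XS,m_{\TS}\otimes\nu_{S})$, which forces the conditional measures to be Haar. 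The positivity of the relative entropy is to be extracted from a separation--versus--cardinality count: by Lemma~\ref{lem:cardinalityresidues} and $\omega(n)\ll\log n/\log\log n$ the set $\primitiverationalsnanbncntwotorus$ has cardinality $\gg n^{1-o(1)}$; distinct $d$--th power residues $k^{d}\bmod n$ produce factor--$2$ coordinates that are $\gg 1/n$--separated; and the effective equidistribution of the $(1,3)$--marginal (Theorem~\ref{thm:mainthm}) together with the equidistribution of the $(1,2)$--marginal (Weil) shows that for balls of radius $n^{-\varepsilon}$ in the $(1,3)$--direction, $\varepsilon$ small relative to the equidistribution exponent $\kappa$, the conditional factor--$2$ mass is carried by $\gg n^{1-O(\varepsilon)-o(1)}$ such well separated points which do not concentrate on a shrinking arc; letting $n\to\infty$ yields a positive lower bound for $h_{\mu}(R_{e_{p}}\mid\pi_{13})$.

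The main obstacle I expect is precisely this last point: converting the arithmetic input (the count and the non--concentration of the rational points, the latter refined using the effective equidistribution of the two lower–dimensional marginals) into a genuine positive lower bound for the \emph{relative} entropy, and then running measure rigidity in the relative setting of a solenoid bundle over the mixing base $(\XS,\nu_{S})$. A secondary point demanding care is that $\TS^{2}$ splits $R$--equivariantly as $\TS\oplus\TS$, so one cannot apply irreducible--torus rigidity directly; this is exactly why the argument is organised so that the fibres over $\pi_{13}$ are the one--dimensional solenoid, where the Rudolph--Johnson dichotomy between zero entropy and Haar measure applies without an irreducibility hypothesis.
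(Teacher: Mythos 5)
Your overall frame (pass to a weak$^\ast$ limit, use the lower-dimensional marginals and $\Z^{\Sf}$-invariance, then apply higher-rank entropy rigidity) matches the paper, but the execution diverges at exactly the step you flag as the ``main obstacle,'' and that step is where the argument breaks down. The paper does \emph{not} try to extract a positive lower bound on relative entropy from a cardinality/separation count. Instead it proceeds via Corollary~\ref{cor:equidistributionfactors} to get the single-coordinate marginals, then invokes Proposition~\ref{prop:disjointnessthreefactors}, which in turn rests on Proposition~\ref{prop:disjointnesstwofactors}. Both disjointness results are proved by the product structure of leafwise measures (Corollary~6.5 of \cite{definitivejoining}) together with the maximal-entropy-implies-invariance principle (Theorem~7.9 of \cite{Pisa}): since the relevant coarse Lyapunov subgroup intersects the ``extra'' factor trivially, the entropy contribution of the joining along that subgroup \emph{equals} the entropy contribution of the projection, which is already maximal because the marginal is the invariant measure. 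No entropy lower bound has to be manufactured from arithmetic input. This is why the paper never needs the $(1,2)$-marginal or Weil's bound at all.

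Concretely, your plan has two gaps. First, the ``separation-versus-cardinality count'' does not, as stated, yield $h_{\mu}(R_{e_p}\mid\pi_{13})>0$ for the limit measure $\mu$: the cardinality and $\gg 1/n$ separation control the discrete approximating measures, and entropy of a weak$^\ast$ limit is not bounded below by such counts (it is not even semi-continuous here); moreover the non-concentration you want in the fibre is supposed to come from the $(1,2)$-marginal, but that marginal says nothing about the conditional distribution of factor~$2$ given a small $(1,3)$-ball, since the latter involves the modular-surface constraint in an essential way and is exactly the kind of joint statement you are trying to prove. Second, you invoke ``Rudolph--Johnson in a relative version over the base $(\TS\times\XS,m_{\TS}\otimes\nu_{S})$.'' That base carries a mixing higher-rank homogeneous action; a relative Rudolph--Johnson theorem for skew products over such a base is not an off-the-shelf result, and proving it in this generality is essentially the content of \cite{definitivejoining} -- i.e.\ the machinery the paper actually deploys. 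So the proposal is not wrong in spirit, but it replaces the paper's clean entropy-contribution identity by a positive-entropy lower bound that is unproved and would require a substantially different (and more delicate) argument.
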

In fact, what we show is disjointness of certain higher rank actions on~$\TS$ and on~$\XS$. In the proof of Proposition~\ref{prop:sarithmetictwotorus} we will show that any limit is a joining of the~$\Z^{\Sf}$ actions on~$\TS$ and on~$\XS$ given by the times-$p$ maps, their inverses, and right-multiplication with the diagonal lattice elements~$a_{p}^{-1}$ for~$p\in\Sf$. The heart of the argument consists of showing that the trivial joining is the only joining for these actions.
\begin{proof}[Proof of Theorem {\ref{thm:realtwotorus}} assuming Proposition {\ref{prop:sarithmetictwotorus}}]
  Set~$S=\{\infty,p,q\}$. As~$(n,pq)=1$ by assumption, we have~$\frac{a_{n}k^{d}}{n},\frac{b_{n}\overline{k^{d}}}{n},\frac{c_{n}k^{d}}{n}\in\Zs$ and thus the projection~$\Pi:\TS^{2}\times\XS\to\Tr^{2}\times\Xr$ defined by mapping to the space of~$\Zs^{2}\times K[0]$-orbits sends~$\primitiverationalsnanbncntwotorus$ injectively onto~$\primitiverationalsnanbncnrtwotorus$. As of Proposition~\ref{prop:sarithmetictwotorus}, the sequence of points~$\primitiverationalsnanbncntwotorus$ equidistributes with respect to the~$\QS^{2}\times\GS$-invariant probability measure~$m_{\TS^{2}}\otimes\nu_{S}$ and thus its projection~$\primitiverationalsnanbncnrtwotorus$ equidistributes with respect to~$\Pi_{\ast}(m_{\TS^{2}}\otimes\nu_{S})=m_{\Tr^{2}}\otimes m_{\Xr}$, the~$\R^{2}\times\Gr$-invariant probability measure on~$\Tr^{2}\times\Xr$.
\end{proof}
In order to prove Proposition~\ref{prop:sarithmetictwotorus}, we proceed as follows. We will first show that every measure~$\measureprimitiverationalsnanbncntwotorus$ is invariant under the action of~$\Z^{\Sf}$ on~$\TS^{2}\times\XS$ given by
\begin{equation}\label{eq:theaction}
  T_{m}(t,s,x)=(S^{2dm}t,S^{-2dm}s,x a_{S^{dm}}^{-1})\quad(t,s\in\TS,x\in\XS,m\in\Z^{\Sf}),
\end{equation}
where~$a_{S^{dm}}\in\GammaS$ is the diagonal embedding of the matrix~$\big(\begin{smallmatrix}S^{dm} & 0 \\ 0 & S^{-dm}\end{smallmatrix}\big)$. Hence every \weakstar limit is invariant under this action. We will show that every \weakstar limit is a measure of maximal entropy and hence uniqueness of the measure of maximal entropy will imply that the unique \weakstar limit is the invariant probability measure. In the proof we will apply higher rank rigidity arguments and more precisely apply a result from \cite{definitivejoining}.
\begin{lem}\label{lem:invariantsets}
  The sets~$\primitiverationalsnanbncntwotorus$ are invariant under~$\Z^{\Sf}$. In particular, the measures~$\measureprimitiverationalsnanbncntwotorus$ are~$\Z^{\Sf}$-invariant.
\end{lem}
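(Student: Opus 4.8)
The plan is to show that for every $m\in\Z^{\Sf}$ the homeomorphism $T_m$ of~\eqref{eq:theaction} maps the finite set $\primitiverationalsnanbncntwotorus$ \emph{onto} itself; the $\Z^{\Sf}$-invariance of the normalized counting measure $\measureprimitiverationalsnanbncntwotorus$ then follows at once, since a bijection of a finite set preserves the uniform measure on it. The arithmetic input is that, because $(\Sf,n)=1$, the canonical map $\Z\to\ZS/n\ZS$ induces an isomorphism $\Z/n\Z\cong\ZS/n\ZS$; hence each prime in $\Sf$ is a unit mod $n$, the elements $S^{2dm}=\prod_{p\in\Sf}p^{2dm_p}$ and $S^{-2dm}$ of $\ZS$ reduce to mutually inverse units mod $n$, and $k\mapsto S^{2m}k$ permutes $(\Z/n\Z)^{\times}$.

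Fix $k$ with $(k,n)=1$. First I would choose an integer $k'$ whose class in $\ZS/n\ZS\cong\Z/n\Z$ equals that of $S^{2m}k$; then automatically $(k',n)=1$. Raising to the $d$-th power gives $(k')^d\equiv S^{2dm}k^d\pmod{n\ZS}$, and taking multiplicative inverses mod $n$ gives $\overline{(k')^d}\equiv S^{-2dm}\overline{k^d}\pmod{n\ZS}$. Multiplying the first congruence by $a_n$ (resp.\ $c_n$) and the second by $b_n$, then dividing by $n$, one obtains that
\begin{equation*}
  \tfrac{a_n(k')^d}{n}-\tfrac{a_nS^{2dm}k^d}{n},\qquad \tfrac{b_n\overline{(k')^d}}{n}-\tfrac{b_nS^{-2dm}\overline{k^d}}{n},\qquad \tfrac{c_n(k')^d}{n}-\tfrac{c_nS^{2dm}k^d}{n}
\end{equation*}
all lie in $\ZS$. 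Since the $\TS$-action of $S^{\pm2dm}$ in~\eqref{eq:theaction} is scalar multiplication, the first two relations say precisely that the first and second coordinates of $T_m$ applied to the point of $\primitiverationalsnanbncntwotorus$ indexed by $k$ coincide with those of the point indexed by $k'$.

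For the third coordinate, write $a_{S^{dm}}=\Delta\big(\begin{smallmatrix}S^{dm}&0\\0&S^{-dm}\end{smallmatrix}\big)\in\GammaS$ and note that $a_{S^{dm}}^{-1}$ commutes with $\imath_\infty(a_{\sqrt n})^{-1}$, as both are diagonal in every place. Using this together with $a_{S^{dm}}u_ta_{S^{dm}}^{-1}=u_{S^{2dm}t}$ and $a_{S^{dm}}^{-1}\in\GammaS$, one computes
\begin{equation*}
  \GammaS\Delta(u_{c_nk^d/n})\imath_\infty(a_{\sqrt n})^{-1}a_{S^{dm}}^{-1}=\GammaS a_{S^{dm}}^{-1}\Delta(u_{S^{2dm}c_nk^d/n})\imath_\infty(a_{\sqrt n})^{-1}=\GammaS\Delta(u_{S^{2dm}c_nk^d/n})\imath_\infty(a_{\sqrt n})^{-1}.
\end{equation*}
By the third relation above, $\Delta(u_{c_n(k')^d/n})\Delta(u_{S^{2dm}c_nk^d/n})^{-1}=\Delta(u_{(c_n(k')^d-c_nS^{2dm}k^d)/n})\in\SLtwo(\ZS)=\GammaS$, so the coset equals $\GammaS\Delta(u_{c_n(k')^d/n})\imath_\infty(a_{\sqrt n})^{-1}$. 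Hence $T_m$ sends the point indexed by $k$ to the point indexed by $k'$, giving $T_m(\primitiverationalsnanbncntwotorus)\subseteq\primitiverationalsnanbncntwotorus$; applying this also with $-m$ in place of $m$ and using that $T_m$ is a bijection of $\TS^2\times\XS$ upgrades this to an equality, which proves the lemma.

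I do not expect a genuine obstacle here: the only things to be careful about are that every congruence must be read inside $\ZS$ (which is legitimate because $(\Sf,n)=1$ gives $\ZS/n\ZS\cong\Z/n\Z$), and that $\imath_\infty(a_{\sqrt n})$ commutes with the \emph{integral} diagonal element $a_{S^{dm}}$ — it is exactly this commutation that lets $a_{S^{dm}}^{-1}$ be absorbed into $\GammaS$ in the third-coordinate computation.
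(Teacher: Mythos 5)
Your argument is correct and is essentially the paper's proof: in both, the point is that $(\Sf,n)=1$ makes $S^{\pm 2dm}$ units in $\ZS/n\ZS\cong\Z/n\Z$, so multiplication by $S^{\pm2dm}$ on the torus coordinates amounts to a permutation of the primitive residues, while the third coordinate is handled by the conjugation relation $a_{S^{dm}}u_{t}a_{S^{dm}}^{-1}=u_{S^{2dm}t}$, the fact that $a_{S^{dm}}\in\GammaS$, and commutation of the diagonal elements $a_{S^{dm}}$ and $\imath_{\infty}(a_{\sqrt n})$. Your version merely makes the parametrizing residue $k'$ and the finiteness/bijectivity upgrade explicit, which the paper leaves implicit.
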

\begin{proof}
  Let~$u\in\Z$ such that~$uS^{m}\equiv 1\mod n$. For any~$k\in\Z$ and for every~$a\in\Z$ we have
  \begin{equation*}
    S^{m}(u\tfrac{ak^{d}}{n})\equiv\tfrac{ak^{d}}{n}\mod\Z
  \end{equation*}
  and thus it follows that
  \begin{equation*}
    u\tfrac{ak^{d}}{n}+\ZS=S^{-m}\tfrac{ak^{d}}{n}+\ZS.
  \end{equation*}
  In particular we get
  \begin{align*}
    T_{m}\big(\ZS+&\Delta(\tfrac{a_{n}k^{d}}{n}),\ZS+\Delta(\tfrac{b_{n}\overline{k^{d}}}{n}),\GammaS u_{c_{n}k^{d}/n}a_{\sqrt{n}}^{-1}\big)\\
    &=\big(\ZS+\Delta(\tfrac{a_{n}S^{2dm}k^{d}}{n}),\ZS+\Delta(\tfrac{b_{n}\overline{S^{2dm}k^{d}}}{n}),\GammaS u_{c_{n}S^{2dm}k^{d}/n}a_{\sqrt{n}}^{-1}\big).
  \end{align*}
  This implies the claim.
\end{proof}
\subsection{Lyapunov exponents and coarse Lyapunov subgroups}\label{sec:lyapunov}
We denote by~$\HS$ the group
\begin{equation*}
  \HS=\US\times\VS\times\GS\leq\SLtwo(\QS)^{3}.
\end{equation*}
Let~$\HGamma=\HS\cap\SLtwo(\ZS)^{3}$ and note that there is a natural isomorphism
\begin{equation*}
  \Psi_{S}:\TS^{2}\times\XS\to\rquotient{\HGamma}{\HS}.
\end{equation*}
The action of~$\Z^{\Sf}$ described in~\eqref{eq:theaction} above can be described as follows. Given~$p\in\Sf$, let~$\alpha_{p}\in\SLtwo(\QS)^{3}$ denote the element given as~$\alpha_{p}=(a_{p^{d}},a_{p^{d}},a_{p^{d}})$. Note that conjugation by~$\alpha_{p}$ preserves the lattice~$\HGamma$. Hence for all~$g\in\HS$ the map~$\HGamma g\mapsto\alpha_{p}\cdot\HGamma g=\HGamma\alpha_{p}g\alpha_{p}^{-1}$ is well-defined. We denote by~$e_{p}\in\Z^{\Sf}$ the vector satisfying~$(e_{p})_{q}=\delta_{p,q}$ for all~$q\in\Sf$, so that we get the equivariance~$\alpha_{p}\circ\Psi_{S}=\Psi_{S}\circ T_{de_{p}}$. Setting~$\alpha_{S^{m}}=\prod_{p\in\Sf}\alpha_{p}^{m_{p}}$ for~$m\in\Z^{\Sf}$, we have
\begin{equation}\label{eq:homogeneousequivariance}
  \alpha_{S^{m}}\circ\Psi_{S}=\Psi_{S}\circ T_{dm},
\end{equation}
where~$T_{dm}$ was defined in~\eqref{eq:theaction}. For the dynamics, we are particularly interested in the following subgroups of~$\HS$. We denote by~$H^{(1)}$ and~$H^{(2)}$ the embeddings of~$U$ and~$V$ in the first and second coordinates respectively. Furthermore, the groups~$H^{(-)}$ and~$H^{(+)}$ shall denote the upper- and lower-triangular unipotent groups in the third coordinate respectively. These groups shall be called the \emph{elementary unipotent subgroups}. We set~$\Ical=\{1,2,+,-\}$ and remark here that~$\HS=\langle\HS^{(i)};i\in\Ical\rangle$ for all finite sets of places~$S$ of~$\Q$. A similar statement holds for~$\Hs$. In what follows, a \emph{Lyapunov exponent} for~$\alpha$ is an additive functional~$\chi:\Z^{\Sf}\to\R$ which is non-trivial and for which there exists an elementary unipotent subgroup~$H^{(i)}$ and a place~$p\in S$---i.e.~$\chi$ is a Lyapunov exponent on~$H^{(i)}$---such that
\begin{equation*}
  \lVert\alpha_{S^{m}}(h-\one)\alpha_{S^{-m}}\rVert_{p}=e^{\chi(m)}\lVert h-\one\rVert_{p}\qquad(h\in H_{p}^{(i)}).
\end{equation*}
We let
\begin{equation*}
  \Phi_{i,p}=\set{\chi}{\chi\text{ is a Lyapunov exponent on }\Hp^{(i)}},
\end{equation*}
as well as~$\Phi_{p}=\bigcup_{i\in\Ical}\Phi_{i,p}$,~$\Phi_{i}=\bigcup_{p\in S}\Phi_{i,p}$, and~$\Phi=\bigcup_{i\in\Ical}\Phi_{i}=\bigcup_{p\in S}\Phi_{p}$.

One calculates
\begin{align*}
  \alpha_{p}(h-\one)\alpha_{p}^{-1}&=p^{2d}(h-\one)\quad \text{if }h\in\HS^{(1)}\text{ or }h\in\HS^{(-)},\text{ and}\\
  \alpha_{p}(h-\one)\alpha_{p}^{-1}&=p^{-2d}(h-\one)\quad \text{if }h\in\HS^{(2)}\text{ or }h\in\HS^{(+)}.
\end{align*}
In particular, for all distinct~$p,q\in\Sf$, we have 
\begin{align*}
  \lVert\alpha_{p}(h-\one)\alpha_{p}^{-1}\rVert_{q}&=\lVert h-\one\rVert_{q}\quad\text{if }h\in\Hq^{(i)}, i\in\{1,2,+,-\},\text{ and}\\
  \lVert\alpha_{p}(h-\one)\alpha_{p}^{-1}\rVert_{p}&=p^{-2d}\lVert h-\one\rVert_{p}\quad \text{if }h\in\Hp^{(1)}\text{ or }h\in\Hp^{(-)},\text{ and}\\
  \lVert\alpha_{p}(h-\one)\alpha_{p}^{-1}\rVert_{p}&=p^{2d}\lVert h-\one\rVert_{p}\quad \text{if }h\in\Hp^{(2)}\text{ or }h\in\Hp^{(+)},\\
  \lVert\alpha_{p}(h-\one)\alpha_{p}^{-1}\rVert_{\infty}&=p^{2d}\lVert h-\one\rVert_{\infty}\quad \text{if }h\in\Hr^{(1)}\text{ or }h\in\Hr^{(-)},\text{ and}\\
  \lVert\alpha_{p}(h-\one)\alpha_{p}^{-1}\rVert_{\infty}&=p^{-2d}\lVert h-\one\rVert_{\infty}\quad \text{if }h\in\Hr^{(2)}\text{ or }h\in\Hr^{(+)}.
\end{align*}
As the map~$\Z^{\Sf}\to\HS$,~$m\mapsto\alpha_{S^{m}}$ is a homomorphism, we obtain the following
\begin{cor}\label{cor:weights}
  The functionals appearing in~$\Phi$ are of the form~$\chi=\varepsilon\chi_{p}$ where~$\varepsilon\in\{\pm1\}$ and
  \begin{equation*}
    \chi_{p}(m)=\begin{cases}
      m_{p}2d\log p & \text{if }p\in\Sf,\\
      \sum_{q\in\Sf}m_{q}2d\log q & \text{else.}
    \end{cases}
  \end{equation*}
  Furthermore, we have
  \begin{align*}
    \Phi_{1,p}&=\Phi_{-,p}=\{-\chi_{p}\},\quad\Phi_{2,p}=\Phi_{+,p}=\{\chi_{p}\},\quad(p\in\Sf)\\
    \Phi_{1,\infty}&=\Phi_{-,\infty}=\{\chi_{\infty}\},\quad\Phi_{2,\infty}=\Phi_{+,\infty}=\{-\chi_{\infty}\}.
  \end{align*}
\end{cor}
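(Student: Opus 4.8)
The plan is to obtain Corollary~\ref{cor:weights} as a direct unwinding of the two displays of conjugation formulas immediately preceding it, together with the fact that $m\mapsto\alpha_{S^m}$ is a homomorphism. First I would observe that each elementary unipotent subgroup $H^{(i)}$, $i\in\Ical$, is one-dimensional, so that for every place $p\in S$ the map $h\mapsto\alpha_{S^m}(h-\one)\alpha_{S^{-m}}$ restricted to $H_p^{(i)}$ is multiplication by a single positive real scalar, which we may write as $e^{\chi(m)}$; whenever this scalar is not identically $1$ in $m$ it determines a unique additive functional $\chi$. Hence each $\Phi_{i,p}$ is either empty or a singleton, and the corollary reduces to identifying that one functional for each pair $(i,p)$.

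Next I would use $\alpha_{S^m}=\prod_{q\in\Sf}\alpha_q^{m_q}$ to express the scaling factor at the place $p$ as the product over $q\in\Sf$ of the scaling factors coming from $\alpha_q^{m_q}$. From the first display, $\alpha_q(h-\one)\alpha_q^{-1}$ equals the diagonally embedded scalar $q^{2d}$ times $h-\one$ when $h\in H^{(1)}$ or $h\in H^{(-)}$, and $q^{-2d}$ times $h-\one$ when $h\in H^{(2)}$ or $h\in H^{(+)}$. Passing to the $p$-adic, respectively archimedean, absolute value of this scalar splits into three regimes: at a finite place $p=q$ the factor is $|q^{\pm2d}|_q=q^{\mp2d}$; at a finite place $p\neq q$ it is $|q^{\pm2d}|_p=1$; and at $p=\infty$ it is $|q^{\pm2d}|_\infty=q^{\pm2d}$. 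Multiplying over $q\in\Sf$ with the weights $m_q$ therefore produces: at a finite place $p$ only the term $q=p$ survives, yielding $\pm m_p\,2d\log p$; and at the infinite place every term contributes, yielding $\pm\sum_{q\in\Sf}m_q\,2d\log q$. These are exactly the functionals $\chi_p$ and $-\chi_p$ of the statement, and reading off from the second display which of the four elementary unipotent subgroups, sitting in which coordinate, carries which sign produces the four displayed equalities for $\Phi_{i,p}$. Since $\Phi=\bigcup_{i\in\Ical}\bigcup_{p\in S}\Phi_{i,p}$, the first assertion follows at once.

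There is no genuine analytic difficulty here; the argument is pure bookkeeping, and the only points requiring care are the three absolute-value regimes above --- so that a finite place $p$ ``sees'' only its own $\alpha_p$ while the archimedean place feels the contributions of all the $\alpha_q$ --- and the accurate tracking of the sign $\varepsilon\in\{\pm1\}$ attached to each elementary unipotent subgroup in each of the three coordinates. It is worth recording, for later use when the coarse Lyapunov subgroups are assembled, that the functionals $\chi_p$, $p\in S$, are pairwise non-proportional over $\R$ --- each $\chi_p$ with $p\in\Sf$ depends only on the coordinate $m_p$, while $\chi_\infty$ depends on all coordinates --- so that $\Phi=\{\pm\chi_p:p\in S\}$ consists of $2|S|$ distinct Lyapunov exponents grouped into $|S|$ antipodal pairs.
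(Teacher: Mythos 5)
Your proof is correct and follows exactly the route the paper intends: the paper itself gives no written proof beyond the two conjugation displays preceding the corollary plus the observation that $m\mapsto\alpha_{S^{m}}$ is a homomorphism, and your argument is precisely the bookkeeping that turns those displays into the stated description of $\Phi$. The sign-tracking and the three absolute-value regimes ($p=q$ finite, $p\neq q$ finite, $p=\infty$) are handled correctly, and the closing remark about pairwise non-proportionality of the $\chi_{p}$ is accurate under the standing assumption $\lvert S\rvert\geq 3$ made at the start of that section (the paper itself notes this subtlety when discussing the coarse Lyapunov subgroups).
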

Given a Lyapunov exponent~$\chi\in\Phi$, we define the corresponding Lyapunov subgroup by
\begin{equation*}
  H_{\chi}^{-}=\prod_{\substack{p\in S,i\in\Ical \\ -\chi\in\Phi_{i,p}}}\Hp^{(i)}.
\end{equation*}
\begin{cor}\label{cor:lyapunovsubgroups}
  The Lyapunov subgroups are given as follows:
  \begin{align*}
    H_{\chi_{p}}^{-}&=\Hp^{(1)}\Hp^{(-)},\qquad H_{-\chi_{p}}^{-}=\Hp^{(2)}\Hp^{(+)}\qquad(p\in\Sf),\\
    H_{\chi_{\infty}}^{-}&=\Hr^{(2)}\Hr^{(+)},\qquad H_{-\chi_{\infty}}^{-}=\Hr^{(1)}\Hr^{(-)}.
  \end{align*}
\end{cor}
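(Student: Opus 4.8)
The plan is to obtain Corollary~\ref{cor:lyapunovsubgroups} as a direct bookkeeping consequence of Corollary~\ref{cor:weights} and the defining formula $H_{\chi}^{-}=\prod_{p\in S,\,i\in\Ical,\,-\chi\in\Phi_{i,p}}\Hp^{(i)}$. First I would record that, by Corollary~\ref{cor:weights}, the set $\Phi$ of Lyapunov exponents for $\alpha$ is exactly $\{\pm\chi_{p}:p\in\Sf\}\cup\{\pm\chi_{\infty}\}$. At this point it is worth noting that the standing hypothesis $\lvert S\rvert\geq 3$, i.e.\ $\lvert\Sf\rvert\geq 2$, forces these functionals on $\Z^{\Sf}$ to be pairwise distinct and distinct from their negatives: $\chi_{p}(m)=2d(\log p)\,m_{p}$ depends only on the $p$-th coordinate, whereas $\chi_{\infty}(m)=2d\sum_{q\in\Sf}(\log q)\,m_{q}$ genuinely involves at least two coordinates. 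This distinctness is precisely what ensures that, as $(p,i)$ runs over $S\times\Ical$ in the product defining $H_{\chi}^{-}$, no spurious elementary unipotent factor is picked up.

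Next, for each of the four exponents $\chi\in\{\chi_{p},-\chi_{p},\chi_{\infty},-\chi_{\infty}\}$ I would simply read off from the explicit tables in Corollary~\ref{cor:weights}, namely $\Phi_{1,p}=\Phi_{-,p}=\{-\chi_{p}\}$ and $\Phi_{2,p}=\Phi_{+,p}=\{\chi_{p}\}$ for $p\in\Sf$, together with $\Phi_{1,\infty}=\Phi_{-,\infty}=\{\chi_{\infty}\}$ and $\Phi_{2,\infty}=\Phi_{+,\infty}=\{-\chi_{\infty}\}$, which pairs $(i,p)$ satisfy $-\chi\in\Phi_{i,p}$. For $\chi=\chi_{p}$ with $p\in\Sf$ the only such pairs are $(1,p)$ and $(-,p)$, so $H_{\chi_{p}}^{-}=\Hp^{(1)}\Hp^{(-)}$; symmetrically, since $-(-\chi_{p})=\chi_{p}$ lies only in $\Phi_{2,p}$ and $\Phi_{+,p}$, one gets $H_{-\chi_{p}}^{-}=\Hp^{(2)}\Hp^{(+)}$. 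For $\chi=\chi_{\infty}$ the only such pairs are $(2,\infty)$ and $(+,\infty)$, giving $H_{\chi_{\infty}}^{-}=\Hr^{(2)}\Hr^{(+)}$; and for $\chi=-\chi_{\infty}$ the relevant pairs are $(1,\infty)$ and $(-,\infty)$, so $H_{-\chi_{\infty}}^{-}=\Hr^{(1)}\Hr^{(-)}$. This is exactly the asserted list.

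Finally I would observe that each of these products really is a subgroup of $\HS$, since in every case the two elementary unipotent subgroups involved sit in different coordinates of $\SLtwo(\QS)^{3}$ and hence commute, so the set-theoretic product is a group. I do not expect any genuine obstacle in this argument; the only point that deserves an explicit word is the distinctness of the functionals $\chi_{p},\chi_{\infty}$ invoked in the first step, which is the one place where the hypothesis $\lvert S\rvert\geq 3$ is actually used.
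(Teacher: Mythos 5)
Your argument is correct and is exactly the (implicit) reasoning behind the paper's statement: the corollary is a direct read-off from the definition of $H_{\chi}^{-}$ together with the tables in Corollary~\ref{cor:weights}, and the only subtlety — that $\lvert S\rvert\geq 3$ is needed to keep $\chi_{\infty}$ distinct from the $\pm\chi_{p}$ so that no unintended factors appear in the product — is the same point the paper itself flags in the sentence immediately following the corollary. Your additional observation that each product is genuinely a subgroup because the two factors live in different coordinates of $\SLtwo(\QS)^{3}$ is a correct and welcome bit of tidiness, though the paper leaves it tacit.
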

We note that in our case two Lyapunov exponents are equivalent in the sense of \cite[Section~2.2]{definitivejoining} if and only if they are equal. Thus each Lyapunov exponent is in fact a \emph{coarse} Lyapunov exponent. This is of importance as the statements we will use in general hold for coarse Lyapunov subgroups as opposed to Lyapunov subgroups defined here. In Corollary~\ref{cor:lyapunovsubgroups} we made use of the higher-rank assumption~$\lvert S\rvert>2$, as in the case~$S=\{\infty,p\}$ we would have~$\chi_{\infty}=\chi_{p}$ and thus~$H_{\chi_{p}}=H_{\chi_{\infty}}$. 
\subsection{Two disjointness results}\label{sec:disjointness}
Using our understanding of the (coarse) Lyapunov subgroups discussed in Section~\ref{sec:lyapunov}, we can prove two disjointness results for the~$\Z^{\Sf}$-actions under consideration. The proofs are applications of the product structure of leafwise measures for higher rank actions and the classical Abramov-Rokhlin formula, where the factors are chosen so that the Lyapunov subgroup for the factors are trivial. The first disjointness result gives rise to ineffective equidistribution in the setup of Section~\ref{sec:primitivepoints} discussed in Corollary \ref{cor:ineffectivetorustimessurface} but more importantly serves as an input to the second disjointness result.
\begin{prop}[Disjointness for two factors]\label{prop:disjointnesstwofactors}
  Let~$\mu$ be an ergodic joining of the~$\Z^{\Sf}$ actions on~$(\TS,m_{\TS})$ and~$(\XS,\nu_{S})$, that is,~$\mu$ is a probability measure on~$\TS\times\XS$, invariant and ergodic under the action~$\Z^{\Sf}\curvearrowright\TS\times\XS$,~$m.(t,x)=(S^{2dm}t,xa_{S^{dm}}^{-1})$ and satisfying~$(\pi_{\TS})_{\ast}\mu=m_{\TS}$ and~$(\pi_{\XS})_{\ast}\mu=\nu_{S}$. Then~$\mu=m_{\TS}\otimes\nu_{S}$, that is, the systems are disjoint. The same holds for the action~$\Z^{\Sf}\curvearrowright\TS\times\XS$,~$m.(t,x)=(S^{-2dm}t,xa_{S^{dm}}^{-1})$.
\end{prop}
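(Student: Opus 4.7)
The plan is to follow the higher-rank joining-rigidity strategy of \cite{definitivejoining}, combining the product structure of leafwise measures for higher-rank diagonalizable actions with the Abramov-Rokhlin formula for conditional entropy. The higher-rank hypothesis $|\Sf|\geq 2$ is essential throughout.

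First I would carry out a Lyapunov analysis analogous to Section~\ref{sec:lyapunov}. Identifying $\TS$ with the $\US$-orbit of the identity coset in $\XS$ embeds $\TS\times\XS$ into $\XS\times\XS$, and under this embedding the $\Z^{\Sf}$-action of~\eqref{eq:theaction} becomes the diagonal right-translation by $(a_{S^{dm}}^{-1},a_{S^{dm}}^{-1})$, with $\{a_{S^{dm}}\}_{m\in\Z^{\Sf}}$ spanning a rank-$|\Sf|$ $\QS$-split torus. A direct calculation shows that the coarse Lyapunov weights of this joint action are the $\pm\chi_{p}$, $p\in S$, of Corollary~\ref{cor:weights}, and that the coarse Lyapunov subgroups for the weights $-\chi_{\infty}$ and $\chi_{p}$ ($p\in\Sf$) are contained entirely in the $\XS$-factor --- namely $\Vr$ and $\Vp$ respectively. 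The reason is that on $\TS$ the only non-zero Lyapunov weights are $\chi_{\infty}$ (in the real direction) and $-\chi_{p}$ (in each $p$-adic direction).

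Next, for each such ``$\XS$-only'' weight $\chi$ I would show that $\mu$ is invariant under the corresponding unipotent acting on the $\XS$-coordinate. Since $(\pi_{\XS})_{\ast}\mu=\nu_{S}$ is Haar on $\XS$, its leafwise measure along the unipotent in question is Haar. Applying the Abramov-Rokhlin formula
\begin{equation*}
  h_{\mu}(\alpha)=h_{\nu_{S}}(\alpha)+h_{\mu}(\alpha\mid\pi_{\XS})
\end{equation*}
to a regular $\alpha\in\Z^{\Sf}$ expanding along $\chi$, and exploiting the product structure of the leafwise measures of $\mu$ along coarse Lyapunov subgroups, one deduces that the joint leafwise measure along the unipotent coincides with that of the $\nu_{S}$-factor, and is therefore Haar. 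The crucial point is that the chosen weight $\chi$ is trivial on the $\TS$-factor, so the fiberwise Lyapunov subgroup with respect to $\pi_{\XS}$ is trivial in this direction and the equality case of Margulis-Ruelle becomes available. Carrying this out for every ``$\XS$-only'' weight yields invariance of $\mu$ under $\Vr$ and each $\Vp$, $p\in\Sf$, acting on the $\XS$-coordinate only.

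The subgroups $\Vr$ and $\Vp$, $p\in\Sf$, generate $\VS\leq\GS$. Since the $\VS$-action on $(\XS,\nu_{S})$ is ergodic (by Mautner's theorem, using decay of matrix coefficients for $\SLtwo$ at each place in $S$), disintegrating $\mu$ along $\pi_{\XS}$ and using the $\VS$-invariance of $\mu$ shows that the fiber measures on $\TS$ coincide $\nu_{S}$-almost surely with a common probability measure $\mu_{0}$, so that $\mu=\mu_{0}\otimes\nu_{S}$. The marginal condition $(\pi_{\TS})_{\ast}\mu=m_{\TS}$ then forces $\mu_{0}=m_{\TS}$, giving $\mu=m_{\TS}\otimes\nu_{S}$. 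For the alternative action with $S^{-2dm}$ in place of $S^{2dm}$ on the $\TS$-factor the Lyapunov signs on $\TS$ reverse, the ``$\XS$-only'' weights become $\chi_{\infty}$ and $-\chi_{p}$ for $p\in\Sf$, and one deduces invariance under $\Ur$ and each $\Up$, hence under $\US$; the same disintegration argument then concludes.

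\emph{Main obstacle.} The principal difficulty lies in the third paragraph, namely turning the product structure of leafwise measures together with the Abramov-Rokhlin identity into the claimed $\Vr$- and $\Vp$-invariance of $\mu$ via the equality case of Margulis-Ruelle. This must be carried out in the mixed archimedean/$p$-adic $S$-arithmetic context, which requires extending the methods of \cite{definitivejoining} to handle the solenoid factor $\TS$ alongside the $\SLtwo$-quotient $\XS$. It is also exactly here that the hypothesis $|\Sf|\geq 2$ is indispensable: in rank one, one cannot separate the Lyapunov spectra on the two factors in a way that would isolate an ``$\XS$-only'' coarse Lyapunov direction.
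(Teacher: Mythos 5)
Your proposal is correct and follows essentially the same route as the paper's proof: after identifying the coarse Lyapunov subgroups $\Vr$ and $\Vp$ ($p\in\Sf$) as those living entirely in the $\XS$-factor, you invoke the product structure of leafwise measures plus Abramov--Rokhlin (this is precisely Corollary~6.5 of \cite{definitivejoining}) and the equality case of the entropy--determinant inequality (Theorem~7.9 of \cite{Pisa}) to obtain $\VS$-invariance of $\mu$, then conclude by disintegration and ergodicity. The only cosmetic difference is that you disintegrate over $\pi_{\XS}$ and argue that the fiber measures on $\TS$ are a.e.\ constant by $\VS$-ergodicity, whereas the paper disintegrates over $\pi_{\TS}$ (whose fibers are $\VS$-invariant) and uses extremality of the ergodic measure $\nu_{S}$.
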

\begin{proof}
  We only prove the first case. The second follows by interchanging the roles of the subgroups~$\US$ and~$\VS$. Using the discussion from Section~\ref{sec:lyapunov} and restricting to the subgroup~$\US\times\GS=\tildeHS$, we note that the Lyapunov exponents are exactly the same for this smaller system, however, the Lyapunov subgroups are given by
  \begin{align*}
    \tilde{H}_{\chi_{p}}^{-}&=\tildeHp^{(1)}\tildeHp^{(-)},\qquad \tilde{H}_{-\chi_{p}}^{-}=\tildeHp^{(+)}\qquad(p\in\Sf)\\
    \tilde{H}_{\chi_{\infty}}^{-}&=\tildeHr^{(+)},\phantom{\tildeHr^{(2)}}\qquad \tilde{H}_{-\chi_{\infty}}^{-}=\tildeHr^{(1)}\tildeHr^{(-)},
  \end{align*}
  where~$\tilde{H}^{(1)}$ denotes the embedding of~$U$ in the first coordinate of~$\tilde{H}$ and~$\tilde{H}^{(-)}$ and~$\tilde{H}^{(+)}$ denote the upper- and lower-triangular unipotents in the second coordinate. We now apply a consequence of the product structure for leafwise measures and the Abramov-Rokhlin formula, namely using Corollary~6.5 from \cite{definitivejoining}, we obtain
  \begin{equation*}
    h_{\mu}(\alpha_{p},\tilde{H}_{\chi_{\infty}}^{-})=h_{\nu_{S}}(\alpha_{p},\tilde{H}_{\chi_{\infty}}^{-})+h_{\mu}(\alpha_{p},\underbrace{\tilde{H}_{\chi_{\infty}}^{-}\cap\US}_{=\{1\}})=h_{\nu_{S}}(\alpha_{p},\tilde{H}_{\chi_{\infty}}^{-}),
  \end{equation*}
  where~$h_{\mu}(\alpha,\tilde{H}_{\chi_{\infty}}^{-})$ denotes the entropy contribution for the Lyapunov subgroup~$\tilde{H}_{\chi_{\infty}}^{-}$. Using Theorem~7.9 in \cite{Pisa}, it follows that~$\mu$ is~$\tilde{H}_{\chi_{\infty}}^{-}$-invariant. By a similar argument one obtains that~$\mu$ is~$\tilde{H}_{-\chi_{p}}^{-}$-invariant for any~$p\in\Sf$. In particular,~$\mu$ is~$\tildeHS^{(+)}$-invariant.

  Let~$\Bcal_{\TS}$ be the Borel-$\sigma$-algebra on~$\TS$ and let~$\Ccal=\pi_{\TS}^{-1}\Bcal_{\TS}$ be the~$\sigma$-algebra defined by the fibers of the canonical projection to~$\TS$. Consider the disintegration
  \begin{equation}\label{eq:disintegration}
    \mu=\int\mu_{(t,x)}^{\Ccal}\der\mu(t,x)
  \end{equation}
  of~$\mu$ with respect to~$\Ccal$. As the atoms of~$\Ccal$ are~$\tildeHS^{(+)}$-invariant and as on a set of full measure the conditional measure~$\mu_{(t,x)}^{\Ccal}$ is determined by the atom~$[(t,x)]_{\Ccal}$, almost all measures appearing in the disintegration are~$\tildeHS^{(+)}$-invariant. We refer to Chapter~5 in \cite{vol1} for details on conditional measures. As~$\nu_{S}$ is ergodic for the action of~$\tildeHS^{(+)}$, extremality of ergodic measures among invariant measures implies~$(\pi_{\XS})_{\ast}\mu_{(t,x)}^{\Ccal}=\nu_{S}$ for almost all~$(t,x)$. As~$\mu_{(t,x)}^{\Ccal}$ is concentrated on the atom~$[(t,x)]_{\Ccal}=\{t\}\times\XS$, we get~$\mu_{(t,x)}^{\Ccal}=\delta_{t}\otimes\nu_{S}$ for almost all~$(t,x)\in\TS\times\XS$ where~$\delta_{t}$ denotes the Dirac measure at~$t$. Hence by means of Fubini's Theorem and the assumption that~$(\pi_{\TS})_{\ast}\mu=m_{\TS}$ it follows that~$\mu=m_{\TS}\otimes\nu_{S}$.
\end{proof}
We show how to deduce an ineffective~$S$-arithmetic equidistribution result in the spirit of Theorem~\ref{thm:mainthm} for sequences of values~$(a_{n},c_{n})$.
\begin{cor}\label{cor:ineffectivetorustimessurface}
  Let~$a_{n},c_{n}\in\Z$ be a sequence satisfying~$(n,a_{n}c_{n})=1$. Let~$\mu_{n,S;a_{n},c_{n}}^{\times d}$ denote the normalized counting measure on the set
  \begin{equation*}
    \Pcal_{S}^{\times d}(n;a_{n},c_{n})=\big\{\big(\ZS+\Delta(\tfrac{a_{n}k^{d}}{n}),\GammaS\Delta(u_{c_{n}k^{d}/n})a_{\sqrt{n}}^{-1}\big):(k,n)=1\big\}\subseteq\TS\times\XS.
  \end{equation*}
  As~$n\to\infty$ in~$\D(\prod_{p\in\Sf}p)$, the measures~$\mu_{n,S;a_{n},c_{n}}^{\times d}$ equidistribute towards the~$\QS\times\GS$-invariant probability measure on~$\TS\times\XS$.
\end{cor}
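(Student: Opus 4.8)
The plan is to take an arbitrary \weakstar limit of the measures $\mu_{n,S;a_{n},c_{n}}^{\times d}$, identify it as a joining of the relevant $\Z^{\Sf}$-actions, and then invoke the disjointness statement of Proposition~\ref{prop:disjointnesstwofactors}. First I would observe that, exactly as in Lemma~\ref{lem:invariantsets} but with the $\VS$-factor dropped, each set $\Pcal_{S}^{\times d}(n;a_{n},c_{n})$ is invariant under the $\Z^{\Sf}$-action $m.(t,x)=(S^{2dm}t,xa_{S^{dm}}^{-1})$ on $\TS\times\XS$: since $S^{2m}$ is a unit mod $n$, multiplying $k$ by $S^{2m}$ permutes the residues with $(k,n)=1$ and carries the point with index $k$ to the point with index $S^{2m}k$, using that $a_{S^{dm}}\in\GammaS$ and that the diagonal subgroup is abelian. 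Consequently every \weakstar limit $\mu$ of the $\mu_{n,S;a_{n},c_{n}}^{\times d}$ along a subsequence (with $n\in\D(\prod_{p\in\Sf}p)$) is invariant under this action. To exclude escape of mass I would use that the $\XS$-marginal of $\mu_{n,S;a_{n},c_{n}}^{\times d}$ is the normalized counting measure on $\{\GammaS\Delta(u_{c_{n}k^{d}/n})a_{\sqrt{n}}^{-1}:(k,n)=1\}$, which converges to $\nu_{S}$ by Corollary~\ref{cor:equidistributionfactors}; hence the family $\{\mu_{n,S;a_{n},c_{n}}^{\times d}\}$ is tight, so every subsequential \weakstar limit is a genuine probability measure.

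Next I would compute the marginals of such a limit $\mu$. The $\XS$-marginal is $\nu_{S}$ by the remark above, and the $\TS$-marginal is the \weakstar limit of the counting measures on $\{\ZS+\Delta(\tfrac{a_{n}k^{d}}{n}):(k,n)=1\}$, which is $m_{\TS}$ by the torus analog of Corollary~\ref{cor:equidistributionfactors} (equidistribution of degree-$d$ residues in $\TS$). Thus $\mu$ is a joining of the $\Z^{\Sf}$-action on $(\TS,m_{\TS})$, $m.t=S^{2dm}t$, with the $\Z^{\Sf}$-action on $(\XS,\nu_{S})$, $m.x=xa_{S^{dm}}^{-1}$. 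Now I would pass to the ergodic decomposition $\mu=\int\mu_{\omega}\,dP(\omega)$ for this action. The single transformation $t\mapsto S^{2de_{p}}t$ is mixing on $(\TS,m_{\TS})$ by Corollary~\ref{cor:effectivemixingsolenoid}, and $x\mapsto xa_{S^{de_{p}}}^{-1}$ is mixing on $(\XS,\nu_{S})$ by Proposition~\ref{prop:spectralgap}, so both factor actions are ergodic; since the pushforwards $(\pi_{\TS})_{\ast}\mu_{\omega}$ and $(\pi_{\XS})_{\ast}\mu_{\omega}$ are ergodic and average over $\omega$ to the ergodic measures $m_{\TS}$ and $\nu_{S}$, uniqueness of the ergodic decomposition forces $(\pi_{\TS})_{\ast}\mu_{\omega}=m_{\TS}$ and $(\pi_{\XS})_{\ast}\mu_{\omega}=\nu_{S}$ for $P$-a.e.\ $\omega$. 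Hence a.e.\ $\mu_{\omega}$ is an ergodic joining, and Proposition~\ref{prop:disjointnesstwofactors} gives $\mu_{\omega}=m_{\TS}\otimes\nu_{S}$, so $\mu=m_{\TS}\otimes\nu_{S}$. As every subsequential \weakstar limit equals $m_{\TS}\otimes\nu_{S}$ and the sequence is tight, the full sequence converges \weakstar to the $\QS\times\GS$-invariant probability measure $m_{\TS}\otimes\nu_{S}$, as claimed.

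The genuinely hard input has been isolated into Proposition~\ref{prop:disjointnesstwofactors}, whose proof rests on higher-rank measure rigidity (the product structure of leafwise measures together with the Abramov--Rokhlin formula); once that proposition is granted, the work remaining here is essentially soft and amounts to bookkeeping about marginals, ergodic decomposition, and tightness. The only point requiring a little care is that ergodic components of a joining need not themselves be joinings in general, which is exactly why I need both factor actions to be ergodic --- and here even a single generator acts mixingly, which is more than enough.
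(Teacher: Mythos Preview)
Your argument is correct and follows essentially the same route as the paper: show the limit is a $\Z^{\Sf}$-invariant measure with the correct marginals, decompose into ergodic components, and apply Proposition~\ref{prop:disjointnesstwofactors}. You are in fact more careful than the paper on two points: you explicitly address tightness via the $\XS$-marginal, and you justify why the ergodic components themselves have marginals $m_{\TS}$ and $\nu_{S}$ (using ergodicity of the factor actions), whereas the paper simply asserts that almost every ergodic component is a joining.
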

\begin{proof}
  Denote by~$\pi_{\XS}:\TS\times\XS\to\XS$ and~$\pi_{\TS}:\TS\times\XS\to\TS$ the canonical projections. Corollary~\ref{cor:equidistributionfactors} implies that any \weakstar limit of the sequence~$\mu_{n,S;a_{n},c_{n}}^{\times d}$ projects to the unique~$\GS$-invariant probability measure~$\nu_{S}$ on~$\XS$ under~$\pi_{\XS}$ and similarly to the unique~$\QS$-invariant probability measure~$m_{\TS}$ on~$\TS$ under~$\pi_{\TS}$. For every~$p\in\Sf$ the measure~$\mu_{n,S;a_{n},c_{n}}^{\times d}$ is invariant under simultaneous multiplication by~$p^{2d}$ in the first component and right multiplication by~$a_{p^{d}}^{-1}$ in the second component. Hence any \weakstar limit is a joining for the actions of~$\Z^{\Sf}$ on~$(\TS,m_{\TS})$ and on~$(\XS,\nu_{S})$ defined respectively by~$m.t=S^{2dm}t$ and~$m.x=xa_{S^{dm}}^{-1}$ for~$m\in\Z^{\Sf}$,~$t\in\TS$ and~$x\in\XS$.

  Let now~$\mu$ be a \weakstar limit of a sequence of measures~$\mu_{n,S;a_{n},b_{n}}^{\times d}$ and let
  \begin{equation*}
    \mu=\int_{\TS\times\XS}\mu_{(t,x)}^{\Ecal}\der\mu(t,x)
  \end{equation*}
  be an ergodic decomposition of~$\mu$. Then almost every~$\mu_{(t,x)}^{\Ecal}$ is an ergodic joining for the~$\Z^{\Sf}$ actions on~$(\TS,m_{\TS})$ and~$(\XS,\nu_{S})$ respectively. It thus suffices to show that the only ergodic joining is the trivial joining~$m_{\TS}\otimes\nu_{S}$. This was done in Proposition~\ref{prop:disjointnesstwofactors}.
\end{proof}
The second disjointness result forms the heart of this section and is the main input for the completion of the proof of Proposition~\ref{prop:sarithmetictwotorus}.
\begin{prop}[Disjointness for three factors]\label{prop:disjointnessthreefactors}
  Fix~$d\in\N$ and let~$\nu$ be a probability measure on~$\TS\times\TS\times\XS$ which is~$\Z^{\Sf}$-invariant and ergodic for the action defined by
  \begin{equation*}
    m.(t,s,x)=(S^{2dm}t,S^{-2dm}s,xa_{S^{dm}}^{-1})\qquad(t,s\in\TS,x\in\XS).
  \end{equation*}
  Assume that~$\nu$ projects to~$m_{\TS}$ in the first two factors and to~$\nu_{S}$ in the last factor. Then~$\nu$ equals the product measure, i.e.~$\nu=m_{\TS}\otimes m_{\TS}\otimes\nu_{S}$.
\end{prop}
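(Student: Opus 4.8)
The strategy is to show that any ergodic component of a weak$^{\ast}$ limit of the measures $\measureprimitiverationalsnanbncntwotorus$ is the product measure $m_{\TS}\otimes m_{\TS}\otimes\nu_{S}$; Proposition~\ref{prop:disjointnessthreefactors} is exactly the disjointness statement that makes this work, and its proof will proceed, as indicated, via leafwise measures and entropy. First I would record the Lyapunov data from Section~\ref{sec:lyapunov}: for the full system $\TS\times\TS\times\XS$ with the $\Z^{\Sf}$-action $\alpha$, the coarse Lyapunov subgroups are the ones listed in Corollary~\ref{cor:lyapunovsubgroups}. The key structural observation is that for each coarse Lyapunov exponent $\chi$, the Lyapunov subgroup $H_\chi^{-}$ is a product of an elementary unipotent in at most one of the torus factors together with an elementary unipotent in the $\GS$-factor, and in fact one of the two torus factors always contributes the trivial group to $H_\chi^{-}$. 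Concretely: $H_{\chi_p}^{-}=\HS^{(1)}_p\HS^{(-)}_p$ (first torus and $\GS$), $H_{-\chi_p}^{-}=\HS^{(2)}_p\HS^{(+)}_p$ (second torus and $\GS$), and similarly at $\infty$ with the roles of the two torus factors swapped.

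\textbf{Entropy comparison and invariance.} The plan is to run the Abramov--Rokhlin/product-structure argument of \cite{definitivejoining} (Corollary~6.5 there), exactly as in the proof of Proposition~\ref{prop:disjointnesstwofactors}, but now with the projection onto \emph{one} of the torus factors together with $\XS$. Fix $p\in\Sf$ and consider the factor map $\pi:\TS\times\TS\times\XS\to\TS\times\XS$ forgetting, say, the second torus coordinate. For the coarse Lyapunov subgroup $H_{\chi_\infty}^{-}=\Hr^{(2)}\Hr^{(+)}$ the intersection with $\ker\pi$-directions, i.e.\ with the second-torus direction, is exactly $\Hr^{(2)}$, so the fibrewise part of $H_{\chi_\infty}^{-}$ over the factor $\TS\times\XS$ is trivial (the $\GS$-unipotent $\Hr^{(+)}$ survives to the factor). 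Hence the entropy contribution of $H_{\chi_\infty}^{-}$ for $\nu$ equals that for the factor measure, which is at most its value for $\nu_S$ on $\XS$; combined with the Abramov--Rokhlin inequality this forces $h_\nu(\alpha_\infty, H_{\chi_\infty}^{-})=h_{\nu_S}(\alpha_\infty, H_{\chi_\infty}^{-})$, hence (by the entropy-rigidity criterion, e.g.\ Theorem~7.9 in \cite{Pisa}) $\nu$ is invariant under $H_{\chi_\infty}^{-}$, in particular under $\Hr^{(2)}\Hr^{(+)}$. Running the same argument with the other forgetful factor, with the other choices of $p\in\Sf$, and with the roles of the exponents $\pm\chi_p$, $\pm\chi_\infty$ interchanged, one deduces that $\nu$ is invariant under \emph{all} the elementary unipotent subgroups in the $\GS$-factor, i.e.\ under $\HS^{(+)}$ and $\HS^{(-)}$, and also under $\HS^{(1)}$ and $\HS^{(2)}$ in the two torus factors. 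Since $\HS=\langle \HS^{(i)}:i\in\Ical\rangle$, the $\GS$-unipotent invariance alone gives, by the Mautner phenomenon on each $\SLtwo(\Q_\ell)$-factor, that $\nu$ is invariant under all of $\GS$ acting on the third coordinate.

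\textbf{Conclusion by conditional measures.} Once $\nu$ is $\GS$-invariant on the third factor, disintegrate $\nu$ over the $\sigma$-algebra $\Ccal=\pi_{\TS^2}^{-1}\Bcal_{\TS^2}$ pulled back from the two torus factors, exactly as in the last paragraph of the proof of Proposition~\ref{prop:disjointnesstwofactors}: almost every conditional measure is $\GS$-invariant on its atom $\{(t,s)\}\times\XS$, and since $\nu_S$ is ergodic (indeed uniquely $\GS$-invariant), each conditional measure is $\delta_{(t,s)}\otimes\nu_S$. This yields $\nu=(\,(\pi_{\TS^2})_\ast\nu\,)\otimes\nu_S$. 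It remains to identify the push-forward of $\nu$ to $\TS\times\TS$. For this I would use the torus-direction invariances $\HS^{(1)}$ and $\HS^{(2)}$ already obtained — these give $\Q_\ell$-invariance in the first torus factor and in the second torus factor separately — so $(\pi_{\TS^2})_\ast\nu$ is invariant under $\QS\times\QS$ acting by translation, hence equals $m_{\TS}\otimes m_{\TS}$. Therefore $\nu=m_{\TS}\otimes m_{\TS}\otimes\nu_S$, as claimed. The main obstacle I expect is the bookkeeping in the entropy step: one must be careful that the coarse Lyapunov subgroups genuinely factor as "(torus unipotent) $\times$ ($\GS$ unipotent)" with trivial contribution from one torus factor, so that Corollary~6.5 of \cite{definitivejoining} applies cleanly to the right factor system, and one must check the higher-rank hypothesis $\lvert S\rvert\ge 3$ is used exactly where $\chi_\infty$ and $\chi_p$ must be distinct coarse exponents (otherwise the factorization degenerates). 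Everything else — the Mautner argument, the conditional-measure argument, and the passage from the $S$-arithmetic statement to Theorem~\ref{thm:realtwotorus} and Proposition~\ref{prop:sarithmetictwotorus} — is routine given the earlier results, in particular Corollary~\ref{cor:equidistributionfactors} for the factor projections and the uniqueness of the measure of maximal entropy invoked after Lemma~\ref{lem:invariantsets}.
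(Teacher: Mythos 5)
Your overall strategy is the same as the paper's: use Proposition~\ref{prop:disjointnesstwofactors} to identify the two $\pi_i$-factors of $\nu$ with the respective Haar measures, apply Corollary~6.5 of \cite{definitivejoining} to compare entropy contributions of coarse Lyapunov subgroups for $\nu$ with those for the factor measures, use Theorem~7.9 of \cite{Pisa} to upgrade maximal entropy contribution to invariance, and observe that collecting all four coarse Lyapunov subgroups gives invariance under the whole of $\HS$. Your ``conditional measures plus Mautner'' ending is a longer route to what the paper states in one line: $\HS$-invariance already forces $\nu=m_{\TS}\otimes m_{\TS}\otimes\nu_S$ by uniqueness of the $\HS$-invariant probability measure.

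However, your illustrative example contains an error exactly at the bookkeeping step you flag as the expected difficulty, and as written it is internally contradictory. With the projection $\pi_1$ forgetting the \emph{second} torus coordinate, the fiber direction is $\VS$ (the second coordinate). You then apply this to $H_{\chi_\infty}^{-}=\Hr^{(2)}\Hr^{(+)}$ and write that the intersection with the fiber ``is exactly $\Hr^{(2)}$, so the fibrewise part \ldots is trivial'' --- but $\Hr^{(2)}\le\VS$ is nontrivial and \emph{is} the fibrewise part, so the Abramov--Rokhlin formula does not collapse as you want: it leaves over the term $h_\nu(\alpha,\Hr^{(2)})$, about which nothing is known a priori. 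The correct pairings are: $\pi_1$ (fiber $\VS$) with $H_{\chi_p}^{-}=\Hp^{(1)}\Hp^{(-)}$ and with $H_{-\chi_\infty}^{-}=\Hr^{(1)}\Hr^{(-)}$, where the $\VS$-intersection is indeed trivial; and $\pi_2$ (fiber $\US$) with $H_{-\chi_p}^{-}=\Hp^{(2)}\Hp^{(+)}$ and with $H_{\chi_\infty}^{-}=\Hr^{(2)}\Hr^{(+)}$, where the $\US$-intersection is trivial. This is precisely how the paper runs the argument. Since you explicitly acknowledge that this pairing is the delicate point and that ``the other forgetful factor'' must be used, the mistake is a correctable slip rather than a missing idea, but as stated the example does not support the invariance claim for $H_{\chi_\infty}^{-}$.
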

\begin{proof}
  In what follows, we denote by~$m_{1}$ the unique~$\US\times\GS$-invariant probability measure on~$\rquotient{\UGamma\times\GGamma}{\US\times\GS}$ and by~$\pi_{1}$ the projection sending~$(t,s,x)\in\rquotient{\HGamma}{\HS}$ to~$(t,x)\in\TS\times\XS$. Similarly, we let~$m_{2}$ denote the unique~$\VS\times\GS$-invariant probability measure on~$\rquotient{\VGamma\times\GGamma}{\VS\times\GS}$ and~$\pi_{2}$ the projection sending~$(t,s,x)\in\rquotient{\HGamma}{\HS}$ to~$(s,x)\in\TS\times\XS$. As of Proposition \ref{prop:disjointnesstwofactors}, we know that~$(\pi_{i})_{\ast}\nu=m_{i}$ for both~$i=1,2$. Thus by Corollary~6.5 from \cite{definitivejoining}, we know that for any~$p\in\Sf$ we have
  \begin{equation*}
    h_{\nu}(\alpha_{p},H_{\chi_{p}}^{-})=h_{m_{1}}(\alpha_{p},H_{\chi_{p}}^{-})+h_{\nu}(\alpha_{p},\underbrace{H_{\chi_{p}}^{-}\cap\VS}_{=\{1\}})=h_{m_{1}}(\alpha_{p},H_{\chi_{p}}^{-}).
  \end{equation*}
  Again, Theorem 7.9 in \cite{Pisa} implies that~$\nu$ is~$H_{\chi_{p}}^{-}$-invariant. The same argument shows
  \begin{equation*}
    h_{\nu}(\alpha_{p},H_{-\chi_{p}}^{-})=h_{m_{2}}(\alpha_{p},H_{-\chi_{p}}^{-})+h_{\nu}(\alpha_{p},\underbrace{H_{-\chi_{p}}^{-}\cap\US}_{=\{1\}})=h_{m_{2}}(\alpha_{p},H_{-\chi_{p}}^{-}),
  \end{equation*}
  and thus~$\nu$ is~$\Up\times\Vp\times\SLtwo(\Qp)$-invariant. Applying the same argument for the Lyapunov subgroups~$H_{\chi_{\infty}}^{-}$ and~$H_{-\chi_{\infty}}^{-}$, one obtains invariance of~$\nu$ under~$\Ur\times\Vr\times\SLtwo(\R)$. Hence~$\nu$ is~$\HS$-invariant.
\end{proof}
\subsection{Completion of the proof of Theorem {\ref{thm:realtwotorus}}}
Using the disjointness results form Section~\ref{sec:disjointness}, we can prove Proposition~\ref{prop:sarithmetictwotorus}. 
\begin{proof}[Proof of Proposition {\ref{prop:sarithmetictwotorus}}]
  Let~$\overline{\nu}$ be a \weakstar limit of a sequence of measures~$\measureprimitiverationalsnanbncntwotorus$ where~$n\in\D(\prod_{p\in\Sf}p)$. Then~$\overline{\nu}$ is invariant under the action of~$\Z^{\Sf}$ given by
  \begin{equation*}
    m.(t,s,x)=T_{dm}(t,s,x)\qquad(t,s\in\TS,x\in\XS)
  \end{equation*}
  where~$T_{dm}$ was defined in~\eqref{eq:theaction}. Using our assumptions on~$a_{n},b_{n},c_{n}$ together with Corollary~\ref{cor:equidistributionfactors}, the measure~$\overline{\nu}$ projects to~$m_{\TS}$ in the first two components respectively and to~$\nu_{S}$ in the third component. In particular, every ergodic component of~$\overline{\nu}$ with respect to the~$\Z^{\Sf}$-action satisfies the assumptions of Proposition~\ref{prop:disjointnessthreefactors} and thus equals the product measure as of Proposition~\ref{prop:disjointnessthreefactors}. This completes the proof.
\end{proof}
\def\cprime{$'$} \providecommand{\bysame}{\leavevmode\hbox
  to3em{\hrulefill}\thinspace}
\providecommand{\MR}{\relax\ifhmode\unskip\space\fi MR }
\providecommand{\MRhref}[2]{%
  \href{http://www.ams.org/mathscinet-getitem?mr=#1}{#2} }
\providecommand{\href}[2]{#2}

\bibliographystyle{alphabetic}

\begin{thebibliography}{10}
\bibitem[BEG]{BEG} M.~Bj\"orklund and M.~Einsiedler and A.~Gorodnik.
  \newblock Quantitative multiple mixing.
  \newblock Preprint (Url: https://arxiv.org/pdf/1701.00945.pdf), 2017. To appear in {\em J.~Eur.~Math.~Soc.~(JEMS)}.

\bibitem[Cas08]{Cassels} J.~W.~S.~Cassels.
  \newblock Rational quadratic forms.
  \newblock London Mathematical Society Monographs, 13. {\em Academic Press, Inc.}, {\em London-New York}, 1978.

\bibitem[DA14]{Emek} E.~Demirci Akarsu.
  \newblock Short incomplete Gauss sums and rational points on metaplectic horocycles.
  \newblock {\em Int. J. Number Theory}, 10 (2014), no.~6, 1553--1576.

\bibitem[DAM13]{EmekMarklof}  E.~Demirci Akarsu and J.~Marklof.
  \newblock The value distribution of incomplete Gauss sums.
  \newblock {\em Mathematika}, 59 (2013), no.~2, 381--398.

\bibitem[EL10]{Pisa} M.~Einsiedler and E.~Lindenstrauss.
  \newblock Diagonal actions on locally homogeneous spaces. {\em Homogeneous flows, moduli spaces and arithmetic}, 155--241,
  \newblock {\em Clay Math.~Proc.}, 10, {\em Amer.~Math.~Soc.}, {\em Providence, RI}, 2010.

\bibitem[EL17]{definitivejoining} M.~Einsiedler and E.~Lindenstrauss.
  \newblock Joinings of higher rank torus actions on homogeneous spaces.
  \newblock {\em Publ.~math.~IHES}, 129 (2019), no.~1, 83--127.

\bibitem[ELue18]{survey} M.~Einsiedler and M.~Luethi.
  \newblock Kloosterman sums, disjointness, and equidistribution. {\em Ergodic theory and dynamical systems in their interactions with arithmetics and combinatorics}, 137--161,
  \newblock Lecture Notes in Math., 2213, {\em Springer, Cham}, 2018.

\bibitem[EMMV17]{EMMV} M.~Einsiedler, G.~Margulis, A.~Mohammadi, and A.~Venkatesh.
  \newblock Effective equidistribution and property~$(\tau)$.
  \newblock {\em J.~Amer.~Math.~Soc.}, electronically published on October 2, 2019, DOI: https://doi.org/10.1090/jams/930 (to appear in print).

\bibitem[EMSS16]{Primitive} M.~Einsiedler, S.~Mozes, N.~Shah, and U.~Shapira.
  \newblock Equidistribution of primitive rational points on expanding horospheres.
  \newblock {\em Compos.~Math.}, 152 (2016), no.~4, 667--692.

\bibitem[ERW17]{ERW} M.~Einsiedler, R.~R\"uhr, and P.~Wirth.
  \newblock Distribution of shapes of orthogonal Lattices.
  \newblock {\em Ergod.~Theory Dyn.~Syst.}, 39 (2019), no.~6, 1531--1607

\bibitem[EW11]{vol1} M.~Einsiedler and T.~Ward.
  \newblock Ergodic theory with a view towards number theory.
  \newblock Graduate Texts in Mathematics, 259. {\em Springer-Verlag London, Ltd., London}, 2011.

\bibitem[EBHL18]{BazHuangLee} D.~El-Baz and B.~Huang and M.~Lee.
  \newblock Effective equidistribution of primitive rational points on expanding horospheres.
  \newblock Preprint (Url: https://arxiv.org/pdf/1811.04019.pdf), 2018.

\bibitem[GMO08]{GorodnikMaucourantOh} A.~Gorodnik, F.~Maucourant, and H.~Oh.
  \newblock Manin's and Peyre's conjectures on rational points and adelic mixing.
  \newblock {\em Ann.~Sci.~\'Ec.~Norm.~Sup\'er.~(4)}, 41 (2008), no.~3, 383--435.

\bibitem[HW08]{HardyWright} G.~H.~Hardy and E.~M.~Wright, E. M.
  \newblock An introduction to the theory of numbers. Sixth edition. Revised by D. R. Heath-Brown and J. H. Silverman. With a foreword by Andrew Wiles.
  \newblock {\em Oxford University Press, Oxford}, 2008.

\bibitem[Oh02]{HeeOh} H.~Oh.
  \newblock Uniform pointwise bounds for matrix coefficients of unitary representations and applications to Kazhdan constants.
  \newblock {\em Duke Math.~J.}, 113 (2002), no.~1, 133--192.

\bibitem[Kl26]{Kloosterman} H.~D.~Kloosterman.
  \newblock On the Representation of Numbers in the Form~$ax^{2}+by^{2}+cz^{2}+dt^{2}$.
  \newblock {\em Proc.~London Math.~Soc.}, 25 (1926), no.~2, 143--173.

\bibitem[KM96]{KleinbockMargulisPieces} D.~Kleinbock and G.~Margulis.
  \newblock Bounded orbits of nonquasiunipotent flows on homogeneous spaces. {\em Sina\u\i's Moscow Seminar on Dynamical Systems}, 141--172, Amer.~Math.~Soc.~Transl.~Ser.~2, 171, Adv.~Math.~Sci., 28, {\em Amer.~Math.~Soc.}, {\em Providence, RI}, 1996.

\bibitem[KT07]{KleinbockTomanov} D.~Kleinbock and G.~Tomanov.
  \newblock Flows on~$S$-arithmetic homogeneous spaces and applications to metric Diophantine approximation.
  \newblock {\em Comment.~Math.~Helv.}, 82 (2007), no.~3, 519--581.

\bibitem[LM18]{LeeMarklof} M.~Lee and J.~Marklof.
  \newblock Effective equidistribution of rational points on expanding horospheres.
  \newblock {\em Int.~Math.~Res.~Not.~IMRN} (2018), no.~21, 6581--6610.

\bibitem[Lue19]{numberfield} M.~Luethi.
  \newblock Effective equidistribution of primitive rational points in Hilbert modular surfaces.
  \newblock Preprint (Url: https://arxiv.org/abs/1901.01967), 2019.

\bibitem[PR94]{Rapinchuk1994} V.~Platonov and A.~Rapinchuk.
  \newblock Algebraic groups and number theory. Translated from the 1991 Russian original by Rachel Rowen.
  \newblock Pure and Applied Mathematics, 139. {\em Academic Press, Inc.}, {\em Boston, MA}, 1994.

\bibitem[Sar81]{Sarnak1981} P.~Sarnak.
  \newblock Asymptotic behavior of periodic orbits of the horocycle flow and Eisenstein series.
  \newblock {\em Comm. Pure Appl. Math.}, 34 (1981), no.~6, 719--739.

\bibitem[Sel65]{Selberg65} A.~Selberg.
  \newblock On the estimation of Fourier coefficients of modular forms.
  \newblock {\em Proc. Sympos. Pure Math.}, Vol.~VIII, 1--15 {\em Amer. Math. Soc.}, {\em Providence, RI}, 1965.

\bibitem[Wei48]{WeilExponentialSums} A. Weil.
  \newblock On some exponential sums.
  \newblock {\em Proc.~Nat.~Acad.~Sci.~U.~S.~A.}, 34 (1948), 204--207.
\end{thebibliography}

\end{document}